\newtheorem{theorem}{Theorem}
\newtheorem{claim}{Claim}
\newtheorem{problem}{Problem}
\newtheorem{lemma}[theorem]{Lemma}
\newtheorem{proposition}[theorem]{Proposition}
\newtheorem{comments}[theorem]{Comments}
\newtheorem{corollary}[theorem]{Corollary}
\newtheorem{definition}{Definition}
\newcommand{\QQ}{{\mathbb Q}}
\newcommand{\NN}{{\mathbb N}}
\newcommand{\ZZ}{{\mathbb Z}}
\newcommand{\RR}{{\mathbb R}}
\DeclareMathOperator{\ainc}{Inc}
\DeclareMathOperator{\Conv}{Conv}
\DeclareMathOperator{\comp}{{Comp}}
\DeclareMathOperator{\id}{{Id}}
\DeclareMathOperator{\prim}{Prim}
\DeclareMathOperator{\age}{Age}
\DeclareMathOperator{\Inc}{\overline{Inc}}
\DeclareMathOperator{\Id}{\mathbf{Id}}
\DeclareMathOperator{\dual}{dual}
\DeclareMathOperator{\forb}{Forb}
\title{Hereditary classes of ordered sets of width at most two}
\author [Maurice Pouzet]{Maurice Pouzet}
\address{Univ. Lyon, Universit\'e Claude-Bernard Lyon1, CNRS UMR 5208, Institut Camille Jordan, 43, Bd. du 11 Novembre 1918, 69622
Villeurbanne, France et Department of Mathematics and Statistics, University of Calgary, Calgary, Alberta, Canada}
\email{pouzet@univ-lyon1.fr }
\author[Imed Zaguia]{Imed Zaguia*}\thanks{*Corresponding author. Supported by the Canadian Defence Academy Research Program, NSERC and LABEX MILYON (ANR-10-LABX-0070) of Universit\'e de Lyon within the program ''Investissements d'Avenir (ANR-11-IDEX-0007'' operated by the French National Research Agency (ANR)}
\address{Department of Mathematics \& Computer Science, Royal Military College of Canada,
P.O.Box 17000, Station Forces, Kingston, Ontario, Canada K7K 7B4}
\email{zaguia@rmc.ca}
\date{\today}
\keywords{(partially) ordered set; age; hereditary class; well-quasi-order; permutation graph}
\subjclass[2010]{06A6, 06F15}
\begin{document}

\maketitle

\dedicatory{$\dag$\;  Dedicated to the memory of Pierre Rosenstiehl.}

\begin{abstract}This paper is a contribution to the study of hereditary classes of relational structures, these classes being quasi-ordered by embeddability. It deals with the specific case of ordered sets  of width two and the corresponding bichains and incomparability graphs.

Several open problems about hereditary classes of relational structures which have been considered over the years have positive answer in this case. For example, well-quasi-ordered hereditary classes of finite  bipartite permutation graphs, respectively finite 321-avoiding permutations, have been characterized by Korpelainen, Lozin and Mayhill, respectively by  Albert, Brignall, Ru\v{s}kuc and Vatter.

In this paper we present an overview of properties of these hereditary classes in the framework of the Theory of Relations as presented by Roland Fra\"{\i}ss\'e.

We provide another proof of the results mentioned above. It is based on  the existence of a countable universal poset of width two, obtained by the first author in 1978, his notion of multichainability (1978) (a kind of analog to letter-graphs), and metric properties of incomparability graphs. Using Laver's theorem (1971) on better-quasi-ordering (bqo) of countable chains we prove that a wqo hereditary class of finite or  countable bipartite permutation graphs is necessarily bqo. This gives a positive answer to a conjecture of Nash-Williams (1965) in this case. We extend a previous result of Albert et al. by proving that if a hereditary class of finite, respectively countable, bipartite permutation graphs is wqo, respectively bqo, then the corresponding hereditary classes of posets of width at most two and bichains are wqo, respectively bqo.

Several notions of labelled wqo are also considered. We prove that they are all equivalent in the case of bipartite permutation graphs, posets of width at most two and the corresponding bichains. We characterize hereditary classes of  finite bipartite permutation graphs which remain wqo when labels from a wqo are added. Hereditary classes of  posets of width two, bipartite permutation graphs and the corresponding bichains having finitely many bounds are also characterized.

We prove that a hereditary class of finite bipartite permutation graphs is not wqo if and only if it embeds the poset of finite subsets of $\NN$ ordered by set inclusion. This answers a long standing conjecture of the first author in the case of bipartite permutation graphs.
\end{abstract}


\section{Introduction}

This paper is a contribution to the study of hereditary classes of relational structures, these classes being quasi-ordered by embeddability. It is concerned with hereditary classes of posets of width at most two and hereditary classes of  derived structures, like their incomparability graphs and their associated bichains. Several open problems about hereditary classes of relational structures  have been considered since the seventies. By lack of significant results, these problems were not presented in a coherent way. The situation is changing. As we will illustrate,  those problems have a positive answer in the case of posets of width at most two and their derived structures.

In  this  paper we introduce the framework of the theory of relations as developed by Roland Fra\"{\i}ss\'e and subsequent investigators. We present first some of the central questions about well-quasi-ordered hereditary classes (we have chosen five, but there are many other). Next, we present known  and new results about bipartite permutation graphs and the corresponding bichains. And we conclude by giving  a positive answer to  the questions mentioned.

At the core  of the framework of the theory of relations  is the notion of embeddability, a quasi-order between relational structures. We recall that a relational structure $R$ is \emph{embeddable} in a relational structure $R'$,  and we set $R\leq R'$, if $R$ is isomorphic to an induced substructure of $R'$. Several  important notions in the study of these structures, like hereditary classes, ages, bounds, derive from this quasi-order.

A class $\mathcal C$ of relational structures is \emph{hereditary} if it contains every relational structure that embeds into a member of $\mathcal C$. The \emph{age} of a relational structure $R$ is the class $\age(R)$  of all finite relational structures, considered up to isomorphy, which embed into $R$. This is an \emph{ideal}, that is  a  nonempty,  hereditary  and  \emph{up-directed} class $\mathcal{C}$ (any pair of members of $\mathcal C$ are embeddable in some element of $\mathcal C$). As shown by Fra\"{\i}ss\'e (see chapter 10 of  \cite{fraissetr}), in the case of relational structures made of finitely many relations, the converse holds: ideals are ages. Among hereditary classes there are those that are well-quasi-ordered. A class $\mathcal{C}$ is \emph{well-quasi-ordered} by embeddability, wqo for short, if every infinite sequence of members of $\mathcal{C}$ contains an infinite subsequence which is increasing with respect to embeddability.

A \emph{bound} of a hereditary class $\mathcal C$ of finite relational structures (e.g. graphs, ordered sets, bichains) is any relational structure $R\not \in \mathcal C$ such that every proper induced substructure of $R$ belongs to $\mathcal C$. The bounds of $\mathcal C$ form an antichain. Note that
if $\mathcal C'$ is a proper subclass of $\mathcal C$, then every bound of $\mathcal C'$ not in $\mathcal C$ is also a bound of $\mathcal C$. Furthermore, if $\mathcal C$ is wqo, then only finitely many bounds of $\mathcal C'$ are in $\mathcal C$. 

For a wealth of information on these notions see Fra\"{\i}ss\'e's book \cite{fraissetr}.\\

Several investigations have been devoted to hereditary classes, ages and their bounds, particularly those classes that are well-quasi-ordered. This is particulary done in the case of structures like graphs, posets and notably in the case of permutation graphs and bichains (see some surveys like \cite{pouzetw.q.o.bqo, pouzet2006}, \cite{klazar, vatter} and hopefully the forthcoming \cite{pouzet2022}). Several questions remain unsolved. For instance,

\begin{problem}
An age $\mathcal{C}$ of finite graphs is not well-quasi-ordered if and only if $\mathcal{C}$ embeds the poset of finite subsets of $\NN$ ordered by set inclusion.
\end{problem}

When true, the condition above is equivalent to other significant conditions (e.g. $\mathcal{C}$ contains uncountably many subages), see Proposition \ref{prop:well-founded}.

A strengthening of the notion of  well-quasi-order, \emph{better-quasi-order} (bqo), was introduced in 1965 by Nash-Williams \cite{nashwilliams1}. Nash-Williams \cite{nashwilliams1} p.700,  asserted  that "one  is inclined to  conjecture  that  most  wqo sets which  arise in a reasonably  'natural'  manner  are likely to be bqo"

\begin{problem}Is a hereditary class of finite structures with finite signature  bqo whenever it is wqo?
\end{problem}

It is not known if the answer is positive for hereditary classes of finite graphs  (it is also unsolved for the class of finite graphs equipped with the minor quasi-order  while, by Robertson-Seymour's theorem, this class is wqo). We prove that the answer is positive for the class of finite bipartite permutation graphs and the corresponding classes of bichains and posets (see Theorem \ref{thm:2}).  However, it should be noted that the answer is negative if the signature is infinite and the arity is unbounded. Also, there are wqo ages which are not bqo. Examples are based on a construction given in \cite{pouzetsobranisa}.

In general, infinitary constructions do not preserve well-quasi-order. An example due to Rado \cite{rado} illustrates this. The notion of bqo was invented by Nash-Williams to prove that some classes of countable structures are wqo (see Nash-Williams \cite{nashwilliams1} for countable trees under the topological minor relation and Laver (1971) \cite{laver} for countable chains under embedding),  so one may ask why it is important to know whether wqo ages which are made of finite structures are in fact bqo.  The reason is twofold. First, in several instances, the complexity of an infinite  hereditary class $\mathcal{C}$ of finite structures can be better represented by a single infinite relational structure $R$ (this requires that $\mathcal{C}$  be an age and $\mathcal{C}= \age(R)$). This is one of the originalities of the approach of combinatorial problems by the theory of relations. Next, properties of families of ages can be reflected in terms of properties of families of countable structures. For example, a countable antichain of ages yields trivially a countable antichain of countable structures  (the same property holds with chains, see Theorem V-2.3 in \cite{pouzet79} and also \cite{pouzetsobranisa}). Now, if an age embeds Rado's example \cite{rado} (this is an example of a poset that is wqo but not bqo), it will contain an infinite antichain of ages. Up to now, the known examples of ages embedding this poset and which are wqo are made of relational structures with an unbounded signature.


In order to prove that some classes of structures are wqo or bqo, one is lead to prove more: these classes remain wqo under the embeddability quasi-order when the structures are labelled with the elements of a quasi-order. Precisely, let $\mathcal{C}$ be a class of relational structures, e.g.,  graphs, posets, etc., and $Q$ be  a quasi-ordered set or a poset. If $R\in \mathcal C$, a \emph{labelling of $R$ by $Q$} is any map $f$ from  the domain of $R$ into $Q$. Let   $\mathcal{C}\cdot Q$ denotes the collection of $(R,f)$ where $R\in \mathcal{C}$ and $f: R\rightarrow Q$ is  a labelling. This class is quasi-ordered  by $(R,f)\leq (R',f')$ if there exists an embedding $h: R\rightarrow R'$ such that $f(x)\leq (f'\circ h)(x)$ for all $x\in R$.  The class $\mathcal{C}$ is \emph{hereditary wqo} if $\mathcal{C}\cdot Q$ is wqo for every wqo $Q$. The class $\mathcal{C}$ is $n^{-}$-wqo if the class $\mathcal C_{n^-}$ of $(R,a_1,\ldots,a_n)$ where $R\in \mathcal {C}$ and $a_1,\ldots,a_n\in R$ is wqo. Analogous notions with bqo replacing wqo can be also defined.

These notions were introduced by the first author (see \cite{pouzetthesis} and \cite{oudrar-pouzet2016}). We do not know if they are different. 

\begin{problem}A hereditary class of finite structures is either hereditary wqo  or it is not $2^-$-wqo.
\end{problem}

\begin{problem} If a hereditary class $\mathcal C$ of finite structures is hereditary bqo, is it true that the class of countable structures $R$ such that $\age(R) \subseteq \mathcal C$ bqo?
\end{problem}

A huge amount of work has been devoted to enumeration problems. In the framework of the theory of relations, the \emph{profile} of a hereditary class $\mathcal C$ of finite relational structures is the function $\varphi_{\mathcal C}$ which counts for every nonnegative integer $n$ the number $\varphi_{\mathcal C}(n)$ of members of $\mathcal C$ on  $n$ elements counted up to isomorphism. Solving a question of  Fra\"{\i}ss\'e and him, the first author shown in 1971 that the profile of an infinite age is nondecreasing. The proof, based on Ramsey theorem,  appears in Fra\"{\i}ss\'e's Cours de Logique  1971 (Exercice 8, p.113, \cite{fraisse-courslogique1}). Then, jumps in the growth rate of  profiles have been observed in 1978 \cite{pouzetthesis, pouzetmontreal}, later, hereditary classes  whose the profile is bounded by a polynomial  have been studied \cite{pouzet-thiery1, pouzet-thiery2};  the paper \cite{pouzet2006} contains  an overview, with a particular emphasis on profiles whose generating function is a rational fraction, the Cameron approach,  and some problems linking properties of profiles and  of rational languages (e.g., Chomsky-Sch\"{u}tzenberger’s theorem)."

\begin{problem} \label{problem:pouz-sob}  Is the profile of a wqo  hereditary class $\mathcal C$  of finite relational structures with a bounded signature majorized by some exponential?
\end{problem}

 Some hypothesis (like the boundedness of the signature) is needed but the answer is not known in the case of graphs. Without this hypothesis, one can built examples of wqo hereditary classes of arbitrarily large profiles (a joint result  not yet published \cite{pouzet-sobrani}).  The  problem above with the  hypothesis that $\mathcal C$ is hereditary wqo was proposed in 2003 \cite {pouzetsobranisp}.

During the last fifteen years, many results concerning the enumeration of  classes of binary relations have appeared. Among them, it is worth noting the results of Albert and Atkinson \cite{A-A}, Balog et al,\cite{B-B-S-S, B-B-M06, B-B-M06/2, BBM07} (see \cite{klazar, vatter} for an overview). These results as well as the  recent result by Albert et al \cite {albert-brignall-Ruskuc-vatter} 2019 proving that the generating series of the profile of a hereditary class of bichains is a rational fraction support a positive answer to the problem above.

There are other problems related to those; these are mentioned in the text, with label (e.g., Problem \ref{problem6}) or not.

\section{Known results}
In this section we present known results about bipartite permutation graphs and the corresponding bichains.

Recall that a finite graph is  a  \emph{permutation graph} if  its vertices represent the elements of a permutation, and  its edges represent pairs of elements that are reversed by the permutation. 

The class of finite permutation graphs is hereditary and  not well-quasi-ordered  \cite{bona}.  Indeed,  the set of \emph{double-ended forks} form an infinite antichain among bipartite permutation graphs. (see Figure \ref{fig:doublefork}).

A \emph{double-ended fork  $\mathrm{DF}_{n}$ of length $n$}, for $n\geq 1$, is the graph obtained from a path $\mathrm P_n$ on $n$ vertices by adding two vertices adjacent to each end vertex of  $\mathrm P_n$,  these  two vertices being  nonadjacent.

\begin{figure}[h]
\begin{center}
\leavevmode \epsfxsize=3in \epsfbox{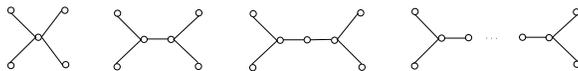}
\end{center}
\caption{Double-ended forks} \label{fig:doublefork}
\end{figure}

Korpelainen and Lozin   proved that the class of finite bipartite permutation graphs not containing a path of length $k$ is wqo (see Theorem 26 in \cite{korpelainen-lozin}).  More generally, Lozin and Mayhill \cite{lozin-mayhill} proved the following (see Theorem 7 page 520),

\begin{theorem}\label{thm:korpelainen-lozin}
A hereditary class  $\mathcal C$ of finite bipartite permutation graphs is wqo if and only if there is  some integer $n$ such that $\mathcal C$ contains no double-ended fork of length greater than  $n$.
\end{theorem}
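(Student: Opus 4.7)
The plan is to handle the two directions separately. The "only if" direction is immediate from the observation already made in the excerpt: the double-ended forks $\{\mathrm{DF}_n : n \geq 1\}$ form an infinite antichain among bipartite permutation graphs. Hence if $\mathcal{C}$ contains $\mathrm{DF}_n$ for every $n$, then $\mathcal{C}$ contains an infinite antichain under induced-subgraph embedding and is not wqo.

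For the nontrivial "if" direction, I would exploit the correspondence between finite bipartite permutation graphs and finite posets of width at most two (together with their bichain encodings), which is central to the paper. Under this correspondence, a finite bipartite permutation graph $G$ is realized as the incomparability graph of a width-two poset $P$, and the presence of a long double-ended fork in $G$ translates into a structural long-alternation pattern in $P$ (roughly, a long chain of incomparabilities flanked at both extremities by two incomparable elements that witness a ``fork''). The next step is to apply the multichainability representation from the first author's 1978 work: every width-two poset admits an encoding as a word over an alphabet of ``transition types'' between the two chains. The hypothesis that $\mathcal{C}$ avoids $\mathrm{DF}_n$ for some fixed $n$ should force a uniform bound on the transition alphabet needed, so that all members of $\mathcal{C}$ are encoded by words over a single finite alphabet $\Sigma = \Sigma(n)$.

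Once such a uniform finite-alphabet encoding is in place, and provided that embedding of graphs in $\mathcal{C}$ is bounded below by the subword-embedding order on the encodings (which is the functorial content of multichainability, analogous to the letter-graph framework mentioned in the abstract), Higman's lemma yields that $\mathcal{C}$ is wqo. The main obstacle I expect is precisely the passage from ``no long $\mathrm{DF}_n$'' to ``the multichainability alphabet is uniformly bounded'': one must show that whenever the encoding requires arbitrarily many transition types across $\mathcal{C}$, one can reconstruct, by a Ramsey- or pigeonhole-style argument on the bichain associated to $P$, a double-ended fork of arbitrary length as an induced subgraph of the incomparability graph. Concretely, the two ``prongs'' at each end of $\mathrm{DF}_k$ must be exhibited inside the width-two poset as two incomparable elements flanking a long alternating induced path, and this is the step where the metric properties of incomparability graphs alluded to in the introduction enter: one uses distances in the bipartite graph to locate the two prongs robustly regardless of how the transitions distribute along the two chains.
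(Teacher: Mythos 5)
Your ``only if'' direction is fine and is exactly the paper's argument: the double-ended forks form an infinite antichain, so a wqo class can contain only finitely many of them (this is implication $(v)\Rightarrow(ii)$ of Theorem \ref{thm:2}). The gap is in the key step of your ``if'' direction: the claim that excluding all $\mathrm{DF}_{n'}$ with $n'\geq n$ forces a \emph{uniform} finite ``transition alphabet'', so that \emph{every} member of $\mathcal C$ is encoded by a word over a single finite alphabet $\Sigma(n)$, is false. In the paper's framework such a uniform encoding amounts to $\mathcal C_{<\omega}\subseteq \age(D(m))$ for some fixed $m$, and by Theorem \ref{thm:1} this holds \emph{if and only if} $\mathcal C$ is $\mathrm P_k$-free for some $k$ (indeed, the paper computes that the longest induced path in $\ainc(D(m))$ has exactly $6m$ vertices). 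But fork-exclusion is strictly weaker than path-exclusion: the hereditary class of disjoint unions of paths (incomparability graphs of linear sums of width-two zigzag posets) contains no double-ended fork whatsoever, since none of its members has a vertex of degree three, yet it contains every $\mathrm P_k$; so it admits no uniform finite-alphabet encoding, and your proposed ``reconstruct a long fork from an unbounded alphabet'' step cannot succeed --- the prongs you want to locate simply need not exist. What fork-exclusion actually buys (Lemmas \ref{lem3:doublefork} and \ref{lem2:degreethree}, i.e.\ $(ii)\Rightarrow(iii)\Rightarrow(iv)$ of Theorem \ref{thm:2}) is a bound on the diameter of the set of vertices of degree at least three, whence each connected member splits into a bounded-diameter core $K$ --- which does embed into a fixed $D(m)$ by Proposition \ref{bounded-diameter} --- together with two pendant paths of arbitrary length attached at two vertices of $K$. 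All the unboundedness lives in those two tails, outside the reach of any fixed-alphabet encoding.

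Moreover, even once you have this core-plus-tails decomposition, applying Higman's lemma to the pieces separately (core word plus two tail lengths) would reproduce the known error in Lozin and Mayhill's published proof, which the paper isolates in Comments \ref{comment:lozin}: embeddings of the cores and of the tails taken independently need not assemble into an embedding of the whole graphs, because the core embedding need not send attachment vertices to attachment vertices (see the posets $P$ and $Q$ of Figure \ref{fig:counterexample-Lozin}). The paper's repair, and the real content of $(iv)\Rightarrow(i)$ in Theorem \ref{thm:2}, is that the class of posets embeddable in $D(m)$ is \emph{hereditary} (labelled) bqo --- via multichainability, Higman's theorem and Laver's theorem --- so one may distinguish two vertices of the core and label them with the tail lengths in $\NN\cup\{\infty\}$; embeddability of these labelled cores then does yield embeddability of the full posets, and finally disjoint unions (lexicographical sums over chains) are handled by Higman, respectively Laver. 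Your proposal names the right tools, but the decomposition and the labelled strengthening are precisely where the difficulty of the theorem lies, and both are missing.
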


These results are based on a result of Lozin and Gabor (Theorem 6  in \cite{lozin-gabor}) on the existence of  universal bipartite permutation graphs  and the  notion  of \emph{letter graphs} introduced by Petkov\u{s}ek \cite{petkovsek}.

Finite bipartite permutation graphs are incomparability graphs of ordered sets of width at most two (that is antichains have cardinality at most two). If a poset $P:= (V, \leq )$ has width two,  then the order $\leq $ is the intersection of two linear orders $\leq_1$ and $\leq_2$ on $V$ (this follows from Dilworth's Theorem \cite{dilworth} that the dimension of a poset is at most its width). Let us call \emph {bichain} every relational  structure $B:= (V, (\leq_1, \leq_2))$  made of  a set $V$ and two  linear orders $\leq_1$ and $\leq_2$; denote by $o( B)$ the poset $(V, \leq_1\cap \leq_2)$. Then each of our posets is of the form $o(B)$ for some bichain. Finite bichains are a way of looking at (finite) permutations. In fact, each  bichain $B$ on a $n$-element set  can be coded in a unique way by a permutation $\sigma$ of $\{1, \dots, n\}$ (as a $n$-element chain can be coded by the natural order on $\{1, \dots n\}$). As observed by Cameron \cite{cameron}, the embeddability between bichains corresponds to an order $\leq$ on the set $\Omega$ of permutations of $\{0, \dots, n-1\}$, for  $n\in \NN$. Given permutations $\pi$ and $\sigma$, we say that $\pi$ \emph{contains} $\sigma$, and write $\sigma \leq \pi$, if $\pi$ has a subsequence $\pi(i_1)\ldots \pi (i_{|\sigma|})$ of the same length as $\sigma$ that is order isomorphic to $\sigma$ (i.e., $\pi (i_s) < \pi (i_t )$ if and only if $\sigma(s) < \sigma (t)$ for all $1 \leq s, t \leq |\sigma|)$; otherwise, we say that $\pi$ \emph{avoids} $\sigma$.

Denote by $Av(321)$ the set of 321-avoiding permutations. Then finite bichains associated to posets of width at most two correspond to $321$-avoiding permutations.

Motivated by the resolution of the Stanley-Wilf Conjecture  (by Marcus and  Tard\"os \cite{marcus-tardos} 2004), hereditary classes of permutations (defined as  classes $\mathcal C$ of permutations $\sigma$ such that  $\sigma \leq \tau\in \mathcal C $ implies  $\sigma \in \mathcal C$) and their generating series have been intensively studied  (see \cite{vatter} for an overview). Due to  Cameron's observation, their study fits  within the study of  hereditary  classes of finite  relational structures.

Let $U$ denote the set of all permutations $\pi$ for which the corresponding permutation graph is isomorphic to a double-ended fork. The following result is due to Albert et al. (see Theorem 9.3. in \cite{albert-brignall-Ruskuc-vatter}).

\begin{theorem}\label{thm:albert-brignall-Ruskuc-vatter} A subclass $\mathcal{C}\subseteq Av(321)$ is well-quasi-ordered if and only if $\mathcal{C}\cap U$ is finite.
\end{theorem}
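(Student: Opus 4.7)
The plan is to prove both directions separately. The forward direction is a short consequence of the fact that the double-ended forks form an antichain, while the reverse direction reduces to Theorem \ref{thm:korpelainen-lozin} plus a lifting argument from graphs to permutations.

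For the forward direction I argue contrapositively. Suppose $\mathcal{C}\cap U$ is infinite. Since only finitely many permutations exist of each given length, $\mathcal{C}\cap U$ must contain permutations of arbitrarily large size, and the corresponding permutation graphs realise infinitely many distinct double-ended forks $\mathrm{DF}_n$. The text already observes that $\{\mathrm{DF}_n : n\geq 1\}$ is an antichain under induced subgraph containment. Since $\sigma\leq\pi$ in the permutation order entails that the permutation graph of $\sigma$ is an induced subgraph of that of $\pi$, an infinite antichain of graphs pulls back to an infinite antichain of permutations inside $\mathcal{C}$, so $\mathcal{C}$ is not wqo.

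For the reverse direction, assume $\mathcal{C}\cap U$ is finite. Let $\mathcal{G}$ be the class of permutation graphs of members of $\mathcal{C}$; this is a hereditary class of finite bipartite permutation graphs because $\mathcal{C}\subseteq Av(321)$ is hereditary and taking permutation graphs is compatible with containment. Each $\mathrm{DF}_n$ is the permutation graph of only finitely many $321$-avoiding permutations, so the hypothesis gives an integer $N$ with $\mathrm{DF}_n\notin\mathcal{G}$ for every $n>N$. Theorem \ref{thm:korpelainen-lozin} then yields that $\mathcal{G}$ is wqo. It remains to transfer wqo from $\mathcal{G}$ back to $\mathcal{C}$.

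The main obstacle is precisely this last lifting, because a single bipartite permutation graph can be the incomparability graph of several non-isomorphic bichains, so an induced subgraph embedding need not respect the underlying permutation structure. The strategy I would use is to equip each vertex of a graph in $\mathcal{G}$ with labels coding its side in the (essentially unique, up to component-wise flip) chain decomposition of the associated width-two poset $o(B_\pi)$ together with its rank within that chain; these labels take values in a wqo (the side is a $2$-element set and the ranks are integers compared component-wise within a chain), and they suffice to recover the bichain $B_\pi$ and hence $\pi$. A bad sequence in $\mathcal{C}$ then yields a bad labelled sequence in $\mathcal{G}$, contradicting a labelled refinement of Theorem \ref{thm:korpelainen-lozin}. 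Establishing this labelled refinement, together with the compatibility of the labels with induced subgraph embeddings of bipartite permutation graphs, is the delicate point of the argument; once it is in place the equivalence of the theorem follows.
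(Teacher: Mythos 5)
Your forward direction is fine, and so is the reduction of the reverse direction to Theorem \ref{thm:korpelainen-lozin}: with $\mathcal{C}\cap U$ finite, the associated hereditary graph class $\mathcal{G}$ omits all sufficiently long double-ended forks and is therefore wqo. But the theorem's entire content beyond the graph-level result is the lifting from ``$\mathcal{G}$ is wqo'' to ``$\mathcal{C}$ is wqo,'' and this is exactly the step you do not prove: you describe a labelling scheme and then state yourself that establishing its two key properties ``is the delicate point of the argument.'' That is a genuine gap, not a routine verification, for two concrete reasons. First, the ``labelled refinement of Theorem \ref{thm:korpelainen-lozin}'' you invoke is a strictly stronger statement (wqo under wqo labels, i.e.\ hereditary wqo) which cannot be cited as a black box; in this paper it is the substance of Theorem \ref{thm:1}, proved via the universal poset $D(m)$ and multichainability (Proposition \ref{multichainablewqo}). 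Second, even granting labelled wqo, your specific labels do not do the required job: an embedding of labelled structures only weakly increases the integer ranks, and a side-preserving, rank-increasing induced-subgraph embedding does not determine how the non-edges (the comparabilities between the two chains) are oriented, nor does it handle disconnected graphs, where the chain decomposition is not unique and components may be independently flipped or permuted. Being able to \emph{recover} $B_\pi$ from $(G_\pi,\text{labels})$ is an object-level statement; what you need is the morphism-level statement that every label-compatible graph embedding induces a pattern containment, and that does not follow from your sketch.

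For comparison, the paper obtains this theorem as a consequence of Theorem \ref{thm:2} together with the transfer machinery of Theorem \ref{thm:bqo-poset-bichain}, which replaces labelling by structure theory. For a width-two poset with connected incomparability graph, every proper module is a chain (Corollary \ref{cor:module1}), so by the Gallai--Kelly theory an embedding of incomparability graphs lifts to an embedding of posets up to duality (Theorem \ref{thm:embed-poset-graph}); such posets are uniquely realizable (the extension of El-Zahar--Sauer, Theorem \ref{thm:elzahar-sauer}), so a poset embedding lifts to a bichain embedding up to transpose (Theorem \ref{thm:embed-bichain-poset}); finally, an arbitrary member is a lexicographic sum of connected pieces over a chain of components, and these sums are reassembled by Higman's and Laver's theorems in the proof of Theorem \ref{thm:bqo-poset-bichain}. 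To complete your argument you would either have to prove your labelled claim in full (essentially reproving Theorem \ref{thm:1} plus a Brignall--Vatter-type compatibility of labels with graph embeddings) or switch to this module-theoretic lifting.
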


Although we do not deal with generating functions in this paper, the following result, due to Albert et al., (see Theorem 1.1. \cite{albert-brignall-Ruskuc-vatter}) is worth mentioning.

\begin{theorem}
If a proper subclass of the $321$-avoiding permutations has finitely many bounds or is well-quasi-ordered, then it has a rational generating function.
\end{theorem}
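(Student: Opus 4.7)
My plan is to combine the letter-graph / multichainability encoding for bichains of posets of width at most two with finite-automaton arguments, concluding via Chomsky--Sch\"utzenberger. Via Cameron's correspondence I pass from a subclass $\mathcal{C}\subseteq Av(321)$ to the corresponding hereditary class $\widetilde{\mathcal{C}}$ of bichains $B=(V,(\leq_1,\leq_2))$ with $o(B)$ of width at most two. Using Dilworth's theorem, the underlying set canonically splits into two chains of the poset; reading these chains along $\leq_1$ and labelling each vertex by the chain it belongs to together with the local data needed to recover $(\leq_1,\leq_2)$, I obtain an encoding $B\mapsto w_B$ into $\Sigma^{*}$ for a finite alphabet $\Sigma$. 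After fixing a canonical choice this encoding is one-to-one on isomorphism types, and the embedding order between bichains translates to a recognisable order on encoded words.

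The core step is to show that $L_{\mathcal{C}}:=\{w_B : B\in \widetilde{\mathcal{C}}\}\subseteq\Sigma^{*}$ is a regular language. In the wqo case, Theorem~\ref{thm:albert-brignall-Ruskuc-vatter} yields an integer $n$ bounding the length of every double-ended fork embedded in $\mathcal{C}$; under the encoding this bound becomes a finite list of forbidden factors, giving a local (hence regular) language. In the finite-bounds case, write $\mathcal{C}=\forb(321,B_1,\dots,B_k)$; each $B_i$ has bounded size, and avoidance of the $B_i$'s can be checked by a nondeterministic automaton that simultaneously tracks all partial embeddings of the $B_i$ into the word being read, the key being that width two pins down the chain of each vertex, so only a bounded amount of partial-embedding data needs to be remembered.

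Once regularity of $L_{\mathcal{C}}$ is established, Chomsky--Sch\"utzenberger's theorem gives a rational generating series for $|L_{\mathcal{C}}\cap \Sigma^{n}|$, and dividing by the (uniformly bounded) number of canonical encodings per isomorphism class transfers rationality to the profile generating function $\sum_n\varphi_{\mathcal{C}}(n)\,z^n$.

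The main obstacle I foresee is regularity in the finite-bounds case: pattern avoidance in permutations is \emph{a priori} a non-local condition, since an embedding can use positions spread arbitrarily far apart. The remedy must be the rigidity of width two: given the chain decomposition encoded in the letters, the relative position of any small bound is determined by a bounded amount of combinatorial data, so finitely many automaton states suffice. A secondary check is that being a \emph{proper} subclass of $Av(321)$ genuinely activates this mechanism --- as opposed to $Av(321)$ itself, whose Catalan generating series is algebraic but not rational --- and this should come from the fact that the presence of at least one nontrivial extra bound beyond $321$ forces a quantitative restriction on how the two chains can interleave.
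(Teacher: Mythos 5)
A preliminary remark: the paper you were given does not prove this statement at all; it is imported verbatim as Theorem~1.1 of Albert, Brignall, Ru\v{s}kuc and Vatter \cite{albert-brignall-Ruskuc-vatter}, whose proof is a long, delicate analysis specific to $321$-avoiding permutations (a staircase/gridding decomposition producing, for each such class, a regular language in bijection with it). Your sketch does not reconstruct that argument, and the gap is not cosmetic. The core of your plan is an encoding of width-two bichains into words over a \emph{fixed finite} alphabet under which pattern containment becomes a subword-like ``recognisable order'', so that hereditary subclasses become languages cut out by forbidden factors or by bounded-memory automata. Any class captured by such an encoding is hereditary (labelled) wqo --- this is exactly the letter-graph/multichainability mechanism of Theorem \ref{thm:1}, and it applies precisely to the $\mathrm P_k$-free subclasses. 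But the theorem you are proving covers classes that are \emph{not} hereditary wqo: the paper's own example $\age(P_2)$ (posets whose incomparability graphs are paths; equivalently the permutation class $\age(B_2)$) is a proper subclass with finitely many bounds, is wqo, yet is not hereditary wqo --- indeed paths with their two end vertices distinguished form an infinite antichain, which is how the paper proves $(iv)\Rightarrow(i)$ in Theorem \ref{thm:1}. Consequently paths have unbounded lettericity, and no encoding of the kind you postulate exists for this class. Your wqo-case step also conflates two different things: by Theorem \ref{thm:2}, wqo gives $\mathcal C\subseteq \forb(n)$, but $\mathcal C$ is an \emph{arbitrary} hereditary subclass of $\forb(n)$, possibly with infinitely many bounds (e.g.\ $\age(\underline{\mathrm{DF}}_{\infty})$, which is wqo with infinitely many bounds); moreover the excluded objects $\mathrm{DF}_m$, $m\geq n$, form an antichain, so they do not reduce to finitely many forbidden configurations, let alone to finitely many forbidden \emph{factors} of words.

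The finite-bounds case has the same hole at its centre, and you half-acknowledge it. Your automaton must decide whether the permutation encoded by the word contains one of the bounds $B_i$. To extend a partial embedding of $B_i$ one must compare the \emph{values} of entries matched far apart in the word; in any chain-by-chain encoding of a width-two poset, the comparability of an element of one chain with an element of the other is not determined by the chain labels and reading positions --- it depends on how the two chains interleave between them, which is an unbounded counter. So ``width two pins down the chain of each vertex'' does not bound the partial-embedding data; making pattern detection effectively finite-state is precisely the content of \cite{albert-brignall-Ruskuc-vatter}, which does not track partial embeddings but instead proves that properness (avoidance of one extra pattern) forces a bounded gridded structure that can be synchronised with a regular language. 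In short, your proposal correctly identifies the target (a size-preserving bijection of $\mathcal C$ with a regular language, bounded multiplicity, then rationality --- for which Kleene's theorem suffices; Chomsky--Sch\"utzenberger is about algebraic series), and correctly flags that properness must be used, but both regularity steps are asserted rather than proved, and the first, as stated, is refuted by the known example $\age(P_2)$.
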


This result solves positively Problem \ref{problem:pouz-sob} in the case of bichains whose the intersection order has width two.

The following result is due to Brignall and Vatter (see Theorem 7.17 in \cite{brignall-vatter}). It expresses the fact that hereditary wqo does translate up from permutation graphs to the corresponding bichains.

\begin{theorem}
The permutation class $\mathcal{C}$ is hereditary w.q.o if and only if the corresponding class of permutation graphs is hereditary w.q.o.
\end{theorem}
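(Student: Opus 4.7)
My plan is to prove both directions separately, treating the forward direction as routine and concentrating on the converse.

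The forward direction (permutations hereditary wqo implies permutation graphs hereditary wqo) is direct. Fix a wqo $Q$ and a sequence $(G_i, g_i)_{i \in \NN}$ of $Q$-labelled graphs in the class associated with $\mathcal{C}$. For each $i$, pick a permutation $\pi_i \in \mathcal{C}$ with $G(\pi_i) = G_i$, producing a sequence $(\pi_i, g_i)$ in $\mathcal{C} \cdot Q$. By the hereditary wqo hypothesis, there exist $i < j$ and a labelled permutation embedding $h : (\pi_i, g_i) \to (\pi_j, g_j)$. Since permutation embeddings preserve the inversion relation, $h$ is also a labelled embedding of the associated permutation graphs.

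For the reverse direction, the plan is to encode each $Q$-labelled permutation $(\pi, f)$ on vertex set $V$ as a $(Q \times L)$-labelled permutation graph $(G(\pi), \tilde{f})$ for a suitably chosen wqo $L$, in such a way that any $(Q \times L)$-labelled graph embedding preserving $\tilde{f}$ is forced to be a $Q$-labelled permutation embedding preserving $f$. One then feeds the class of $(Q \times L)$-labelled permutation graphs to the hypothesis and reads off the desired wqo property for permutations.

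The encoding I would use exploits the modular (substitution) decomposition of the permutation graph. Its tree has parallel, series and prime nodes; only the prime nodes introduce ambiguity in reconstructing a permutation from its graph, corresponding to the two transitive orientations of a prime comparability graph (Gallai). For each vertex $v$, record the sequence of binary choices made at the prime ancestors along the root-to-leaf path; this places $v$ in a set $L$ of finite binary strings, equipped with an ordering tailored to embedding preservation (of Higman type). The labelling $\tilde{f}(v) = (f(v), \ell(v))$ then takes values in $Q \times L$, which remains wqo. The main obstacle is verifying that label-preserving graph embeddings genuinely lift to permutation embeddings. This requires analysing how modular decompositions behave under induced-subgraph embeddings of permutation graphs and confirming that the auxiliary labels are rich enough to recover the lost permutation-level information yet still live in a wqo. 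Choosing the order on $L$ so that it is both wqo and respected by the natural action of embeddings on modular-decomposition data is the crux of the argument.
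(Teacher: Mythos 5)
Your forward direction is correct, but it is the easy, purely ``downward'' half: a bichain embedding preserves inversions, so lifting each labelled graph to a labelled permutation and applying the hypothesis immediately projects back down. The entire content of the theorem is the converse, and there your text is a plan rather than a proof: the step you yourself call ``the crux'' --- verifying that label-dominating graph embeddings lift to permutation embeddings --- is exactly what is missing, and the specific encoding you propose faces concrete obstructions. First, modular decompositions are not stable under taking induced subgraphs: prime nodes of the host graph can collapse or vanish in the guest, and vertex subsets that were not modules become modules; consequently the root-to-leaf ``prime ancestor'' code of a vertex in $G(\pi_i)$ and the code of its image in $G(\pi_j)$ need not be related in any way a fixed order on $L$ could exploit. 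Second, in the definition of $\mathcal{C}\cdot Q$ labels are only required to be \emph{dominated}, not equal; under a Higman-type subword order, $\ell(v)\leq \ell'(h(v))$ permits arbitrary insertions, so the orientation bits recorded at prime nodes cannot be transmitted rigidly, yet rigid transmission is what ``forcing'' a permutation embedding requires. Making $L$ an antichain of codes would restore rigidity, but an infinite antichain cannot sit inside a wqo, and since decomposition trees have unbounded depth over the class, finitely many codes do not suffice. You cannot simultaneously have $L$ wqo and $L$-labels that pin down the permutation-level data; a substantially different idea is needed.

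For context: the paper does not prove this statement at all --- it quotes it as Theorem 7.17 of Brignall and Vatter \cite{brignall-vatter}. What the paper does prove in this direction (Theorems \ref{thm:embed-poset-graph}, \ref{thm:embed-bichain-poset} and \ref{thm:bqo-poset-bichain}) transfers wqo/bqo upward from incomparability graphs to posets and then to bichains only for classes in which every proper module is totally ordered (lexicographic sums over chains of such posets). That hypothesis eliminates precisely the ambiguity your $L$-labels are trying to encode: by Gallai--Kelly (Theorem \ref{kelly}) a prime comparability graph has exactly two transitive orientations, and by unique realizability (Theorem \ref{thm:elzahar-sauer} together with Corollary \ref{cor:elzahar-sauer-zaguia}) the poset determines the bichain up to transpose, so unlabelled graph embeddings lift to poset embeddings up to duality and then to bichain embeddings up to transpose, with no labelled bookkeeping at all. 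The general statement, with unrestricted modules, genuinely requires the machinery of \cite{brignall-vatter} rather than a labelled re-encoding of the modular decomposition.
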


From bichains to the corresponding posets and going down to incomparability graphs, there is an order-preserving mapping. In particular, this
means that if we have a wqo/bqo set of objects at one level, then the image of that set on the level down will also be wqo/bqo. We only know that this works in the downward direction. Thus the question of whether we can translate wqo (or bqo)  properties up is open, in general.

It follows from Albert et al. \cite{albert-brignall-Ruskuc-vatter} that this is the case for finite bipartite permutation graphs. We extend this result to countable incomparability graphs of posets of width at most two. We should mention that these are not necessarily permutation graphs. 

\section{Presentation of the results}
 Looking at hereditary classes of  posets of width at most two which are finite or countable, we  describe those  which are wqo and prove that in fact they are bqo (see Theorem \ref{thm:2} below),  a far reaching strengthening of the notion of wqo by Nash-Williams \cite{nashwilliams1}.

To a  class $\mathcal B$ of bichains  we may associate the class of posets $o\langle\mathcal B\rangle:=\{ o(B): B\in \mathcal B\}$ and to the class $o\langle\mathcal B\rangle$ we may associate the class of incomparability graphs $\ainc\langle o\langle\mathcal B\rangle\rangle:= \{\ainc(P): P  \in o\langle\mathcal B\rangle\}$. It should be noted that to an element of $\ainc\langle o\langle\mathcal B\rangle\rangle$ may correspond several elements of $o\langle\mathcal B\rangle$. Similarly, to an element of $o\langle\mathcal B\rangle$ may correspond several elements of $\mathcal B$.

A natural question then arises: how does the embeddability between elements of a given class translate to the embeddability between the corresponding elements in the other two classes? Trivially, an embedding between two bichains, up to a transposition, yields an embedding between the corresponding posets. An embedding between two posets, up to duality, yields an embedding between the corresponding incomparability graphs. It is not  known if the reverse of these implications holds in general. We have only few elements of an answer (see Theorems \ref{thm:embed-poset-graph} and \ref{thm:embed-bichain-poset}).

The embeddability considerations are important when studying the transferability of the wqo/bqo character between  the above mentioned three classes.





It is easy to see that  if $\mathcal B$ is a wqo/bqo class of bichains then, the class  $o\langle\mathcal B\rangle$ is wqo/bqo. Also,  if $o\langle\mathcal B\rangle$ is wqo/bqo then $\ainc \langle o\langle\mathcal B\rangle \rangle$ is wqo/bqo too. It is not  known if the reverse of these implications holds. That is,  if $\mathcal C$ is a wqo/bqo class of posets, for which the order is intersection of two linear orders, is the class $o^{-1}\langle \mathcal C \rangle: = \{ B: o(B)\in \mathcal C\}$ wqo/bqo? And if  $\ainc\langle\mathcal C \rangle$ is wqo/bqo, is $\mathcal C$ wqo/bqo? We prove that the answer to these two questions is yes  if $\mathcal C$ is a hereditary class of finite or countable posets of width at most two. As stated in Theorem \ref{thm:bqo-poset-bichain} below, parts of these implications  hold in a more general situation.

With the notions of lexicographical sum given in subsection \ref{subsection:lex} and  the notion of module given in section \ref{section:bichains}, we give the following definition.

\begin{definition}Let $\mathcal{D}$ be a class of posets that are finite or  countable, whose order is the intersection of two linear orders,  whose only modules are totally ordered, and whose incomparability graph is connected. Let $\widehat{\mathcal{D}}:= \sum \mathcal D$ be the collection of lexicographical sums, over finite or countable chains, of posets belonging to $\mathcal D$. Let ${\widehat{\mathcal{D}}}_{<\omega}$ be the subclass of $\widehat{\mathcal{D}}$ made of finite posets (if any).
\end{definition}

\begin{theorem} \label{thm:bqo-poset-bichain}   The following propositions are true.
\begin{enumerate}[$(1)$]
\item $\widehat{\mathcal{D}}$ is bqo if and only if $o^{-1}\langle \widehat{\mathcal{D}}\rangle $ is bqo;
\item ${\widehat{\mathcal{D}}}_{<\omega}$ is wqo (resp., bqo)  if and only if $\ainc \langle {\widehat{\mathcal{D}}}_{<\omega}\rangle$ is wqo (resp., bqo).
\end{enumerate}
\end{theorem}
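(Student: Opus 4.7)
The plan is to uncover a canonical block decomposition for members of $\widehat{\mathcal{D}}$ and then reduce everything to a Higman--Laver type argument plus a finite-label translation between posets, bichains and incomparability graphs at the ``block'' level $\mathcal{D}$.

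\textbf{Step 1: Canonical decomposition.} For $P\in\widehat{\mathcal{D}}$, write $P=\sum_{c\in C}P_c$ with $P_c\in\mathcal{D}$. The connected components of $\ainc(P)$ are precisely the subgraphs $\ainc(P_c)$: within $P_c$ the incomparability graph is connected by hypothesis on $\mathcal{D}$, while across distinct indices every cross-pair is comparable in the lex sum. Hence both the chain $C$ and the family $(P_c)_{c\in C}$ are recovered canonically from $P$. The hypothesis that modules of posets in $\mathcal{D}$ are chains forbids any internal lex refinement, so the decomposition is unique.

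\textbf{Step 2: Embeddings factor.} I then show that an embedding $P\hookrightarrow P'$ in $\widehat{\mathcal{D}}$ is exactly the data of an order-embedding $\phi\colon C\hookrightarrow C'$ together with embeddings $P_c\hookrightarrow P'_{\phi(c)}$ in $\mathcal{D}$ for each $c\in C$. Indeed, any poset embedding preserves incomparability; the image of the connected graph $\ainc(P_c)$ must lie in a single connected component $\ainc(P'_{c'})$, which defines $\phi(c):=c'$. The map $\phi$ is strictly increasing because the cross-comparability between $P_c$ and $P_d$ (for $c\neq d$) must be reproduced in the codomain. Completely analogous factorisations are proved for bichains in $o^{-1}\langle\widehat{\mathcal{D}}\rangle$ and for graphs in $\ainc\langle\widehat{\mathcal{D}}_{<\omega}\rangle$, using that the incomparability graph of a lex sum over a chain is the disjoint union of the component graphs.

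\textbf{Step 3: Higman--Laver reduction.} Steps 1--2 identify the quasi-order on $\widehat{\mathcal{D}}$ with the Higman-style order on chain-indexed families of elements of $\mathcal{D}$. Laver's 1971 bqo theorem on countable chains, in its relative form for chain-indexed families with values in a bqo, then yields that $\widehat{\mathcal{D}}$ is bqo if and only if $\mathcal{D}$ is bqo. In the finite case $\widehat{\mathcal{D}}_{<\omega}$, where $C$ and each $P_c$ are finite, Higman's lemma gives the analogous wqo and bqo equivalences for the class of finite members of $\mathcal{D}$. The same reduction applies verbatim to $o^{-1}\langle\widehat{\mathcal{D}}\rangle$ and to $\ainc\langle\widehat{\mathcal{D}}_{<\omega}\rangle$, bringing parts (1) and (2) down to the corresponding questions about bichains and graphs over the ``block'' class $\mathcal{D}$.

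\textbf{Step 4: Finite-label translation on $\mathcal{D}$.} At the block level, each $P\in\mathcal{D}$ has width two and connected incomparability graph, so by Dilworth it has exactly two linear extensions, yielding a bichain unique up to transposition. Thus bichains over $\mathcal{D}$ are just members of $\mathcal{D}$ decorated by a two-element label. Similarly, $\ainc(P)$ determines $P\in\mathcal{D}$ up to duality, so finite posets in $\mathcal{D}$ are graphs in $\ainc\langle\mathcal{D}\rangle$ decorated by a two-element label. Since wqo and bqo are both preserved under (un)decoration by a finite quasi-order, the chain of equivalences closes in both (1) and (2). The easy directions (bichain bqo $\Rightarrow$ poset bqo in (1); poset wqo/bqo $\Rightarrow$ graph wqo/bqo in (2)) follow because $o$ and $\ainc$ are monotone for embeddability, so images of bqo/wqo classes are bqo/wqo.

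\textbf{Main obstacles.} I expect Step 2 to be the delicate point: one must use both hypotheses on $\mathcal{D}$---modules totally ordered and connected incomparability---to rule out that an embedding splits a block $P_c$ across several blocks of $P'$, or merges distinct blocks, and to verify that the induced map on the indexing chains is well-defined and strictly monotone. A secondary subtlety lies in Step 4: the two-to-one label lifting must be carried out for bqo, not only for wqo, which uses that bqo is stable under products with a finite poset---standard in the Nash-Williams--Laver framework but requiring more care than the wqo analogue.
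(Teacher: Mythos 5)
Your overall route is the same as the paper's: decompose members of $\widehat{\mathcal D}$ into the connected components of their incomparability graphs (Lemma \ref{lem:folklore}), handle the chain-indexed sums by Laver's theorem (Theorem \ref{thm:laver}) in the countable case and Higman's lemma in the finite case, and translate between posets, bichains and graphs block by block, up to transposition resp.\ duality. There are, however, two genuine problems. The smaller one is that the word ``exactly'' in Step 2 is false: an embedding $P\hookrightarrow P'$ may \emph{merge} several blocks of $P$ into a single block of $P'$, so the induced map $\phi$ on index chains is order-preserving but need not be injective. For instance, let $A_2$ be the two-element antichain and $Q$ the six-element poset with $x_i<x_j$ iff $j\geq i+2$ (a finite restriction of the paper's poset $P_2$, whose incomparability graph is the path on six vertices); both may be taken in $\mathcal D$, and $A_2+A_2\in\widehat{\mathcal D}$ embeds into the one-block poset $Q$ via $a_1,b_1,a_2,b_2\mapsto x_1,x_2,x_5,x_6$. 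Connectivity rules out splitting a block, but nothing rules out merging, contrary to what you hope to prove in your ``Main obstacles'' paragraph. This error is survivable, because only one direction is needed: the Higman/Laver order on chain-indexed families implies embeddability of the sums, a quasi-order containing a bqo quasi-order on the same class is bqo, and the reverse implication ($\widehat{\mathcal D}$ bqo $\Rightarrow$ $\mathcal D$ bqo) holds simply because $\mathcal D$ embeds into $\widehat{\mathcal D}$. But Steps 2--3 must be rewritten in this weaker form.

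The serious gap is Step 4, which is where the actual content of the theorem lies. Members of $\mathcal D$ are \emph{not} assumed to have width two, only to have an order that is the intersection of two linear orders; and even for width-two posets, ``exactly two linear extensions by Dilworth'' is not a correct statement. What you need is that every $P\in\mathcal D$ is \emph{uniquely realizable}, so that poset embeddings lift to bichain embeddings up to transposition; in the paper this is Theorem \ref{thm:embed-bichain-poset} together with the extension of El-Zahar--Sauer (Theorem \ref{thm:elzahar-sauer}), whose proof uses the module hypothesis, connectivity, and Ille's theorem --- none of this follows from Dilworth. Likewise, for part (2) you need not merely that $\ainc(P)$ determines $P$ up to duality, but that every embedding between the incomparability graphs of blocks is induced by an embedding of the posets up to duality; this is Theorem \ref{thm:embed-poset-graph}, which rests on Gallai--Kelly theory and requires the chain modules to be \emph{finite} (it fails for infinite chain modules, which is exactly why part (2) is restricted to $\widehat{\mathcal D}_{<\omega}$). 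Finally, the ``two-element decoration'' mechanism does not literally exist: there is no labelling of $\{B,B^t\}$ or $\{P,P^*\}$ compatible with all embeddings, since the involution has no canonical side. What is true is a lifting of embeddings up to an involution, and transferring wqo (resp.\ bqo) across such a two-to-one correspondence needs a Ramsey argument (resp.\ a barrier-partition argument) --- composing two ``crossed'' relations to produce a ``straight'' one --- not closure of bqo under products with finite posets. As written, your Step 4 assumes for free the two embeddability theorems that constitute the bulk of the paper's Section \ref{section:bichains}.
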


The proof of Theorem \ref{thm:bqo-poset-bichain} is given in Subsection \ref{subsection:proofthm1}.

\begin{corollary}The three classes ${\widehat{\mathcal{D}}}_{<\omega}$,  $o^{-1}\langle \widehat{\mathcal{D}}\rangle $ and $\ainc \langle {\widehat{\mathcal{D}}}_{<\omega}\rangle$ are  wqo (resp., bqo)  if and only if one of these is wqo (resp., bqo).
\end{corollary}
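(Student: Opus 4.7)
The plan is to close the triangle of implications between the three classes by combining Theorem \ref{thm:bqo-poset-bichain} with the monotonicity properties relating bichains, posets and incomparability graphs. The ``downward'' implications are routine: an embedding of bichains $B\hookrightarrow B'$ restricts to an embedding of the posets $o(B)\hookrightarrow o(B')$, and an embedding of posets $P\hookrightarrow P'$ induces an embedding $\ainc(P)\hookrightarrow\ainc(P')$. Choosing for every $P\in\widehat{\mathcal{D}}$ a bichain $B$ with $o(B)=P$, these remarks yield
\[
o^{-1}\langle\widehat{\mathcal{D}}\rangle \text{ wqo/bqo} \Longrightarrow \widehat{\mathcal{D}} \text{ wqo/bqo} \Longrightarrow {\widehat{\mathcal{D}}}_{<\omega} \text{ wqo/bqo} \Longrightarrow \ainc\langle{\widehat{\mathcal{D}}}_{<\omega}\rangle \text{ wqo/bqo},
\]
where the middle step is the trivial restriction to finite members.

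For the reverse cycle, I would first apply clause $(2)$ of Theorem \ref{thm:bqo-poset-bichain}, which gives that $\ainc\langle{\widehat{\mathcal{D}}}_{<\omega}\rangle$ wqo (resp.\ bqo) implies ${\widehat{\mathcal{D}}}_{<\omega}$ wqo (resp.\ bqo). The remaining and essential step is the passage from ${\widehat{\mathcal{D}}}_{<\omega}$ up to $o^{-1}\langle\widehat{\mathcal{D}}\rangle$. In the bqo setting this is done in two stages: first lift bqo from ${\widehat{\mathcal{D}}}_{<\omega}$ to the countable class $\widehat{\mathcal{D}}$ via Laver's theorem, using that each member of $\widehat{\mathcal{D}}$ is by definition a lexicographical sum, over a finite or countable chain, of elements of the bqo class $\mathcal{D}$; then apply clause $(1)$ of Theorem \ref{thm:bqo-poset-bichain} to transfer bqo to $o^{-1}\langle\widehat{\mathcal{D}}\rangle$.

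To handle the wqo case, I would insert the ``wqo implies bqo'' transfer for the classes under consideration, established earlier in the paper (for bipartite permutation graphs, equivalently for incomparability graphs of posets of width two, via Laver's theorem on countable chains). Given ${\widehat{\mathcal{D}}}_{<\omega}$ wqo, this transfer upgrades it to bqo, whence the bqo argument just sketched yields bqo of $o^{-1}\langle\widehat{\mathcal{D}}\rangle$ and thus a fortiori wqo, closing the cycle.

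The substantive obstacles really lie inside Theorem \ref{thm:bqo-poset-bichain} and inside the wqo-to-bqo transfer; at the corollary's level the only genuine care required is to switch between finite and countable variants at the right moment, that is, to establish the bqo bridge from ${\widehat{\mathcal{D}}}_{<\omega}$ to $\widehat{\mathcal{D}}$ before invoking clause $(1)$, which is stated only for bqo.
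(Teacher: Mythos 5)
The skeleton you set up (downward order-preserving maps, clause $(2)$ of Theorem \ref{thm:bqo-poset-bichain} at the finite level, then a bqo bridge from ${\widehat{\mathcal{D}}}_{<\omega}$ to $\widehat{\mathcal{D}}$ followed by clause $(1)$) is the derivation the paper has in mind, and you correctly isolate the bridge as the crux; but the bridge you build does not hold. From ``${\widehat{\mathcal{D}}}_{<\omega}$ is bqo'' you pass to ``$\mathcal{D}$ is bqo'' and then sum over chains via Laver's theorem. The hypothesis, however, concerns only finite posets, so it controls at best $\mathcal{D}_{<\omega}$, the class of finite members of $\mathcal{D}$; the definition of $\mathcal{D}$ expressly allows countable members, about which it says nothing. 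The step actually fails: let $\mathcal{D}$ be an infinite antichain of countable posets satisfying the definition, e.g.\ width-two posets whose incomparability graphs are infinite combs with teeth at the multiples of pairwise distinct primes (such combs are incomparability graphs of width-two posets by Lemma \ref{lem:w2}, the module condition holds by Corollary \ref{cor:module1}, and an induced embedding between two such combs would carry a translate of all but finitely many teeth of one into the teeth of the other, which is impossible for multiples of distinct primes). Then ${\widehat{\mathcal{D}}}_{<\omega}$ is empty, hence trivially wqo and bqo, while $\widehat{\mathcal{D}}$, and with it $o^{-1}\langle\widehat{\mathcal{D}}\rangle$, contains an infinite antichain. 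So the upward passage from the finite classes to the countable ones genuinely requires an additional hypothesis (for instance that every member of $\mathcal{D}$ be finite, or outright that $\mathcal{D}$ be bqo). You are in good company --- the same conflation of $\mathcal{D}$ with $\mathcal{D}_{<\omega}$ occurs in the last line of the paper's proof of clause $(2)$, and the corollary is loosely stated on exactly this point --- but as a proof of the statement as written, this step is a gap.

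Your wqo case has a second, independent gap: the ``wqo implies bqo'' upgrade you invoke is the equivalence $(i)\Leftrightarrow(v)$ of Theorem \ref{thm:2}, which is proved for \emph{hereditary} classes of posets of width at most two (equivalently, of bipartite permutation graphs). Neither hypothesis is available here: $\widehat{\mathcal{D}}$ is not assumed hereditary, and members of $\mathcal{D}$ need not have width two, since any prime two-dimensional poset qualifies and such posets can have width three or more (e.g.\ the poset of the simple permutation $25314$), so $\ainc\langle{\widehat{\mathcal{D}}}_{<\omega}\rangle$ need not even consist of bipartite graphs. Indeed, in the observations immediately following this corollary the paper states that it does not know whether $\widehat{\mathcal{D}}$ wqo implies $\widehat{\mathcal{D}}$ bqo in general; your argument, if sound, would settle that question. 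What does follow from Theorem \ref{thm:bqo-poset-bichain} together with the routine downward maps is: the wqo and bqo equivalences between ${\widehat{\mathcal{D}}}_{<\omega}$ and $\ainc\langle{\widehat{\mathcal{D}}}_{<\omega}\rangle$, the bqo equivalence between $\widehat{\mathcal{D}}$ and $o^{-1}\langle\widehat{\mathcal{D}}\rangle$, and all implications from the countable classes down to the finite ones; the remaining implications claimed by the corollary need hypotheses beyond those stated.
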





Some observations are in order. Firstly  note that,  in the case of  infinite structures, the appearance of bqo is unavoidable. Indeed,  the class $\mathcal {S}_{\leq\omega}$ of countable chains is  $\sum {\mathcal D}$ where $\mathcal D$ consists of  the one-element poset, hence  the hypotheses of the theorem are fulfilled. The class of incomparability graphs of chains  consists of graphs without edges so is totally ordered  by the cardinality of their vertex-set,  hence is wqo. The fact  that the class of countable chains  is wqo,  a famous conjecture of Fra\"{\i}ss\'e,  was proved  by Laver (1971) \cite{laver} by means of the theory of bqo. Secondly,  the result above does not state that the class $\mathcal C$ is bqo whenever it is wqo. We do not know if this is true in general. Our third observation is that  the class $\mathcal C$ is not necessarily hereditary. Since  the condition on modules imposes that  the three-element antichain does not belong to $\mathcal C$ we infer that if $\mathcal C$ is hereditary, then $\mathcal C$  is made of posets of width at most two. In this case, due to our knowledge of these posets, we prove that $\mathcal C$ is bqo if and only if $\mathcal C$ is wqo (see the equivalence $(i) \Leftrightarrow (iii)$ in Theorem \ref{thm:2}). 

The  key tools in our study are a previous result of the first author \cite{pouzet78} (1978) on  the existence of a countable universal poset of width two, his notion of multichainability \cite{pouzetthesis} 1978 (a kind of analog to letter-graphs\footnote{A referee of this paper indicates that there is an analogous notion for permutations called \emph{geometric grid class} \cite {aleculozin, alecu-al}.}), metric properties of incomparability graphs (see \cite{pouzet-zaguia21}) and Laver's theorem on countable chains  \cite{laver}.

According to Th\'eor\`eme II.4.1 of  \cite{pouzet78}, every countable poset of width two embeds into the lexicographical sum of copies of a particular poset of width two, denoted by $\QQ_{\pi,2}$,  indexed by the chain of rational numbers $\QQ$.  The poset $\QQ_{\pi,2}$ is the set of couples $(x,i)$ with $x\in \QQ$ and $i\in\{0,1\}$ ordered by: $(x, i)\leq (y,j)$ if either $i=j$  and $x \leq y$ in $\QQ$ or $i \neq j$ and $x +\pi \leq y$ in $\QQ$. From this result  follows that every  countable poset $P$ of width at most two  whose  incomparability graph is connected embeds into  $\QQ_{\pi,2}$. Instead of $\QQ_{\pi,2}$ we consider the following uncountable poset.

Let $D:=[0,2\pi [\times \ZZ$ ordered by:
 \[(r,n)\leq (r',n') \mbox{ if } r\leq  r' \mbox{ in}\;   [0,2\pi[ \;  \mbox {and}\;  n\leq n' \mbox{ or } r\geq  r' \mbox{ and } n'\geq n+2.\]
For a positive integer $m\geq 1$,  let $D(m):=([0,2\pi[ \times [-m,m],\leq)$ where $[-m, m]\subset \ZZ$.

The poset  $D(m)$ has a special structure. It is multichainable (Lemma \ref{dm-multichainable}). A poset $P$ is \emph{multichainable} if there
is an enumeration of the elements $(v_{xy})_{(x,y)\in L\times V}$ of the vertices of $P$ where $V$ is finite, $L$ is totally ordered
such that for every local isomorphism $f$ of $L$, the map $(f,\id)$, where $\id$ is the identity map on $V$,  is a local isomorphism of
$P$. This notion was introduced in 1978 \cite{pouzetthesis} (and published in \cite {pouzetrm} subsection IV.3.3.2, p.341, and subsequent papers, e.g.,  \cite{pouzet2006}). It allows us to point out hereditary classes  which are \emph{hereditary wqo}, resp., \emph{hereditary bqo} (a class $\mathcal{C}$ has this property if the class $\mathcal{C}\cdot Q$ of structures belonging to  $\mathcal{C}$ and labelled by a wqo, resp., a bqo $Q$,  remains wqo, resp., bqo). If $R$ is a  multichainable structure, in particular a multichainable poset, then, with the help of Laver's theorem on countable chains, we get that the collection of countable structures whose age is included in $\age (R)$ is hereditary bqo (Proposition \ref{multichainablebqo}).  This fact applies to $D(m)$ (Corollary \ref{multichainablebqo2}).

Let $\mathrm P_k$ be the path on $k$ vertices. A class $\mathcal C$ of graphs is \emph{$\mathrm P_k$-free} if it does not contain $\mathrm P_k$. If $\mathcal C$ is hereditary this amounts to stating  that no member of $\mathcal C$ embeds $\mathrm P_k$. Hence, this class is  $\mathrm P_{k'}$-free for every $k'\geq k$.

Here is our first characterisation.

 \begin{theorem}\label{thm:1} Let $\mathcal{C}$ be a hereditary class of finite or countable posets  having width at most two. Then the following
 propositions are equivalent.
 \begin{enumerate}[(i)]
   \item The class $\ainc\langle \mathcal{C}\rangle$ of incomparability graphs of members of $\mathcal C$ is $P_{k}$-free for some integer~$k$;
   \item The class $\mathcal{C}_{<\omega}$ of finite members of $\mathcal C$ is  a subset of  $\age (D(m))$ for some integer $m\geq 1$;
  \item The class $\mathcal{C}$ is hereditary bqo;
  \item The class consisting of the elements of $\mathcal{C}_{<\omega}$ labelled by two constants is wqo.

 \end{enumerate}
 \end{theorem}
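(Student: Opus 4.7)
The plan is to establish the cycle of implications (ii) $\Rightarrow$ (iii) $\Rightarrow$ (iv) $\Rightarrow$ (i) $\Rightarrow$ (ii). Two of these are direct consequences of tools already developed in the paper, while the remaining two carry the real content.

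For (ii) $\Rightarrow$ (iii) I would invoke Corollary~\ref{multichainablebqo2}: since $D(m)$ is multichainable by Lemma~\ref{dm-multichainable}, the class of countable posets $R$ with $\age(R)\subseteq \age(D(m))$ is hereditary bqo. Under hypothesis (ii) every $R\in \mathcal{C}$ satisfies $\age(R)\subseteq \mathcal{C}_{<\omega}\subseteq \age(D(m))$, so $\mathcal{C}$ sits inside a hereditary bqo class and inherits the property. The step (iii) $\Rightarrow$ (iv) is formal: hereditary bqo forces hereditary wqo, and the latter specialised to two distinguished constants is exactly (iv).

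For (iv) $\Rightarrow$ (i), I would argue by contraposition. If (i) fails, then for every $n\geq 1$ there is a poset in $\mathcal{C}$ whose incomparability graph contains the induced path $\mathrm{P}_n$. Restricting to the vertices of that path is legal by heredity and yields $Q_n\in\mathcal{C}_{<\omega}$ with $\ainc(Q_n)=\mathrm{P}_n$. Mark the two leaves of the path as constants $a,b$ to form $(Q_n,a,b)\in\mathcal{C}_{2^-}$. Because any poset embedding preserves incomparability in both directions, an embedding $(Q_n,a,b)\hookrightarrow (Q_m,a',b')$ induces an induced-subgraph embedding $\mathrm{P}_n\hookrightarrow \mathrm{P}_m$ sending leaves to leaves; the only such embedding is surjective, forcing $n=m$. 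Hence $\{(Q_n,a,b)\}_{n\geq 1}$ is an infinite antichain in $\mathcal{C}_{2^-}$, contradicting (iv).

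The main obstacle is (i) $\Rightarrow$ (ii). One must produce a uniform $m=m(k)$ so that every finite $P\in\mathcal{C}$ embeds into $D(m)$. My plan is to split $P$ according to the connected components of $\ainc(P)$ — on a width-two poset these assemble into a lexicographical sum along a chain — and to embed each component into $\QQ_{\pi,2}$ via Th\'eor\`eme~II.4.1 of~\cite{pouzet78}. The universal embedding attaches to each vertex a pair (rational, level) in $\QQ\times\{0,1\}$; reinterpreting the rational inside $[0,2\pi[$ and allowing a wider level range in $\ZZ$ places each component inside $D$. The quantitative step is to bound the level range: using the metric properties of incomparability graphs from~\cite{pouzet-zaguia21}, I expect to show that a component requiring vertical spread larger than some $m=m(k)$ must create in $\ainc(P)$ an induced path of length exceeding $k$, contradicting (i). Concatenating the component embeddings along the $[0,2\pi[$ coordinate then produces the desired embedding $P\hookrightarrow D(m)$ with $m$ depending only on $k$.
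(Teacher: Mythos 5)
You follow the same cycle of implications as the paper, and three of its legs coincide with the paper's own proof: (ii) $\Rightarrow$ (iii) via Corollary \ref{multichainablebqo2}, the formal step (iii) $\Rightarrow$ (iv), and (iv) $\Rightarrow$ (i) by marking the two ends of longer and longer induced paths --- the paper excludes a putative embedding $(\mathrm P_k,a_k,b_k)\hookrightarrow(\mathrm P_{k'},a_{k'},b_{k'})$ by comparing end-to-end graphic distances, while you observe that an induced path joining the two leaves of $\mathrm P_{k'}$ must be all of $\mathrm P_{k'}$; both arguments are correct. You have also correctly located the real content in (i) $\Rightarrow$ (ii), and your per-component step (every connected component embeds in $D(m(k))$ because its diameter is at most $k-1$) is exactly Proposition \ref{bounded-diameter} of the paper.

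The gap is your final gluing step: concatenating the component embeddings ``along the $[0,2\pi[$ coordinate'' does not work, because in $D$ advancing the angle does not raise a point in the order. By the definition of $D$, if $r<r'$ then $(r,n)\leq (r',n')$ holds if and only if $n\leq n'$, and by Lemma \ref{lem:bichain-coding} every incomparable pair of $D$ has the form $\{(r,n),(s,n-1)\}$ with $r<s$. Hence, if one two-element antichain, placed at levels $n$ and $n-1$, must lie entirely below another, placed at levels $n'$ and $n'-1$, then comparing the level-$n$ element of the first with the level-$(n'-1)$ element of the second forces $n'\geq n+1$ no matter how the angles are chosen. Consequently the linear sum of $j$ two-element antichains embeds in $D(m)$ only when $j\leq 2m$, even though its incomparability graph ($j$ disjoint edges) is $\mathrm P_3$-free: the level windows of successive summands must strictly climb, and no placement of angles avoids this. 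So your concatenation cannot produce a uniform $m=m(k)$, and the defect is not in your particular placement --- the target $\age(D(m))$ is genuinely too small once $\ainc(P)$ is disconnected. It is only fair to add that this is precisely the step the paper itself dispatches in one unproved sentence (``Finite sums of finite members of $\mathcal C$ are embeddable in $D(m)$''), which the example above contradicts; a robust version of (ii) appears to require the age of lexicographical sums, over chains, of copies of $D(m)$ (in the spirit of the class $\widehat{\mathcal D}$ and of the use of Laver's theorem in the proof of Theorem \ref{thm:2}) rather than $\age(D(m))$ itself.
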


The proof of Theorem \ref{thm:1} is given in Section \ref{section:proofthm:1}.

If we replace  $\mathcal{C}$ by the associated class of bichains or by the associated class of incomparability graphs, then one gets a similar characterisation.

An immediate consequence of Theorem \ref{thm:1} is Corollary 26 of Korpelainen and Lozin \cite{korpelainen-lozin}.

\begin{corollary}\label{korpelainen-lozin}For any fixed $k$, the class of finite $\mathrm P_k$-free bipartite permutation graphs is wqo by the induced subgraph relation.
\end{corollary}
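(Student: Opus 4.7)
The plan is to identify the class of finite $\mathrm P_k$-free bipartite permutation graphs as $\ainc\langle\mathcal{C}_{<\omega}\rangle$ for an appropriate hereditary class $\mathcal{C}$ of posets of width at most two, and then invoke Theorem~\ref{thm:1}.

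First, I recall from Section~2 that the finite bipartite permutation graphs are exactly the incomparability graphs of finite posets of width at most two. Accordingly, let $\mathcal{C}$ be the class of all finite or countable posets of width at most two whose incomparability graph contains no induced $\mathrm P_k$. Since being $\mathrm P_k$-free is a hereditary graph property and $\ainc$ sends induced subposets to induced subgraphs, $\mathcal{C}$ is a hereditary class; moreover $\mathcal{C}_{<\omega}$ is the class of finite posets of width at most two whose incomparability graph is a $\mathrm P_k$-free bipartite permutation graph, and $\ainc\langle\mathcal{C}_{<\omega}\rangle$ is exactly the class in the statement of the corollary. By construction, $\ainc\langle\mathcal{C}\rangle$ is $\mathrm P_k$-free, so condition (i) of Theorem~\ref{thm:1} holds for $\mathcal{C}$.

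Applying Theorem~\ref{thm:1} yields condition (iii): $\mathcal{C}$ is hereditary bqo, and hence in particular $\mathcal{C}_{<\omega}$ is wqo under embeddability. To finish, I would use the elementary "downward" transfer: any poset embedding $f\colon P\hookrightarrow P'$ is tautologically an embedding $\ainc(P)\hookrightarrow \ainc(P')$ of incomparability graphs, so the image under $\ainc$ of a wqo class of posets is wqo. Applying this to $\mathcal{C}_{<\omega}$ gives that $\ainc\langle\mathcal{C}_{<\omega}\rangle$---the class of finite $\mathrm P_k$-free bipartite permutation graphs---is wqo.

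There is no serious obstacle here: the corollary essentially unpacks the implication (i)$\Rightarrow$(iii) of Theorem~\ref{thm:1} together with the trivial transfer of wqo from posets to their incomparability graphs. The only thing meriting a moment's care is the identification of finite bipartite permutation graphs with incomparability graphs of finite width-two posets, which is recalled in Section~2; everything else is bookkeeping.
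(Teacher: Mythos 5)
Your proposal is correct and follows exactly the route the paper intends: the paper derives this corollary as an immediate consequence of Theorem~\ref{thm:1} (via the implication $(i)\Rightarrow(iii)$ applied to the hereditary class of width-at-most-two posets with $\mathrm P_k$-free incomparability graphs), together with the trivial downward transfer of wqo from posets to their incomparability graphs. You have simply made explicit the bookkeeping that the paper leaves to the reader.
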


The fact that the class of $\mathrm P_k$-free bipartite permutation graphs remains wqo when its members are labelled with two constants is related to the notion of bound (a \emph{bound} of a hereditary class $\mathcal{C}$ of graphs is any  finite structure $Q$ not in $\mathcal{C}$ which is minimal w.r.t. embeddability, hence, $Q$ is a bound  of $\mathcal{C}$ if $Q\not \in \mathcal{C}$ while $Q \setminus \{x\}\in \mathcal{C}$ for every $x\in V(Q)$). An other consequence of Theorem \ref{thm:1} is Corollary \ref{cor:bounds}.

\begin{corollary}\label{cor:bounds}Let $\mathcal{C}$ be a hereditary class of finite bipartite permutation graphs. The following propositions are equivalent.
\begin{enumerate}[(i)]
\item The class $\mathcal{C}$ has finitely many bounds;
\item The class $\mathcal{C}$ is hereditary wqo;
\item The class $\mathcal {C}$ is $\mathrm P_k$-free for some nonnegative integer $k$.
\end{enumerate}
\end{corollary}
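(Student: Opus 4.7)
The plan is to deduce the three equivalences from the bipartite-permutation-graph analogue of Theorem~\ref{thm:1}, together with a direct analysis of the bounds.

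For (ii)$\Leftrightarrow$(iii): by the analogue of Theorem~\ref{thm:1} for incomparability graphs, condition (iii) is equivalent both to $\mathcal{C}$ being hereditary bqo and to $\mathcal{C}$ being $2^-$-wqo. Since hereditary bqo entails hereditary wqo and hereditary wqo in turn entails $2^-$-wqo (specialise by distinguishing two vertices via an antichain labelling), condition (ii) is squeezed between these and is therefore also equivalent to (iii).

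For (iii)$\Rightarrow$(i), assume $\mathcal{C}$ is $P_k$-free and let $R$ be a bound of $\mathcal{C}$. Every $R\setminus\{v\}\in\mathcal{C}$ is $P_k$-free; when $|R|>k$, any induced $P_k$ in $R$ would persist in $R\setminus\{v\}$ for some $v$ outside it, so $R$ itself is $P_k$-free. An induced odd cycle $C_{2m+1}$ inside such an $R$ would have $|C_{2m+1}|=2m+1\le k<|R|$ and thus be a proper induced subgraph of $R$, placing $C_{2m+1}\in\mathcal{C}\subseteq\mathrm{BPG}$, which contradicts its non-bipartiteness. So bounds of size greater than $k$ are bipartite and $P_k$-free. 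Those lying in BPG form an antichain in the wqo class of $P_k$-free bipartite permutation graphs (Corollary~\ref{korpelainen-lozin}), hence are finite in number, while those that are bipartite but not permutation graphs are minimal bipartite non-permutation graphs, again finitely many by the classical forbidden-induced-subgraph characterisation of bipartite permutation graphs within bipartite graphs. Together with the finitely many bounds of size at most $k$ this yields a finite total.

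For (i)$\Rightarrow$(iii), I argue by contraposition: suppose $\mathcal{C}$ is not $P_k$-free for any $k$, so $P_k\in\mathcal{C}$ for every $k$, and by heredity every finite disjoint union of paths lies in $\mathcal{C}$. For each $m\ge 1$ the odd cycle $C_{2m+1}$ is not a bipartite permutation graph and so does not belong to $\mathcal{C}$, yet each of its proper induced subgraphs is a disjoint union of paths and therefore lies in $\mathcal{C}$; hence every such $C_{2m+1}$ is a bound of $\mathcal{C}$, producing infinitely many bounds. The delicate step in the proof is establishing the finiteness of non-BPG bounds in (iii)$\Rightarrow$(i); this rests on the finite forbidden-induced-subgraph characterisation of bipartite permutation graphs within the class of bipartite graphs, together with the fact that among the infinitely many minimal non-BPG odd cycles only those of length at most $k$ have any chance of being bounds of a $P_k$-free class.
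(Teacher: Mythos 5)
Your overall architecture is genuinely different from the paper's. The paper closes the cycle (i)$\Rightarrow$(iii)$\Rightarrow$(ii)$\Rightarrow$(i), where the last implication is a short labelling argument: delete a vertex $x_n$ from each bound $B_n$, colour the remaining vertices green or red according to adjacency to $x_n$, and use hereditary wqo of $\mathcal{C}\cdot\{g,r\}$ to produce an embedding of $B_i$ into $B_j$, contradicting that bounds form an antichain. That argument needs no structure theory of bounds at all and works for any hereditary class of finite structures of bounded signature. You instead prove (ii)$\Leftrightarrow$(iii) by squeezing hereditary wqo between hereditary bqo and $2^{-}$-wqo through the graph analogue of Theorem \ref{thm:1} (this squeeze is correct, and your (i)$\Rightarrow$(iii) by contraposition via odd cycles matches the paper's), and you replace the labelling argument by a direct analysis of what the bounds can look like in (iii)$\Rightarrow$(i). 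That analysis is where the gap lies.

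You claim that the bounds which are bipartite but not permutation graphs are finitely many ``by the classical forbidden-induced-subgraph characterisation of bipartite permutation graphs within bipartite graphs.'' No finite such characterisation exists: the even cycles $C_6, C_8, C_{10},\dots$ are pairwise incomparable minimal bipartite non-permutation graphs (every proper induced subgraph of $C_{2m}$ is a disjoint union of paths, hence a bipartite permutation graph), so the set of minimal obstructions within bipartite graphs is infinite --- compare Lemma \ref{lem:w2}, where the obstructions are exactly the even cycles of length at least six together with the finitely many comparability graphs of the posets of Figure \ref{fig:critique3}. Your closing sentence attempts to discard an infinite family, but it speaks of \emph{odd} cycles, which are irrelevant at that stage: odd cycles are not bipartite and were already eliminated by your bipartiteness step; the family you actually need to eliminate is the even cycles. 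The step is repairable inside your framework: a bound $R$ with $|R|>k$ that is bipartite and not a permutation graph has all proper induced subgraphs in $\mathcal{C}$, hence is a minimal bipartite non-permutation graph; it cannot be an even cycle, since an even cycle on $|R|>k$ vertices contains an induced path on $|R|-1\geq k$ vertices, contradicting the $\mathrm P_k$-freeness of $R$ that you correctly established; so $R$ must be one of the finitely many sporadic obstructions of Lemma \ref{lem:w2}. With that correction (and keeping your use of Corollary \ref{korpelainen-lozin} for the bounds that are permutation graphs), your (iii)$\Rightarrow$(i) goes through, at the price of invoking structural facts the paper's labelling argument avoids entirely.
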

\begin{proof}
$(i) \Rightarrow (iii)$ If $\mathrm P_k\in \mathcal C$ with $k$ even then the cycle on $k+1$ vertices is a bound of $\mathcal C$. Indeed,  since $\mathcal C$  is made of bipartite graphs it contains no odd cycles. \\
$(iii)\Rightarrow (i)$ Follows from  implication $(i)\Rightarrow (iii)$ of Theorem \ref{thm:1} and the fact that a bqo is a  wqo.\\
 $(ii) \Rightarrow (i)$ Suppose that $\mathcal {C}$ has infinitely many bounds.  Let $(B_n)_{n\in \NN}$ be an enumeration of
the bounds of $\mathcal{C}$. For each $n\in \NN$ let $x_n\in B_n$ and $B'_n=B_n\setminus \{x_n\}$. The vertex $x_n$ leads to a
partition of the vertices of $B'_n$ into two sets: those who are adjacent to $x_n$, which we color green, and those who are not
adjacent to $x_n$, which we color red. Denote by $B'_n\cdot \{g,r\}$ the bi-colored graph $B_n$. The structures $(B'_n\cdot
\{g,r\})_{n\in \NN}$ are in $\mathcal{C}\cdot \{g,r\}$. Since $\mathcal{C}$ is hereditary wqo it follows that there are integers
$i<j$ such that there is a color preserving embedding from $B'_i\cdot \{g,r\}$ into $B'_j\cdot \{g,r\}$. This embedding clearly yields
an embedding of $B_i$ into $B_j$. This is impossible since the bounds of $\mathcal C$ are pairwise incomparable.  Hence,
$\mathcal{C}$ has finitely many bounds as required.
\end{proof}

Implication $(ii)\Rightarrow (i)$ holds for any hereditary class of finite relational structures made of a fixed finite numbers of relations (not necessarily binary)  \cite{pouzet72}. On an other  hand, implication $(i)\Rightarrow (ii)$ may be  false if $\mathcal C$ is made of bipartite graphs or  of  permutation graphs (not necessarily bipartite).   Indeed, in \cite{korpelainen-lozin}, Korpelainen and Lozin proved that the class of $\mathrm P_7$-free bipartite graphs is not wqo, hence not hereditarily wqo. While  Brignall    and Vatter \cite{brignall-engen-vatter} have shown that some hereditary classes of permutations graphs are wqo with finitely many bounds but are not hereditary wqo,  answering negatively a question of    Korpelainen, Lozin, and Razgon \cite{korpelainen-lozin-razgon}). Implication $(i)\Rightarrow (iii)$ holds for other classes of graphs. Namely, any hereditary class of comparability graphs or incomparability graphs. Indeed, odd cycles are neither comparability nor incomparability graphs. Implication $(iii)\Rightarrow (i)$ is false in general,  even in the case of incomparability graphs. Indeed, let  $X$ be an infinite and coinfinite subset of $\NN$,  let $H_X$ be the direct sum $\bigoplus_{n\in X} C_{2n}$   of cycles $C_{2n}$  and let $\mathcal C$ be the age of the complement of $H_X$. Since  $\mathcal C$ consists of complements of finite bipartite graphs, this is a $\mathrm P_{5}$-free class of incomparability graphs. It has infinitely many bounds, since for every $n\not \in X$, the complement of $C_{2n}$ is a bound of $\mathcal C$.

Despite the fact that  posets of width at most two share several  properties with  bipartite permutation graphs,  if $\mathcal C$ consists of  finite posets of width two, implication $(i)\Rightarrow (ii)$ may be  false.  Indeed, let  $P_2$ be the poset represented Figure \ref{Fig3} whose incomparability graph is the infinite path $\mathrm P_{\infty}$. Then $\age(P_2)$  has finitely many bounds, its age is wqo but is not hereditarily wqo. A characterisation of ages of posets of width two which are wqo and have only finitely many bounds is given in Corollary \ref{cor-wqo-finite} as a consequence of the  characterisation of wqo hereditary classes of posets of width two. This  characterisation uses  double-ended forks.

\begin{definition}For $n\in \NN$, let $\forb(n)$ be the class of finite or countable posets having width at most two such that no $\mathrm{DF}_{n'}$, for $n'\geq n$, belongs to $\ainc\langle \forb(n) \rangle$. Let $\forb_{<\omega}(n)$ be the subclass of  its finite members.
\end{definition}

\begin{theorem}\label{thm:2}Let $\mathcal{C}$ be a hereditary class of finite or countable posets having width at most two and $\mathcal C_{<\omega}$ be the subclass of its finite members. The following propositions are equivalent.
 \begin{enumerate}[(i)]
  \item $\mathcal{C}$ is bqo;
  \item There exists $n$ such $\mathcal{C}\subseteq \forb(n)$;
  \item There exists $r$ such that for every $P\in \mathcal C_{<\omega}$ whose incomparability graph $\ainc(P)$ is connected, the diameter of the set of vertices of degree at least three in $\ainc(P)$ is at most~$r$.
 \item There exists $k$ such that for every $P\in \mathcal C$ whose incomparability graph  is connected, $\ainc(P)$ consists  of a graph $K$ with diameter at most $k$ with two paths $C_a$ and $C_b$ attached to two distinct vertices $a$ and $b$ of $K$ so that in $P$ every vertex  of $C_{a} \setminus \{a\}$ is below every vertex of $K\setminus \{a\}$, and every vertex  of $C_{b} \setminus \{b\}$ is above  every vertex  of $K\setminus \{b\}$.
  \item $\mathcal C_{<\omega}$ is wqo.
  \end{enumerate}
\end{theorem}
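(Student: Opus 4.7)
The plan is to establish the cycle of implications $(i)\Rightarrow (v)\Rightarrow (ii)\Rightarrow (iii)\Rightarrow (iv)\Rightarrow (i)$. The implication $(i)\Rightarrow (v)$ is immediate, since bqo implies wqo and wqo is inherited by subclasses. For $(v)\Rightarrow (ii)$, I would observe that, because $\ainc$ is order-preserving (as noted in Section~2), the class $\ainc\langle \mathcal{C}_{<\omega}\rangle$ is a wqo hereditary class of finite bipartite permutation graphs; Theorem~\ref{thm:korpelainen-lozin} of Lozin and Mayhill then supplies an integer $n$ such that no $\mathrm{DF}_{n'}$ with $n'\geq n$ belongs to this class. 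Heredity of $\mathcal{C}$ transfers this exclusion to incomparability graphs of infinite members as well (any such $\mathrm{DF}_{n'}$ would already appear in a finite sub-poset), yielding $\mathcal{C}\subseteq \forb(n)$.

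For $(ii)\Rightarrow (iii)$, fix $P\in\mathcal{C}_{<\omega}$ with $G:=\ainc(P)$ connected, and take $u,v\in V(G)$ both of degree at least three. Picking a geodesic $\gamma$ from $u$ to $v$ and two extra neighbors of $u$ and of $v$ off $\gamma$, the bipartite structure of $G$ together with the minimality of $\gamma$ force these four extra vertices to be disjoint from $\gamma$ and pairwise nonadjacent as required, thus assembling an induced copy of $\mathrm{DF}_{d_G(u,v)}$. Hypothesis $(ii)$ therefore caps $d_G(u,v)$ by some $r=r(n)$. The implication $(iii)\Rightarrow (iv)$ uses the structure of connected bipartite permutation graphs whose branching vertices (degree $\geq 3$) all lie within a ball $K$ of bounded diameter: such a graph must consist of $K$ together with at most two pendant paths $C_a$ and $C_b$ attached to vertices $a,b\in K$. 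In $P$, any vertex of degree one in $\ainc(P)$ is comparable with every other vertex save its unique incomparable neighbor, so each pendant path corresponds to a chain lying entirely below or entirely above the remainder of $P$; a coherent choice of orientations (forced by the geometry of the bichain representation) produces the configuration stated in $(iv)$. The passage from finite to countable $P$ is handled by heredity together with a compactness argument on finite induced sub-posets.

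The implication $(iv)\Rightarrow (i)$ is where the real work lies. My plan is to treat first posets $P\in\mathcal{C}$ with connected incomparability graph: since the core $K$ has diameter at most $k$, it contains no induced path on $k+2$ vertices, so Theorem~\ref{thm:1} places the finite sub-posets associated to $K$ inside $\age(D(m))$ for a suitable $m$, and Corollary~\ref{multichainablebqo2} makes the class of such cores hereditary bqo. The two attached chains $C_a\setminus\{a\}$ and $C_b\setminus\{b\}$ are countable and hence bqo by Laver's theorem \cite{laver}. Encoding each $P$ as a labelled triple consisting of a core together with its lower and upper chain, I would combine hereditary bqo of the cores with bqo of the chains to obtain bqo for the connected-incomparability-graph subclass. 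Finally, an arbitrary $P\in\mathcal{C}$ decomposes as a lexicographic sum, indexed by a countable chain, of such connected blocks; Theorem~\ref{thm:bqo-poset-bichain}, applied to the class $\mathcal{D}$ associated to $\mathcal{C}$, together with Laver's theorem, propagates bqo through this sum and closes the cycle. The main obstacle will be making the encoding in $(iv)\Rightarrow (i)$ rigorous: one must verify that the natural class $\mathcal{D}$ extracted from $\mathcal{C}$ meets the hypotheses of Theorem~\ref{thm:bqo-poset-bichain} (connected incomparability graph and totally ordered modules), and track labels carefully enough that Laver's theorem applies simultaneously to the indexing chain of the lex-sum and to the two pendant chains of each summand.
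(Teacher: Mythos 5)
Your proposal is correct in outline and follows essentially the same route as the paper: the same cycle $(i)\Rightarrow(v)\Rightarrow(ii)\Rightarrow(iii)\Rightarrow(iv)\Rightarrow(i)$, with the crucial implication $(iv)\Rightarrow(i)$ handled exactly as in the paper's proof — bounded-diameter cores placed in $\age(D(m))$ and made hereditary bqo via multichainability (Corollary~\ref{multichainablebqo2}), the two pendant paths encoded as labels on the attachment vertices, and bqo propagated through the lexicographic-sum decomposition by Laver's theorem.

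Several of your intermediate justifications, however, are not correct as literally stated, though each is repaired by results already in the paper. In $(ii)\Rightarrow(iii)$, minimality of the geodesic $\gamma$ and bipartiteness do \emph{not} force the extra neighbours of $u$ and $v$ to be non-adjacent to $\gamma$: such a neighbour may be adjacent to both the first and third vertices of $\gamma$ (this creates only a $4$-cycle, which bipartiteness allows), so you cannot extract an induced $\mathrm{DF}_{d_G(u,v)}$; after shifting each extremity of the fork by up to two steps one gets an induced double-ended fork of length at least $d-4$, which is precisely Lemma~\ref{lem3:doublefork}$(1)$ and is all the implication needs. In $(iv)\Rightarrow(i)$, the claim that diameter at most $k$ excludes induced paths on $k+2$ vertices is false for general graphs; for incomparability graphs of width-two posets it holds in the form $D_G\leq 3\delta_G-1$ (Corollary~\ref{cor:detour}), which suffices. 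Also, the pendant structures $C_a\setminus\{a\}$ and $C_b\setminus\{b\}$ are not chains as posets but posets whose incomparability graphs are paths; with the below/above side fixed they are determined up to isomorphism by their length, so they form a single embeddability chain of type $\omega+1$ and no appeal to Laver is needed for them — this is why the paper simply labels the two attachment vertices by elements $n_a,n_b$ of $\NN\cup\{\infty\}$, which is exactly where hereditary (not merely plain) bqo of the cores is required. Finally, in $(v)\Rightarrow(ii)$ the appeal to Theorem~\ref{thm:korpelainen-lozin} is unnecessary and, in the context of this paper, awkward, since Theorem~\ref{thm:2} is meant to furnish an independent proof of that result; you only use its trivial direction, which is nothing more than the paper's own observation that the width-two orientations of the double-ended forks form an infinite antichain, so a wqo class contains only finitely many of them.
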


As with Theorem \ref{thm:1}, if we replace  $\mathcal{C}$ by the associated class of bichains or by the associated class of incomparability graphs, one gets a similar characterisation.  A consequence of Theorem \ref{thm:1} is Theorem \ref{thm:korpelainen-lozin}  of Lozin and Mayhill (cf. Theorem 7 in \cite{lozin-mayhill}) (see Comments \ref{comment:lozin} in Section \ref{section:wqo-thm2-thm6}) and Theorem \ref{thm:albert-brignall-Ruskuc-vatter} of Albert et al. (cf. Theorem 9.3. in \cite{albert-brignall-Ruskuc-vatter}).


\begin{figure}
\begin{center}
\includegraphics[width=200pt]{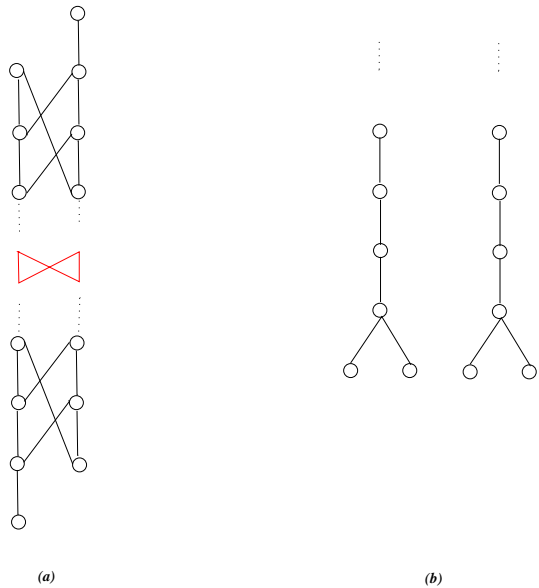}
\end{center}
\caption{The age of $\underline{\mathrm {DF}}_{\infty}$ represented in $(a)$ is wqo and has   infinitely many bounds. The graph represented in $(b)$ is the incomparability graph of $\underline{\mathrm {DF}}_{\infty}$}
\label{fig:wqo-bounds}
\end{figure}

Next, we state two consequence of Theorem \ref{thm:2} related to bounds of hereditary classes of bipartite permutation graphs.

\begin{corollary}Let $\mathcal C$  be a wqo hereditary class of finite  bipartite permutation graphs. Let $G$ be the graph depicted in $(b)$ of Figure $\ref{fig:wqo-bounds}$. Then $\mathcal C$ has finitely many bounds which are permutation graphs if and only if $\age(G)\nsubseteq \mathcal C$.
\end{corollary}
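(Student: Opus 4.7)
The plan is to prove both implications by contraposition, using the principle recalled in the introduction: for any wqo class $\mathcal C$ and subclass $\mathcal C' \subseteq \mathcal C$, only finitely many bounds of $\mathcal C'$ lie in $\mathcal C$, and every bound of $\mathcal C'$ not in $\mathcal C$ is itself a bound of $\mathcal C$. Applied to $\mathcal C' = \age(G)$, this principle is the engine of the whole argument, once the bounds of $\age(G)$ have been understood precisely enough.

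For the direction $\age(G)\subseteq \mathcal C \Rightarrow \mathcal C$ has infinitely many permutation graph bounds, the caption of Figure \ref{fig:wqo-bounds} already asserts that $\age(G)$ has infinitely many bounds. I would first verify that infinitely many of them are bipartite permutation graphs, by exhibiting an explicit family $\{B_n\}_{n\in \NN}$ coming from the structure of $G$: minimal bipartite permutation configurations obtained from a finite induced subgraph of $G$ by a single local modification (such as an extra fork) that is incompatible with the global shape of $G$. Applying the general principle to the inclusion $\age(G)\subseteq \mathcal C$ then forces all but finitely many of the $B_n$ to be bounds of $\mathcal C$, providing the required infinite family of permutation graph bounds.

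For the converse, suppose $\mathcal C$ has infinitely many permutation graph bounds. A bound $Q$ of $\mathcal C$ has all proper induced subgraphs in $\mathcal C$, hence bipartite. If $Q$ itself is not bipartite it contains an odd cycle $C_{2k+1}$; but $C_{2k+1}$ is not a permutation graph for $k\geq 2$, so $k=1$, and then bipartiteness of every proper induced subgraph of $Q$ forces $Q=C_3$, yielding at most one non-bipartite bound. So we obtain an infinite antichain $(B_n)_{n\in \NN}$ of bipartite permutation graph bounds of $\mathcal C$. Since $\mathcal C$ is wqo, Theorem \ref{thm:2} places $\mathcal C$ inside some $\forb(n)$ and constrains its connected incomparability graphs to the shape given in (iv). Matching this shape against the incomparability graph $G$ of $\underline{\mathrm{DF}}_\infty$, one shows that, up to finitely many exceptions, the $B_n$ must coincide with the canonical bounds of $\age(G)$ produced above; their proper induced subgraphs then exhaust $\age(G)$, giving $\age(G)\subseteq \mathcal C$.

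The main obstacle is the converse implication, namely identifying the antichain $(B_n)$ of bipartite permutation graph bounds as, essentially, the canonical antichain of bounds of $\age(G)$. This relies on combining the structural classification of Theorem \ref{thm:2}(iv) with the metric properties of incomparability graphs of width-two posets developed in \cite{pouzet-zaguia21}, in order to rule out alternative sources of infinite antichains of bipartite permutation graph bounds in a wqo hereditary class of bipartite permutation graphs.
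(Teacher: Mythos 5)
Your forward direction and your reduction of the converse to bipartite bounds both match the paper: the paper likewise derives the forward implication from wqo-ness of $\mathcal C$ together with the fact that the bounds of $\age(G)$ include all double-ended forks (which are bipartite permutation graphs), and it eliminates non-bipartite permutation-graph bounds exactly as you do, leaving only $C_3$ as a possible exception. The genuine gap is at the heart of your converse. The claim that, up to finitely many exceptions, the bounds $B_n$ ``must coincide with the canonical bounds of $\age(G)$'' is precisely what needs proof, and you leave it as an unproven ``one shows that'', flagging it yourself as the main obstacle. Moreover, the tool you propose cannot do the job as described: Theorem \ref{thm:2} (in particular item (iv) and the inclusion of $\mathcal{C}$ in some $\forb(n)$) constrains the \emph{members} of the wqo class $\mathcal C$, whereas the bounds $B_n$ are by definition \emph{not} members of $\mathcal C$, so matching the shape of elements of $\mathcal C$ against $G$ says nothing directly about the $B_n$. (Your intermediate claim is in fact true, but its proof rests on a different observation: a bound cannot \emph{properly} contain a large double-ended fork, since that fork would then lie in $\mathcal C$; hence every bound either \emph{is} a double-ended fork or omits all large forks, and the class of finite bipartite permutation graphs omitting all $\mathrm{DF}_{m}$ with $m\geq n_0$ is wqo, so it can contain only finitely many members of the antichain of bounds.)

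What your proposal is missing is the paper's simple device, which makes any classification of the bounds unnecessary: adjoin the bounds to the class. Let $B'$ be the set of bipartite permutation-graph bounds of $\mathcal C$ and set $\mathcal C' := \mathcal C \cup B'$. This class is hereditary (every proper induced subgraph of a bound lies in $\mathcal C$) and consists of bipartite permutation graphs, so Theorem \ref{thm:2} applies to $\mathcal C'$ itself. If $B'$ is infinite, then $\mathcal C'$ contains an infinite antichain, hence is not wqo, hence contains double-ended forks $\mathrm{DF}_m$ of unbounded length. Every member of $\age(G)$ embeds as a \emph{proper} induced subgraph of some such $\mathrm{DF}_m$ (proper because $\mathrm{DF}_m \notin \age(G)$), and whether that $\mathrm{DF}_m$ lies in $\mathcal C$ or is itself a bound, all of its proper induced subgraphs lie in $\mathcal C$; hence $\age(G)\subseteq \mathcal C$. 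This short argument replaces the entire ``identification'' step of your sketch, and that step is where your proposal, as written, has a hole.
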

\begin{proof}The forward implication follows from our assumption that $\mathcal C$  is wqo and the fact that $\age(G)$ has infinitely many bounds which are permutation graphs, all double-ended forks. For the converse, assume $\age(G)\nsubseteq \mathcal C$ and let $B$ be the set of bounds of  $\mathcal C$ that are permutation graphs. Then an element of $B$ does not have induced cycles of length at least $5$, and in particular no odd cycles of length at least $5$. Hence, except for the 3-element cycle, all elements of $B$ are bipartite permutation graphs.  Let $B':=B\setminus \{C_3\}$ and set $\mathcal{C}':= \mathcal{C} \cup B'$.  This is a hereditary class of bipartite permutation graphs.  According to Theorem \ref{thm:2},  $\mathcal{C}'$ is wqo if and only if there is an upper-bound on the size of double-ended forks $ \mathrm{DF}_{n}$ it may contain. Thus, if  $\mathcal{C}'$ is not wqo it will contain $\age(G)$ (indeed, if $I$ is an infinite subset of $\NN$, $\age(G)\subseteq \downarrow \{\mathrm{DF}_{n}:n\in I\}$),  hence $\mathcal{C}$ will contain $\age (G)$.
\end{proof}

An other consequence involves the poset $\underline{\mathrm{DF}}_{\infty}$ represented in $(a)$ of Figure \ref{fig:wqo-bounds}.

\begin{corollary} \label{cor-wqo-finite} Let $P$ be a poset of width two. Then $\age(P)$ is wqo with only finitely many bounds if and only if $\age (\underline{\mathrm{DF}}_{\infty}) \not \subseteq \age (P)$.
\end{corollary}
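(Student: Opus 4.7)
The plan is to prove the two directions separately, using Theorem \ref{thm:2} together with the fact (from Figure \ref{fig:wqo-bounds}) that $\age(\underline{\mathrm{DF}}_\infty)$ is itself wqo and has infinitely many bounds.

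For the forward direction I would argue contrapositively: suppose $\age(\underline{\mathrm{DF}}_\infty)\subseteq\age(P)$ and that $\age(P)$ is wqo. The general observation recalled in the Introduction then says that only finitely many bounds of the subclass $\age(\underline{\mathrm{DF}}_\infty)$ can lie in the wqo superclass $\age(P)$, so infinitely many bounds of $\age(\underline{\mathrm{DF}}_\infty)$ lie outside $\age(P)$. Each such bound $B$ still has all of its proper subposets in $\age(\underline{\mathrm{DF}}_\infty)\subseteq\age(P)$, hence is also a bound of $\age(P)$, contradicting the finiteness of the latter.

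For the backward direction, assume $\age(\underline{\mathrm{DF}}_\infty)\not\subseteq\age(P)$. I first establish that $\age(P)$ is wqo. If not, the equivalences $(v)\Leftrightarrow(i)\Leftrightarrow(ii)$ of Theorem \ref{thm:2} provide arbitrarily long double-ended forks $\mathrm{DF}_n$ among the incomparability graphs of members of $\age(P)$. Since a connected incomparability graph of a width-$\le 2$ poset determines the poset up to duality, and $\underline{\mathrm{DF}}_n$ is self-dual by the symmetry of Figure \ref{fig:wqo-bounds}(a), this forces $\underline{\mathrm{DF}}_n\in\age(P)$ for infinitely many $n$. Every finite subposet of $\underline{\mathrm{DF}}_\infty$ embeds into $\underline{\mathrm{DF}}_n$ once $n$ is large enough, so heredity gives $\age(\underline{\mathrm{DF}}_\infty)\subseteq\age(P)$, a contradiction. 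Next I show that $\age(P)$ has finitely many bounds. If the set $B$ of bounds were infinite, remove from it the $3$-antichain (the unique minimal poset of width $>2$) to obtain an infinite antichain $B^*$ of width-$\le 2$ bounds, and form $\mathcal{C}':=\age(P)\cup B^*$. Because every proper subposet of a bound of $\age(P)$ already lies in $\age(P)$, the class $\mathcal{C}'$ is hereditary and consists of width-$\le 2$ posets; containing the infinite antichain $B^*$, it is not wqo. Theorem \ref{thm:2} applied to $\mathcal{C}'$, together with the same reconstruction step, then yields $\age(\underline{\mathrm{DF}}_\infty)\subseteq\mathcal{C}'$. For any $H\in\age(\underline{\mathrm{DF}}_\infty)$, I would pick a strictly larger $H'\in\age(\underline{\mathrm{DF}}_\infty)$, available since $\underline{\mathrm{DF}}_\infty$ is infinite; because $H'\in\mathcal{C}'=\age(P)\cup B^*$ and $B^*$ is an antichain, $H'$ must lie in $\age(P)$, whence $H\in\age(P)$ by heredity. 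Thus $\age(\underline{\mathrm{DF}}_\infty)\subseteq\age(P)$, contradicting the backward hypothesis.

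The main subtlety I expect is the reconstruction step used in both parts of the backward direction: converting the statement ``$\mathrm{DF}_n$ appears as an incomparability graph of a member of the class'' into the stronger ``the poset $\underline{\mathrm{DF}}_n$ belongs to the class''. This rests on the uniqueness, up to duality, of a width-$\le 2$ poset whose connected incomparability graph is prescribed, together with the self-duality of $\underline{\mathrm{DF}}_n$ visible in Figure \ref{fig:wqo-bounds}(a); if need be, an infinite pigeonhole on $n$ selects a uniform orientation before applying duality.
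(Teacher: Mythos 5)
Your proposal is correct and follows essentially the same route as the paper: the forward direction uses that $\age(\underline{\mathrm{DF}}_\infty)$ has infinitely many bounds together with the fact that ``wqo with finitely many bounds'' passes to hereditary subclasses, and the backward direction adjoins the width-$\le 2$ bounds to $\age(P)$ and invokes Theorem \ref{thm:2} (the paper does this in a single application to $\age(P)\cup\mathcal B$, obtaining wqo of $\age(P)$ and finiteness of $\mathcal B$ simultaneously, whereas you split it into two applications). One small repair in your transfer step: $H'$ need not lie in $\age(P)$ --- it could itself be an element of $B^*$ --- but then $H$, being properly embedded in the bound $H'$, lies in $\age(P)$ by the very definition of a bound, so the conclusion $H\in\age(P)$ stands either way.
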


\begin{proof}
The direct implication is due to the fact that $\age (\underline{\mathrm{DF}}_{\infty})$ has  infinitely many bounds (notably the transitive orientations of the complements of the double-ended forks $\mathrm {DF}_n$).  Because, in general,   if $\mathcal C$  is a wqo hereditary class of finite structures with finitely many bounds (like $\age (P)$)  then every hereditary subclass $\mathcal D$ is wqo and has finitely many bounds (indeed,  every bound of $\mathcal D$ not in $\mathcal C$ is necessarily a bound of $\mathcal C$ and since $\mathcal C$ is wqo,  $\mathcal D$ has  only finitely many bounds in $\mathcal C$).
%
For the converse, let $\mathcal B$ be the set of bounds of $\age (P)$ in the class of posets distinct from the  three element antichain. Hence, each   element  of $\mathcal B$ has width at most two. Let $\mathcal C:= \age(P) \cup \mathcal B$.    This is a hereditary class of posets of has width at most two.  According to Theorem \ref {thm:2},  $\mathcal C$ is wqo if and only if there is an upper-bound on the size of the orientations $\underline{\mathrm{DF}}_{n}$ of the complement of  double-ended forks $ \mathrm{DF}_{n}$ it may contain.
Thus, if  $\mathcal C$ is not wqo it will contain $\age (\underline{\mathrm{DF}}_{\infty})$ (indeed, if $I$ is an infinite subset of $\NN$, $\age(\underline{\mathrm{DF}}_{\infty})\subseteq \downarrow \{ \underline{\mathrm{DF}}_{n}:n\in I\}$),  hence $\age (P)$ will contain $\age (\underline{\mathrm{DF}}_{\infty})$.
\end{proof}

An other consequence of   Theorem \ref{thm:2} is this:
\begin{corollary}\label{cor:countably many}There are only countably many wqo hereditary classes made of finite posets having width at most two, resp. made of the associated classes of bichains or the associated classes of bipartite permutation graphs.
\end{corollary}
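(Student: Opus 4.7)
The plan is to reduce the statement to one elementary observation: every countable wqo has at most countably many hereditary subsets (down-sets). Once this is granted, Theorem~\ref{thm:2} does essentially all the work.

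First I would establish the observation. Given a countable wqo $(Q,\leq)$, every down-set $D\subseteq Q$ is determined by the antichain $A_D:=\min(Q\setminus D)$ of minimal elements of its complement, since $D=Q\setminus {\uparrow}A_D$. In a wqo every antichain is finite, so $A_D$ is a finite subset of $Q$, and the map $D\mapsto A_D$ embeds the family of down-sets of $Q$ into the (countable) collection of finite subsets of $Q$, which is countable.

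Next I would invoke Theorem~\ref{thm:2}. The equivalence $(i)\Leftrightarrow(ii)\Leftrightarrow(v)$ there says that a hereditary class $\mathcal{C}$ of finite posets of width at most two is wqo if and only if $\mathcal{C}\subseteq \forb_{<\omega}(n)$ for some $n\in\NN$, and applied to $\forb_{<\omega}(n)$ itself it says that each such ``umbrella'' class is wqo. Since there are only countably many finite posets (up to isomorphism), $\forb_{<\omega}(n)$ is a countable wqo and, by the observation above, admits only countably many hereditary subclasses. Taking the union over $n\in\NN$ produces a countable family containing every wqo hereditary class of finite posets of width at most two.

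The same skeleton handles the classes of bichains and of bipartite permutation graphs. The characterisation of Theorem~\ref{thm:2} transfers verbatim to those two levels (as stated in the paragraph immediately following that theorem), producing countable wqo umbrellas whose hereditary subclasses likewise form a countable family; the conclusion is identical. I do not anticipate a real obstacle here: Theorem~\ref{thm:2} carries the entire content, and only the standard antichain/down-set correspondence is needed on top of it. The one point that deserves care is the verification that the analogues of $\forb_{<\omega}(n)$ at the bichain and incomparability-graph levels are genuinely wqo; this is precisely what the bichain/graph version of Theorem~\ref{thm:2} furnishes, and it may alternatively be recovered from Theorem~\ref{thm:bqo-poset-bichain}$(2)$ once the poset case is settled.
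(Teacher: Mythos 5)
Your proof is correct and follows essentially the same route as the paper: Theorem~\ref{thm:2} ($(v)\Rightarrow(ii)$) places every wqo hereditary class inside some $\forb_{<\omega}(n)$, each $\forb_{<\omega}(n)$ is itself wqo, and a countable wqo admits only countably many hereditary subclasses, so the countable union over $n$ finishes the argument. The only cosmetic difference is that you prove the down-set/finite-antichain correspondence explicitly, whereas the paper relies on the same fact implicitly (it is the content of the equivalence $(a)\Leftrightarrow(b)$ in Proposition~\ref{prop:well-founded}, proved there via bounds, i.e.\ the minimal elements of the complement).
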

\begin{proof}
For each $n\in \NN$, $\forb_{<\omega}(n)$ is wqo. Hence, $\forb_{<\omega}(n)$ contains only countably many hereditary subclasses. It follows then that the set of hereditary subclasses contained in some $\forb_{<\omega}(n)$ is countable. These classes are the hereditary subclasses of posets of width at most two. Indeed, if $\mathcal{C}$ is wqo and made of finite posets having width at most two, then by $(v)\Rightarrow (ii)$ of Theorem \ref{thm:2},  $\mathcal{C}\subseteq \forb(n)$. Since the members of $\mathcal{C}$ are finite we infer that $\mathcal{C}\subseteq \forb_{<\omega}(n)$.
\end{proof}

However, there are uncountably many wqo ages of permutation graphs \cite{oudrar-pouzet-zaguia}.

With the help of Theorem \ref{thm:2} and Theorem \ref {thm:infinitepath-kite} below, we give in Theorem \ref{thm:3} a new characterisation of the non wqo ages of  finite bipartite permutation graphs.

\begin{theorem}\label{thm:3}Let $\mathcal{C}$ be an age of finite bipartite permutation graphs. Then $\mathcal{C}$ is not wqo if and only if it contains the age of a direct sum $\bigoplus_{i\in I} \mathrm {DF}_i$ of double-ended forks of arbitrarily large length for some infinite subset $I$ of $\NN$.
\end{theorem}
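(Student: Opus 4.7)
The plan is to handle the two implications separately. The backward direction is immediate: as noted in Section~2, the double-ended forks $\mathrm{DF}_{i}$ for distinct $i$ form an infinite antichain under embeddability, so any class containing $\age(\bigoplus_{i\in I}\mathrm{DF}_{i})$ already contains this infinite antichain and cannot be wqo.

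For the forward direction, I would begin by writing $\mathcal{C}=\age(R)$ for some countable bipartite permutation graph $R$, equipped with a compatible poset structure of width at most two. Since $\mathcal{C}$ is not wqo, the equivalence $(v)\Leftrightarrow(ii)$ of Theorem~\ref{thm:2} yields $\mathcal{C}\not\subseteq\forb(n)$ for every $n$, so $\mathrm{DF}_{n}\in\mathcal{C}$ for arbitrarily large $n$. The next step is to decompose $R$ along its incomparability components: a standard verification shows that the equivalence relation ``same connected component of $\ainc(R)$'' yields a lexicographic decomposition $R=\sum_{x\in L}R_{x}$ over a countable chain $L$ with each $R_{x}$ having connected incomparability graph. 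Since $\ainc(\mathrm{DF}_{n})$ is connected, every induced copy of $\mathrm{DF}_{n}$ in $R$ lies inside a single component $R_{x}$.

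Then I would split into two cases. First, suppose every $R_{x}$ embeds $\mathrm{DF}_{n}$ only for $n$ bounded. By the unboundedness of the $n$'s realised in $R$, pigeonhole produces infinitely many distinct components $R_{x_{k}}$, each hosting some $\mathrm{DF}_{n_{k}}$ with $n_{k}\to\infty$. Since no incomparability edge joins distinct components of the lex decomposition, the chosen copies assemble into an induced copy of $\bigoplus_{k}\mathrm{DF}_{n_{k}}$ inside $R$, and setting $I=\{n_{k}:k\geq 1\}$ gives $\age\bigl(\bigoplus_{i\in I}\mathrm{DF}_{i}\bigr)\subseteq\mathcal{C}$. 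Second, suppose some $R_{x_{0}}$ embeds $\mathrm{DF}_{n}$ for arbitrarily large $n$ while $\ainc(R_{x_{0}})$ remains connected; here I would invoke Theorem~\ref{thm:infinitepath-kite}, whose role is precisely to extract from such a connected bipartite permutation graph a pairwise disjoint family of induced double-ended forks $\mathrm{DF}_{n_{k}}$ with no incomparability edges between them, i.e., an induced direct sum sitting inside $R_{x_{0}}$.

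The hard part will be the second case. Within a single connected incomparability component, induced copies of $\mathrm{DF}_{n}$ may share vertices or be linked by many incomparabilities, and isolating a pairwise disjoint, direct-sum family is not automatic. The argument must exploit the fine structure of connected bipartite permutation graphs whose age fails condition~(iv) of Theorem~\ref{thm:2}, namely whose connected incomparability subgraphs cannot be packaged as a bounded-diameter ``kite'' with two attached paths of bounded length. This is precisely the combinatorial content I expect Theorem~\ref{thm:infinitepath-kite} to encapsulate; the first case, by contrast, reduces to a pigeonhole argument on the lex-sum structure and the absence of cross-component incomparabilities.
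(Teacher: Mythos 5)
Your overall architecture coincides with the paper's: the backward direction via the antichain of forks, the choice of a single graph $R$ with $\age(R)=\mathcal C$ (legitimate since $\mathcal C$ is an age), the appeal to Theorem \ref{thm:2} to obtain forks of unbounded length in $\mathcal C$, and the dichotomy according to whether forks of unbounded length sit inside one connected component of $\ainc(R)$ or spread over infinitely many components. Your first case (infinitely many components, pigeonhole, and assembly of the copies into an induced direct sum because distinct components span no edges) is correct and is exactly the paper's second case.

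The gap is in your second case, and it is a genuine one: Theorem \ref{thm:infinitepath-kite} does not say what you want it to say. Its hypothesis is not ``$G$ embeds $\mathrm{DF}_n$ for arbitrarily large $n$'' but that $G$ is a connected incomparability graph of \emph{unbounded diameter} whose set of vertices of degree at least three has \emph{unbounded diameter}; and its conclusion is not a pairwise disjoint family of forks but a single induced comb or kite with infinitely many vertices of degree $3$. Two bridges are therefore missing, and they are precisely where the metric theory of width-two posets enters the paper's proof. First, forks of unbounded length only give induced paths of unbounded length, i.e., unbounded detour; for a general connected bipartite graph this does not force unbounded diameter (a bipartite graph of bounded diameter can contain arbitrarily long induced paths), and one needs Corollary \ref{cor:detour}, namely $\delta_G\leq D_G\leq 3\delta_G-1$ for incomparability graphs of posets of width at most two, to conclude that the component has unbounded diameter; moreover one needs Lemma \ref{lem3:doublefork}$(2)$, which shows that the two degree-three vertices of an embedded fork of length $d$ are at distance at least $d/3+1$, to see that the set of vertices of degree at least three has unbounded diameter. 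Second, once Theorem \ref{thm:infinitepath-kite} yields a comb or a kite --- necessarily a comb or a kite of type $(3)$, since the graph is bipartite --- one still has to extract from it an induced direct sum of forks of arbitrarily large length; this final extraction is easy, but it is the opposite direction of what you attribute to the theorem. With these two bridges supplied, your outline becomes the paper's proof; without them, the invocation of Theorem \ref{thm:infinitepath-kite} is not justified.
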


Note that if $\mathcal{C}$ is a  hereditary class but is not an age, $\mathcal C$ may contain infinite antichains and yet does not contain the   age of a direct sum of double-ended forks of arbitrary large length. A simple example is the class made of graphs which embed into some double-ended fork.
The proof of Theorem \ref{thm:3} follows  from Theorem \ref{thm:infinitepath-kite} below (This is Theorem 6 from \cite{pouzet-zaguia21}). Theorem \ref{thm:infinitepath-kite} is based on the notions of caterpillar and kite.

A graph  $G:=(V,E)$ is a \emph{caterpillar} if the graph obtained by removing from $V$ the vertices of degree one is a path (finite or not, reduced to one vertex or empty). A \emph{comb} is a caterpillar such that every vertex is adjacent to at most one vertex of degree one. Incidentally, a path on three vertices is not a comb.

We now give the definition of a \emph{kite} and we will distinguish three types (see Figure \ref{fig:comb-kite}). A \emph{kite} is a graph extending the   infinite path $\mathrm P_{\infty}$ made of vertices $x_i$ labelled by the nonnegative integers. It has type $(1)$ if it is obtained from $\mathrm P_{\infty}$ by adding a new set of vertices $Y$ (finite or infinite) so that every vertex of $Y$ is adjacent to exactly two vertices of $\mathrm P_{\infty}$ and these two vertices must be consecutive in $\mathrm P_{\infty}$. Furthermore, two distinct vertices of $Y$ share at most a common neighbour in $\mathrm P_{\infty}$.

A \emph{kite of type $(2)$} is obtained from $\mathrm P_{\infty}$ by adding a new set of vertices $Y$ (finite or infinite) so that every vertex of $Y$ is adjacent to exactly three vertices of $\mathrm P_{\infty}$ and these three vertices must be consecutive in $\mathrm P_{\infty}$. Furthermore, for all $x,x'\in Y$, if $x$ is adjacent to $x_i,x_{i+1},x_{i+2}$ and $x'$ is adjacent to $x_{i'},x_{i'+1},x_{i'+2}$  then $i+2\leq i'$ or $i'+2\leq i$.

A \emph{kite of type $(3)$} is  obtained from  $\mathrm P_{\infty}$ by adding a new set of vertices $Y$ (finite or infinite) so that every vertex of $Y$ is adjacent to exactly two vertices of $\mathrm P_{\infty}$ and these two vertices must be at distance two in $\mathrm P_{\infty}$. Furthermore, for all $x,x'\in X$, if $x$ is adjacent to $x_i$ and $x_{i+2}$ and $x'$ is adjacent to $x_{i'}$ and $x_{i'+2}$ then $i+2\leq i'$ or $i'+2\leq i$.


\begin{figure}
\begin{center}
\includegraphics[width=300pt]{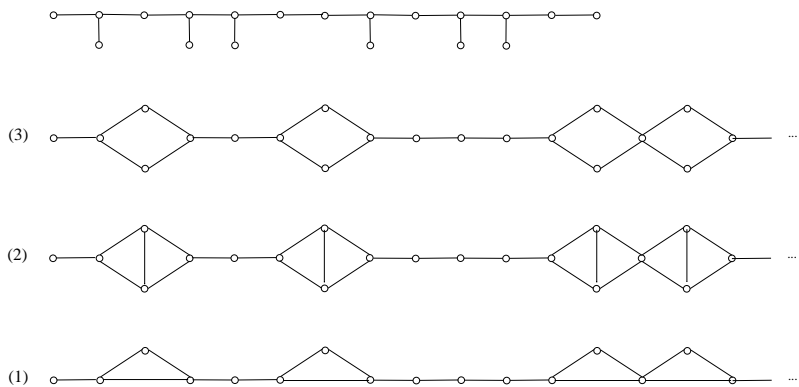}
\end{center}
\caption{A comb and kites.}
\label{fig:comb-kite}
\end{figure}

\begin{theorem}\label{thm:infinitepath-kite}If $G$ is a connected incomparability graph with unbounded diameter, then
\begin{enumerate}[$(1)$]
  \item $G$ embeds an infinite induced path with unbounded diameter starting at any vertex.
  \item If the set of vertices of degree at least three in $G$ has unbounded diameter, then $G$ embeds an induced comb or an induced kite, having  unbounded diameter and infinitely many vertices of degree $3$.
\end{enumerate}
\end{theorem}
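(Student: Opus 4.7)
The plan is a two-layer argument: first a König/compactness step to produce an infinite induced path from shortest paths of arbitrary length, then a Ramsey-type classification of how the extra degree-at-least-three vertices can attach to it, constrained by the fact that $G$ is an incomparability graph.

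For (1), fix any starting vertex $v$. Since $G$ is connected and of unbounded diameter, for every $n$ there is a vertex $w_n$ with $d(v,w_n)\ge n$; choose a shortest $v$-to-$w_n$ path $P_n$, which is automatically an induced path of length at least $n$. Build an infinite induced path $v=y_0,y_1,y_2,\dots$ by compactness on the BFS tree rooted at $v$: at stage $k$, assuming infinitely many $P_n$ begin with the prefix $y_0,\dots,y_k$, apply pigeonhole on the countable set of neighbours of $y_k$ in the next BFS-level to extract a common extension $y_{k+1}$ shared by infinitely many of the surviving $P_n$. By construction every finite prefix $y_0,\dots,y_k$ is a shortest path in $G$, so it is induced; hence the whole infinite path is induced, and it has unbounded diameter since it contains vertices at arbitrarily large distances from $v$.

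For (2), assume $V_{\ge 3}(G)$ has unbounded diameter. Starting from the infinite induced path $P=x_0,x_1,\dots$ delivered by (1) applied at a suitable vertex, use connectivity and the unbounded diameter of $V_{\ge 3}$ to arrange that infinitely many $x_i$ either lie in $V_{\ge 3}(G)$ themselves or are within uniformly bounded distance of a vertex of $V_{\ge 3}(G)$. For each extra vertex $y\notin P$ that witnesses the high degree, record the set $N(y)\cap V(P)$ of its neighbours on $P$. The key structural fact (justified below) is that this set must be one of: a single vertex, two consecutive vertices $\{x_i,x_{i+1}\}$, three consecutive vertices $\{x_i,x_{i+1},x_{i+2}\}$, or two vertices at distance exactly two $\{x_i,x_{i+2}\}$. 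By pigeonhole, infinitely many $y$'s realize the same pattern at unbounded positions along $P$. A further Ramsey extraction thins the chosen $y$'s so that the non-sharing/spacing conditions required by the definitions of comb, and of kites of types $(1)$, $(2)$, $(3)$ are satisfied along consecutive $y_j$'s. The resulting induced subgraph is then a comb (in the first case) or a kite of the corresponding type, visibly of unbounded diameter and containing infinitely many vertices of degree $3$.

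The main obstacle is the structural restriction on attachments. One must show that in an incomparability graph a vertex $y$ off an induced path $P$ cannot attach to $P$ in a pattern outside the four above: if $y$ were adjacent to $x_i$ and $x_j$ with $j-i\ge 3$ but to no intermediate $x_k$, then transitivity of the underlying poset, applied to the alternating comparabilities and incomparabilities along $P$ between $x_i$ and $x_j$, forces either an additional adjacency of $y$ to some intermediate $x_k$ or a contradiction with the induced character of $P$. Iterating this forbidden-configuration analysis reduces $N(y)\cap V(P)$ to an interval of length at most $3$, or to a pair at distance exactly $2$, which is precisely the enumerated list. Carrying out this case analysis, and then combining it with the Ramsey extraction to enforce the spacing conditions in the kite definitions, is the heart of the argument.
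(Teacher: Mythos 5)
Your proposal has two genuine gaps, and both occur exactly at the points where the hypothesis that $G$ is an \emph{incomparability} graph has to do real work. (Note that the paper itself does not prove this statement: it imports it as Theorem 6 of \cite{pouzet-zaguia21}, and the machinery it reproduces from that paper --- order-convexity of balls as in Proposition \ref{cor:orderconvex}, the path lemma \ref{lem:inducedpath}, the oscillation distance --- shows the actual proof is order-theoretic throughout.) The first gap is in (1): your K\"onig/compactness step needs $G$ to be locally finite, and the incomparability graphs this theorem is applied to are very far from that --- in $\ainc(D)$, where $D=[0,2\pi[\times\ZZ$ is the poset at the center of the paper, every vertex has uncountably many neighbours. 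Pigeonhole on an infinite set of "holes" extracts nothing: the infinitely many surviving shortest paths through $y_k$ may all continue through pairwise distinct neighbours, and no extension $y_{k+1}$ need be shared by infinitely many of them. The failure is not repairable within pure graph theory, because your argument for (1) never uses the incomparability hypothesis, while the statement is false for general connected graphs of unbounded diameter: the tree obtained by gluing paths of lengths $1,2,3,\dots$ at a common endpoint is connected, has unbounded diameter, and contains no infinite path at all. (That tree is not an incomparability graph --- it contains the subdivided claw of Figure \ref{fig:critique3}, which is not one --- and exploiting this kind of obstruction is precisely what a correct proof must do.)

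The second gap is that the "key structural fact" on which (2) rests is false. A vertex off an induced path in an incomparability graph is not confined to your four attachment patterns. Take the standard realization of the path, $x_i<x_j$ if and only if $j\geq i+2$, and add $y$ with $x_0<y$ and $y<x_j$ for all $j\geq 5$: this relation is a partial order, the $x_i$ still induce a path in its incomparability graph, and $y$ is adjacent there to exactly $x_1,x_2,x_3,x_4$ --- four consecutive vertices; the same construction attaches $y$ to a consecutive block of any prescribed length. Alternatively, add $y$ with $x_1<y<x_3$ (whence $y<x_j$ for $j\geq 5$): then $y$ is adjacent to exactly $x_0,x_2,x_4$, three vertices pairwise at distance two. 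Your justification --- no induced cycles of length at least $5$ exist in an incomparability graph, so consecutive neighbours of $y$ along the path are at distance at most $2$ --- is correct as far as it goes, but it only bounds the \emph{gaps} between neighbours; it bounds neither their number nor the global pattern, so it cannot yield the four-case classification. Since combs and kites admit only those four attachment types, a correct proof of (2) must show how to avoid or prune the other (perfectly possible) attachments; that is where the order-theoretic analysis of \cite{pouzet-zaguia21} lies. In addition, even granting a classification, your final Ramsey extraction would still have to make the selected witnesses pairwise non-adjacent and enforce the spacing conditions in the kite definitions, which is asserted rather than argued.
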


With Theorem \ref{thm:infinitepath-kite}, the proof of Theorem \ref{thm:3} goes as follows.
\begin{proof}(of Theorem \ref{thm:3})
Since $\mathcal C$ is not wqo,  it embeds double-ended forks of unbounded length (Theorem \ref{thm:2}). Let $G$ be a graph with $\age(G)= \mathcal C$. We consider two  cases:\\
 $1)$ Some connected component of $G$, say $G_i$, embeds double forks of unbounded length. In this case, the detour of $G_i$, that is the supremum of the lengths of induced paths in $G_i$,  is unbounded. Since $G_i$ is the incomparability graph of a poset of width at most two, its diameter is unbounded  (See Corollary \ref{cor:detour}). In fact, since the vertices of degree $3$ in the forks are end vertices of induced paths, the diameter of the set of vertices of degree $3$ in $G_i$ is unbounded.  Thus from $(2)$ of Theorem \ref{thm:infinitepath-kite},  $G_i$ embeds an induced caterpillar or an induced kite  with infinitely many vertices of degree at least $3$.  Since $G$ is bipartite, it can only embeds a kite of type $(3)$. As it is easy to see, this caterpillar or that kite embeds a direct sum $\bigoplus_{i\in I} \mathrm {DF}_i$ of double-ended forks of arbitrarily large length, as required. \\
 $2)$ If the first case does not hold, there are infinitely many connected components $G_i$, each embedding some double-ended fork $\mathrm {DF}_i$, and the length of these double-ended forks is unbounded. This completes the proof of Theorem \ref{thm:3}.
\end{proof}

The existence  of an infinite antichain in a hereditary class of posets of width two  is equivalent to many other statements. See Proposition \ref{prop:well-founded}  and Corollary \ref{cor:subclasses} in Section \ref{section:testclass}.
 For other classes these equivalences remain conjectures.

Finally, we look at a notion of minimality related to the notion of primality.
We recall that a binary relational structure is {\it prime} (or {\it indecomposable}) if it has no nontrivial modules (see the definition in Section \ref{section:minimality}). A hereditary class $\mathcal{C}$ of finite binary structures  is \emph{minimal prime} if it contains infinitely many prime structures and every proper hereditary subclass has only finitely many prime structures. This  notion appears in the thesis of D. Oudrar \cite{oudrar}, where it is shown that  a  hereditary class which is minimal prime is an age (see Theorem \ref{thm:main1}) and several examples are studied (see Chapter 6 of \cite{oudrar}).

We describe the minimal prime ages associated to posets of width at most two (see the proofs in Section \ref{section:minimality}).

Let $\mathrm P_{\infty}$ be  the one way infinite path. Let  $H$ be the  \emph{half-complete bipartite graph}, that is the graph made of two disjoint independent sets $A:=\{a_i : i\in \NN\}$ and $B:=\{b_i : i\in \NN\}$, where a   pair $\{a_i,b_j\}$ is an edge in $H$ if $i\leq j$. The graphs $H$ and $\mathrm P_{\infty}$ are comparability graphs of the posets represented in Figure \ref {fig:minimalage} as well as incomparability graphs of the posets $P_1$ and $P_2$  represented in Figure \ref{Fig3}.

 \begin{theorem}\label{thm:minimalprimegraph} Let $\mathcal C$ be a hereditary class made of finite posets of width at most two, respectively, of incomparability graphs of posets of width at most two. Then  $\mathcal C$  is  minimal prime if and only if  $\mathcal C$  is either the age
of the posets $P_1$ or $P_2$, or the age of their incomparability graphs, namely $H$ and $\mathrm P_{\infty}$.
\end{theorem}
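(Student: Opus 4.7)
My plan is to prove the equivalence by handling each direction separately: first showing that each of the four listed ages is indeed minimal prime, and then showing that no other hereditary class satisfying the hypotheses meets the definition.

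For the forward direction, I would in each case identify an infinite chain of primes that is cofinal in the age. A direct modular-decomposition check shows that the primes on at least four vertices in $\age(\mathrm P_\infty)$ are exactly the finite paths $\mathrm P_n$ with $n\geq 4$ (in a disjoint union of two paths each component is a module, and the endpoints of $\mathrm P_3$ are twins). A similar check identifies the primes in $\age(H)$ as an infinite chain of ``staircase'' bipartite subgraphs cofinal in the age. For the poset ages $\age(P_1)$ and $\age(P_2)$, primality of a width-two poset is controlled by the indecomposability of its incomparability graph together with the orientation data, which reduces the analysis to the graph side. With the primes arranged as a chain $B_1\leq B_2\leq\cdots$, any proper hereditary subclass $\mathcal C\cap\forb(F)$ with $F\in\mathcal C$ finite must exclude all but finitely many of the $B_n$, verifying minimal primality.

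For the converse, let $\mathcal C$ be a minimal prime hereditary class satisfying the hypotheses. By Theorem \ref{thm:main1}, $\mathcal C=\age(R)$ for some infinite countable $R$. The key consequence of minimal primality is a pigeonhole principle: for every finite $A\in\mathcal C$, the subclass $\mathcal C\cap\forb(A)$ is proper hereditary, hence contains only finitely many primes, so infinitely many primes of $\mathcal C$ must embed $A$. Using Gallai decomposition together with the width-two hypothesis, any prime of $\mathcal C$ on at least four vertices has connected incomparability graph, so all but finitely many primes of $\mathcal C$ lie in a single connected component of $R$; minimality lets us assume $R$ itself has connected incomparability graph. The universality theorem of \cite{pouzet78} (Th\'eor\`eme II.4.1) then embeds $R$ into $\QQ_{\pi,2}$, yielding a multichainable presentation of $R$ indexed by a countable subchain $L$ of $\QQ$ with fibre $V$ of size two.

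The main obstacle is turning the pigeonhole condition into the classification. I would argue that $L$ must be order-isomorphic to $\omega$: a densely ordered subchain of $\QQ$ would furnish a proper cofinal sub-presentation of $R$ whose age still contains infinitely many primes of $\mathcal C$, contradicting minimality, and a $\ZZ$-type indexing would admit an analogous proper half-line sub-presentation. Once $L\cong\omega$ and $|V|=2$, the connection pattern between the two fibres is encoded by a single monotone function $\omega\to\omega$ (recording, for each point of one fibre, the threshold above which it is comparable with points of the other fibre in $\QQ_{\pi,2}$), and a direct case analysis distinguishes two essentially different regimes: the ``path'' regime, which yields $\mathrm P_\infty$ or its width-two transitive orientation $P_2$, and the ``half-complete'' regime, which yields $H$ or its orientation $P_1$. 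The most delicate step will be the reduction of non-$\omega$ indexings, which I expect to require the explicit construction of defect subages and a careful check that the required infinite family of prime extensions of some finite $A\in\mathcal C$ cannot survive.
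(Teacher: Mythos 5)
Your forward direction is essentially the paper's argument: the primes in $\age(\mathrm P_\infty)$ on at least four vertices are the finite paths, the primes in $\age(H)$ are the finite half-graphs (the paper gets this chain from Schmerl--Trotter's classification of critically prime graphs, which you should cite rather than wave at with ``a similar check''), and the cofinal-chain-of-primes argument correctly shows every proper hereditary subclass has only finitely many primes. The transfer to the poset ages via primality of the incomparability graph is also the paper's route (Theorem \ref{thm:minimalprime} and Lemma \ref{lem:iteration}), though you skip the condition $\downarrow \prim(\age(P))=\age(P)$ that the paper needs for this transfer.

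The converse, however, has a genuine gap. Where the paper simply invokes its earlier classification (Theorem 2 of \cite{pouzet-zaguia2009}: every infinite prime graph with no infinite clique embeds one of four graphs, of which only $\mathrm P_\infty$ and $H$ are incomparability graphs, whence $\age(\mathrm P_\infty)\subseteq\mathcal C$ or $\age(H)\subseteq\mathcal C$ and minimality forces equality), you attempt to rederive this structure theory from the universal poset, and two steps fail. First, an embedding of $R$ into $\QQ_{\pi,2}$ only realizes $R$ as an arbitrary subset of $\QQ\times\{0,1\}$; it does not produce a presentation on a product set $L\times V$, so you do not obtain a ``multichainable presentation with fibre of size two'' at all. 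Second, and more seriously, your reduction to $L\cong\omega$ confuses proper induced substructures with proper hereditary subclasses: minimal primality constrains only the latter, and a proper substructure of $R$ can have the \emph{same} age as $R$, in which case no contradiction with minimality arises. Concretely, the two-way infinite path is a prime graph whose age is $\age(\mathrm P_\infty)$, a minimal prime class; its indexing is $\ZZ$-like, and its ``half-line sub-presentation'' (the one-way infinite path) has exactly the same age. So your claimed contradiction evaporates, and the claim ``$L$ must be $\omega$'' is false for the structure $R$ produced by Theorem \ref{thm:main1}. Finally, the closing dichotomy between the ``path regime'' and the ``half-complete regime'' is asserted, not proven; that dichotomy is precisely the nontrivial content of the cited classification, so as it stands the core of the converse is missing.
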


The second part of the result was obtained by Oudrar \cite{oudrar} Lemma 6.1 page 107.

\begin{figure}[ht]
\begin{center}
\leavevmode \epsfxsize=4in \epsfbox{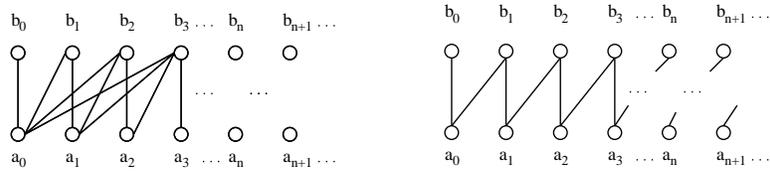}

\end{center}
\caption{The graphs $H$ and $\mathrm P_{\infty}$}
\label{fig:minimalage}

\end{figure}

%
%

\begin{figure}[h]
\begin{center}
\leavevmode \epsfxsize=2.5in \epsfbox{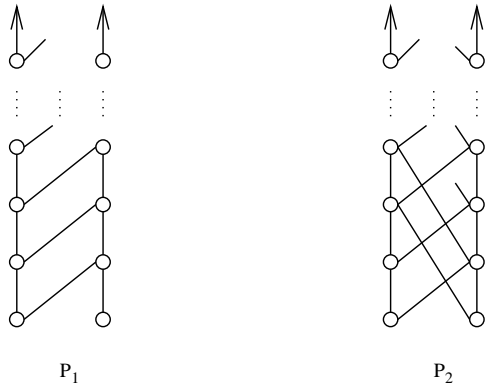}
\end{center}
\caption{Two posets $P_1$ and $P_2$ of width $2$.}
\label{Fig3}
\end{figure}

If $B:= (V, (\leq_1, \leq_2))$ is a bichain, the   \emph{transpose} is $B^{t}:=(V, (\leq_2, \leq_1))$; the \emph{dual} is $B^{*}:= (V, (\leq_1^*, \leq_2^*))$. We set  $o(B):= (V, \leq_1\cap \leq_2)$. Let $B_1:= (\NN, (\leq, \leq_{\omega+\omega}))$ and $B_2:= (\NN, (\leq, \leq_{\omega}))$   be the bichains defined as follows: the order $\leq$ is the natural order on the set $\NN$ of nonnegative integers  while $\leq_{\omega+\omega}$ consists in putting  the odd integers in their natural order before all even integers in their natural order, $\leq_{\omega}$  is defined by $i \leq_{\omega}j$ if $i \leq_{\sigma}j$ where $\sigma $ is the permutation of $\NN$ defined by $\sigma(0):=1$, $\sigma (1):=3$ and for $n\geq 1$, $\sigma(2n):= 2n-2$ and $\sigma(2n+1):= 2n+3$. In fact, we have   $2<_{\omega}0<_{\omega}4<_{\omega} 1 <_{\omega} \cdots  <_{\omega} 2k<_{\omega} 2k-3< \cdots$. Observe that $o(B_1)= P_1$ and $o(B_2):=P_2$. These bichains and the corresponding posets are represented Figures \ref{representationomega-omega} and \ref{representation-plane}.


 \begin{figure}[h]
\begin{center}
\leavevmode \epsfxsize=3.5in \epsfbox{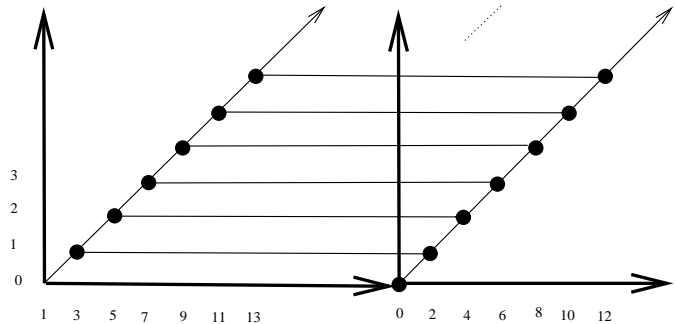}
\end{center}
\caption{A representation  of $B_1$ and  $P_1$}
\label{representationomega-omega}
\end{figure}

 \begin{figure}[h]
\begin{center}
\leavevmode \epsfxsize=3in \epsfbox{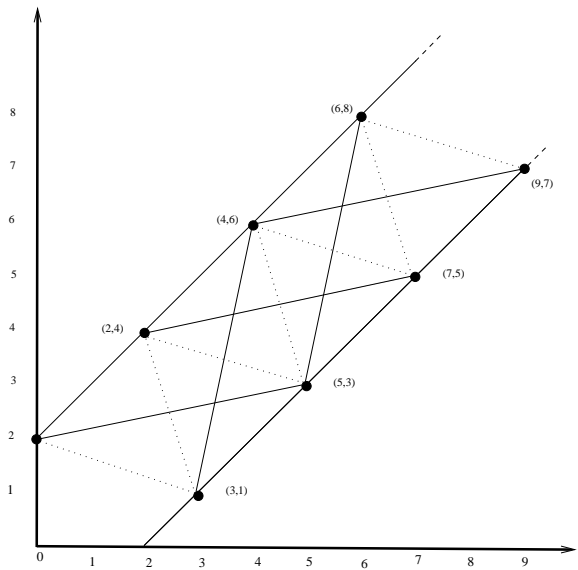}
\end{center}
\caption{A representation of  $P_2$ and $B_2$}
\label{representation-plane}
\end{figure}

\begin{theorem}\label{thm:5} Let $\mathcal C$ be a minimal prime class  of finite bichains $B$ such that the poset $o(B)$ has width at most two.  Then $\mathcal C$  is either the age of one of the  bichains $B_1$, $B_1^t$ or $B_2$.
\end{theorem}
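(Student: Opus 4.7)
The plan is to establish the theorem in three steps: verifying that $\age(B_1)$, $\age(B_1^t)$ and $\age(B_2)$ are themselves minimal prime bichain classes, showing these exhaust the possibilities via a reduction to Theorem \ref{thm:minimalprimegraph}, and then lifting from posets back to bichains.

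First I would check directly that each of $\age(B_1)$, $\age(B_1^t)$, $\age(B_2)$ contains infinitely many prime bichains and that every proper hereditary subclass contains only finitely many prime bichains. For the half-graph bichain $B_1$ (and its transpose), the initial sub-bichains $B_1\restriction_{\{0,\dots,n\}}$ are easily seen to be prime for all sufficiently large $n$, and the very rigid staircase structure of $B_1$ forces any finite bichain $F \in \age(B_1)\setminus \mathcal D$ to appear in every sufficiently large prime sub-bichain; consequently a proper hereditary subclass $\mathcal D$ contains only finitely many prime members. The argument for $B_2$ is analogous, the half-graph pattern being replaced by the path-like pattern. This step is essentially the bichain counterpart of the corresponding fact for posets used in Theorem \ref{thm:minimalprimegraph}.

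Next, let $\mathcal C$ be an arbitrary minimal prime class of bichains with $o(B)$ of width at most two for $B\in\mathcal C$. By Theorem \ref{thm:main1} write $\mathcal C=\age(B)$ for some countable bichain $B$, and put $P:=o(B)$, a countable poset of width at most two. Then $o\langle\mathcal C\rangle=\age(P)$. The crucial reduction is to show that $\age(P)$ (or, failing that, its incomparability graph class $\ainc\langle o\langle\mathcal C\rangle\rangle$) contains $\age(P_1)$ or $\age(P_2)$ (respectively $\age(H)$ or $\age(\mathrm P_\infty)$). Since $\mathcal C$ has infinitely many prime bichains and primality of a bichain forces $\leq_1\neq\leq_2$, the associated posets $o(B')$ are never chains, and their incomparability graphs have unbounded size. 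Passing to the incomparability graph (where modules of the graph lift to modules of any bichain encoding, thanks to width two) yields infinitely many prime incomparability graphs in $\ainc\langle\age(P)\rangle$; Theorem \ref{thm:minimalprimegraph} then forces $H\leq\inc(P)$ or $\mathrm P_\infty\leq\inc(P)$, whence $P_1\leq P$ or $P_2\leq P$.

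Finally, I would lift the embedding $P_i\hookrightarrow P=o(B)$ to the bichain level. The two linear orders $\leq_1$ and $\leq_2$ of $B$ restrict on the image of $P_i$ to a pair of linear extensions whose intersection is $P_i$. A direct combinatorial classification of linear extensions shows that $P_1$ admits, up to isomorphism, exactly two such ordered pairs of extensions, realized by $B_1$ and $B_1^t$, while $P_2$ admits essentially one, realized by $B_2$ (the apparent additional possibilities being identified under the path symmetries and self-duality of $P_2$). A compactness/König-lemma argument then extracts a copy of $B_1$, $B_1^t$ or $B_2$ as a sub-bichain of $B$, so $\age(B_1)\subseteq\mathcal C$, $\age(B_1^t)\subseteq\mathcal C$, or $\age(B_2)\subseteq\mathcal C$; combined with Step~1 and minimality of $\mathcal C$, equality follows.

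The main obstacle is the middle step: a prime bichain may very well have an $o$-image that is not a prime poset (e.g.\ the bichain coding a permutation can be prime while its intersection poset is an antichain). Circumventing this requires that the width-two hypothesis be used to tie bichain-modules to incomparability-graph-modules tightly enough that infinitely many prime bichains in $\mathcal C$ do produce a minimal prime poset (or graph) subclass inside $\age(P)$ rather than being dispersed across structurally unrelated patterns.
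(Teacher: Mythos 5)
Your overall skeleton (reduce to the graph--level classification of Theorem \ref{thm:minimalprimegraph}, then lift back through unique realizability) is the same as the paper's, but there are genuine gaps, and the one you flag as the ``main obstacle'' is not an obstacle at all. The transfer from prime bichains to prime posets is exactly Theorem \ref{zaguia} (from \cite{zaguia98}, quoted and used in the paper): $o(B)$ is prime \emph{if and only if} $B$ is prime. Your purported counterexample is false: if $o(B)$ is an antichain on at least three elements, then $\leq_2$ must reverse every $\leq_1$-comparability, so $\leq_2=\leq_1^*$, and then every $\leq_1$-interval is a module of $B$; such a bichain is never prime. The paper packages this transfer, together with the transfer of minimal primality between the ages, into Theorem \ref{thm:minimalprime}(2) (proved via the free-operator Lemma \ref{lem:iteration}), after which the argument is short: $\mathcal C=\age(B)$ with $B$ prime by Theorem \ref{thm:main1}, so $\age(o(B))$ is minimal prime, so it equals $\age(P_1)$ or $\age(P_2)$ by Theorem \ref{thm:minimalprimegraph}, and unique realizability (Corollary \ref{cor:elzahar-sauer-zaguia}) pins down $\age(B)$.

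The second, unfixed-as-written, gap is your repeated passage from containment of ages to embeddings of infinite structures. From ``$\ainc\langle \age(P)\rangle$ contains $\age(H)$ or $\age(\mathrm P_{\infty})$'' one cannot conclude $H\leq \ainc(P)$ or $\mathrm P_{\infty}\leq \ainc(P)$ by a compactness or K\"onig-type argument: a graph can embed arbitrarily long finite induced paths without embedding $\mathrm P_{\infty}$ (a direct sum of finite paths does), just as $\age(\omega^*)\subseteq\age(\omega)$ while $\omega^*\nleq\omega$. What makes the conclusion true here is that $\ainc(P)$ is itself an \emph{infinite prime} graph with no infinite clique (again by Theorem \ref{zaguia} and Theorem \ref{kelly}), so Theorem 2 of \cite{pouzet-zaguia2009} applies directly and yields the embedding of $H$ or $\mathrm P_{\infty}$; this is precisely how Claim \ref{claim:minimalprime} proceeds, and the same repair is needed for your final extraction of a copy of $B_1$, $B_1^t$ or $B_2$ inside $B$. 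In addition, your duality bookkeeping is off: Corollary \ref{proposition:embed-poset-graph-prime} only gives $P_1\leq P$ \emph{or} $P_1\leq P^*$, and $P_2$ is \emph{not} self-dual (it has two minimal elements and no maximal element), so the dual cases do not vanish by symmetry; to dispose of them one must verify that the relevant ages are closed under duality, i.e.\ $\age(B_1^*)=\age(B_1)$, $\age((B_1^t)^*)=\age(B_1^t)$ and $\age(B_2^*)=\age(B_2)=\age(B_2^t)$, a verification absent from your Steps 1 and 3. The paper's proof avoids all of this by working with ages throughout and never embedding one infinite structure into another.
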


In \cite{pouzet-zaguia2009},  we called a binary relational structure  $R$  \emph{minimal prime} if $R$ is prime and every prime induced structure  with the same cardinality as $R$ embeds a copy of $R$. We described the countable minimal prime graphs with no infinite clique (Theorem 2, p.358). From  our description follows that  a  countable graph $G$ which is the incomparability graph of a poset of width two  is minimal prime if $G$ is either isomorphic to the one way infinite path $\mathrm P_{\infty}$ or to the half-complete bipartite graph $H$. Thus,  $\age(G)$ is minimal prime. The question to know wether or not the age of a relational structure is minimal prime whenever $R$ is minimal prime was considered by Oudrar (\cite{oudrar} Probl\`eme 1, page 99). Her and our  results suggest  that the answer is positive.

Countable prime bichains, and notably those which are perpendicular, have been studied by several authors in several papers \cite{sauer-zaguia, laflamme-pouzet-sauer-zaguia, delhomme-zaguia,delhomme2}. A description of minimal prime bichains, posets and graphs is not known. A characterisation of minimal prime ages of graphs, posets and bichains can be found in \cite{oudrar-pouzet-zaguia}.

\section{Prerequisites}
\subsection{Graphs and  posets}
This paper is about  graphs and posets. Sometimes, we will need  to consider binary relational structures, that is ordered pair $R:=(V,(\rho_{i})_{i\in I})$ where each $\rho_i$ is a binary relation on $V$. The framework of this study is the theory of relations as developed by Fra\"{\i}ss\'e.  At the core  is the notion of embeddability, a quasi-order between relational structures. Several  important notions in the study of these structures, like hereditary classes, ages, bounds, derive from this quasi-order and will be defined when needed. For a wealth of information on these notions see \cite{fraissetr}.

\subsubsection{Graphs}Unless otherwise stated, the graphs we consider are undirected, simple and have no loops. That is, a {\it graph} is a
pair $G:=(V, E)$, where $E$ is a subset of $[V]^2$, the set of $2$-element subsets of $V$. Elements of $V$ are the {\it vertices} of
$G$ and elements of $ E$ its {\it edges}. The {\it complement} of $G$ is the graph $G^c$ whose vertex set is $V$ and edge set
${\overline { E}}:=[V]^2\setminus  E$. If $A$ is a subset of $V$, the pair $G_{\restriction A}:=(A,  E\cap [A]^2)$ is the \emph{graph
induced by $G$ on $A$}.
A \emph{path} is a graph $\mathrm P$ such that there exists a one-to-one map $f$ from the set $V(\mathrm P)$ of its vertices into an
interval $I$ of the chain $\ZZ$ of integers in such a way that $\{u,v\}$ belongs to $E(\mathrm P)$, the set of edges of $\mathrm P$,  if and only if $|f(u)-f(v)|=1$ for every $u,v\in V(\mathrm P)$.  If $I=\{1,\dots,n\}$, then we denote that path by $\mathrm P_n$; its \emph{length} is $n-1$ (so, if $n=2$, $\mathrm P_2$ is made of a single edge, whereas if $n=1$, $\mathrm P_1$ is a single vertex. A graph is $\mathrm P_n$-\emph{free} if it has no $\mathrm P_n$ as an induced subgraph. A \emph{cycle}
is obtained from a finite path $\mathrm P_n:= \{v_{1},...,v_{n}\}$ by adding the edge $\{v_{n},v_{1}\}$ and $n\geq 2$. The integer $n$ is called the
\emph{length} of the cycle. If $G:= (V, E)$ is  a graph, and $x,y$ are two vertices of $G$, we denote by $d_G(x,y)$ the length of the shortest path joining $x$ and $y$ if any, and $d_G:= \infty$ otherwise. This defines a distance on $V$, the \emph{graphic distance}.
The \emph{diameter} of $G$, denoted by  $\delta_{G}$, is the supremum of the set of $d_G(x,y)$ for $x,y\in V$. If $A$ is a subset of $V$, the graph $G'$ induced by $G$ on $A$  is an \emph{isometric subgraph} of $G$  if $d_{G'}(x,y)=d_G(x,y)$ for all $x,y\in A$. The supremum of the length of induced finite paths of $G$, denoted by $D_G$,  is sometimes called the \emph{detour} of $G$ \cite{buckley-harary}. We denote by $D_G(x,y)$ the supremum of the length of the induced path joining $x$ to $y$. Evidently, $d_G(x,y)\leq D_G(x,y)$. In Proposition \ref{prop:oscillation} of Subsection \ref{posetswidth2} we give an upper bound for $D_G$, when $G$ is a permutation graph.

\subsubsection{Posets}Throughout, $P :=(V, \leq)$ denotes an ordered set (poset), that is
a set $V$ equipped with a binary relation $\leq$ on $V$ which is
reflexive, antisymmetric and transitive. We say that two elements $x,y\in V$ are \emph{comparable} if $x\leq y$ or $y\leq x$, otherwise,  we say they are \emph{incomparable}. The \emph{dual} of $P$ denoted $P^{*}$ is the order defined on $V$ as follows: if $x,y\in
V$, then $x\leq y$ in $P^{*}$ if and only if $y\leq x$ in $P$. Let $P :=(V, \leq)$ be a poset. A set of pairwise comparable elements is called a \emph{chain}. On the other hand, a set of pairwise incomparable elements is called an \emph{antichain}. The \emph{width} of a poset is the maximum cardinality  of its antichains (if the maximum does not exist, the width is set to be infinite). Dilworth's celebrated theorem on finite posets \cite{dilworth} states that the maximum cardinality of an antichain in a finite poset equals the minimum number of chains needed to cover the poset. This result remains true even if the poset is infinite but has finite width. If  the poset $P$ has width $2$ and the incomparability graph of $P$ is connected, the partition of $P$ into two chains is unique (picking  any vertex $x$,  observe that the set of  vertices at odd distance from $x$ and the set of vertices at even distance from $x$ form a partition into two chains). This paper is essentially about those posets.   According to  Szpilrajn \cite{szp},  every order on  a set  has a linear extension. Let $P:=(V,\leq)$ be a poset.  A \emph{realizer} of $P$ is a family $\mathcal{L}$ of linear extensions of the order of $P$ whose intersection is the order of $P$. Observe that the set of all linear extensions of $P$ is a realizer of $P$. The \emph{dimension} of $P$, denoted $dim(P)$, is the least cardinal $d$ for which there exists a realizer of cardinality $d$ \cite{dushnik-miller}. It follows from the  Compactness Theorem of First Order Logic that an order is intersection of  at most $n$ linear orders ($n\in \NN$) if and only if every finite restriction of the order has this property. Hence the class of posets with dimension at most $n$ is determined by a set of finite obstructions, each   obstruction is a poset $Q$  of dimension $n+1$  such that  the deletion of any element of $Q$ leaves a poset of dimension $n$; such a poset is said \emph{critical}.  For $n\geq 2$ there are infinitely many critical posets of dimension $n+1$. For $n=2$ they have been  described by Kelly \cite{kelly77}; beyond, the task is considered as hopeless.

We present in Figure \ref{fig:critique3} the critical bipartite posets of dimension $3$ extracted from  the list of Kelly. The comparability graph of the first one is commonly called a  \emph{spider}.
\begin{figure}[h]
\begin{center}
\leavevmode \epsfxsize=3in \epsfbox{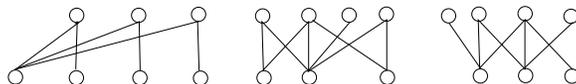}
\end{center}
\caption{Critical bipartite posets  of dimension three.} \label{fig:critique3}
\end{figure}
\subsubsection{Comparability and incomparability graphs, permutation graph}The \emph{comparability graph}, respectively the \emph{incomparability graph}, of a poset $P:=(V,\leq)$ is the undirected graph, denoted by $Comp(P)$, respectively $Inc(P)$, with vertex set $V$ and edges the pairs $\{u,v\}$ of comparable distinct vertices (that is, either $u< v$ or $v<u$) respectively incomparable vertices. A graph $G:= (V, E)$ is a \emph{comparability graph} if the edge set is the set of comparabilities of some order on $V$. From the Compactness Theorem of First Order Logic, it follows that a graph is a comparability graph if and only if every finite induced subgraph is a comparability graph. Hence, the class of comparability graphs is determined by a set of finite obstructions. The complete list of minimal obstructions was determined by Gallai \cite{gallai}. A graph $G:= (V, E)$ is a \emph{permutation graph} if there is a total order $\leq $ on $V$ and a permutation $\sigma$ of $V$ such that the edges of $G$ are the pairs  $\{x, y\}\in [V]^2$ which are reversed by $\sigma$. Denoting by $\leq_{\sigma}$ the set of oriented pairs $(x, y)$ such that $\sigma(x) \leq \sigma (y)$, the graph is the comparability graph of the poset whose order is the intersection of $\leq$ and the opposite of $\leq_{\sigma}$.  Hence, a permutation graph is the comparability graph of  an order intersection of two total orders, that is the comparability graph of an order of dimension at most two \cite{dushnik-miller}. The converse does not hold in general. It holds, if the graph is finite. As it is well known, a finite graph $G$ is a permutation graph if and only if $G$ and $\overline G$ are comparability graphs \cite{dushnik-miller}; in particular, a finite graph  is a permutation graph if and only if its complement is a permutation graph.  Via the Compactness Theorem of First Order Logic, an infinite graph is the comparability graph  of  a poset intersection of two total orders if an only if each finite induced graph is a permutation graph (sometimes these graphs are called permutation graphs, while there is no possible permutation involved). For more about permutation graphs, see \cite{klazar}.

\subsubsection{Lexicographical sum}\label{subsection:lex}Let $I$ be a poset such that $|I|\geq 2$ and let $\{P_{i}:=(V_i,\leq_i)\}_{i\in I}$ be a family of pairwise disjoint nonempty posets
that are all disjoint from $I$. The \emph{lexicographical sum} $\displaystyle \sum_{i\in I} P_{i}$ is the poset defined on
$\displaystyle \bigcup_{i\in I} V_{i}$ by $x\leq y$ if and only if
\begin{enumerate}[(a)]
\item There exists $i\in I$ such that $x,y\in V_{i}$ and $x\leq_i y$ in $P_{i}$; or
\item There are distinct elements $i,j\in I$ such that $i<j$ in $I$,   $x\in V_{i}$ and $y\in V_{j}$.
\end{enumerate}

The posets $P_{i}$ are called the \emph{components} of the lexicographical sum and the poset $I$ is the \emph{index set}.
If $I$ is a totally ordered set, then $\displaystyle \sum_{i\in I} P_{i}$ is called a \emph{linear sum}. On the other hand, if $I$ is
an antichain, then $\displaystyle \sum_{i\in I} P_{i}$ is called a \emph{direct sum}  and will be denoted by $\bigoplus_{i\in I}P_i$ instead.

The decomposition of the incomparability graph of a poset into connected components is expressed in the following lemma which belongs
to the folklore of the theory of ordered sets.

\begin{lemma}\label{lem:folklore} If $P:= (V, \leq)$ is a poset, the order on $P$ induces a total order on the set $Connect(P)$
of connected components of $\Inc(P)$ and $P$ is the lexicographical sum of these components indexed by the chain $Connect(P)$. In
particular, if $\preceq$ is a total order extending the order $\leq$ of $P$, each connected component $A$ of $\Inc(P)$ is an interval of
the chain $(V, \preceq)$.
\end{lemma}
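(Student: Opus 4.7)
The plan hinges on a single rigidity observation: for any two distinct connected components $A$ and $B$ of $\Inc(P)$, all elements of $A$ are comparable in $P$ to all elements of $B$, and moreover all such comparabilities point the same way. To establish this I would fix $b \in B$ and pick two elements $a, a' \in A$ joined by an edge of $\Inc(P)$, so incomparable in $P$. Both $a$ and $a'$ are comparable to $b$ since they lie outside the component of $b$; if they lay on opposite sides of $b$ one would have $a < b < a'$ or $a' < b < a$, which would contradict $a \parallel a'$ by transitivity. An induction along an arbitrary path in the subgraph of $\Inc(P)$ induced by $A$ extends this to all of $A$, and the symmetric argument run inside $B$ shows that the direction is independent of the choice of $b$ as well.

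With rigidity in hand, I would define a total order on $\Connect(P)$ by declaring $A \leq B$ when some---equivalently every---$a \in A$ satisfies $a \leq b$ for some---equivalently every---$b \in B$. Totality follows immediately from the fact that every cross-component pair is comparable; transitivity reduces to applying the transitivity of $\leq$ on $P$ to sample representatives $a \in A$, $b \in B$, $c \in C$. The lexicographic-sum description is then simply an unpacking of the definition: for $x \in A$ and $y \in B$, the relation $x \leq y$ in $P$ is equivalent to $A < B$ when $A \neq B$, and to the order inherited on $A$ when $A = B$. This matches verbatim the defining clauses (a) and (b) of $\sum_{A \in \Connect(P)} P_{A}$, where each $P_A$ is $P$ restricted to $A$.

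For the last clause, fix any linear extension $\preceq$ of $\leq$ and suppose toward a contradiction that some component $A$ fails to be an interval of $(V, \preceq)$. Then there exist $a_1, a_2 \in A$ and $y \notin A$ with $a_1 \preceq y \preceq a_2$. The element $y$ belongs to some component $B \neq A$, so the rigidity observation forces all of $A$ to lie on a single side of $y$ in $P$. Because $\preceq$ extends $\leq$, it also forces all of $A$ on a single side of $y$ in $\preceq$, contradicting the existence of $a_1$ and $a_2$ straddling $y$ in $\preceq$.

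The only genuinely substantive step is the rigidity observation; once it is in place the rest is formal, and no machinery beyond a basic path argument inside $\Inc(P)$ is needed.
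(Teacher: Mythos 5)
Your proof is correct. The paper states this lemma as folklore and supplies no proof of its own, so there is nothing to compare against: your rigidity observation (elements of distinct components of $\Inc(P)$ are comparable, and a path induction inside each component together with transitivity forces all comparabilities between two fixed components to point the same way) is precisely the standard argument this folklore statement rests on, and the total order on $\Connect(P)$, the lexicographic-sum decomposition, and the interval property under any linear extension all follow formally from it exactly as you describe.
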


\subsubsection{Initial segment, ideal, final segment, filter, convex set} An \emph{initial segment} of a  poset $P:= (V, \leq)$ is any subset $I$ of $V$ such that $x\in V$, $y\in I$ and $x\leq y$ imply $x\in I$. An \emph{ideal} is
any nonempty  initial segment $J$ of $P$ which is up-directed (that is $x, y\in J$ implies  $x,y\leq z$ for some $z\in J$). If $X$ is a
subset of $V$, the set $\downarrow X:=\{y\in V: y\leq x \; \text{for some}\; x\in X\}$ is the least initial segment containing $X$, we
say that it is \emph{generated} by $X$. If $X$ is a one element set, say $X=\{x\}$, we denote by $\downarrow x$, instead of $\downarrow X$,
this initial segment and say that it is \emph{principal}.
We denote by $\mathbf {I}(P)$, resp. $\mathbf {Id}(P)$,  the set of initial segments, resp. ideals,  of $P$, ordered by inclusion.
\emph{Final segments} and \emph{filters} of $P$ are defined as initial segments and ideals of $P^{*}$.

A subset $X$ of $P$ is \emph{order convex} or \emph{convex} if for all $x,y\in X$, $[x,y]:=\{z : x\leq z\leq y\}\subseteq X$. For instance, initial and final segments are convex. Also, any intersection of convex sets is also convex. In particular, the intersection of all convex sets containing $X$, denoted $Conv_P(X)$, is convex. This is the smallest convex set containing $X$. Note that
\[Conv_P(X)=\{z\in P : x\leq z\leq y \mbox{ for some } x,y\in X\}=\downarrow X\cap \uparrow X.\]
\subsubsection{Order and metric convexities}

We compare the notions of order convexity and metric convexity with respect to the distance on the incomparability  graph of a poset. We present several results that we borrow  from \cite{pouzet-zaguia21} to prove Lemmas \ref{lem3:doublefork},  \ref{lem1:degreethree} and  \ref{lem2:degreethree}.

Let $G:=(V,E)$ be a graph. We equip it with the graphic distance $d_G$. A \emph{ball} is any subset $B_G(x, r):= \{y\in V: d_G(x,y)\leq r\}$ where $x\in V, r\in \NN$. A subset of $V$ is \emph{convex} with respect to the distance $d_G$ if this is an intersection of balls. The \emph{least convex subset} of $G$ containing $X$ is
\[Conv_{G}(X):=\displaystyle \bigcap_{X\subseteq B_G(x,r)}B_G(x,r).\]

Let $X\subseteq V$ and $r\in \NN$. Define
\[B_G(X,r):=\{v\in V : d_G(v,x)\leq r \mbox{ for some } x\in X\}.\]

From  Proposition \ref{cor:orderconvex} it will follow that every ball in a incomparability graph of a poset is order convex and that the graph induced on it is an  isometric subgraph.

We state an improvement  of I.2.2 Lemme, p.5 of \cite{pouzet78} (Lemma 31 of \cite{pouzet-zaguia21}).
\begin{lemma}\label{lem:inducedpath} Let $x,y$ be two vertices of a poset $P$ with $x<y$.  If $x_0, \dots, x_n$ is an induced path in the incomparability graph of $P$ from $x$ to $y$ then   $x_i< x_j$ for all $j-i\geq 2$.
\end{lemma}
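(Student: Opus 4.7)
The plan is to first establish that in any induced path in $\Inc(P)$, the comparability direction between pairs at graph-distance two is consistent all along the path, then to bootstrap this to all non-consecutive pairs, using the hypothesis $x<y$ only at the very end to fix the global direction.

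The first step is a local consistency claim: for any four consecutive vertices $x_i,x_{i+1},x_{i+2},x_{i+3}$ of the path, $x_i<x_{i+2}$ if and only if $x_{i+1}<x_{i+3}$. I would argue by contradiction. Since the path is induced, the pairs $\{x_i,x_{i+2}\}$, $\{x_{i+1},x_{i+3}\}$ and $\{x_i,x_{i+3}\}$ are comparable in $P$, while $\{x_i,x_{i+1}\}$ and $\{x_{i+2},x_{i+3}\}$ are not. Suppose $x_i<x_{i+2}$ and $x_{i+1}>x_{i+3}$. If $x_i<x_{i+3}$, transitivity with $x_{i+3}<x_{i+1}$ gives $x_i<x_{i+1}$, contradicting the incomparability of $x_i$ and $x_{i+1}$. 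If instead $x_i>x_{i+3}$, transitivity with $x_i<x_{i+2}$ gives $x_{i+3}<x_{i+2}$, contradicting the incomparability of $x_{i+2}$ and $x_{i+3}$. The reverse implication is obtained by passing to the dual poset.

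Iterating, all the comparabilities between $x_i$ and $x_{i+2}$ share a common sign $\varepsilon \in \{+,-\}$. I would then extend this sign to all non-consecutive pairs by induction on $j-i$. The base case $j-i=2$ is the previous step. For $j-i\geq 3$, the induction hypothesis applied to $(i,j-1)$ gives the comparison of $x_i$ and $x_{j-1}$ in the direction $\varepsilon$; if the comparison of $x_i$ and $x_j$ (which exists by distance $\geq 2$ in the path) went in the opposite direction, transitivity would force $x_{j-1}$ and $x_j$ to be comparable, contradicting their adjacency in $\Inc(P)$. Applying the conclusion to $(0,n)$ and using $x_0=x<y=x_n$ forces $\varepsilon=+$, which yields $x_i<x_j$ for every $j-i\geq 2$, as desired.

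The substantive step is the four-vertex case analysis; once it is done, the rest is a routine induction using only transitivity of $\leq$ and the definition of an induced path in the incomparability graph.
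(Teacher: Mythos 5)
Your proof is correct. Note that the paper itself gives no proof of Lemma \ref{lem:inducedpath}: it is quoted from Lemma 31 of \cite{pouzet-zaguia21} (as an improvement of I.2.2 Lemme of \cite{pouzet78}), so there is no in-paper argument to compare against; your proposal must therefore stand on its own, and it does. The four-vertex exchange step is sound: in either case of the comparability of $x_i$ and $x_{i+3}$, transitivity produces a comparability between two path-adjacent (hence incomparable) vertices, and the converse implication does follow by duality applied to the reversed path. The induction on $j-i$ then correctly propagates the common sign $\varepsilon$ to all non-consecutive pairs, since an opposite-sign comparability between $x_i$ and $x_j$ would, via the pair $(i,j-1)$, force $x_{j-1}$ and $x_j$ to be comparable. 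Finally, the hypothesis $x<y$ rules out $n\leq 1$ (for $n=0$ it gives $x=y$, for $n=1$ it contradicts incomparability of adjacent vertices), so the pair $(0,n)$ is available to pin down $\varepsilon=+$. For comparison, a more direct argument is an induction on $n$ that uses the endpoint hypothesis throughout: from $x_0<x_n$, inducedness and transitivity give $x_0<x_{n-1}$ and $x_1<x_n$, so both subpaths of length $n-1$ satisfy the hypothesis and the induction closes immediately. Your two-phase route is slightly longer but isolates a stronger structural fact that the direct induction does not: \emph{any} induced path in $\ainc(P)$, with no assumption on its endpoints, is either increasing or decreasing on all pairs at distance at least two, the endpoint condition serving only to select the orientation.
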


The following is Lemma 34 of \cite{pouzet-zaguia21}.

\begin{lemma}\label{lem:convex-boule}Let $P:=(V,\leq)$ be a poset and $G$ be its incomparability graph, $X\subseteq V$ and $r\in \NN$. Then
\begin{align}
&B_G(\downarrow X,r)=(\downarrow X)\cup B_G(X,r)=\downarrow B_G(X,r),\label{eq1}\\
&B_G(\uparrow X,r)=(\uparrow X)\cup B_G(X,r)=\uparrow B_G(X,r),\label{eq2}\\
&B_G(\uparrow X\cap \downarrow X,r)= B_G(\uparrow X,r)\cap B_G(\downarrow X,r),\label{eq3}\\
B_G(\Conv&_P(X),r)=\Conv_P(X)\cup B_G(X,r)=\Conv_P(B_G(X,r)).\label{eq4}
\end{align}
\end{lemma}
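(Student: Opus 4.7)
The plan is to prove (1) directly, derive (2) by applying (1) to the dual poset $P^\ast$ (whose incomparability graph coincides with $G$, and for which $\downarrow$ in $P^\ast$ is $\uparrow$ in $P$), and then extract (3) and the two equalities of (4) by set-theoretic manipulation of (1) and (2). The main obstacle is (1), whose two nontrivial inclusions both rely on a short ``path-surgery'' argument exploiting the fact that consecutive vertices of any path in $G$ are incomparable in $P$.

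For the first equality in (1), the containment $(\downarrow X)\cup B_G(X,r)\subseteq B_G(\downarrow X,r)$ is immediate (using $X\subseteq \downarrow X$). Conversely, given $y\in B_G(\downarrow X,r)$, choose $z\in \downarrow X$ minimizing $k:=d_G(y,z)\leq r$, together with $x\in X$ satisfying $z\leq x$. If $k=0$ then $y=z\in\downarrow X$; otherwise take a shortest path $y=y_0,y_1,\ldots,y_k=z$ in $G$ and examine $y_{k-1}$. It cannot satisfy $y_{k-1}\geq x$ (else $y_{k-1}\geq z$, contradicting incomparability of $y_{k-1}$ and $y_k=z$), nor $y_{k-1}\leq x$ (else $y_{k-1}\in\downarrow X$ at distance $k-1<k$ from $y$, violating minimality of $k$). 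Hence $y_{k-1}$ is incomparable to $x$, so $y_0,\ldots,y_{k-1},x$ is a walk of length $k\leq r$ in $G$, yielding $y\in B_G(X,r)$. For the second equality, let $y\in \downarrow B_G(X,r)$: pick $z$ with $y\leq z$ and $x\in X$ with $d_G(z,x)\leq r$, and take a shortest path $z=z_0,z_1,\ldots,z_m=x$ in $G$, so $m\leq r$. Set $i^\ast:=\max\{\, i : y\leq z_i\,\}$. If $i^\ast=m$ then $y\leq x$, so $y\in\downarrow X$. Otherwise, since $z_{i^\ast}$ and $z_{i^\ast+1}$ are incomparable in $P$, we cannot have $y\geq z_{i^\ast+1}$ (it would force $z_{i^\ast+1}\leq y\leq z_{i^\ast}$), and by maximality of $i^\ast$ we cannot have $y\leq z_{i^\ast+1}$ either, so $y$ is incomparable to $z_{i^\ast+1}$; the walk $y,z_{i^\ast+1},z_{i^\ast+2},\ldots,z_m=x$ has length $m-i^\ast\leq r$ in $G$, placing $y\in B_G(X,r)$.

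Given (1) and (2), monotonicity of $B_G$ together with $X\subseteq \Conv_P(X)$ yields
\[ \Conv_P(X)\cup B_G(X,r)\;\subseteq\;B_G(\Conv_P(X),r)\;\subseteq\;B_G(\uparrow X,r)\cap B_G(\downarrow X,r). \]
By (1) and (2), the rightmost set equals $\bigl((\uparrow X)\cup B_G(X,r)\bigr)\cap\bigl((\downarrow X)\cup B_G(X,r)\bigr)$, and distribution collapses this intersection to $(\uparrow X\cap \downarrow X)\cup B_G(X,r)=\Conv_P(X)\cup B_G(X,r)$, the cross-terms $(\uparrow X)\cap B_G(X,r)$ and $(\downarrow X)\cap B_G(X,r)$ being absorbed into $B_G(X,r)$. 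All three sets in the display thus coincide, which is (3) together with the first equality of (4). For the second equality of (4), the identities from (1) and (2) also give $\downarrow B_G(X,r)\cap \uparrow B_G(X,r)=\bigl((\downarrow X)\cup B_G(X,r)\bigr)\cap\bigl((\uparrow X)\cup B_G(X,r)\bigr)=\Conv_P(X)\cup B_G(X,r)$ by the same distribution, while the left-hand side is $\Conv_P(B_G(X,r))$ by the identity $\Conv_P(Y)=\downarrow Y\cap \uparrow Y$ recalled earlier.
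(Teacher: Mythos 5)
Your proof is correct. Note that the paper does not actually prove this lemma: it is imported verbatim as Lemma 34 of \cite{pouzet-zaguia21}, so there is no internal argument to compare yours against; what you have written is a valid, self-contained substitute for the external citation. Each step checks out: the two ``path-surgery'' arguments for \eqref{eq1} are sound (rerouting the last edge of a shortest path to $\downarrow X$ through the witness $x\in X$ above its endpoint, using minimality of $k$ and the incomparability of consecutive path vertices; and splicing $y$ into a shortest path from a point above it at the last index $i^\ast$ with $y\leq z_{i^\ast}$), the reduction of \eqref{eq2} to \eqref{eq1} via the dual poset is legitimate since $P$ and $P^\ast$ have the same incomparability graph, and the derivation of \eqref{eq3} and \eqref{eq4} from \eqref{eq1}, \eqref{eq2}, the lattice identity $(A\cup C)\cap(B\cup C)=(A\cap B)\cup C$, monotonicity of $Y\mapsto B_G(Y,r)$, and the identity $\Conv_P(Y)=\downarrow Y\cap\uparrow Y$ is pure set manipulation that closes the chain of inclusions into equalities. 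The only microscopic omission is the trivial inclusion $(\downarrow X)\cup B_G(X,r)\subseteq \downarrow B_G(X,r)$ in the second equality of \eqref{eq1}, immediate from $X\subseteq B_G(X,r)$ and reflexivity of $\leq$; state it for completeness. You also do not need the paper's Lemma \ref{lem:inducedpath} or any other imported metric fact, which makes your argument genuinely first-principles.
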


The following is Theorem 35  of \cite{pouzet-zaguia21}.

\begin{proposition} \label{cor:orderconvex}Let $P:=(V,\leq)$ be a poset, $G$ be its incomparability graph,  $X\subseteq V$ and $r\in \NN$.
\begin{enumerate}[(a)]
\item If $X$ is an  initial segment, respectively a final segment, respectively an order convex subset of $P$ then $B_G(X,r)$ is an initial segment, respectively a final segment, respectively an order convex subset of $P$. In particular, for all $x\in V$ and $r\in \NN$, $B_G(x,r)$ is order convex;
\item \label{lem:convex-connected}  If $X$ is order convex then the graph induced by $G$ on $B_G(X,r)$ is an isometric subgraph of $G$. In particular, if $X$ is  included into a connected component  of $G$ then the graph induced by $G$ on $B_G(X,r)$ is connected.
\end{enumerate}
\end{proposition}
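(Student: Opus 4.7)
For part (a) I would simply unfold the three corresponding identities of Lemma \ref{lem:convex-boule}. An initial segment $X$ satisfies $X=\downarrow X$, so equation \eqref{eq1} specializes to $B_G(X,r)=\downarrow B_G(X,r)$, which is precisely the statement that $B_G(X,r)$ is an initial segment; the final segment case is the formal dual via \eqref{eq2}. If $X$ is order convex then $X=\Conv_P(X)$, so \eqref{eq4} reads $B_G(X,r)=\Conv_P(B_G(X,r))$, i.e.\ $B_G(X,r)$ is order convex. The ``in particular'' clause about $B_G(x,r)$ is then the case $X=\{x\}$, since a singleton is trivially order convex.

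For part (b) the plan is to show that every geodesic of $G$ between two vertices $u,v\in B_G(X,r)$ can be realized inside $B_G(X,r)$, which would yield at once that the induced subgraph is an isometric subgraph of $G$. If $u=v$, or if $u$ and $v$ are incomparable (hence adjacent in $G$), there is nothing to prove, so up to duality I may assume $u<v$ in $P$ and $d_G(u,v)=k\geq 2$. I would then fix a shortest, hence induced, path $u=w_0,w_1,\dots,w_k=v$ in $G$. Lemma \ref{lem:inducedpath} gives $w_i<w_j$ whenever $j-i\geq 2$, so for every index $i$ with $2\leq i\leq k-2$ one has $u<w_i<v$, i.e.\ $w_i\in[u,v]$. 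Since $B_G(X,r)$ is order convex by part (a) and contains both $u$ and $v$, the interval $[u,v]$ lies entirely inside $B_G(X,r)$, and all ``central'' vertices of the geodesic are thereby captured.

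The hard part will be the two ``fringe'' vertices $w_1$ and $w_{k-1}$: the zigzag structure forced by Lemma \ref{lem:inducedpath} makes them incomparable to $u$ and to $v$ respectively, so a priori they are not in $[u,v]$. To handle them I plan to argue locally around each endpoint: fix witnesses $x_u,x_v\in X$ with $d_G(u,x_u)\leq r$ and $d_G(v,x_v)\leq r$; by part (a) the balls $B_G(x_u,r)$ and $B_G(x_v,r)$ are themselves order convex, and I would use this to reroute the first and last legs of the geodesic through these balls, replacing $w_1$ and $w_{k-1}$ by substitute neighbors that already lie in $B_G(X,r)$ without lengthening the path. Once the isometry is in hand the connectivity statement is immediate: if $X$ lies in a single connected component $C$ of $G$, every vertex of $B_G(X,r)$ is at finite $G$-distance from $X$ and so lies in $C$, and the isometric embedding just established transfers the connectedness of $C$ to the induced subgraph on $B_G(X,r)$.
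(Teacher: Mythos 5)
Your part (a) is correct and complete: it is nothing more than the specialization of Lemma \ref{lem:convex-boule} (identities \eqref{eq1}, \eqref{eq2}, \eqref{eq4}) to the cases $X=\downarrow X$, $X=\uparrow X$ and $X=\Conv_P(X)$, plus the observation that singletons are order convex. (The paper gives no proof of this proposition, quoting it as Theorem 35 of \cite{pouzet-zaguia21}, so the comparison can only be with a correct argument; for (a) yours is one.) Part (b), however, is not a proof. The step you yourself call ``the hard part'' --- producing substitutes inside $B_G(X,r)$ for the fringe vertices $w_1$ and $w_{k-1}$ --- is exactly where the entire content of the statement lies, and you only announce a rerouting plan without executing it. Note also that when $k=2$ or $k=3$ the range $2\le i\le k-2$ is empty, so in the short cases \emph{every} interior vertex of the geodesic is a fringe vertex; for $k=2$ the single interior vertex is incomparable to both $u$ and $v$, so it cannot be handled ``locally around each endpoint'' --- the two ends interact.

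Moreover, the plan as stated (use only the order convexity of the balls around the witnesses) cannot succeed. A symptom: your sketch never uses $r\ge 1$, yet the conclusion fails for $r=0$: take $P$ on $\{u,v,w\}$ with $u<v$ and $w$ incomparable to both, and $X=\{u,v\}$, which is order convex; then $B_G(X,0)=X$ induces an edgeless graph although $d_G(u,v)=2$. So any correct argument must exploit that all neighbours of $X$ lie in $B_G(X,r)$ once $r\ge 1$. A repair that actually closes the gap goes as follows: by induction on $k=d_G(u,v)$ it suffices to find one neighbour $u_1$ of $u$ with $u_1\in B_G(X,r)$ and $d_G(u_1,v)=k-1$. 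If $u\in X$, then $w_1\in B_G(X,1)\subseteq B_G(X,r)$. Otherwise pick $x_u\in X$ incomparable to $u$; for $k\ge 3$ one checks $w_1<v$ (Lemma \ref{lem:inducedpath}) and $x_u<v$, and then compares $w_1$ with $x_u$: if they are incomparable, $w_1$ is adjacent to $x_u\in X$, so $w_1\in B_G(X,1)$; if $x_u\le w_1$, then $w_1\in[x_u,v]\subseteq B_G(X,r)$ by part (a); if $w_1\le x_u$, then Corollary \ref{lem:monotone-distance} gives $d_G(x_u,v)\le d_G(w_1,v)=k-1$, so $x_u$ itself is a legitimate first step, and it lies in $X$. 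The case $k=2$ needs, in addition, the convexity of $X$ itself: if the common neighbour $w_1$ is comparable to both witnesses, one is forced into $x_u<w_1<x_v$, whence $w_1\in[x_u,x_v]\subseteq X$. The comparison with the witnesses in the order of $P$, the monotone-distance inequality of Corollary \ref{lem:monotone-distance} (needed to justify your phrase ``without lengthening the path''), the use of $r\ge 1$, and the convexity of $X$ in the short case are precisely the ideas missing from your proposal.
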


The order convexity of balls is equivalent to the following inequality (see Corollary 36 of~\cite{pouzet-zaguia21}):

\begin{corollary}\label{lem:monotone-distance}Let $P$ be a poset and let $G$ be its incomparability graph. Then \begin{equation} \label{eq:metric-inequalities}
d_{G}(u,v)\leq d_{G}(x,y) \mbox{ for all }  x\leq u\leq v\leq y  \mbox{ in }  P.
\end{equation}
\end{corollary}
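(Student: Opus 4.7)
The plan is to deduce the monotonicity directly from the order convexity of balls asserted in Proposition~\ref{cor:orderconvex}(a), applied twice through an intermediate vertex. Set $d := d_G(x,y)$; the case $d = \infty$ is trivial, so I would assume $d$ is finite.

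First, I would show that $d_G(u,y) \le d$. Consider the ball $B_G(y,d)$: it contains $y$ tautologically and contains $x$ because $d_G(x,y)=d$. Proposition~\ref{cor:orderconvex}(a) states that balls of the incomparability graph are order convex in $P$, so the hypothesis $x \le u \le y$ forces $u \in B_G(y,d)$, i.e.\ $d_G(u,y) \le d$.

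Next, setting $d' := d_G(u,y)$, I would repeat the argument starting at $u$. The ball $B_G(u,d')$ contains $u$ tautologically and $y$ by choice of $d'$, and Proposition~\ref{cor:orderconvex}(a) once more gives its order convexity in $P$. From $u \le v \le y$ we conclude $v \in B_G(u,d')$, so $d_G(u,v) \le d' \le d$, which is exactly the desired inequality.

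The argument is short, and once order convexity of balls is available it is essentially immediate. The only subtle point, which I flag as the main obstacle, is that applying Proposition~\ref{cor:orderconvex}(a) directly to $B_G(x,d)$ only yields $u,v \in B_G(x,d)$, which controls $d_G(x,u)$ and $d_G(x,v)$ separately rather than $d_G(u,v)$. Passing through a carefully chosen intermediate vertex (first $y$, then $u$) is what converts convexity of a single ball into the required mixed bound between $u$ and $v$.
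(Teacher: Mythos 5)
Your proof is correct and takes the same route the paper intends: the paper presents this corollary precisely as a consequence of the order convexity of balls (Proposition~\ref{cor:orderconvex}(a)), deferring details to Corollary 36 of \cite{pouzet-zaguia21}. Your two-step argument through the intermediate vertices $y$ and then $u$ correctly supplies those details, and your flagged subtlety (that a single application to $B_G(x,d)$ is insufficient) is exactly right.
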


The following is Corollary 37  of \cite{pouzet-zaguia21}.

\begin{corollary}\label{lem:intermediateinducedpath} $\delta_G(X)= \delta_G(Conv_{P}(X))=\delta_G(Conv_{G}(X))$ for every subset $X$ of the poset~$P$.
\end{corollary}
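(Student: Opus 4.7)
The plan is to establish the chain of inclusions $X \subseteq \Conv_P(X) \subseteq \Conv_G(X)$ and then to close the loop with a soft metric observation that forces $\Conv_G(X)$ to have no larger diameter than $X$. The first inclusion is immediate from the definition of $\Conv_P$. For the second, I would invoke Proposition \ref{cor:orderconvex}(a): every ball $B_G(x,r)$ in the incomparability graph $G$ is order convex. Since $\Conv_G(X)$ is by definition the intersection of all balls containing $X$, it is an intersection of order convex sets and is therefore itself an order convex subset of $P$ containing $X$. By minimality of $\Conv_P(X)$ among order convex sets containing $X$, this forces $\Conv_P(X) \subseteq \Conv_G(X)$. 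Monotonicity of diameter under inclusion then yields
\[
\delta_G(X)\;\leq\;\delta_G(\Conv_P(X))\;\leq\;\delta_G(\Conv_G(X)).
\]

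Next I would prove the reverse inequality $\delta_G(\Conv_G(X)) \leq \delta_G(X)$. Set $D:=\delta_G(X)$ (the case $D=\infty$ is trivial from the above chain, so assume $D<\infty$). For each $x\in X$, the ball $B_G(x,D)$ contains $X$ by definition of the diameter, so by the definition of $\Conv_G(X)$ as intersection of all balls containing $X$ we get $\Conv_G(X)\subseteq B_G(x,D)$. Hence every $y\in\Conv_G(X)$ satisfies $d_G(x,y)\le D$ for \emph{all} $x\in X$. But this says exactly that $X\subseteq B_G(y,D)$, and applying the definition of $\Conv_G(X)$ a second time gives $\Conv_G(X)\subseteq B_G(y,D)$, i.e.\ $d_G(y,z)\le D$ for all $z\in\Conv_G(X)$. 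Taking the supremum over $y,z$ yields $\delta_G(\Conv_G(X))\le D$, and the three diameters coincide.

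I do not expect any serious obstacle here. The only nontrivial input is the order convexity of balls from Proposition \ref{cor:orderconvex}(a); without it one could not deduce $\Conv_P(X)\subseteq\Conv_G(X)$ and the middle equality would not follow by this route. The remainder is just the standard ``double application'' of the ball-intersection definition of the metric convex hull, valid in any graph (indeed in any metric space).
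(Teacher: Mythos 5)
Your proof is correct. A direct comparison is complicated by the fact that the paper itself gives no proof of this statement: it is quoted as Corollary 37 of \cite{pouzet-zaguia21}, placed immediately after the tools it depends on. The route suggested by that placement differs slightly from yours in how the middle equality is obtained: the paper introduces Corollary \ref{lem:monotone-distance} ($d_G(u,v)\leq d_G(x,y)$ whenever $x\leq u\leq v\leq y$) just beforehand, and with it one bounds $\delta_G(\Conv_P(X))$ directly --- for comparable $u,v\in \Conv_P(X)$, say $u\leq v$, choose $x,y\in X$ with $x\leq u\leq v\leq y$ and get $d_G(u,v)\leq d_G(x,y)\leq \delta_G(X)$, while for incomparable $u,v$ one has $d_G(u,v)=1\leq \delta_G(X)$ (the case $|X|\leq 1$ being trivial). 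You instead obtain the middle equality for free from the sandwich $X\subseteq \Conv_P(X)\subseteq \Conv_G(X)$, where the second inclusion uses Proposition \ref{cor:orderconvex}(a): balls are order convex, hence so is their intersection $\Conv_G(X)$, and minimality of $\Conv_P(X)$ applies. Since the paper itself remarks that order convexity of balls is \emph{equivalent} to the inequality of Corollary \ref{lem:monotone-distance}, the two routes consume equivalent inputs; yours is arguably cleaner in that it avoids the comparability case analysis and isolates the purely metric fact --- valid in any metric space, as you note --- that the ball-intersection hull of a set has the same diameter as the set. That ``double application'' argument for $\delta_G(\Conv_G(X))\leq \delta_G(X)$ is exactly right, handles the $\delta_G(X)=\infty$ case correctly, and is the essential content in either approach.
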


\subsection{Posets of width at most two  and bipartite permutation graphs}\label{posetswidth2}
In this subsection we recall the properties we need about posets of width at most two  and permutation graphs.  We start with a characterisation of bipartite permutation graphs, next we give some properties of the graphic distance and the detour in comparability graphs of posets of width at most two. Some  properties listed below can be found in \cite {pouzet-zaguia21}. A new property, Lemma \ref{lem3:doublefork} is proven.
\subsubsection{Posets of width at most two  and their distances}

We note that a poset $P$ of width at most two  has dimension at most $2$, hence its comparability graph is an  incomparability graph. As previously mentioned, a finite graph $G$ is a comparability and incomparability graph if and only if it is a permutation graph. Incomparability graphs of finite posets of width at most two  coincide with bipartite permutation graphs. For arbitrary posets,  the characterisation  is  as follows (see Lemma 42 of \cite{pouzet-zaguia21}).

\begin{lemma}\label{lem:w2}Let $G$ be a graph. The following are equivalent.
\begin{enumerate}[(i)]
  \item $G$ is  bipartite and the comparability graph of a poset of dimension at most two;
  \item $G$ is bipartite and embeds  no even cycles of length at least six and none  of the comparability graphs of the posets depicted in Figure $\ref{fig:critique3}$.
  \item $G$ is the incomparability graph of a poset of width at most two.
  \item $G$ is a bipartite incomparability graph.
\end{enumerate}
\end{lemma}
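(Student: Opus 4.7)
The plan is to prove $(iii) \Leftrightarrow (iv)$ and $(iii) \Leftrightarrow (i)$ by direct constructions, and then handle $(i) \Leftrightarrow (ii)$ via Kelly's classification of critical posets of dimension three together with the Compactness Theorem of first-order logic. The easy block $(iii) \Leftrightarrow (iv)$ is immediate: if $P$ has width at most two, Dilworth's theorem (valid for posets of finite width) partitions $V$ into two chains, forming a bipartition of $\ainc(P)$; conversely, a three-element antichain in $P$ would appear as a triangle in $\ainc(P)$, contradicting bipartiteness.

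For $(iii) \Leftrightarrow (i)$ I will use a linear-extension flip. If $P = (V, \leq_P)$ has width at most two, then by Dilworth $\dim(P) \leq 2$, so ${\leq_P} = {\leq_{L_1}} \cap {\leq_{L_2}}$ for two linear extensions $L_1, L_2$ of $P$. Define a new poset $Q$ on $V$ by $x \leq_Q y$ iff $x \leq_{L_1} y$ and $y \leq_{L_2} x$; as the intersection of $L_1$ with the reverse of $L_2$, $Q$ has dimension at most two, and its comparabilities are exactly the pairs ordered oppositely by $L_1$ and $L_2$, namely the incomparabilities of $P$. Hence $\mathrm{Comp}(Q) = \ainc(P) = G$, proving $(iii) \Rightarrow (i)$. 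The converse runs the same construction backwards; bipartiteness of $G$ forbids any three-element antichain in the resulting $P$, ensuring width at most two.

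For the main step $(i) \Leftrightarrow (ii)$: the forward direction is easy because an induced subgraph of $G = \mathrm{Comp}(Q)$ is the comparability graph of an induced sub-poset of $Q$, inheriting the bound $\dim \leq 2$; however, the bipartite orientation of $C_{2n}$ with $n \geq 3$ is a critical height-two poset of dimension three (the $n = 3$ case being the crown $S_3^0$), and the posets of Figure \ref{fig:critique3} are likewise critical of dimension three, so none of the forbidden induced subgraphs can occur in $G$. For the converse $(ii) \Rightarrow (i)$, every bipartite graph is trivially a comparability graph -- orient every edge from one part of the bipartition to the other, transitivity holding vacuously since no directed path of length two exists. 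This produces a bipartite poset $Q$ with $\mathrm{Comp}(Q) = G$. By the Compactness Theorem, $\dim(Q) \leq 2$ provided every finite induced sub-poset of $Q$ has dimension at most two, which reduces to forbidding every critical dimension-three induced sub-poset. Kelly's classification shows that the bipartite critical dimension-three posets are precisely the bipartite cycles, whose comparability graphs are the $C_{2n}$ ($n \geq 3$), together with those depicted in Figure \ref{fig:critique3}; hypothesis (ii) rules them all out, giving $\dim(Q) \leq 2$. The main obstacle is invoking Kelly's list correctly and verifying that Figure \ref{fig:critique3} together with the even cycles of length at least six really exhausts the bipartite critical dimension-three posets; the compactness argument itself is routine once that finite obstruction list is in hand.
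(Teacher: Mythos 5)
The paper itself contains no proof of this lemma: it is recalled verbatim as Lemma 42 of \cite{pouzet-zaguia21}, so there is no internal argument to compare yours against. Judged on its own terms, your outline is the natural one and is essentially sound. The block $(iii)\Leftrightarrow(iv)$ (Dilworth's theorem for posets of finite width versus triangle-freeness) and the conjugate-order flip for $(iii)\Leftrightarrow(i)$ are correct, also for infinite posets, since the paper records both Dilworth's theorem in the infinite finite-width case and the compactness statement for dimension. Your reduction of $(ii)\Rightarrow(i)$ to Kelly's list --- orient the bipartite graph from one side to the other, note that every induced subposet of the resulting height-one poset is again height-one, and apply compactness --- is also correct, granted the fact (which the paper builds into the statement via Figure \ref{fig:critique3}) that the height-one critical posets of dimension three are exactly the alternating orientations of the cycles $C_{2n}$, $n\geq 3$, together with the posets of that figure; that exhaustiveness claim is exactly what the citation of Kelly \cite{kelly77} is for, and it cannot be avoided in any proof of $(ii)\Rightarrow(i)$.

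The one step that needs patching is $(i)\Rightarrow(ii)$. If a forbidden graph $H$ is induced in $G=\comp(Q)$, the induced subposet of $Q$ on the vertices of $H$ is only \emph{some} transitive orientation of $H$; your argument tacitly assumes it is the critical dimension-three orientation. A priori a graph could carry one transitive orientation of dimension three and another of dimension two, and then your contradiction evaporates. Close this with either of two standard remarks: (a) having dimension at most two is a property of the comparability graph alone, since by Dushnik--Miller a poset has dimension at most two if and only if its incomparability graph is a comparability graph (so if one orientation of $H$ has dimension three, all of them have dimension at least three); or (b) for these particular graphs the orientation is essentially unique --- $C_{2n}$ admits only its two alternating orientations, which are isomorphic, and the posets of Figure \ref{fig:critique3} are prime, so by Theorem \ref{kelly} their comparability graphs admit only the poset and its dual, which have the same dimension. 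Either remark is a single sentence; with it, your proof is complete.
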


We should mention the following result (Lemma 43 in \cite {pouzet-zaguia21},  essentially Lemma 14 from \cite{zaguia2008}) which states that a bipartite permutation graph without cycles must embed a caterpillar. A key observation is that if a vertex has at least three neighboring vertices in $\ainc(P)$, then at least one has degree one. Otherwise, $\ainc(P)$ would have a spider (see Figure \ref{fig:critique3}) as an induced subgraph,  which is impossible.

\begin{lemma}\label{nospider}Let $P$ be a poset of width at most two. Then the following properties
are equivalent.
\begin{enumerate}[(i)]
\item The incomparability graph of $P$ has no cycles of length three or four.
\item The incomparability graph of $P$ has no cycle.
\item The connected components of the incomparability graph of $P$ are caterpillars.
\end{enumerate}
\end{lemma}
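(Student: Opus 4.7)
The implications $(iii) \Rightarrow (ii) \Rightarrow (i)$ are immediate: caterpillars are trees, hence acyclic, and cycles of length $3$ or $4$ are cycles. The substance lies in the reverse directions.

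For $(i) \Rightarrow (ii)$, I would invoke Lemma \ref{lem:w2} directly. Since $P$ has width at most two, three pairwise incomparable elements cannot exist, so $G := \ainc(P)$ contains no triangle and is therefore bipartite, hence has no odd cycle. By the equivalence $(ii) \Leftrightarrow (iii)$ of Lemma \ref{lem:w2}, $G$ contains no induced even cycle of length at least six either. Once cycles of length four are also excluded, no cycles remain, which is $(ii)$.

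For $(ii) \Rightarrow (iii)$, fix a connected component $T$ of $G$; by $(ii)$ it is a tree. The plan is to show $T$ is a caterpillar by establishing that the subgraph $T'$ obtained by deleting every leaf of $T$ has maximum degree at most $2$; since removing a leaf from a tree preserves connectivity, $T'$ must then be empty, a single vertex, or a path, which is precisely the caterpillar condition. Suppose for contradiction that some $v \in T$ has three neighbors $u_1, u_2, u_3$ that are not leaves, and pick $w_i \ne v$ adjacent to $u_i$. Because $T$ is a tree, any equality among the $w_i$, or any equality $w_i = u_j$ with $i \ne j$, would create a cycle of length at most four; thus the seven vertices $v, u_1, u_2, u_3, w_1, w_2, w_3$ are pairwise distinct, and acyclicity of $T$ forbids any edges among them beyond the six already drawn. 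The induced subgraph is therefore a spider, that is, the comparability graph of the first critical poset of Figure \ref{fig:critique3}, which Lemma \ref{lem:w2}$(ii)$ forbids from embedding into $G$. This spider-avoidance argument is the only delicate point; the rest is elementary tree combinatorics.
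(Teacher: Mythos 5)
Your proof is correct and takes essentially the same approach as the paper, which establishes this lemma by citing Lemma 43 of \cite{pouzet-zaguia21} together with exactly your key observation: a vertex with three non-leaf neighbours in an acyclic incomparability graph of a width-two poset forces an induced spider, which Lemma \ref{lem:w2} forbids. One small repair: ``triangle-free, therefore bipartite'' is not a valid inference in general ($C_5$ is a counterexample); bipartiteness of $\ainc(P)$ should instead be read off directly from Lemma \ref{lem:w2} (or from Dilworth's theorem, each of the two chains covering $P$ being an independent set in $\ainc(P)$), after which everything you write goes through.
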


We are going to evaluate the detour of connected components of the incomparability graph of a poset of width at most two.

Let $P$ be a poset of width at most two. Suppose that $\ainc(P)$ is connected. In this case,  the partition of $P$ into two chains is unique. An \emph{alternating sequence}   in $P$ is any finite monotonic sequence $(x_0, \dots, x_i, \dots x_n)$ of elements of $P$ (i.e., increasing or decreasing) such that no two consecutive elements $x_i$ and $x_{i+1}$ belong to the same chain of the partition. The integer $n$ is the \emph{oscillation} of the sequence; $x$ and $y$ are its \emph{extremities}.

We recall that the oscillation of an alternating sequence with extremities $x$, $y$ is either $0$ or at most $d_{\ainc(P)}$ (see I.2.4. Lemme p.6  of \cite{pouzet78}). This allows to define the following map.
Let $d_P$ be the map from $P\times P$ into $\NN$ such that.

\begin{enumerate}[$(1)$]
\item $d_P(x,x)= 0$ for very $x\in P$;
\item $d_P(x,y)= 1$ if $x$ and $y$ are incomparable;
\item $d_P(x,y)=2$ if $x$ and $y$ are comparable and there is no alternating sequence from $x$ to $y$;
\item $d_P(x,y)=n+2$ if $n\not =0$ and   $n$ is the maximum of the oscillation of alternating sequences with extremities $x$ and $y$.
\end{enumerate}

We recall a result of \cite{pouzet78} II.2.5 Lemme, p. 6. (Lemma 44 of \cite{pouzet-zaguia21})

\begin{lemma}\label{lem:oscillation-distance}The map $d_P$ is a distance on any poset $P$ of width at most two such that the incomparability graph is connected.  Moreover,  for every $x,y\in P$ the following inequalities hold:
\begin{equation}
0\leq d_{\ainc(P)}(x,y)-d_P(x,y) \leq  2\lfloor d_{\ainc(P)}(x,y)/3\rfloor.
\end{equation}
\end{lemma}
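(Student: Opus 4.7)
The plan is to establish the metric axioms for $d_P$ first, and then derive the two-sided comparison with the graphic distance $d_{\ainc(P)}$.

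For the metric axioms, symmetry is immediate from the definition, since reversing a monotonic alternating sequence yields another monotonic alternating sequence with the same oscillation. Positivity and the separation axiom follow by inspecting the four cases in the definition of $d_P$. The triangle inequality is the substantive point: given $x,y,z \in P$, one concatenates an optimal alternating sequence from $x$ to $y$ with one from $y$ to $z$. The concatenation may fail to be alternating at the junction (or fail to be monotonic globally), but when $x$ and $z$ are comparable one can trim or splice through $y$ to obtain an alternating sequence from $x$ to $z$ whose oscillation is at most the sum of the two oscillations, which gives $d_P(x,z) \le d_P(x,y) + d_P(y,z)$. The remaining case, when $x$ and $z$ are incomparable, is even easier since $d_P(x,z)\le 1$, and this can be verified directly from the possible pairings of case values.

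For the lower bound $d_P(x,y) \le d_{\ainc(P)}(x,y)$, the idea is to extract an alternating sequence from a shortest path in $\ainc(P)$. If $x$ and $y$ are incomparable there is nothing to do. Otherwise take a shortest path $x=x_0,x_1,\dots,x_n=y$ in $\ainc(P)$ and apply Lemma \ref{lem:inducedpath}: all pairs $(x_i,x_j)$ with $j-i \ge 2$ are comparable and ordered as $x$ and $y$ are. Since consecutive vertices of the path are incomparable, they lie in different chains of the canonical width-two partition, so the sequence is alternating; monotonicity follows from the fact that $x_0 < x_n$ forces the "even-indexed" and "odd-indexed" subsequences to each be monotonic in the same direction. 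Reading off the oscillation as a function of $n$ and matching against the definition of $d_P$ then gives $d_P(x,y) \le d_{\ainc(P)}(x,y)$.

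For the upper bound, one goes the other way: an alternating sequence realising $d_P(x,y)$ must be converted into an actual path in $\ainc(P)$, and the cost of replacing each comparable consecutive pair $(x_i,x_{i+1})$ by an incomparability-path is controlled. The key observation is that between two comparable elements lying in different chains of the width-two partition, any induced path in $\ainc(P)$ has length at most $3$ (else Lemma \ref{lem:inducedpath} combined with the bipartite structure would force either a forbidden configuration or a shorter alternating sequence of larger oscillation, contradicting optimality). Amortising this cost over the alternating sequence yields the estimate $d_{\ainc(P)}(x,y) - d_P(x,y) \le 2\lfloor d_{\ainc(P)}(x,y)/3\rfloor$.

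The main obstacle is the amortised counting in the upper bound: one must ensure that the local replacement of each "comparable step" of the alternating sequence by an induced path in $\ainc(P)$ can be done globally without over-counting shared vertices or creating shortcuts, and that the resulting bookkeeping matches the factor $2/3$ exactly. The auxiliary bound on induced path length between comparable elements of different chains, which ultimately comes from Lemma \ref{lem:inducedpath}, is the decisive input.
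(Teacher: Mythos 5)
Your proposal founders on the definition of $d_P$: clause $(4)$ defines $d_P(x,y)$ as $n+2$ where $n$ is the \emph{maximum} oscillation over all alternating sequences with extremities $x$ and $y$ — not a minimum. (Note also that the paper does not prove this lemma; it recalls it from II.2.5 Lemme of \cite{pouzet78} and Lemma 44 of \cite{pouzet-zaguia21}, so your argument must stand on its own.) Because $d_P$ is max-based, exhibiting one particular alternating sequence only bounds $d_P$ from \emph{below}, never from above, and this inverts the logic in two of your three main steps. For the triangle inequality, concatenating or splicing optimal sequences from $x$ to $y$ and from $y$ to $z$ produces a single sequence from $x$ to $z$ of controlled oscillation, which proves nothing about $d_P(x,z)$; what is needed is that \emph{every} alternating sequence from $x$ to $z$ has oscillation at most $d_P(x,y)+d_P(y,z)-2$. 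Likewise, for the left inequality $d_P\le d_{\ainc(P)}$, extracting an alternating sequence from a shortest path again yields only a lower bound on $d_P$; what is needed is that every alternating sequence of oscillation $n$ forces $d_{\ainc(P)}(x,y)\ge n+2$ (this is essentially the recalled I.2.4 Lemme of \cite{pouzet78}; it can be proved by induction along the sequence using Corollary \ref{lem:monotone-distance}, together with the observation that two comparable elements lying in different chains are never at graphic distance $\le 2$).

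The conversions themselves are also faulty. A shortest path in $\ainc(P)$ is not an alternating sequence: consecutive path vertices are incomparable, whereas an alternating sequence is monotonic. Nor do the even- and odd-indexed subsequences help: consecutive path vertices lie in different chains, so the even-indexed vertices all lie in the \emph{same} chain and the extracted monotone sequence has oscillation $0$. The correct extraction takes every third vertex $x_0,x_3,x_6,\dots$ (an index gap of $3$ is odd, so the chains alternate, and is $\ge 2$, so comparability holds by Lemma \ref{lem:inducedpath}); this is exactly Lemma \ref{lem:oscillation2}, it is the true source of the factor $3$, and it proves the \emph{right}-hand inequality (a lower bound on $d_P$ in terms of $d_{\ainc(P)}$), not the left. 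Finally, your ``key observation'' that any induced path between two comparable elements of different chains has length at most $3$ is false: such elements are always at graphic distance \emph{at least} $3$, and can be at arbitrarily large distance — in $\RR_{\pi,2}$ the comparable pair $(0,0)$, $(N\pi,1)$ lies in different chains and has graphic distance of order $N$. The observation could at best be salvaged for consecutive elements of a maximum-oscillation sequence, via a splicing argument exploiting maximality, but you neither state nor prove that restricted claim, and the amortised counting built on it therefore has no foundation.
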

The distance $d_P$ can be different from $d_{\ainc(P)}$. For some bichains these two distances coincide, in that case we say that they are \emph{regular}. Two examples, namely $\QQ_{\pi,2}$ and $\RR_{\pi,2}$,  are given below.

We give a slight  improvement of \cite{pouzet78} I.2.3. Corollaire, p. 5.(Lemma 45 of \cite{pouzet-zaguia21}).

\begin{lemma}\label{lem:oscillation2}
Let $P$ be poset of width at most two such that $\ainc(P)$ is connected.  Let $n\in \NN$,  $r\in \{0,1\}$ and   $x, y\in P$ such that $\ainc(P)$ contains an  induced   path of length $3n+r$ and extremities $x$ and $y$. If  $r\not =1$ and $n\geq 1$(resp. $r=1$ and $n\geq 2$) then there is an alternating sequence with extremities $x$,$y$ and oscillation $n$ (resp. $n-1$).
\end{lemma}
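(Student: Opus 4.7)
The plan is to exhibit from the given induced path an explicit subsequence forming the desired alternating sequence, using two ingredients already at hand: Lemma~\ref{lem:inducedpath} to control the comparability pattern along the path, and the observation that consecutive vertices of any induced path in $\ainc(P)$ must lie in different chains of the unique width-two partition of $P$. I would fix an induced path $x=x_0,x_1,\dots,x_{3n+r}=y$ in $\ainc(P)$. Since $\ainc(P)$ is connected and $P$ has width at most two, $P$ admits a unique partition into two chains $C_1,C_2$; since each pair $x_i,x_{i+1}$ is incomparable, successive vertices of the path lie in different chains, so $x_i$ and $x_j$ lie in the same chain iff $j-i$ is even. Since $3n+r\geq 3$ in both cases, the extremities $x$ and $y$ are comparable by Lemma~\ref{lem:inducedpath}; assume without loss of generality that $x<y$. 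That same lemma then yields $x_i<x_j$ whenever $0\leq i<j\leq 3n+r$ and $j-i\geq 2$.

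For $r=0$, I would take the subsequence $(x_0,x_3,x_6,\dots,x_{3n})$, of cardinality $n+1$. All consecutive index-differences equal $3$, which is odd and at least $2$, so the subsequence is strictly increasing in $P$ and successive terms lie in different chains. It is therefore an alternating sequence with extremities $x,y$ and oscillation $n$, as required. For $r=1$ with $n\geq 2$, simply appending $x_{3n+1}$ to the previous construction fails because $x_{3n}$ and $x_{3n+1}$ are adjacent in $\ainc(P)$ and hence incomparable; instead I would take the subsequence $(x_0,x_3,\dots,x_{3(n-2)},x_{3n+1})$, of cardinality $n$. The consecutive index-differences are $3,3,\dots,3$ together with a final jump of length $3n+1-3(n-2)=7$; all are odd and at least $2$, so chain-alternation and strict monotonicity are preserved, producing an alternating sequence with extremities $x,y$ and oscillation $n-1$.

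There is essentially no obstacle beyond the bookkeeping for the final jump of length $7$ in the $r=1$ case: this is precisely what forces the hypothesis $n\geq 2$, since for $n=1$ there is no admissible two-element alternating sequence of the form above. Everything else reduces to the comparability pattern supplied by Lemma~\ref{lem:inducedpath} together with the alternation of chain-membership along any induced path of $\ainc(P)$.
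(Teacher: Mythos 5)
Your proof is correct. There is nothing in the paper to compare it against directly: the paper states Lemma \ref{lem:oscillation2} as a quoted result (Lemma 45 of \cite{pouzet-zaguia21}, refining I.2.3 Corollaire of \cite{pouzet78}) and gives no proof of it here. Your argument is the natural self-contained derivation from the tools the paper does provide: reduce to $x<y$ (legitimate, since alternating sequences may be decreasing), invoke Lemma \ref{lem:inducedpath} to get $x_i<x_j$ whenever $j-i\geq 2$, use parity of index differences for chain alternation, and select the indices $0,3,\dots,3n$ (oscillation $n$) when $r=0$, respectively $0,3,\dots,3(n-2),3n+1$ with a final odd jump of $7$ (oscillation $n-1$) when $r=1$ and $n\geq 2$; all the arithmetic checks out. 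The only cosmetic slip is attributing the comparability of $x$ and $y$ to Lemma \ref{lem:inducedpath}: that lemma already assumes $x<y$, whereas the comparability itself follows simply from the path being induced of length at least $2$, so its extremities are nonadjacent in $\ainc(P)$ and hence comparable in $P$.
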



From Lemma \ref{lem:oscillation-distance}, the oscillation between two vertices $x$ and $y$ of $P$  is bounded above. With this lemma, the length of induced paths between $x$ and $y$ is bounded  too, that is the detour $D_{\ainc(P)} (x,y)$ is an integer.  In fact we have (see Proposition  46 of \cite{pouzet-zaguia21}):

\begin{proposition}
Let $P$ be poset of width at most two such that $\ainc(P)$ is connected and let $x,y\in P$.
Then:

\begin{enumerate}[$(1)$]\label{prop:oscillation}
\item $d_{\ainc(P)}(x,y)=d_P(x,y)= D_{\ainc(P)}(x,y)$  if either $x=y$, in which case this common value is $0$, or $x$ and $y$ are incomparable, in  which case this common value is $1$.
\item $d_{\ainc(P)}(x,y)\geq d_P(x,y)\geq \lfloor  D_{\ainc(P)}(x,y)/3 \rfloor +\epsilon$ where $\epsilon=1$ if  $D_{\ainc(P)}(x,y)\equiv 1 \mod 3$ and $\epsilon=2$ otherwise.
\end{enumerate}

\end{proposition}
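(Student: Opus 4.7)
The plan is to derive Proposition \ref{prop:oscillation} as a direct consequence of Lemmas \ref{lem:oscillation-distance} and \ref{lem:oscillation2}, with small cases handled by unpacking the definition of $d_P$. The inequality $d_{\ainc(P)}(x,y)\geq d_P(x,y)$ is already the left-hand bound of Lemma \ref{lem:oscillation-distance}, so only the other equalities and the lower bound on $d_P$ in terms of the detour need to be addressed.

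For part (1), if $x=y$ all three quantities are $0$ by definition. If $x,y$ are distinct and incomparable, then they form an edge of $\ainc(P)$, so $d_{\ainc(P)}(x,y)=1$ and $d_P(x,y)=1$. For the detour, I would observe that any induced path $x_0,\dots,x_n$ from $x=x_0$ to $y=x_n$ of length $n\geq 2$ must have $x_0$ non-adjacent to $x_n$ (else the path has a chord and is not induced). Since $x,y$ are adjacent, no such longer induced path exists and $D_{\ainc(P)}(x,y)=1$.

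For part (2), I would write $D:=D_{\ainc(P)}(x,y)=3n+r$ with $r\in\{0,1,2\}$ and an induced path of length $D$ between $x$ and $y$ realising the detour. When $r\neq 1$ and $n\geq 1$, Lemma \ref{lem:oscillation2} produces an alternating sequence from $x$ to $y$ of oscillation $n$; by the definition of $d_P$ (case $(4)$) this gives $d_P(x,y)\geq n+2=\lfloor D/3\rfloor+\epsilon$, matching the formula ($\epsilon=2$ when $r\in\{0,2\}$). When $r=1$ and $n\geq 2$, Lemma \ref{lem:oscillation2} gives an alternating sequence of oscillation $n-1$, yielding $d_P(x,y)\geq (n-1)+2=n+1=\lfloor D/3\rfloor+\epsilon$, again matching ($\epsilon=1$ when $r=1$).

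The only remaining difficulty is the boundary cases where Lemma \ref{lem:oscillation2} does not apply, namely $D\in\{1,2,4\}$. For $D=1$, part (1) already shows $d_P(x,y)=1=\lfloor 1/3\rfloor+1$. For $D=2$, the existence of an induced path of length $2$ forces $x,y$ to be distinct and non-adjacent, hence comparable, so by case $(3)$ of the definition $d_P(x,y)\geq 2=\lfloor 2/3\rfloor+2$. For $D=4$ (i.e.\ $n=1$, $r=1$), the same argument gives $x,y$ comparable and distinct, so $d_P(x,y)\geq 2=\lfloor 4/3\rfloor+1$. These small cases are the only real bookkeeping; once they are dispatched, the general inequality follows uniformly from Lemma \ref{lem:oscillation2}. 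I do not anticipate any conceptual obstacle; the routine check that the three constants $\epsilon$ match $n$, $n-1$, and $n$ in the residues $r=0,1,2$ is the main thing to keep straight.
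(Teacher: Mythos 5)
Your strategy is the intended one: this paper does not actually prove Proposition \ref{prop:oscillation} (it is quoted from Proposition 46 of \cite{pouzet-zaguia21}), but it is stated immediately after Lemmas \ref{lem:oscillation-distance} and \ref{lem:oscillation2} precisely because it is meant to follow from them, which is the derivation you reconstruct. Part (1), the reduction of the left-hand inequality of part (2) to Lemma \ref{lem:oscillation-distance}, and your handling of the boundary cases $D\in\{1,2,4\}$ via the definition of $d_P$ (two distinct comparable vertices are at $d_P$-distance at least $2$) are all correct; so is your use of a path realising the detour, justified by the remark preceding the proposition that the detour is finite, hence attained.

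The one step not licensed by the paper as written is your main case when $D\equiv 2\pmod 3$ and $D\geq 5$. Lemma \ref{lem:oscillation2} is stated for $r\in\{0,1\}$ only, so it says nothing about induced paths of length $3n+2$, yet you invoke it for $r=2$ (your ``$\epsilon=2$ when $r\in\{0,2\}$''). The restriction to $\{0,1\}$ is almost certainly a transcription error -- the clause ``if $r\neq 1$'' makes little sense otherwise -- but you cannot recover the $r=2$ case from the stated one by soft arguments: applying the lemma to the subpath $x_0,\dots,x_{3n}$ changes the extremity, and the triangle inequality for $d_P$ then yields only $d_P(x,y)\geq (n+2)-2=n$, short of the required $n+2$. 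The clean patch is to redo the lemma's construction directly: assuming, as we may, that $x_0<x_{3n+2}$, Lemma \ref{lem:inducedpath} gives $x_i<x_j$ whenever $j-i\geq 2$, and consecutive path vertices lie in different chains of the unique $2$-chain partition, so chain membership is determined by the parity of the index; the sequence $x_0,x_3,x_6,\dots,x_{3(n-1)},x_{3n+2}$, whose consecutive index gaps are $3,\dots,3,5$ (all odd and at least $2$), is therefore an increasing alternating sequence with extremities $x$ and $y$ and oscillation $n$, giving $d_P(x,y)\geq n+2$. With that supplement (or with the corrected statement of Lemma \ref{lem:oscillation2}) your proof is complete. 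A final remark on the statement rather than your proof: part (2) must tacitly exclude $x=y$, since $D=0$ would give the false inequality $0\geq 2$; your case analysis implicitly does this by treating $x=y$ only in part (1).
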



As a consequence (see Corollary 47 of \cite{pouzet-zaguia21})
\begin {corollary}\label{cor:detour} If  the incomparability graph $G$  of a poset of width at most two is connected, then the detour of $G$ is bounded if and only if the diameter is bounded. In fact:
\[\delta_G\leq D_G\leq 3\delta_G -1. \]
\end{corollary}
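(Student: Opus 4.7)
The plan is to establish both inequalities $\delta_G \leq D_G$ and $D_G \leq 3\delta_G - 1$ pointwise (that is, for every pair $(x,y)$ of vertices), and then conclude by taking the supremum. The corollary should follow quickly from Proposition \ref{prop:oscillation}, which already does the hard work of relating the detour between two vertices to the graphic distance and the oscillation.

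First, I would handle the easy inequality $\delta_G \leq D_G$. The key observation is that any shortest path between two vertices $x,y$ in a graph is automatically an induced path: a chord would produce a strictly shorter walk, contradicting minimality. Hence for every pair $x,y$ of vertices lying in the same connected component, $d_G(x,y) \leq D_G(x,y)$, and taking the supremum over all such pairs gives $\delta_G \leq D_G$. (Note that both quantities are assumed finite only in the combined biconditional; the inequality itself holds unconditionally, possibly with the value $+\infty$ on one or both sides.)

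Next, for the main inequality $D_G \leq 3\delta_G - 1$, I would fix a pair $x, y$ in $G$ and apply part (2) of Proposition \ref{prop:oscillation}, which yields
\[d_G(x,y) \;\geq\; \lfloor D_G(x,y)/3 \rfloor + \epsilon,\]
where $\epsilon = 1$ if $D_G(x,y) \equiv 1 \pmod 3$ and $\epsilon = 2$ otherwise. Writing $D_G(x,y) = 3q + r$ with $r \in \{0,1,2\}$, I would check each residue class: if $r = 0$ then $d_G(x,y) \geq q+2$, so $D_G(x,y) = 3q \leq 3 d_G(x,y) - 6$; if $r = 1$ then $d_G(x,y) \geq q + 1$, so $D_G(x,y) = 3q + 1 \leq 3 d_G(x,y) - 2$; and if $r = 2$ then $d_G(x,y) \geq q + 2$, so $D_G(x,y) = 3q + 2 \leq 3 d_G(x,y) - 4$. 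In every case $D_G(x,y) \leq 3 d_G(x,y) - 1$, and taking the supremum over pairs $(x,y)$ gives $D_G \leq 3 \delta_G - 1$.

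There is no real obstacle here beyond invoking Proposition \ref{prop:oscillation}: the whole substance of the corollary is packaged in that statement, whose proof in turn relies on Lemma \ref{lem:oscillation-distance} and the alternating-sequence machinery of Lemma \ref{lem:oscillation2}. Once those are in hand, the corollary is just a short case analysis plus the trivial observation about shortest paths being induced, followed by passage to suprema. The biconditional (bounded detour $\Leftrightarrow$ bounded diameter) is an immediate consequence of the sandwich $\delta_G \leq D_G \leq 3\delta_G - 1$.
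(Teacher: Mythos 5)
Your proposal is correct and follows exactly the route the paper intends: the corollary is presented there as an immediate consequence of Proposition \ref{prop:oscillation} (with the details deferred to \cite{pouzet-zaguia21}), namely the trivial lower bound via shortest paths being induced, plus the residue-class arithmetic you carry out on part $(2)$ of that proposition. Your case analysis in fact yields the slightly stronger pointwise bound $D_G(x,y)\leq 3d_G(x,y)-2$, which of course still gives $D_G\leq 3\delta_G-1$ after passing to suprema.
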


\subsubsection{On the diameter of the set of vertices of degree at least $3$ in the incomparability graph of a poset of width at most two}\label{subsection:diameter}

\begin{lemma}\label{lem3:doublefork}Let $G$ be a bipartite graph.
\begin{enumerate}[$(1)$]
  \item If $G$ has two vertices of degree at least $3$ and at distance $d\geq 4$, then $G$ embeds an induced double-ended fork of length at least $d-4$.
  \item If $G$ is the incomparability graph  of a poset of width at most two containing an induced double-ended fork of length at least $d$, the distance in $G$ between the extremities of the double-ended fork is at least $\frac{d}{3}+1$.
\end{enumerate}
\end{lemma}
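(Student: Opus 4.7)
For part (1), the plan is to build the double-ended fork around a shortest $u$-$v$ path $y_0=u,y_1,\ldots,y_d=v$. Since $\deg(u),\deg(v)\ge 3$, I pick neighbors $a_1,a_2$ of $u$ distinct from $y_1$ and $b_1,b_2$ of $v$ distinct from $y_{d-1}$. Bipartiteness of $G$ places the three vertices $y_1,a_1,a_2$ in the color class opposite to $u$, making them pairwise nonadjacent; symmetrically for $y_{d-1},b_1,b_2$. The shortest-path property forbids any edge from $a_i$ to $y_j$ with $j\ge 3$ (such an edge would shortcut the path from $u$ to $y_j$), so the only possible ``contamination'' on the left side is $a_i\sim y_2$; symmetrically on the right the only possible contamination is $b_j\sim y_{d-2}$. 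A routine distance calculation (using $d\ge 4$) rules out every edge between $\{a_1,a_2\}$ and $\{b_1,b_2,y_{d-1},v\}$, and symmetrically at the other end.

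In the clean case where no $a_i$ meets $y_2$ and no $b_j$ meets $y_{d-2}$, the set $\{a_1,a_2,u,y_1,\ldots,y_{d-1},v,b_1,b_2\}$ induces $\mathrm{DF}_{d+1}$. If, say, $a_1\sim y_2$, I slide the left degree-$3$ vertex inward: take $y_2$ as the new left end of the base path and take $y_1$ and $a_1$ as its two prongs. The pair $(y_1,a_1)$ is independent by the same bipartite argument, and neither member is adjacent to any $y_j$ with $j\ge 3$ (another shortest-path application), hence to nothing else in the candidate fork. Applying the symmetric fix at the right end if necessary, in the worst case we lose one vertex from each end of the base and obtain an induced $\mathrm{DF}_{d-3}$, which satisfies the bound length $\ge d-4$.

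For part (2), let an induced $\mathrm{DF}_n$ with $n\ge d$ be given, written as base $x_1,\ldots,x_n$ with prongs $a_1,a_2$ at $x_1$ and $b_1,b_2$ at $x_n$. Because the fork is induced, the sequence $a_1,x_1,x_2,\ldots,x_n,b_1$ is an induced path of length $n+1$ in $G=\ainc(P)$, so the detour satisfies $D_G(a_1,b_1)\ge n+1$. Applying Proposition~\ref{prop:oscillation} inside the connected component of $G$ containing this path yields $d_G(a_1,b_1)\ge \lfloor D_G(a_1,b_1)/3\rfloor+\epsilon$ with $\epsilon\in\{1,2\}$. A short case analysis on $(n+1)\bmod 3$, minimizing the right-hand side over all admissible values $D_G\ge n+1$, produces $d_G(a_1,b_1)\ge \lceil n/3\rceil+1\ge d/3+1$. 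Taking $a_1$ and $b_1$ as the two extremities (a prong at each end) delivers the claim.

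The main obstacle is the verification in the contaminated case of part~(1): one must check that the shifted pair $(y_1,a_1)$ (resp.\ $(y_{d-1},b_1)$) genuinely forms two independent prongs at $y_2$ (resp.\ $y_{d-2}$), with no other edges into the remaining fork. The key observation is that $a_1\sim y_2$ forces $d_G(a_1,v)=d-1$, after which the triangle inequality eliminates every unwanted adjacency from $a_1$ into the right half of the fork, and the bipartite argument eliminates them within a color class. Beyond this bookkeeping, the rest of both parts is a direct consequence of the shortest-path structure together with Proposition~\ref{prop:oscillation}.
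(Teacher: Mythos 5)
Your proposal is correct and follows essentially the same route as the paper: part (1) is proved via a shortest path between the two degree-$3$ vertices whose extra neighbours can only attach at positions $0$ or $2$ (bipartiteness plus the no-shortcut argument), with the fork endpoint slid inward in the contaminated case to yield $\mathrm{DF}_{d-3}$, and part (2) is proved by feeding the long induced path through the detour--distance inequality of Proposition \ref{prop:oscillation}. If anything, your treatment of part (2) --- taking the prongs as extremities so that $D_G\geq n+1$, and keeping the floor in the case analysis on $D_G \bmod 3$ --- is more careful than the paper's own computation, which drops the floor.
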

\begin{proof}
\begin{enumerate}[$(1)$]
  \item Let $x,y$ be two vertices of degree at least $3$ such that $d_G(x,y)=d\geq 4$ and let $x=v_0,\dots,v_{d}=y$ be an induced path of length $d$ joining $x$ and $y$. \\
      \textbf{Claim:} Every neighbour of $x$ but $v_1$, respectively every neighbour of $y$ but $v_{d-1}$, is not in $\{v_0,\cdots,v_{d}\}$.\\
      \textbf{Proof of Claim:} Let $x_1,x_2$, respectively $y_1,y_2$, be neighbours of $x$, respectively be neighbours of $y$, not in $\{v_0,\dots,v_{d}\}$. Note that since $G$ has no triangles $x_1$ and $x_2$ are not adjacent and so are $y_1$ and $y_2$. Moreover,  $x_i$, for $i=1,2$, is not adjacent to $v_1$. Similarly $y_i$, for $i=1,2$, is not adjacent to $v_{d-1}$. Furthermore, $x_i$, for $i=1,2$, is adjacent to at most two vertices in $\{v_0,\cdots,v_{d}\}$ because otherwise the distance between $x$ and $y$ would be less than $d$. Moreover, and since $G$ has no odd cycles, $x_i$, for $i=1,2$, can only be adjacent to $v_0$ and $v_2$. By symmetry we have that $y_i$, for $i=1,2$, can only be adjacent to $v_{d-2}$ and $v_{d}$.\hfill $\Box$

      We now claim that there exists $j\in \{0,2\}$ and $k\in \{d-2,d\}$ such that $v_j$ and $v_k$ are the extremities of a double-ended fork proving (1). Indeed, If some $x_i$ is adjacent to $v_2$ set $j=2$. By symmetry set $k=d-2$ if some $y_i$ is adjacent to $v_{d-2}$. Otherwise set $j=0$ and $k=d$.
  \item This is an immediate consequence of the inequality (2) of Proposition \ref{prop:oscillation} applied to a poset $P$ such that $\ainc(P)=G$. Indeed, let $x,y$ be the extremities of the double-ended fork. We have $d_G(x,y)\geq \lfloor \frac{D_G(x,y)}{3} \rfloor +\epsilon$ where $\epsilon=1$ if $D_P(x,y)\equiv 1 \mod 3$ and $\epsilon=2$ otherwise. Since  $D_G(x,y) \geq d$ and $\epsilon \geq 1$, we have $\frac{D_G(x,y)}{3}+\epsilon \geq \frac{d}{3} +1$.
\end{enumerate}
\end{proof}

\begin{lemma}\label{lem1:degreethree} Let  $P:= (V, \leq)$ be a poset of width at most two whose incomparability graph is  connected.  Let $I$ be the initial segment of $P$ generated by the  set of vertices of degree at least $3$.
If $C$ is a nontrivial connected component of $\ainc(P)_{\restriction V\setminus I}$ then $C$ is a path and all other connected component are below $C$ in $P$, in particular they are trivial. If $C$ has at least two vertices then some  vertex $x$ of $C$ with degree $1$ is  connected to some  vertex belonging to $I$ and this vertex is unique. If $C$ has at least four vertices, then one and only one vertex of $C$ is connected to some vertex of $I$ and this vertex is unique.
\end{lemma}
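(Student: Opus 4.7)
The plan is to proceed in four stages, each controlling successively finer structure of $C$. First I use a degree argument: by the very definition of $I$, every vertex of $V\setminus I$ has $\ainc(P)$-degree at most two, so the induced graph $\ainc(P)_{\restriction V\setminus I}$ has maximum degree at most two and its connected components are paths or cycles. Since $\ainc(P)$ is bipartite and embeds no induced even cycle of length at least six (Lemma \ref{lem:w2}), any cycle component would be a $4$-cycle, and such a $4$-cycle has no edges to $I$ (its vertex degrees are already saturated), so it would be a connected component of $\ainc(P)$ itself; the connectedness of $\ainc(P)$ then forces $\ainc(P)$ to be that $4$-cycle, a degenerate case set aside. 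Otherwise $C$ is a path.

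Next I locate $C$ inside the chain of components. By Lemma \ref{lem:folklore} applied to the final segment $P_{\restriction V\setminus I}$, the connected components of $\ainc(P)_{\restriction V\setminus I}$ are totally $P$-ordered and $P_{\restriction V\setminus I}$ is their lexicographical sum, so distinct components are $P$-comparable and carry no edge of $\ainc(P)$ between them. A preliminary observation is that for every $i\in I$ and every $v\in V\setminus I$ one has either $i<v$ or $i\| v$, since $v<i\leq t$ for some degree-$\geq 3$ vertex $t$ would place $v$ in $I$. Combined with the standard interval property of incomparability neighbors in a width-two poset (the neighbors of a given vertex within the other chain form an interval of that chain, provable directly: an element between two incomparability-neighbors is itself an incomparability-neighbor), this forces $C$ to be the top component and every other component to be a singleton sitting below $C$ in $P$: a nontrivial companion $C'$ would supply a second path whose endpoint is adjacent to some $w\in I$, and tracking $w$ against the saturated neighborhood interval of a vertex of $C$ (or of $C'$) produces a middle element that must be both comparable and incomparable with it. Now writing $C=v_1-v_2-\cdots-v_k$ with $|C|\geq 2$, each interior $v_i$ has degree two in $C$ and therefore exactly two in $\ainc(P)$ (else $v_i\in I$), so interior vertices have no $I$-neighbor, while the endpoints $v_1,v_k$ have at most one; connectedness of $\ainc(P)$ (when $I\neq\emptyset$) forces at least one endpoint to be $I$-adjacent, and the uniqueness of its $I$-neighbor is immediate from the degree bound.

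For $|C|\geq 4$ it remains to show exactly one endpoint meets $I$. Assume for contradiction both $v_1$ and $v_k$ have $I$-neighbors $w_1,w_k$. Fix the unique two-chain partition $L\cup R$ of $P$; by Lemma \ref{lem:inducedpath} applied to the induced path $v_1-v_2-\cdots-v_k$, the odd-indexed vertices lie monotonically in one chain and the even-indexed vertices in the other. Matching parities, $w_k$ lies in the same chain as $v_{k-1}$, and the preliminary observation yields $w_k<v_{k-1}$ and $w_k<v_j$ for every $j<k-1$ of the same parity as $k-1$; by Lemma \ref{lem:inducedpath}, such a $v_j$, which exists once $k\geq 4$, lies strictly between $w_k$ and $v_{k-1}$ in their common chain. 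But the set of $\ainc(P)$-neighbors of $v_k$ in that chain must be an interval equal to $\{w_k,v_{k-1}\}$ (degree bound plus interval property), so $v_j\in[w_k,v_{k-1}]$ would have to be incomparable with $v_k$, contradicting the comparability $v_j<v_k$ guaranteed by Lemma \ref{lem:inducedpath}. The main obstacle throughout is precisely this alignment of the parity bipartition of $v_1,\ldots,v_k$ with the $L\cup R$ bipartition of $P$ and the deployment of the interval property together with Lemma \ref{lem:inducedpath}; the same mechanism also drives the trivial-companions claim of the second paragraph, and both arguments genuinely require $k\geq 4$ (for $k\leq 3$ no forced middle element appears, which is why uniqueness of the $I$-adjacent endpoint is only asserted when $|C|\geq 4$).
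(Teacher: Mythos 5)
Your opening and closing stages are essentially sound. The degree argument and the elimination of cycle components are fine (your explicit treatment of the $C_4$ possibility is in fact more careful than the paper's own first sentence, which asserts that $G$ is a path whenever $I=\emptyset$ and overlooks exactly this case; note, though, that neither you nor the paper can honestly ``set it aside'', since for the poset consisting of two disjoint $2$-chains the lemma as literally stated fails). The argument that at least one endpoint of $C$ meets $I$, with a unique neighbour, is correct, and your stage for $|C|\geq 4$ --- aligning the parity of $v_1,\dots,v_k$ with the two-chain partition via Lemma \ref{lem:inducedpath} and then squeezing $v_{k-3}$ strictly between the two saturated neighbours $w_k<v_{k-1}$ of $v_k$ --- is correct and is a genuinely different route from the paper's, which instead shows that every interior vertex of $C$ lies above all of $I$, hence so does the far endpoint (being above $v_1$ and $v_2$ by Lemma \ref{lem:inducedpath}), so that endpoint is comparable to, hence not adjacent to, every element of $I$.

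The genuine gap is in your second stage, i.e., in the lemma's first assertion that every other component lies \emph{below} the nontrivial $C$ and is a singleton. Your justification covers only a \emph{nontrivial} companion $C'$: the squeezing mechanism needs $C'$ to supply an attached endpoint $u'$ whose saturated neighbourhood $\{v',w'\}$ is an interval, into which a vertex of the other nontrivial component falls (and, once reconstructed, that argument is correct and needs only $|C|,|C'|\geq 2$). It does not cover a trivial companion $\{s\}$ lying above $C$, which the lemma also forbids. The interval mechanism has no purchase there: all neighbours of $s$ lie in $I$, hence by your preliminary observation below every vertex of $C$ in their chain, while every vertex of $C$ lies below $s$, so no relevant vertex is ever forced strictly inside a saturated neighbourhood interval. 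What kills this configuration is transitivity together with width two: $s$ has a neighbour $w'\in I$ by connectivity; $w'$ and $s$ lie in different chains; $C$, being nontrivial, has a vertex $c$ in the chain of $w'$; then $w'<c$ by your preliminary observation and $c<s$ because $C$ is below $\{s\}$, whence $w'<s$, contradicting the incomparability of $w'$ and $s$. No argument of this kind appears in your proof, whereas the paper handles both kinds of companion uniformly: a shortest path between two components has length two with middle vertex in $I$, and then width two plus transitivity yields the contradiction. Your closing sentence makes matters worse rather than better: you concede that the trivial-companions argument ``genuinely requires $k\geq 4$'', but the lemma asserts that claim for \emph{every} nontrivial $C$, i.e., already for $|C|=2,3$, so on your own account those cases are left open.
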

\begin{proof}Let $G:= \ainc(P)$. If $I$ is empty, that is every vertex has degree at most $2$, then,  being connected, $G$ is a path. If $I$ is nonempty,  the graph $G_{\restriction V\setminus I}$ is not necessarily connected, but its connected components are paths and according to Lemma \ref{lem:folklore} they are  totally ordered. Suppose that there are two distinct components  $C$ and $C'$. We may suppose  $C< C'$ (that is,  every $x\in C$ is below, in $P$,  every $x\in C'$). Let $x\in C, x'\in C'$ such that the distance  $d_G(x,x')$ is minimum. Let $x_0, \dots, x_n$  be a path in $G$ joining $x$ to $x'$ such that $n=d_G(x,x')$.\\
\textbf{Claim:} $n=2$ and $x_1\in I$.\\
\textbf{Proof of Claim.} Clearly, $n\geq 2$. Necessarily $x_1\in I$ otherwise $x_1\in C$ contradicting the minimality of the distance $d_G(x,x')$. Similarly, $x_{n-1} \in I$. If $n\geq 3$, then from Lemma \ref{lem:inducedpath}, $x_0\leq x_{n-1}$ in $P$. Since $I$ is an initial segment and  $x_{n-1} \in I$ we have   $x_0\in I$,  which  is impossible. Hence $n<3$ and thus $n=2$. This completes the proof of the claim.\hfill       $\Box$

This ensures that $C$ is trivial. Indeed, if $C$ contains at least two vertices, then it contains some vertex $y$ connected to $x$ by an edge of $G$, that is $y$ is incomparable to $x$.  Since $x$ and $x_1$  are incomparable and $P$ has width at most $2$, $x_1$ and $y$ are comparable. Since $x_1\in I$  and $I$ is an initial segment,  $y\leq x_1$  is impossible; thus $x_1< y$. But, since $C\leq C'$, $y<x'$ and thus $x_1<x'$, which is impossible since by definition $x_1$ is incomparable to $x'$.

Consequently, if $C$ is nontrivial, all other components, if any, are trivial and below $C$. Suppose that $C$ is nontrivial. Since $I$ is nonempty, $C$ cannot be a two ways infinite path, hence either $C$ is a one way infinite path or a finite one and we may label the vertices as $x_0, \dots, x_i, \dots x_n$ with $x_i< x_{i+2}$ for all $i$.  If $x$ is any element of $C$ which is not of degree $1$, it cannot be incomparable to an element of $I$, otherwise it would have degree at least $3$ in $G$. Thus $x$ is above all elements of $I$. Since $G$ is connected,  only an element of degree $1$ of $C$  can be connected  to some element $y$ of $I$ and this element $y$ is unique (otherwise the vertex of degree $1$ in $C$  would have degree at least $3$ in $G$). If $C$ has three vertices, say $x_0, x_1, x_2$, and if $x_2$ is connected to some element $y'$ of $I$ then $y=y'$. Indeed, we have  $x_0<x_2$. If $y'\not =y$,  $y'$ cannot be connected to $x_0$, otherwise, this vertex would have degree at least $3$ in $G$ which is impossible. Thus $y'$ and $x_0$ are comparable and necessarily $y'<x_0$.  Since $ x_0<x_2$,    it follows $y'<x_2$ which is impossible.  If $C$ has at least four vertices, then $x_0$ is connected to a unique vertex of $I$ and all other elements of $C$ are above all elements of $I$ (indeed, let $n\geq 1$; if $x_n$ is not an end vertex of $C$ it is above all elements of $I$. If $x_n$ is an end vertex, then  $n\geq 3$.  Hence $x_n$ is above $x_0$ and $x_1$ thus is above all elements of $I$).  This completes the proof of the lemma.
\end{proof}

\begin{lemma}\label{lem2:degreethree}
Let  $P:= (V, \leq)$ be a poset of width at most two whose incomparability graph is  connected. If the diameter of $d^{\geq 3} (\ainc (P))$, the set of vertices of degree at least $3$,  is bounded by some integer $d$ then there is a connected induced subgraph  $K$  of $\ainc(P)$  with diameter at most $d+4$ such that $\ainc(P)$ consists of $K$ and  at most two paths attached to two distinct vertices of $K$.
\end{lemma}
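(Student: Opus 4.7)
I would apply Lemma \ref{lem1:degreethree} to both $P$ and its dual $P^{*}$, whose incomparability graphs coincide with $G:=\ainc(P)$. Setting $D:=d^{\geq 3}(G)$, $I:=\downarrow D$ and $F:=\uparrow D$, the lemma applied to $P$ gives at most one nontrivial component $C^{+}$ of $G_{\restriction V\setminus I}$, a path whose degree-$1$ endpoint $v^{+}$ is adjacent in $G$ to a unique $b\in I$. Applied to $P^{*}$, we obtain analogously $C^{-}$, $v^{-}$ and $a\in F$. I would then take
\[K:=V(G)\setminus\bigl((V(C^{+})\setminus\{v^{+}\})\cup(V(C^{-})\setminus\{v^{-}\})\bigr),\]
so that the at most two paths $V(C^{+})\setminus\{v^{+}\}$ and $V(C^{-})\setminus\{v^{-}\}$ are attached to $K$ at $v^{+}$ and $v^{-}$ respectively. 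Connectivity of $K$ is immediate, since $G$ is connected and each such pendant attaches to the remainder of $G$ only through its attachment point, which remains in $K$.

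The heart of the proof is the claim that $a$ and $b$ actually belong to $D$. For $b$: assume for contradiction that $b\in I\setminus D$, so $b<d$ in $P$ for some $d\in D$. Since $v^{+}\in V\setminus I$, we have $v^{+}\not\leq d$. Either $v^{+}>d$, in which case $v^{+}>d>b$ and so $v^{+}>b$, contradicting the incomparability $v^{+}\sim b$; or $v^{+}\sim d$ in $G$, so $d$ is a neighbour of $v^{+}$. But $v^{+}\notin D$ has degree at most $2$, and its unique $C^{+}$-neighbour lies in $V\setminus I$, which is disjoint from $D\subseteq I$; the $D$-neighbour $d$ must therefore coincide with $b$, giving $b\in D$. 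By duality, $a\in D$. The same comparison argument yields a companion claim: every trivial component $\{v\}$ of $G_{\restriction V\setminus I}$ has at least one $G$-neighbour in $D$ (otherwise all its neighbours lie in $I\setminus D$, and each would force an additional $D$-neighbour of $v$, exceeding the degree bound); symmetrically for $V\setminus F$.

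These facts give the diameter bound at once. Partition $V$ as $\Conv_{P}(D)\sqcup(I\setminus F)\sqcup(F\setminus I)\sqcup(V\setminus(I\cup F))$. Vertices of $K\cap\Conv_{P}(D)$ have distance $0$ from $D$. Vertices of $K\cap(I\setminus F)$ are either equal to $v^{-}$ (adjacent to $a\in D$, so distance $1$) or belong to a trivial component of $V\setminus F$ (distance $1$ by the companion claim); the case $K\cap(F\setminus I)$ is symmetric. Finally $K\cap(V\setminus(I\cup F))$ can be nonempty only if $|D|\leq 2$, in which case any such vertex is adjacent to every element of $D$, again at distance $1$. Thus $K\subseteq B_{G}(\Conv_{P}(D),1)$, and since Corollary \ref{lem:intermediateinducedpath} gives $\delta_{G}(\Conv_{P}(D))=\delta_{G}(D)\leq d$, the triangle inequality yields $\delta_{G}(K)\leq 1+d+1=d+2\leq d+4$.

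The only remaining point is the distinctness of $v^{+}$ and $v^{-}$. If both pendants exist and $v^{+}=v^{-}$, this common vertex must lie in $V\setminus(I\cup F)$ and have degree exactly two; equating its two neighbour pairs from each viewpoint forces, using $a,b\in D$ and $C^{\pm}\cap D=\emptyset$, that $a=b$ and the $C^{+}$- and $C^{-}$-neighbours coincide, so $C^{+}=C^{-}$ is then a single pendant path and the ``two distinct vertices'' clause is vacuous. The technically delicate step I expect to work hardest on is precisely the claim $a,b\in D$: it is what localises the pendant attachments to $D$ itself, rules out pathological attachments deep inside $I$ or $F$, and makes the $d+2$ (hence $d+4$) diameter bound fall out cleanly.
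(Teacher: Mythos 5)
Your construction is genuinely different from the paper's, and its central new ingredient is correct. The paper sets $G:=\ainc(P)$, $A:=d^{\geq 3}(G)$, disposes of $|A|\leq 1$ by hand, and takes $K$ to be the ball $B_G(\Conv_P(A),2)$: Corollary \ref{lem:intermediateinducedpath} gives $\delta_G(\Conv_P(A))=\delta_G(A)\leq d$, Proposition \ref{cor:orderconvex} makes this ball a connected, \emph{isometric} induced subgraph of diameter at most $d+4$, and equation (\ref{eq3}) of Lemma \ref{lem:convex-boule} splits its complement into two pieces to which Lemma \ref{lem1:degreethree} and its dual apply, producing the pendant paths. You instead remove only the pendant interiors supplied by Lemma \ref{lem1:degreethree} applied to $P$ and $P^{*}$, and your key claim that the attachment vertices $a,b$ lie in $D:=d^{\geq 3}(G)$ is correct and does not appear in the paper (the comparison argument is exactly right: $v^{+}$ has degree two, one neighbour inside $C^{+}\subseteq V\setminus I$, the other equal to $b$). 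This buys the sharper bound $d+2$ in place of $d+4$ for the $G$-diameter of your $K$.

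There are, however, two gaps. The more serious one: what you bound is $\delta_G(K)$, the diameter of the vertex set $K$ measured with distances of $G$, whereas the lemma (and its use in Theorem \ref{thm:2}, where $K$ is subsequently treated as a poset in its own right with incomparability graph of bounded diameter) needs the diameter of $K$ as an induced subgraph. The two agree only if $G$-geodesics between vertices of $K$ can be rerouted inside $K$, i.e., if $K$ is isometric in $G$. For the paper's ball-shaped $K$ this is automatic by item (b) of Proposition \ref{cor:orderconvex}; for yours it is not: Lemma \ref{lem1:degreethree} guarantees a \emph{unique} attachment vertex only when $C^{+}$ has at least four vertices, and when $|C^{+}|\in\{2,3\}$ both endpoints of $C^{+}$ may meet $I$, so the pendant $V(C^{+})\setminus\{v^{+}\}$ can be glued to $K$ at a second vertex and geodesics between vertices of $K$ can pass through it. This is repairable — in those cases the same width-two comparison shows the second attachment vertex $b'$ also lies in $D$ and that $b\sim b'$ (resp.\ $v^{+}\sim b$ in the three-vertex case), so any geodesic segment through the pendant can be replaced inside $K$ without increasing length — but this verification is missing, and your sentence ``connectivity of $K$ is immediate, since each pendant attaches only through its attachment point'' silently relies on the very statement that fails in these cases. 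The second gap is degenerate but real: your whole construction presupposes $D\neq\emptyset$. If $D=\emptyset$ then $I=F=\emptyset$, Lemma \ref{lem1:degreethree} provides no attachment vertex at all (a two-way infinite path has none), the claim $b\in D$ is vacuous, and $\Conv_P(D)=\emptyset$ makes the final distance computation meaningless. The paper settles $|A|\leq 1$ separately at the outset ($G$ is then a path, possibly with a pendant vertex), and your proof needs the same preliminary step.
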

\begin{proof}Let $G:= \ainc(P)$ and set $A:= d^{\geq 3} (G)$. If $A$ is empty, then $G$ is a path, finite or infinite, whereas if  $A$ is a singleton, then $G$ is a path with a pending vertex; in both cases, the conclusion of the lemma holds.  So we may suppose that $A$ has at least two elements. Let $r\geq 1$ be an integer.\\
\textbf{Claim 1:} $\delta_{G} (Conv_P(A))\leq d$, $B_G(Conv_P(A),r)$ is a connected subgraph of $G$ and  $$\delta_G(B_G(Conv_P(A),r))\leq \delta_{G} (Conv_P(A))+2r. $$
\textbf{Proof of Claim 1.}According to Corollary  \ref{lem:intermediateinducedpath} $\delta_{G} (Conv_P(A))= \delta_G(A)$. The first inequality follows. The second part of the claim follows from  (\ref{lem:convex-connected}) of Proposition \ref{cor:orderconvex}. The third part follows from the triangular inequality.  \hfill $\Box$

Let $K$ be the graph induced by  $G$  on $B_G(Conv_P(A),2)$.

\noindent \textbf{Claim 2:} The graph induced on $V\setminus B_G(Conv_P(A),2)$ is made of at most two paths, say $L^{-}$ and $L^{+}$, each one being linked by an  edge  to one and only one vertex of $B_G(Conv_P(A),2)$, these two vertices being distinct.\\
\textbf{Proof of Claim 2.} We notice at once $\delta_G(B_G(Conv_P(A),2))\leq \delta_{G} (Conv_P(A))+4\leq d+4$. Let $I_{2}$, resp. $F_{2}$  be the set of vertices at distance at most $2$ of $\downarrow A$, resp $\uparrow A$. According to $(3)$ of Lemma \ref{eq3} of \ref{lem:convex-boule} applied to $X= A$ and $r=2$ we obtain $I_{2} \cap F_{2}=B_G(Conv_P(A),2)$. Hence,  $V\setminus B_G(Conv_P(A),2)=(V\setminus I_2) \cup  (V\setminus F_2)$. Since $V\setminus I_2\ \subseteq V\setminus I$ we may apply Lemma \ref {lem1:degreethree} and deduce that there is a path in $V\setminus I_2$. Similarly,  we obtain a path in $V\setminus F_2$. These are the paths $L^{+}$ and $L^{-}$.
\end{proof}

\subsection{The countable universal poset of width two}

We recall the existence of a universal poset of width at most two. We describe the incomparability graph of a variant of this poset more appropriate for our purpose.

 Let $\RR_{\pi,2}$ be the set of couples $(x,i)$ with $x\in \RR$ and $i\in\{0,1\}$ ordered as follows: $(x, i)\leq (y,j)$ if either $i=j$  and $x \leq y$ in $\RR$ or $i \neq j$ and $x +\pi \leq y$ in $\RR$. Let $\QQ_{\pi,2}$ be the restriction of $\RR_{\pi,2}$ to $\QQ\times \{0,1\}$. These two posets have width two and their incomparability graphs are connected. In \cite{pouzet78}, the first author proved that $\QQ_{\pi,2}\cdot  \QQ$, the lexicographical sum of copies of $\QQ_{\pi,2}$ indexed by the chain $\QQ$ of rational numbers is universal among countable posets of width two, that is, every finite or  countable poset of width two embeds into $\QQ_{\pi,2} \cdot  \QQ$. From this fact follows that $\QQ_{\pi,2}\cdot \QQ$, $\QQ_{\pi,2}$ and $\RR_{\pi,2}$ have the same age. Hence,  every poset $P$ at most countable, of width at most two, such that $\ainc (P)$ is connected embeds in $\QQ_{\pi,2}$ as well as in $\RR_{\pi,2}$.

Let $C:=(X,\preceq)$ be a totally ordered set. Set $C_{\ZZ}:=(X\times \ZZ,\leq)$ where $\leq$ is  the binary relation on $X \times \ZZ$ defined as follows:
\[(x,n)\leq (x',n') \mbox{ if } x\preceq x' \mbox{ and } n\leq n' \mbox{ or } x\succ x' \mbox{ and } n'\geq n+2.\]

\begin{lemma}\label{lem:bichain-coding}
\begin{enumerate}[$(1)$]
\item The binary relation $\leq$ is an order relation.
\item Two elements $(x,n)$ and $(x',n')$ are incomparable if and only if either ($x<x'$ and $n'=n-1$) or ($x'<x$ and  $n=n'-1$).
\item The poset $C_{\ZZ}$ has width at most two.
\item Every local isomorphism of the total order $C$ induces a local isomorphism of $C_{\ZZ}$.
\end{enumerate}
\end{lemma}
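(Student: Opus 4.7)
The plan is to verify the four assertions sequentially by direct case analysis on the two defining clauses of $\leq$. None of the parts should involve genuinely hard ideas; the main care is to organise the cases so that the verification does not explode combinatorially. Throughout, I will refer to the first alternative ``$x \preceq x'$ and $n \leq n'$'' as \emph{clause 1} and to ``$x \succ x'$ and $n' \geq n+2$'' as \emph{clause 2}.

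For (1), reflexivity is immediate from clause 1. For antisymmetry, if both $(x,n)\leq (x',n')$ and $(x',n')\leq (x,n)$ hold via clause 1, then $x=x'$ and $n=n'$; if both via clause 2, then $x\succ x'\succ x$, which is impossible; if one uses clause 1 and the other clause 2, one obtains simultaneously $n\leq n'$ and $n\geq n'+2$, again impossible. For transitivity I would enumerate, given $(x,n)\leq(y,m)\leq(z,p)$, the four clause-combinations. In each case transitivity of $\preceq$ on $X$ together with the integer inequalities produces one of the two clauses between $(x,n)$ and $(z,p)$; the gap of $2$ is preserved even in the worst case, e.g.\ clause~2 followed by clause~2 gives $p\geq m+2\geq n+4$, so clause~2 applies.

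For (2), note first that if $x=x'$ then $\{(x,n),(x,n')\}$ is a chain, so we may assume $x\neq x'$. By symmetry take $x\prec x'$. Then clause~1 yields $(x,n)\leq(x',n')$ iff $n\leq n'$, and clause~2 yields $(x',n')\leq (x,n)$ iff $n\geq n'+2$. The two elements are incomparable precisely when $n>n'$ and $n<n'+2$, that is $n=n'+1$, giving $n' = n-1$ as stated. Part (3) then follows quickly: if $(x_1,n_1),(x_2,n_2),(x_3,n_3)$ were pairwise incomparable, part (2) first rules out two of them sharing a first coordinate; after relabelling, we may assume $x_1\prec x_2\preceq x_3$, and part (2) applied to the pairs $\{1,2\}$ and $\{1,3\}$ forces $n_2=n_1-1=n_3$, while if $x_2\prec x_3$ the pair $\{2,3\}$ forces $n_3=n_2-1$, a contradiction (and $x_2=x_3$ is also excluded).

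Finally, for (4) the key observation is that the formula defining $\leq$ on $C_{\ZZ}$ uses only the $\preceq$-relation between first coordinates and the usual order on $\ZZ$ between second coordinates. Hence, given a local isomorphism $f\colon A\to B$ of $C$, the map $\tilde f(x,n):=(f(x),n)$ from $A\times\ZZ$ to $B\times\ZZ$ preserves and reflects each of the two clauses separately, hence preserves and reflects $\leq$, so is a local isomorphism of $C_{\ZZ}$. I do not anticipate any substantive obstacle; the only place where one has to be attentive is the clause bookkeeping in the transitivity case of (1), and making sure in (3) that all three possible orderings of the first coordinates reduce, via (2), to the same numerical contradiction.
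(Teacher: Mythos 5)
Your proof is correct, and for parts (1), (2) and (4) it is essentially the paper's own argument: the same clause-by-clause case analysis for reflexivity, antisymmetry and transitivity (the paper writes out only the two mixed clause-combinations, treating the pure ones as immediate, exactly as you suggest), the same reduction of incomparability to the single equation $n'=n-1$ once one assumes $x\prec x'$ (the paper states this tersely and in one direction only; your two-sided verification is the complete version of the same computation), and the same observation that $(x,n)\mapsto(\varphi(x),n)$ transports local isomorphisms of $C$ to local isomorphisms of $C_{\ZZ}$.

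The one place you diverge is part (3). You rule out a three-element antichain by applying part (2) to the three pairs and extracting the incompatible equations $n_2=n_1-1=n_3$ and $n_3=n_2-1$. The paper instead exhibits an explicit partition of $C_{\ZZ}$ into two chains, namely the restrictions to $X\times 2\ZZ$ and $X\times(2\ZZ+1)$: by part (2), two elements whose second coordinates have the same parity can never be incomparable, since incomparability forces those coordinates to differ by exactly one. Both arguments are immediate consequences of (2), so nothing is lost either way; the paper's version has the small advantage of producing a concrete chain cover, i.e.\ a Dilworth-style certificate of width at most two, which fits the chain-partition viewpoint used elsewhere in the paper (for instance in the discussion of the unique partition of a connected width-two poset into two chains), whereas yours is a pure counting contradiction.
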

\begin{proof}
\begin{enumerate}
\item We should mention at first that $\leq$ is the union of the coordinate wise order, say $\leq_1$  on the cartesian product $X\times \ZZ$ and a strict order, say $\leq_2$. In particular, $\leq$ is reflexive. We  prove that $\leq$ is antisymmetric. Let $u:=(x,n)$ and $u':= (x',n')$. Suppose $u\leq u'$ and $u'\leq  u$. We may suppose $u\leq_1 u'$ and $u'\leq_2 u$. If $u\not =u'$, this is clearly impossible. We now prove that $\leq$ is transitive. Let $u:=(x,n)$,  $u':= (x',n')$ and $u'':= (x'',n'')$.  Suppose $(x,n)\leq (x',n')\leq (x'',n'')$. If $u'\leq_1 u''$ and $u'\leq_2 u''$ then $x\preceq x'$ and $x''\prec x'$ and $n+2\leq n''$. Since $\preceq$ is a total order, we have $x\preceq x''$ or $x'' \succ x$ and in both cases we deduce that $(x,n)\leq (x'',n'')$. Else $u'\leq_2 u''$ and $u'\leq_1 u''$ Then $x'\prec x$ and $x'\preceq x''$ and $n+2\leq n''$. Since $\preceq$ is a total order, we have $x\preceq x''$ or $x'' \succ x$ and in both cases we deduce that $(x,n)\leq (x'',n'')$.
\item Suppose that  $(x,n)$ and $(x',n')$ are incomparable. Since $x$ and $x'$ are distinct, we may suppose $x<x'$. Necessarily, $n' =n-1$.
\item ${C_\ZZ}_{\restriction (X\times 2\ZZ)}$ and ${C_\ZZ}_{\restriction (X\times (2\ZZ+1))}$ are two chains covering $C_{\ZZ}$.
\item Let $\varphi$ be a local isomorphism of $C$. Define the function $f: X\times \ZZ \rightarrow X\times \ZZ$ as
    $f(x,n)=(\varphi(x),n)$. This is clearly a local isomorphism of $C_{\ZZ}$.
\end{enumerate}
\end{proof}

 Set $C_{[-n,n]}:={C_{\ZZ}}_{\restriction X\times [-n,n]}$. If $C$ is  the real interval $[0,2\pi[$ with the natural order, set $D:=[0,2\pi[_{\ZZ}$ and $D(n):=[0,2\pi[_{[-n,n]}$ for $n\in \NN$. Hence $D= \bigcup_{n\in \NN} D(n)$.

\begin{lemma}The posets $D$ and $\RR_{\pi,2}$ are isomorphic.
\end{lemma}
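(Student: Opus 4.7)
The plan is to exhibit an explicit isomorphism $\phi\colon D\to \RR_{\pi,2}$. The natural candidate, suggested by the fact that $D$ has two rows modulo $2$ while $\RR_{\pi,2}$ has two copies of $\RR$, is
\[
\phi(r,n) := (r+n\pi,\; n \bmod 2).
\]
First I would verify that $\phi$ is a bijection from $[0,2\pi[\times \ZZ$ onto $\RR\times \{0,1\}$. Given $(y,j)\in\RR\times \{0,1\}$, the condition $y=r+n\pi$ with $r\in [0,2\pi[$ forces $n \leq y/\pi < n+2$, leaving exactly the two consecutive integers $\lfloor y/\pi\rfloor$ and $\lfloor y/\pi\rfloor-1$ as admissible values of $n$; exactly one of them has parity $j$, and then $r=y-n\pi$ is uniquely determined.

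Next I would show that $\phi$ preserves the order. Assume $(r,n)\leq(r',n')$ in $D$ and split into the two defining cases. In the first case ($r\leq r'$ and $n\leq n'$), if $n\equiv n'\pmod 2$ the inequality $r+n\pi\leq r'+n'\pi$ is immediate; if the parities differ then $n'\geq n+1$, so $(n'-n-1)\pi\geq 0$, giving $r+n\pi+\pi\leq r'+n'\pi$. In the second case ($r\geq r'$ and $n'\geq n+2$), one uses the crucial bound $r-r'<2\pi$ coming from $r,r'\in[0,2\pi[$: the required inequality follows because the gap $(n'-n)\pi$ (same parities) or $(n'-n-1)\pi$ (different parities) is at least $2\pi$.

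For the converse, assume $\phi(r,n)\leq\phi(r',n')$ in $\RR_{\pi,2}$ and split on whether the second coordinates agree. If $n\equiv n'\pmod 2$ with $r+n\pi\leq r'+n'\pi$, the case $n>n'$ is impossible (it would force $r+(n-n')\pi\leq r'$ with $r\geq 0$ and $(n-n')\pi\geq 2\pi > r'$); the remaining cases $n=n'$ or $n'\geq n+2$ yield, respectively, the first or second disjunct of the definition of $\leq$ on $D$. If the parities differ with $r+n\pi+\pi\leq r'+n'\pi$, the case $n>n'$ is again ruled out by a similar bound, $n'=n+1$ gives $r\leq r'$ and hence the first disjunct, and $n'\geq n+2$ automatically gives one of the two disjuncts (the second when $r\geq r'$, the first when $r<r'$). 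This exhausts the cases and establishes that $\phi$ is an order-isomorphism.

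The verification is essentially a careful case analysis; the one subtle point — and the only place the specific value $\pi$ (or rather, any positive real smaller than the length $2\pi$ of the base interval) really intervenes — is the use of $|r-r'|<2\pi$ to translate the ``$n'\geq n+2$'' jump in $D$ into the ``$x+\pi\leq y$'' jump in $\RR_{\pi,2}$. I will also note, as a sanity check on the width, that the parity partition $\{n\text{ even}\}\sqcup\{n\text{ odd}\}$ of $D$ matches the partition $\{i=0\}\sqcup\{i=1\}$ of $\RR_{\pi,2}$ into two chains.
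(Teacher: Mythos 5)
Your proof is correct and takes essentially the same approach as the paper: the paper uses exactly the same map $f(r,n):=(n\pi+r,\; n\bmod 2)$, proves it is a bijection and order-preserving, and then handles the converse by showing $f$ preserves incomparability (via the characterization of incomparable pairs in $D$ from Lemma~\ref{lem:bichain-coding}), whereas you verify the reverse implication by direct case analysis. Incidentally, your parity split in the case $r\geq r'$, $n'\geq n+2$ is slightly more careful than the paper's corresponding inequality chain, which as written asserts $2\pi+r'-r\geq\pi$ (false when $r-r'>\pi$), although the conclusion still holds by exactly the argument you give.
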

\begin{proof}Let $f$ be the map from $[0,2\pi[\times \ZZ$ into $ \RR \times\{0,1\}$ defined as follows: $f(r,n):=(n\pi+r,n\mod 2)$ and note that $f$ is well defined. We claim that $f$ is an isomorphism of $C_{\ZZ}$ onto $\RR_{\pi,2}$. We first prove that $f$ is a bijection. Let $n,n'\in \ZZ$ and $r,r'\in[0,2\pi[$ and suppose that $f(r,n)=f(r',n')$, that is, $(n\pi+r,n\mod 2)=(n'\pi+r',n'\mod 2)$. This means that $n'-n$ is even and  $(n-n')\pi=r'-r$. From $r,r'\in[0,2\pi[$ it follows readily that $n=n'$ and $r=r'$ proving that $f$ is one-to-one. Let $(x,i)\in \RR_{\pi,2}$ and let $n$ be the unique integer such that $n\pi \leq x< (n+1)\pi$ in $\RR$ and set $r:=x-n\pi$. Hence $x=n\pi +r$ with $r\in [0,2\pi[$. Let $i\in \{0,1\}$. If $i\equiv n\mod 2$, then $f(r,n)=(x,i)$. Else $f(r+\pi,n-1)=(x,i)$. This proves that $f$ is onto. Next we prove that $f$ and $f^{-1}$ are order preserving. Let $n,n'\in \ZZ$ and $r,r'\in[0,2\pi[$ be such that $(r,n)\leq (r',n')$ in $C_{\ZZ}$. Suppose $r\leq r'$ and $n\leq n'$. Then $n\pi +r\leq n'\pi +r'$. If $n'\not \equiv n \mod 2$, then $n<n'$ and $n'\pi +r'-(n\pi +r)=(n'-n)\pi+r'-r\geq \pi$ and hence $f(r,n)\leq f(r',n')$ in $\RR_{\pi,2}$. We now suppose that $r\succ r'$ and $n'\geq n+2$. Then $n'\pi +r'-(n\pi +r)=(n'-n)\pi+r'-r\geq 2\pi+r'-r\geq \pi$ and hence $f(r,n)\leq f(r',n')$ in $\RR_{\pi,2}$. This proves that $f$ is order preserving.\\
In order to prove that $f$ is an isomorphism, it is enough to prove that $f$ preserves the incomparability relation of $\leq$ on $D$. Let $n,n'\in \ZZ$ and $r,r'\in[0,2\pi[$ such that $(n, r)$ and $(n'r')$ are incomparable. With no loss of generality, we may suppose $r<r'$. According to Item (2) of Lemma \ref{lem:bichain-coding}, $n'=n-1$. We have $f(r,n)= (n\pi+r, n \mod(2))$ and $f(r',n')= (n-1)\pi+r',n-1, \mod(2))$. Since $\vert n\pi+r-(n-1)\pi+r'\vert= \vert \pi+(r-r')\vert <\pi$, $f(r,n)$ and $(r',n')$ are incomparable.
This completes the proof of the lemma.
\end{proof}

\begin{corollary}\label{cor:embed-D} Every poset $P$ at most countable, of width at most two, such that $\ainc (P)$ is connected embeds in $D$.
\end{corollary}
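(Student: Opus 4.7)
The plan is to combine the isomorphism $D \cong \RR_{\pi,2}$ just established with the universality statement recalled earlier in the subsection. Specifically, the first author showed in \cite{pouzet78} that $\QQ_{\pi,2}\cdot \QQ$ is universal among countable posets of width two, and the text has already observed that $\QQ_{\pi,2}\cdot \QQ$, $\QQ_{\pi,2}$ and $\RR_{\pi,2}$ share the same age, so that every at most countable poset $P$ of width at most two whose incomparability graph is connected embeds into $\RR_{\pi,2}$.

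Given this, the proof reduces to a single step: apply the preceding lemma identifying $D$ with $\RR_{\pi,2}$ via the explicit isomorphism $f(r,n):=(n\pi+r,n\bmod 2)$, and compose an embedding $P\hookrightarrow \RR_{\pi,2}$ with $f^{-1}$ to get an embedding $P\hookrightarrow D$. Since no countability-sensitive step has been added, the same cardinality hypothesis (finite or countable) carries over unchanged.

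There is essentially no obstacle here, since both inputs are already in hand: the nontrivial embedding result is cited from \cite{pouzet78}, and the isomorphism $D\cong \RR_{\pi,2}$ was verified in the lemma immediately above. The only point that might deserve a line of comment is why connectedness of $\ainc(P)$ is needed: without it, $P$ decomposes by Lemma \ref{lem:folklore} as a lexicographical sum of components of $\ainc(P)$ indexed by a chain, and one must pass to $\QQ_{\pi,2}\cdot \QQ$ rather than $\RR_{\pi,2}$ itself; with connectedness, the target $D\cong \RR_{\pi,2}$ suffices. Thus the write-up will be a two-line argument quoting the universality result and the preceding isomorphism.
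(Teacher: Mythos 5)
Your proposal is correct and is exactly the paper's (implicit) argument: the corollary is stated without proof precisely because it follows by composing the embedding $P\hookrightarrow \RR_{\pi,2}$, recalled from \cite{pouzet78} at the start of the subsection, with the isomorphism $D\cong\RR_{\pi,2}$ of the preceding lemma. Your closing remark on the role of connectedness (forcing the image into a single copy of $\QQ_{\pi,2}$ rather than the full lexicographical sum $\QQ_{\pi,2}\cdot\QQ$) correctly identifies the one point worth making explicit.
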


\begin{lemma}If $C$ is countable and dense in $[0,2\pi[$, $C_{\ZZ}$ is equimorphic to $\QQ_{\pi,2}$.
\end{lemma}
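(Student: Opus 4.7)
The plan is to establish both directions: $C_{\ZZ}\le \QQ_{\pi,2}$ and $\QQ_{\pi,2}\le C_{\ZZ}$.

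For the first, I would verify that $C_{\ZZ}$ is countable, of width at most two (by Lemma~\ref{lem:bichain-coding}(3)), and has connected incomparability graph. The last point uses Lemma~\ref{lem:bichain-coding}(2), according to which $(x,n)$ is incomparable to $(x',n-1)$ exactly when $x<x'$ and to $(x',n+1)$ exactly when $x>x'$; density of $C$ in $[0,2\pi[$ then ensures the existence of enough such $x'$ to zig-zag between any two prescribed vertices. The universality statement recalled just before Corollary~\ref{cor:embed-D} then delivers the embedding $C_{\ZZ}\hookrightarrow \QQ_{\pi,2}$.

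For the reverse direction, I would identify $C_{\ZZ}$ with its image $T=\{(n\pi+r,\,n\bmod 2):r\in C,\ n\in\ZZ\}$ in $\RR_{\pi,2}$ via the isomorphism $f$ of the preceding lemma. Each level set $T_j$, being a union of translates $C+k\pi$ of a dense subset of $[0,2\pi[$, is itself dense in $\RR$. I would construct an order embedding $\phi\colon \QQ_{\pi,2}\hookrightarrow T$ inductively on an enumeration $u_0,u_1,\ldots$ of $\QQ_{\pi,2}$: at step $i$, writing $u_i=(q_i,j_i)$, choose $\phi(u_i)=(x_i,j_i)\in T$ satisfying finitely many order constraints relative to the previously placed $\phi(u_k)=(x_k,j_k)$. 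Each constraint is either an open half-line (same-level case, since $q_i\neq q_k$), a closed half-line of the form $x\ge x_k+\pi$ or $x\le x_k-\pi$ (cross-level comparable), or an open interval $(x_k-\pi,\,x_k+\pi)$ (cross-level incomparable), with the irrationality of $\pi$ excluding degenerate equality cases $q_k+\pi=q_i$ at the rational level. The density of $T_{j_i}$ in $\RR$ then supplies an $x_i$ inside the admissible region.

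The main obstacle is that the admissible region could \emph{a priori} degenerate: two same-level $x_k,x_{k'}$ previously chosen with $x_{k'}-x_k<2\pi$ while $q_{k'}-q_k>2\pi$ would block any cross-level $u_i$ to be placed between them. I would forestall this by strengthening the inductive invariant so as to preserve not only pairwise order relations but the entire ``$\pi$-type'' of the partial image---i.e., the list of comparisons of each $x_{k'}-x_k$ with the (finitely many relevant) integer multiples of $\pi$---since this $\pi$-type is precisely what governs consistency of future cross-level constraints. At each step the admissible region then contains a subinterval of positive length realising the correct $\pi$-type (with $q_i$ acting as a rational witness in the analogous system), and density of $T_{j_i}$ in $\RR$ yields a valid $x_i$ therein. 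Combining the two embeddings establishes the equimorphism.
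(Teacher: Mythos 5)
Your first direction ($C_{\ZZ}\le \QQ_{\pi,2}$) coincides with the paper's: verify that $C_{\ZZ}$ is countable, of width at most two, with connected incomparability graph, and invoke the universality theorem of \cite{pouzet78} recalled before Corollary \ref{cor:embed-D} (you even supply the connectivity check that the paper leaves tacit). For the reverse direction, however, the paper argues quite differently and much more softly: it factors the embedding as $\QQ_{\pi,2}\le \QQ_{\ZZ}\le C_{\ZZ}$. The second inequality is immediate from item $(4)$ of Lemma \ref{lem:bichain-coding}: density of $C$ in $[0,2\pi[$ gives a chain embedding of $\QQ$ into $C$, and every chain embedding induces an embedding of the associated $\ZZ$-constructions. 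The first inequality (stated in the paper without proof) is of the same nature: restricting the inverse of the isomorphism of the preceding lemma to $\QQ_{\pi,2}$ places a copy of $\QQ_{\pi,2}$ inside $C'_{\ZZ}$ for some countable chain $C'\subseteq[0,2\pi[$, and embedding $C'$ into $\QQ$ and applying item $(4)$ again gives $\QQ_{\pi,2}\le\QQ_{\ZZ}$. Thus the paper reduces everything to embeddings of chains, where density does all the work; your one-point-at-a-time construction inside $\RR_{\pi,2}$ re-proves this by hand, at the price of the $\pi$-type bookkeeping that the paper's route avoids entirely.

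Your construction does need one repair. Taken literally, the strengthened invariant is unachievable: for each rational $q$, the poset $\QQ_{\pi,2}$ contains the incomparable pair $(q,0),(q,1)$, whose difference of first coordinates is exactly $0$, and preserving \emph{all} comparisons with integer multiples of $\pi$ (including the equality with $0\cdot\pi$) would force $\phi(q,0)$ and $\phi(q,1)$ to share their first coordinate, i.e., would require a point of $T_0\cap T_1$. In your level-preserving scheme this is impossible in general: for $C=\QQ\cap[0,2\pi[$ the sets $T_0$ and $T_1$ are disjoint, since a common point would produce two rationals differing by a nonzero multiple of $\pi$. The repair is to preserve only \emph{strict} comparisons: for every integer $m$, if $q_{k'}-q_k>m\pi$ then $x_{k'}-x_k>m\pi$, and dually. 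For the exceptional pairs this demands only $\vert x_{k'}-x_k\vert<\pi$, which is achievable; and the induction still closes, because every endpoint of every constraint interval is justified by a strict rational inequality ($q_i-q_k>a\pi$, resp. $q_i-q_{k'}<b\pi$), so for any lower endpoint $x_k+a\pi$ and any upper endpoint $x_{k'}+b\pi$ one has the strict inequality $q_{k'}-q_k>(a-b)\pi$, and the weakened invariant then yields $x_k+a\pi<x_{k'}+b\pi$. Since the weakened invariant still determines all order relations (same-level pairs have distinct rationals, and cross-level comparability is a comparison with $\pm\pi$), the map produced is indeed an embedding, and with this adjustment your argument is complete.
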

\begin{proof}The poset $\QQ_{\pi,2}$ embeds in $\QQ_\ZZ$. Since $C$ is dense in $[0,2\pi[$ the chain $\QQ$ embeds in $C$. Hence, $\QQ_{\pi,2}$ embeds in $C_{\ZZ}$. \\
Since $\QQ_{\pi,2}$ is universal among connected countable posets of width at most $2$ we infer that $C_{\ZZ}$ embeds in $\QQ_{\pi,2}$.
\end{proof}

We describe the balls of $\ainc (\RR_{\pi, 2})$ and their counterpart in $\ainc(D)$  for the graphic distance.

 \begin{lemma}
In $\ainc (\RR_{\pi, 2})$, the ball $B((0,0), 1)$ is equal to $\{(0,0)\} \cup \{(x,1):  -\pi <x<\pi \}$. For $n\geq 2$,
$B((0,0), n)=B((0,0), n-1) \cup \{(x, n \mod 2):  -n \pi <x<n \pi \}$.

In particular, $B((0,0), n) \subseteq D(n)$.
  \end{lemma}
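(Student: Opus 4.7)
The plan is to first derive a clean description of the incomparability relation in $\RR_{\pi,2}$, then analyse the ball around $(0,0)$ by induction on the radius, and finally transport this description to $D$ via the isomorphism $f$ constructed earlier.

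First I would unpack the definition of the order: two elements $(x,i)$ and $(y,j)$ are incomparable in $\RR_{\pi,2}$ if and only if $i\neq j$ and $|x-y|<\pi$. Indeed, if $i=j$ the fibre is a chain, while if $i\neq j$ comparability requires precisely $|x-y|\geq\pi$. The base case $B((0,0),1)=\{(0,0)\}\cup\{(x,1):-\pi<x<\pi\}$ is then immediate, since this ball consists of $(0,0)$ together with its neighbours in $\ainc(\RR_{\pi,2})$.

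For the inductive step $n\geq 2$, I would exploit the fact that along any walk $(x_0,i_0),\ldots,(x_n,i_n)$ in $\ainc(\RR_{\pi,2})$ the parities alternate, so $i_k=k\bmod 2$, and consecutive first coordinates satisfy $|x_k-x_{k-1}|<\pi$. Assuming the formula for $n-1$, the inclusion $\subseteq$ follows: if $(x,i)$ is at distance exactly $n$ from $(0,0)$, then $i=n\bmod 2$ and the triangle inequality forces $|x|<n\pi$. For the reverse inclusion, given $(x,n\bmod 2)$ with $|x|<n\pi$, I would exhibit the explicit walk $(kx/n,k\bmod 2)_{0\leq k\leq n}$; consecutive pairs differ in the second coordinate and have first-coordinate distance $|x|/n<\pi$, hence are incomparable, so $(x,n\bmod 2)\in B((0,0),n)$.

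For the ``in particular'' clause, I would pull back through the isomorphism $f\colon D\to\RR_{\pi,2}$ sending $(r,m)$ to $(m\pi+r,m\bmod 2)$. Given $(x,i)\in B((0,0),n)$, either $(x,i)=(0,0)=f(0,0)\in D(n)$, or by the recursion just proved there is some $1\leq k\leq n$ with $k\equiv i\pmod{2}$ and $-k\pi<x<k\pi$. The unique integer $m\equiv i\pmod{2}$ with $m\pi\leq x<(m+2)\pi$ then satisfies $-k\leq m\leq k-2$: the upper bound comes from $m<k$ together with the parity constraint $m\equiv k\pmod{2}$ (which rules out $m=k-1$), and the lower bound from $m+2>x/\pi>-k$ together with the same parity constraint (ruling out $m=-k-1$). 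Thus $m\in[-n,n]$ and $f^{-1}(x,i)=(x-m\pi,m)\in D(n)$.

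The only mildly delicate step is this last piece of arithmetic, where the parity constraint between $m$ and $k$ is essential to eliminate the off-by-one values; everything else reduces to unpacking the definition of the order, parity alternation along walks, and a direct triangle-inequality argument on the constructed walk.
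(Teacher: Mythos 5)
Your proof is correct, and in fact the paper states this lemma without any proof at all, so your argument supplies exactly the details the authors left to the reader: the characterization of incomparability in $\RR_{\pi,2}$ (namely $i\neq j$ and $|x-y|<\pi$), parity alternation plus the triangle inequality for the inclusion $\subseteq$, the explicit walk $(kx/n,\,k\bmod 2)_{0\leq k\leq n}$ for $\supseteq$, and the pullback through the isomorphism $f(r,m)=(m\pi+r,\,m\bmod 2)$ for the final clause. I checked the delicate arithmetic in the last step: the parity constraint $m\equiv k\pmod 2$ does correctly exclude the values $m=k-1$ and $m=-k-1$, giving $-k\leq m\leq k-2$ and hence $f^{-1}(x,i)\in D(n)$, so the whole argument stands as written.
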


\begin{proposition} \label{bounded-diameter}
\begin{enumerate}[$(1)$]

 \item For each integer $n\geq 1$, the incomparability graph of $D(n)$ has a bounded diameter;

 \item For every integer $n\geq 1$ there is an integer  $m(n)$ such that every poset $P$ at most countable and of width at most two and such that the diameter of $\ainc (P)$ is at most $n$ embeds in $D(m(n))$. In particular every connected bipartite permutation graph $G$ of diameter bounded by $n$ embeds into the incomparability graph of the poset $D(m(n))$.
 \end{enumerate}
\end{proposition}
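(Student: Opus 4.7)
The plan is as follows. For (1), I would show directly that the diameter of $\ainc(D(n))$ is bounded by an explicit function of $n$. By Lemma \ref{lem:bichain-coding}, two vertices of $D$ are adjacent in $\ainc(D)$ iff their $\ZZ$-coordinates differ by exactly one and the vertex on the higher level has strictly smaller first coordinate. Fix a basepoint $v_{0}=(r_{0},0)$ with $r_{0}\in(0,2\pi)$. For any target $(r,k)\in D(n)$, one can build a path from $v_{0}$ to $(r,k)$ that changes the second coordinate one unit at a time, each level-change costing at most three steps (a short zig-zag through an adjacent level to arrange the required strict inequality on the first coordinate), plus a final two-step adjustment within the terminal level. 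This gives a path of length at most $3|k|+2\le 3n+2$, so $\delta(\ainc(D(n)))\le 6n+4$.

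For (2), the argument is driven by Corollary \ref{cor:embed-D}. Under the standing interpretation that ``diameter of $\ainc(P)$ at most $n$'' forces $\ainc(P)$ to be connected, Corollary \ref{cor:embed-D} yields an embedding $f:P\to D$ of posets. Next, observe that for every $c\in\ZZ$ the map $\tau_{c}:(r,k)\mapsto(r,k+c)$ is an automorphism of $D$ (immediate from the definition of the order on $D$); so, fixing $u_{0}\in P$ and writing $f(u_{0})=(r_{0},k_{0})$, we may replace $f$ by $\tau_{-k_{0}}\circ f$ and assume $f(u_{0})=(r_{0},0)$. Because $f$ is a poset embedding, it also embeds $\ainc(P)$ as an induced subgraph of $\ainc(D)$, so $d_{\ainc(D)}(f(u_{0}),f(u))\le d_{\ainc(P)}(u_{0},u)\le n$ for every $u\in P$. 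By Lemma \ref{lem:bichain-coding} every edge of $\ainc(D)$ changes the second coordinate by exactly one, so along any path of length $\le n$ starting at $f(u_{0})=(r_{0},0)$ the second coordinate stays in $[-n,n]$. Therefore $f(P)\subseteq[0,2\pi[\,\times[-n,n]=D(n)$, and $m(n):=n$ works. The last sentence of (2) follows since $\ainc(P)$ is thereby realized as an induced subgraph of $\ainc(D(n))$.

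The only delicate point is in part (1): verifying the path construction near the boundary $r=0$ of $[0,2\pi[$ requires a small case analysis, and one may alternatively bootstrap off the ball description of $B((0,0),n)$ stated just before the proposition, combined with the $\ZZ$-translation automorphisms of $D$. By contrast, part (2) is essentially forced by two ingredients: the universality statement Corollary \ref{cor:embed-D} and the observation that the second coordinate is $1$-Lipschitz along edges of $\ainc(D)$; the $\ZZ$-translation invariance of $D$ then provides the normalization that confines the image of $f$ to $D(n)$.
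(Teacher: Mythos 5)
Your proof of part $(2)$ is correct and takes essentially the paper's route: the paper likewise invokes Corollary \ref{cor:embed-D} to embed $P$ into $D$, notes that the image has diameter at most $n$ in $\ainc(D)$, and then writes that ``we may suppose'' the image lies in the ball $B((0,0),n)$, concluding with the inclusion $B((0,0),n)\subseteq D(n)$ from the preceding lemma. Your normalization is a clean way of making that ``we may suppose'' precise: centring the image exactly at $(0,0)$ would require an automorphism of $D$ moving both coordinates (available only through the isomorphism with $\RR_{\pi,2}$), whereas you use only the evident level-translations $\tau_c$ together with the observation, immediate from item $(2)$ of Lemma \ref{lem:bichain-coding}, that every edge of $\ainc(D)$ changes the level by exactly one; this confines the image to $[0,2\pi[\,\times[-n,n]=D(n)$ and gives the same value $m(n)=n$ as the paper. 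Your remark that finiteness of the diameter forces $\ainc(P)$ to be connected, so that Corollary \ref{cor:embed-D} applies, matches the paper's standing hypothesis.

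Part $(1)$, however, contains a genuine gap that your anticipated ``small case analysis near $r=0$'' cannot close. The vertex $(0,-n)$ is the \emph{minimum} of $D(n)$: its first coordinate $0$ is below every first coordinate and its level $-n$ is below every level, so the first clause of the definition of $\leq$ gives $(0,-n)\leq (s,j)$ for every $(s,j)\in D(n)$. Hence $(0,-n)$ is an isolated vertex of $\ainc(D(n))$ (its neighbours in $\ainc(D)$ all lie on level $-n-1$, which the truncation to $[-n,n]$ removes), so $\ainc(D(n))$ is disconnected and, under the paper's convention that $d_G(x,y)=\infty$ for vertices in different components, has infinite diameter; in particular no path construction can reach $(0,-n)$, and your claimed bound $6n+4$ fails. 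Statement $(1)$ must therefore be read componentwise. With that reading your zig-zag argument does give an $O(n)$ bound on the component $D(n)\setminus\{(0,-n)\}$, and there is an even shorter route inside the paper: the lemma following the proposition shows that the longest induced path of $\ainc(D(n))$ has $6n$ vertices, and since in a connected graph shortest paths are induced (Corollary \ref{cor:detour} gives $\delta_G\leq D_G$), every connected component has diameter at most $6n-1$. Note finally that the paper's own displayed proof argues only part $(2)$ and is silent on part $(1)$, so you were not overlooking an argument from the paper here, but your write-up of $(1)$ should be corrected to the componentwise claim.
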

\begin{proof}
Let $P$ be  a poset at most countable and of width at most two whose incomparability graph is connected. It follows from Corollary \ref{cor:embed-D} that $P$ embeds in $D$. Let $f$ be such an embedding. This is an embedding of $\ainc(P)$ in $\ainc(D)$.   If the diameter of $\ainc (P)$ is at most $n$ then the diameter of the range of $f$ is at most $n$. We may suppose that its image is included into the ball $B((0,0), n)$. Hence it is included into $D(n)$.
\end{proof}

 \begin{figure}[h]
\begin{center}
\leavevmode \epsfxsize=3in \epsfbox{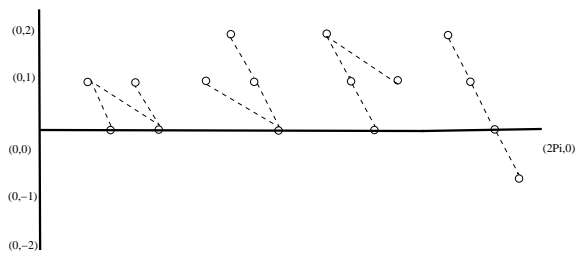}
\end{center}
\caption{Embeddings of $P_4$ into $\Inc(D_2)$.} \label{fig:p4}
\end{figure}

\begin{figure}[h]
\begin{center}
\leavevmode \epsfxsize=3in \epsfbox{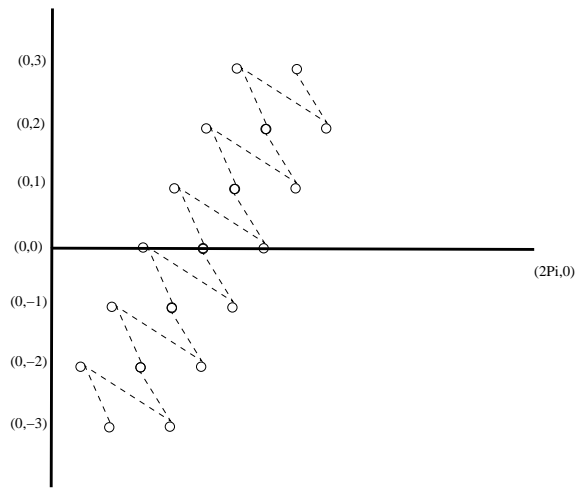}
\end{center}
\caption{An embedding of $P_{18}$ into $\Inc(D_3)$.} \label{fig:path-D3}
\end{figure}

\begin{lemma}Let $n$ be a positive integer. Denote by $f(n)$ the largest positive integer $k$ such that $\mathrm P_k$ embeds into $\ainc(D(n))$. Then $f(n)=6n$ for $n\geq 1$.
\end{lemma}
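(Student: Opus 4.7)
My plan is to prove the two inequalities $f(n)\ge 6n$ and $f(n)\le 6n$ separately, with the lower bound coming from an explicit construction and the upper bound from a metric argument using results already in the paper.

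For the lower bound $f(n)\ge 6n$, I would exhibit an explicit induced path on $6n$ vertices in $\ainc(D(n))$, generalizing the pattern displayed in Figure~\ref{fig:path-D3}. Concretely, I would define vertices $v_1,\dots,v_{6n}$ inductively, with a level sequence $(k_i)$ that zig-zags through $[-n,n]$ visiting each interior level a controlled number of times, and with $r$-coordinates chosen so that consecutive vertices are incomparable while the $r$-values restricted to any single level form a strictly monotone sequence along the path. Non-consecutive comparability is then automatic for pairs at the same level, or at level-gap at least two, by the definition of the order on $D(n)$; for pairs at level-gap exactly one it follows from the monotonicity of $r$-values at each level, together with the direction of the edge step.

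For the upper bound $f(n)\le 6n$, I would invoke Corollary~\ref{cor:detour}, which yields $D_G\le 3\delta_G-1$ for the detour $D_G$ and the diameter $\delta_G$ of the incomparability graph of a poset of width at most two. Since $D(n)$ has width at most two by Lemma~\ref{lem:bichain-coding}, the corollary applies to each connected component of $G:=\ainc(D(n))$, and it suffices to control $\delta_G$. I would then estimate $\delta_G$ directly: between any two vertices $(r,k)$ and $(r',k')$ of the same connected component one can construct a path of length at most $|k-k'|+c$ for a small constant $c$ absorbing a possible $r$-misalignment, by moving one level per step toward the target while adjusting $r$ so that consecutive vertices are incomparable. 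Combining this diameter bound with the detour inequality gives the desired upper bound on the length of any induced path.

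The main obstacle will be the diameter estimate. A purely monotone upward or downward walk forces the $r$-coordinates to change strictly in one direction, which may be incompatible with arbitrary endpoints; absorbing this misalignment without substantially lengthening the walk is the delicate point. I would handle it either by introducing a bounded number of corrective oscillations, or by transporting the problem to $\RR_{\pi,2}$ through the isomorphism proved just before the lemma, where edges correspond to pairs at Euclidean distance less than $\pi$ on the real line, turning the graph-distance estimate into a length computation governed by the width of $[0,2\pi)$. A secondary subtlety is that $\ainc(D(n))$ may contain isolated vertices (for instance $(0,-n)$), so the argument must be carried out per connected component rather than for the graph as a whole.
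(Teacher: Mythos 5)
Your lower bound is essentially the paper's: an explicit zig-zag induced path generalizing Figure \ref{fig:path-D3}, and that half of the proposal is fine. The upper bound, however, has a genuine gap: the inequality $D_G\leq 3\delta_G-1$ of Corollary \ref{cor:detour} is too lossy to yield the exact value $6n$, and the difficulty you dismiss as a delicate point to be ``absorbed'' is in fact an unavoidable obstruction. By Lemma \ref{lem:bichain-coding}, an edge of $G:=\ainc(D(n))$ joins $(r,k)$ to $(r',k+1)$ exactly when $r'<r$; hence along any walk, an upward step strictly decreases the $r$-coordinate and a downward step strictly increases it. Consequently, for $0<r\leq r'$, every path in $G$ from $(r,-n)$ to $(r',n)$ must contain at least one downward step, so its length is at least $2n+2$; such pairs lie in the nontrivial component, so the diameter of that component is at least (in fact exactly) $2n+2$, not $2n$. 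Your constant $c$ is therefore at least $2$, and the best Corollary \ref{cor:detour} can then give is $D_G\leq 3(2n+2)-1=6n+5$, i.e. induced paths with up to $6n+6$ vertices. No refinement of the walk construction can remove this slack: the factor $3$ in the detour inequality multiplies the forced $+2$, so this route proves $f(n)\leq 6n+O(1)$ but can never prove $f(n)\leq 6n$.

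The paper's upper bound avoids diameters altogether and is a counting argument specific to $D(n)$: since every edge of $G$ joins two consecutive levels, the edges of an induced path $R$ are distributed among the $2n$ strips $[0,2\pi[\times\{j,j+1\}$ with $-n\leq j\leq n-1$, and within one strip the half-graph structure of the adjacency (namely $(r,j)$ adjacent to $(r',j+1)$ if and only if $r'<r$), combined with the fact that vertices of a path have degree at most two, forces each strip to carry at most three edges of $R$. This gives at most $6n$ edges in total, which is the exact count the lemma needs. To salvage your strategy you would have to replace the general-purpose metric inequality by such a local, level-by-level count; the detour bound alone cannot close the gap.
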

\begin{proof}Let $n\geq 1$ be an integer. We first exhibit an incomparability path of length $6n$ proving that $f(n)\geq 6n$ (see Figure \ref{fig:path-D3} for an example when $n=3$). Let $(x_k)_{0\leq k\leq 6n}$ be the sequence of vertices of $D_n$ defined as follows. Set $x_0=(\frac{\pi}{n},-n), x_1=(\frac{\pi}{n}-\frac{1}{2n},-n+1),x_2=(\frac{\pi+1}{n},-n),x_3=(\frac{\pi}{n}+\frac{1}{2n},-n+1)$. For $k\geq 1$ set $x_{3k+1}=x_{1}+(\frac{1}{2k},k)$, $x_{3k+2}=x_{2}+(\frac{1}{2k},k)$ and $x_{3k+3}=x_{3}+(\frac{1}{2k},k)$. It is easily seen that the sequence $(x_k)_{0\leq k\leq 6n}$ is an incomparability path in $D_n$ of length $6n$. Thus $f(n)\geq 6n$.\\
Next we prove that $f(n)\leq 6n$. We will prove that if $R:=(r_i,l_i)$  for $i\geq 0$ is an incomparability path in $D_n$, then $R$ has at most three edges in $[0,2\pi[\times \{j,j+1\}$ for all $-n\leq j\leq n-1$. Since there are $2n$ such sets, the maximum number of edges of $R$ is $6n$ and we are done. Suppose for a contradiction that there exists $-n\leq j\leq n-1$ for which $R$ has four distinct edges in $[0,2\pi[\times \{j,j+1\}$. Say $\{(r_k,j),(r'_k,j+1)\}$, for $1\leq k\leq 4$, are the edges of $R$ in $[0,2\pi[\times \{j,j+1\}$. We may assume without loss of generality that $r_1\leq r_2\leq r_3\leq r_4$. Then $r'_k<r_k$ for all $1\leq k\leq 4$. Since $(r_4,j)$ has degree at most two in $R$ we infer that $r'_1=r'_2=r'_3$ or $r'_2=r'_3=r'_4$. We only consider the case $r'_1=r'_2=r'_3$ as the other case can treated similarly. Hence, $(r'_1,j+1)$ is adjacent to $(r_k,j)$ for all $1\leq k\leq 4$. Hence, $r_1=r_2=r_3$ or $r_2=r_3=r_4$. But this is impossible since we assumed that $R$ has four distinct edges in $[0,2\pi[\times \{j,j+1\}$. This completes the proof of the lemma.
\end{proof}

\begin{lemma} \label{dm-multichainable}For every nonnegative integer $n$,  the poset $D (n)$ and its incomparability graph $\Inc(D(n))$ are multichainable.
\end{lemma}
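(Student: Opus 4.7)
The plan is to obtain this essentially for free from part (4) of Lemma~\ref{lem:bichain-coding}. Take the chain $L := [0,2\pi[$ equipped with its natural order and the finite set $V := [-n,n] \subset \ZZ$. The identity enumeration $v_{(r,k)} := (r,k)$ presents the underlying set of $D(n)$ as $L \times V$. So to prove that $D(n)$ is multichainable it remains to check that, for every local (i.e.\ partial) isomorphism $f$ of the chain $L$ with domain $A \subseteq L$, the map
\[
(f,\id)\colon A \times V \longrightarrow L \times V, \qquad (r,k) \longmapsto (f(r),k),
\]
is a local isomorphism of $D(n)$.

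By Lemma~\ref{lem:bichain-coding}(4) applied to the total order $C := [0,2\pi[$, the map $(f,\id)$ is a local isomorphism of $C_{\ZZ} = D$. Since the second coordinate is fixed by $\id$, if its input has second coordinate in $[-n,n]$ then so does its output; hence restricting $(f,\id)$ to $A\times [-n,n]$ yields a well-defined partial map from $D(n)$ to $D(n)$ which preserves and reflects the order (as the restriction of an order local isomorphism of $D$ to an order-convex subset of $L\times \ZZ$ with respect to the second coordinate). This shows that $D(n)$ is multichainable with the enumeration above.

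For the incomparability graph $\Inc(D(n))$, a local isomorphism of a poset trivially preserves incomparability (it preserves both $\leq$ and its negation on its domain), so any local isomorphism of the poset $D(n)$ is automatically a local isomorphism of the graph $\Inc(D(n))$. Therefore the same enumeration $(v_{(r,k)})_{(r,k)\in L\times V}$ witnesses that $\Inc(D(n))$ is multichainable.

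There is no real obstacle here; the content is entirely packaged inside Lemma~\ref{lem:bichain-coding}(4), and the only points that require any care are (i) checking that restriction from $D$ to $D(n)$ preserves the ``local isomorphism'' property, which is immediate because $(f,\id)$ fixes the second coordinate, and (ii) observing that order local isomorphisms induce graph local isomorphisms of the incomparability graph.
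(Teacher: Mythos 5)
Your proof is correct and is essentially the paper's own argument: the paper's proof consists of the single line ``Apply Item (4) of Lemma~\ref{lem:bichain-coding}'', and you have simply spelled out the details that the paper leaves implicit (the restriction from $D$ to $D(n)$, which works because $(f,\id)$ fixes the second coordinate, and the passage from poset local isomorphisms to incomparability-graph local isomorphisms).
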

\begin{proof} Apply Item (4) of Lemma \ref{lem:bichain-coding}.
\end{proof}

\section{Well-quasi-ordered and better-quasi-ordered hereditary classes and a proof of Theorems \ref{thm:1} and \ref{thm:2}.}\label{section:wqo-thm2-thm6}
We recall the notion of well-quasi-order. We refer to the founding work of Erd\"os and Rado\cite{erdos-rado} and Higman \cite{higman}. Then,  we recall the notion of better-quasi-order invented by Nash-Williams \cite{nashwilliams1, nashwilliams2}.

A quasi-ordered-set  (quoset) $Q$  is \emph{well-quasi-ordered}(wqo), if every infinite sequence of elements of $Q$ contains an infinite increasing subsequence. If $Q$ is an ordered set, this amounts to say that every nonempty subset of $Q$ contains finitely many minimal elements (this number
being non zero). Equivalently, $Q$ is wqo if and only if it contains no infinite descending chain and no infinite antichain.
Among classes of structures which are wqo under the embeddability quasi-order some remain  wqo when the structures are labelled by the elements of a quasi-order. Precisely, Let $\mathcal{C}$ be a class of relational structures, e.g., posets, and $Q$ be  a quasi ordered set or a poset. If $R\in \mathcal C$, a \emph{labelling of $R$ by $Q$} is any map $f$ from  the domain of $R$ into $Q$. Let   $\mathcal{C}\cdot Q$ denotes the collection of $(R,f)$ where $R\in \mathcal{C}$ and $f: R\rightarrow Q$ is  a labelling. This class is quasi-ordered  by $(R,f)\leq (R',f')$ if there exists an embedding $h: R\rightarrow R'$ such that $f(x)\leq (f'\circ h)(x)$ for all $x\in R$. We say that $\mathcal{C}$ is \emph{very well quasi-ordered} (vwqo for short) if for every finite $Q$, the class $\mathcal{C}\cdot Q$ is wqo. The class $\mathcal{C}$ is \emph{hereditary wqo} if $\mathcal{C}\cdot Q$ is wqo for every wqo $Q$. The class $\mathcal{C}$ is $n$-wqo if for every $n$-element poset $Q$, $\mathcal{C}\cdot Q$ is wqo.  The class $\mathcal{C}$ is $n^{-}$-w.q.o. if the class of $(R,a_1,\cdots,a_n)$ where $R\in \mathcal{C}$ and $a_1,\cdots,a_n\in R$ is wqo.

We do not know if these four notions are different. In the case of posets of width two we prove that they are identical.

We recall the following result (Proposition 2.2 of \cite{pouzet72}).

\begin{theorem}\label{thm:bounds} Provided that the signature $\mu$ is bounded, the cardinality of bounds of  every hereditary and hereditary wqo subclass of  $\Omega_{\mu}$ is bounded.  \end{theorem}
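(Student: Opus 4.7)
The plan is to argue by contradiction. Assume $\mathcal{C}$ is hereditary and hereditary wqo but that its bounds have unbounded cardinality. Since the signature $\mu$ is bounded, there are only finitely many isomorphism types at each finite cardinality, so this is equivalent to saying that $\mathcal{C}$ has infinitely many pairwise non-isomorphic bounds. Enumerate such bounds as $(B_n)_{n \in \NN}$, and for each $n$ pick a vertex $x_n \in B_n$, setting $B'_n := B_n \setminus \{x_n\}$. Minimality of the bound $B_n$ forces $B'_n \in \mathcal{C}$.

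The heart of the argument is to encode the trace of $x_n$ on $B'_n$ as a labelling drawn from a finite (hence wqo) set. Because $\mu$ is bounded---finitely many relations, each of arity at most some integer $k$---only finitely many atomic configurations involving $x_n$ together with up to $k-1$ vertices of $B'_n$ can arise; call this finite set $Q$. In the binary case I would define $\ell_n(y)$ to be the isomorphism type of the substructure of $B_n$ induced on $\{x_n, y\}$, so that $(B'_n, \ell_n) \in \mathcal{C} \cdot Q$. Hereditary wqo of $\mathcal{C}$ then furnishes indices $i < j$ together with a label-preserving embedding $h : B'_i \to B'_j$. Extending $h$ by $\hat{h}(x_i) := x_j$ produces a map $\hat{h} : B_i \to B_j$: preservation of labels forces every relation involving $x_i$ to translate into an equivalent relation involving $x_j$, while $h$ itself handles relations wholly inside $B'_i$. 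Hence $\hat{h}$ is an embedding of $B_i$ into $B_j$, contradicting the fact that distinct bounds of $\mathcal{C}$ form an antichain under embeddability.

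The main obstacle is the labelling step when the arity $k$ exceeds two: a vertex labelling cannot record the atomic type of $x_n$ relative to tuples of $B'_n$ of length two or more, so relations of arity at least three involving $x_i$ and two or more vertices of $B'_i$ may fail to be preserved by $\hat{h}$. The natural remedy is to label $(k-1)$-tuples of $B'_n$ rather than single vertices, or equivalently to enrich the signature of $B'_n$ by finitely many new auxiliary relations of arity at most $k-1$ recording, for each $R \in \mu$ and each choice of positions to be occupied by $x_n$, the trace of $R$ on $B'_n$. The enriched signature remains bounded, and applying the wqo hypothesis to this tuple-labelled variant (implicit in the hereditary wqo framework of \cite{pouzet72}) recovers $h$; the extension $\hat{h}$ and the contradiction then go through exactly as in the binary case. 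This tuple-level bookkeeping is precisely the reason why the hypothesis requires boundedness of $\mu$ and not merely finiteness of the signature.
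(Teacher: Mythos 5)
Your binary-case argument is sound, and it is in fact the very argument the paper itself uses (for graphs) to prove implication $(ii)\Rightarrow(i)$ of Corollary \ref{cor:bounds}: delete a vertex from each bound, record its trace by a labelling over a finite set, invoke hereditary wqo to obtain a label-preserving embedding $h\colon B'_i\to B'_j$, extend $h$ by $x_i\mapsto x_j$, and contradict the fact that the bounds form an antichain. (Two harmless refinements you omit: the label should be the \emph{pointed} quantifier-free type of the pair $(x_n,y)$ rather than the unpointed isomorphism type of $\{x_n,y\}$, and one should first pass, by pigeonhole, to a subsequence on which the type of $x_n$ itself is constant.) Note also that the paper gives no proof of Theorem \ref{thm:bounds} at all --- it quotes it as Proposition 2.2 of \cite{pouzet72} --- so the graph argument in Corollary \ref{cor:bounds} is the only proof in the text against which yours can be compared.

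The genuine gap is your treatment of arity at least three, which is exactly the content of the theorem beyond the graph case. ``Hereditary wqo'' means, by the definition used in this paper and presupposed by the statement, that $\mathcal C\cdot Q$ is wqo for every wqo $Q$, where a labelling is a map from the \emph{vertices} into $Q$. Your remedy --- labelling $(k-1)$-tuples, equivalently enriching the signature of $B'_n$ by auxiliary relations of arity up to $k-1$ recording the trace of $x_n$ --- produces structures that lie in no class $\mathcal C\cdot Q$, so the hypothesis does not apply to them; the phrase ``implicit in the hereditary wqo framework of \cite{pouzet72}'' is an appeal to authority, not an argument. Worse, there is no general principle converting vertex-labelled wqo into wqo of signature-enriched structures: the class of finite sets (empty signature) is hereditary wqo by Higman's theorem, yet enriching it by a single binary relation yields the class of all finite graphs, which is not wqo. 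Any correct proof must therefore exploit the specific provenance of these enriched structures (they are traces of one-point extensions that are bounds of $\mathcal C$), and your proposal contains no such argument; this missing step is precisely where the real work of Pouzet's proposition lies for non-binary signatures.
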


The notion of better-quasi-ordering is a tool for proving that classes of infinite structures are wqo. The operational definition of bqo is not intuitive. Since we are not going to  prove properties of bqos, we use  the following  definition,  based on the idea of labelling considered above. Let $Q$ be a quasi-ordered set and $Q^{<\omega_1}$,  the set of maps $f: \alpha \rightarrow Q$, where $\alpha$ is any countable ordinal. If $f$ and $g$ are two such maps, we set $f\leq g$ if there is a one-to-one preserving map $h$ from the domain $\alpha$ into the domain $\beta $ of $g$ such that $f(\gamma)\leq g(h((\gamma))$ for all $\gamma< \alpha$. This relation is  a quasi-order; the quasi-ordered set  $Q$ is a \emph{better-quasi-order} if $Q^{<\omega_1}$ is wqo. Bqo's are wqo's. Indeed, according to the famous Higman's Theorem on words, \cite{higman}, a set $Q$ is wqo if and only if  the set $Q^{<\omega}$ of finite sequences of elements of $Q$ is wqo. As  wqo's, finite sets and  well-ordered sets are bqo's (do not try to prove it using  the  definition of bqo given here), finite unions, finite products, subsets and images of bqo's are bqo's.  But, contrarily to wqo's,  if $Q$ is bqo then the set $\mathbf{I} (Q)$ of initial segments of $Q$  is bqo. Instead of labelling ordinals, we may label chains and compare them as above. We will need the following result of Laver \cite{laver} (Theorem,  page 90).

\begin{theorem}\label{thm:laver}
The class of countable chains labelled by a bqo is bqo.
\end{theorem}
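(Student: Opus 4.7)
The plan is to follow the minimal bad array argument invented by Nash-Williams and carried out by Laver. Suppose for contradiction that, for some bqo $Q$, the class $\mathcal{L}$ of $Q$-labelled countable chains is not bqo. Then there is a bad $Q$-labelled array, namely a map $\Phi: B \to \mathcal{L}$ defined on some barrier $B\subseteq [\omega]^{<\omega}$ such that no pair $s \triangleleft t$ in $B$ gives rise to a label-preserving embedding $\Phi(s) \hookrightarrow \Phi(t)$. The goal is to produce from such a $\Phi$ a \emph{minimal} bad array and to extract a contradiction from its minimality.

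To set up the descent, I would fix a quasi-order $\prec$ on $\mathcal{L}$ declaring $(L', f') \prec (L, f)$ whenever $L'$ is the image of a label-preserving embedding of a \emph{proper} convex subchain of $L$. This $\prec$ is well-founded on countable chains: by Hausdorff's theorem, every countable chain is either finite, or scattered (and hence obtainable as an $\omega$- or $\omega^{*}$-indexed sum of strictly simpler pieces of smaller Hausdorff rank), or contains a dense suborder, in which case it is itself a $\mathbb{Q}$-indexed sum of proper convex pieces. Using Nash-Williams' minimal bad array lemma with respect to $\prec$, I would extract from $\Phi$ a bad array $\Psi$ each of whose values is $\prec$-minimal among those appearing as values of bad arrays.

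The core step is then a case analysis on the decomposition type of the chains $\Psi(s)$. Each $\Psi(s)$ can be written as a sum $\sum_{\alpha \in I_s} M_{s,\alpha}$ with index set $I_s \in \{\beta, \beta^{*}, \mathbb{Q}\}$ for some countable ordinal $\beta = \beta(s)$; by minimality of $\Psi$, the family of all $Q$-labelled pieces $(M_{s,\alpha}, f_s \restriction M_{s,\alpha})$, considered as a single array, is bqo. I would then invoke the Nash-Williams--Laver sum lemma, which asserts that bqo-indexed sums of members of a bqo class of $Q$-labelled structures again form a bqo class. Since countable ordinals, their reverses, and $\mathbb{Q}$ are bqo as chains (the last being the heart of Laver's theorem), the sum lemma forces $(\Psi(s))_{s\in B}$ to be bqo, contradicting the badness of $\Psi$.

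The main obstacle is the sum lemma, which requires the full apparatus of barriers and Nash-Williams transforms of bad arrays and a delicate bookkeeping to preserve labels through the transform. A second obstacle at the same depth is the bqo-property of $\mathbb{Q}$ itself: it does not reduce to the scattered case and must be handled by an independent minimal bad array argument in which the descent relation compares the order types of convex pieces of a partition of $\mathbb{Q}$. Once these two ingredients are in hand, the structural induction on Hausdorff rank closes the argument and delivers the theorem.
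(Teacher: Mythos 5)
The paper does not prove this statement: it is quoted from Laver's 1971 paper (the Theorem on page 90 of \cite{laver}) and used as a black box, so there is no internal proof to compare yours against. Your outline does follow the broad strategy of Laver's actual argument --- minimal bad arrays, a decomposition of countable chains into sums indexed by ordinals, reversed ordinals or dense orders, and the Nash--Williams sum lemma --- but it contains a genuine gap at the step where the minimal bad array is extracted.

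The descent relation you propose, $(L',f')\prec (L,f)$ when $L'$ is the image of a label-preserving embedding of a proper convex subchain of $L$, is not well-founded on countable labelled chains. For instance, $\omega$ with a constant label is label-preservingly isomorphic to its proper convex subchain $\omega\setminus\{0\}$, and $\mathbb{Q}$ is isomorphic to the convex subchain $\mathbb{Q}\cap(0,1)$; iterating gives infinite strictly $\prec$-descending sequences. Hausdorff's theorem does not rescue this: the Hausdorff rank of a convex subchain is at most that of the ambient chain, but it is not strictly smaller for proper subchains, which is exactly what your appeal to the minimal bad array lemma requires. Without a well-founded ranking there is no guarantee that a $\prec$-minimal bad array exists, and the descent collapses. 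Laver's proof avoids this by ranking \emph{scattered} types by Hausdorff rank --- where the summands of the canonical $\omega$- or $\omega^{*}$-indexed decomposition do have strictly smaller rank --- and by reducing arbitrary countable chains to dense-indexed sums of scattered ones via the scattering condensation; the induction runs on that rank, not on the convex-subchain relation. Beyond this, the two ingredients you flag as ``obstacles'' (the labelled sum lemma and the treatment of $\mathbb{Q}$-indexed sums) are the substance of Laver's theorem, so even with the ranking repaired the proposal remains a roadmap rather than a proof.
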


A relational structure $R$ is \emph{almost multichainable} if its domain $V$ is the disjoint union of a finite set $F$ and a set  $L\times K$ where  $K$ is a  finite set, for which there is a total order $\leq$ on $L$, satisfying  the following condition:

 $\bullet$ For every local isomorphism $h$  of the chain $C:= (L, \leq)$  the map $(h, 1_K)$ extended by the identity on $F$ is a local isomorphism of $R$ (the map $(h, 1_K)$ is defined by $(h, 1_K)(x, y):= (h(x), y)$ ).

We state

\begin{proposition}\label{multichainablewqo}The age of an almost multichainable structure  is hereditary wqo.
\end{proposition}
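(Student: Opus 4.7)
The plan is to reduce the statement, via the almost multichainable structure, to Higman's theorem on finite words over a wqo alphabet. Fix an almost multichainable $R$ with domain $V=F\cup(L\times K)$ as in the definition, fix a wqo $Q$, and consider an arbitrary infinite sequence $((R_n,f_n))_{n\in \NN}$ in $\age(R)\cdot Q$. Each domain $V_n$ decomposes as $V_n=A_n\cup T_n$ with $A_n\subseteq F$ and $T_n\subseteq L\times K$, both finite. Since $F$ is finite there are finitely many possible $A_n$, so an initial extraction lets us assume that $A_n$ is constantly equal to some fixed $A\subseteq F$; since $Q^A$ is wqo, a further extraction assumes that the restrictions $(f_n)_{\restriction A}$ are componentwise increasing.

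The key step is to encode each labelled piece $(T_n,(f_n)_{\restriction T_n})$ as a finite word over a suitable wqo alphabet. Adjoin to $Q$ a new element $\bot$, declared strictly below every element of $Q$, obtaining a wqo $Q_\bot$; since $K$ is finite, $\Sigma:=Q_\bot^K$ is wqo under the componentwise order. For each $n$, let $L_n\subseteq L$ be the projection of $T_n$ onto its first coordinate, ordered by the restriction of $\leq$, and define a word $w_n\colon L_n\to\Sigma$ by
\[
w_n(\ell)(k):=\begin{cases} f_n(\ell,k)&\text{if }(\ell,k)\in T_n,\\ \bot&\text{otherwise.}\end{cases}
\]
By Higman's theorem, $\Sigma^{<\omega}$ is wqo, so along the preceding subsequence one extracts indices $i<j$ together with a strictly increasing map $h\colon L_{n_i}\to L_{n_j}$ satisfying $w_{n_i}(\ell)\leq w_{n_j}(h(\ell))$ in $\Sigma$ for every $\ell\in L_{n_i}$.

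It remains to assemble the pieces into a labelled embedding of $R_{n_i}$ into $R_{n_j}$. By construction $h$ is a local isomorphism of $(L,\leq)$, so by the defining property of almost multichainability the map $\varphi$ equal to the identity on $F$ and to $(h,\id_K)$ on $L\times K$ is a local isomorphism of $R$; its restriction to $V_{n_i}$ is therefore an embedding of $R_{n_i}$ into $R_{n_j}$, since the componentwise inequality $w_{n_i}(\ell)(k)\leq w_{n_j}(h(\ell))(k)$ with $w_{n_i}(\ell)(k)\in Q$ forces $w_{n_j}(h(\ell))(k)\in Q$, so that $(h(\ell),k)\in T_{n_j}$ whenever $(\ell,k)\in T_{n_i}$. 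The same inequality yields $f_{n_i}(\ell,k)\leq f_{n_j}(h(\ell),k)$, and combined with the already increasing labels on $A$ this furnishes $(R_{n_i},f_{n_i})\leq(R_{n_j},f_{n_j})$. The only delicate point of the whole argument is the design of the alphabet $\Sigma$ together with the rule $\bot<q$, so that Higman embeddings of words correspond exactly to those embeddings of $R$ supplied by the almost multichainable structure; once this matching is fixed, the proof is routine.
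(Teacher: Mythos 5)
Your proof is correct and follows essentially the same route as the paper, which only sketches it: the paper codes members of the age (and their labelled versions) as words over an alphabet built from $K$ and invokes Higman's theorem, exactly as you do with the alphabet $Q_{\bot}^{K}$ of $Q$-labelled partial columns, the finite part $F$ being absorbed by a preliminary extraction. Your alphabet is the labelled analogue of the $\powerset(K)\setminus\{\emptyset\}$ used in the paper's proof of the bqo extension (Proposition \ref{multichainablebqo}), so the two arguments match step for step.
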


It was proved in  $1.$  of Theorem 4.19 p.265 of \cite{pouzet2006} that such an age is very well wqo, that is remains wqo if its members are labelled by a finite wqo. The fact that this later wqo is finite plays no role. Indeed, members of the age of
an almost multichainable structure can be coded by words over a finite alphabet, labelled members can be coded also by words, the conclusion follows from Higman's theorem on words.

With Theorem \ref{thm:bounds}, we have:

\begin{theorem}
If the signature is bounded, the cardinality of bounds of the age of an almost multichainable structure is bounded.
\end{theorem}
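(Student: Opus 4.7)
The plan is to obtain this as an immediate combination of the two results stated just before it, namely Theorem \ref{thm:bounds} (the general bound-cardinality result from \cite{pouzet72}) and Proposition \ref{multichainablewqo} (asserting that the age of an almost multichainable structure is hereditary wqo). The only work is to verify that the hypotheses of Theorem \ref{thm:bounds} apply to the class $\mathcal{A}:=\age(R)$ where $R$ is the given almost multichainable structure.

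First I would note that $\mathcal{A}$ is by definition a hereditary class of finite structures with the same signature as $R$; since we are assuming the signature $\mu$ is bounded, we have $\mathcal{A}\subseteq \Omega_{\mu}$. Next, Proposition \ref{multichainablewqo} gives that $\mathcal{A}$ is hereditary wqo, so for every wqo $Q$ the labelled class $\mathcal{A}\cdot Q$ is wqo. Thus $\mathcal{A}$ satisfies both hypotheses required by Theorem \ref{thm:bounds}, and a direct application yields that the set of bounds of $\mathcal{A}$ has bounded cardinality. That is the whole argument.

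There is essentially no obstacle. The only thing to be careful about is that Theorem \ref{thm:bounds} is stated for a \emph{hereditary and hereditary wqo subclass} of $\Omega_\mu$, while here our class happens to be the age of a single relational structure $R$; but since ages are hereditary classes of finite structures, this is automatic. One could, if desired, restate the conclusion more explicitly: every bound of $\age(R)$ has cardinality at most some $N=N(R)$ depending only on the signature bound and on the proof of Theorem \ref{thm:bounds} applied to $\age(R)$.
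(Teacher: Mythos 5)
Your proposal is correct and is exactly the paper's argument: the theorem is stated immediately after Proposition \ref{multichainablewqo} with the phrase ``With Theorem \ref{thm:bounds}, we have,'' i.e.\ the age of an almost multichainable structure is hereditary wqo, and Theorem \ref{thm:bounds} then bounds the cardinality of its bounds. Your additional check that an age is a hereditary subclass of $\Omega_{\mu}$ is the only (trivial) verification needed, and it is fine.
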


We extend Proposition \ref{multichainablewqo}.

\begin{proposition}\label{multichainablebqo}If $\mathcal C$ is the age of a almost multichainable structure  then the collection  $\tilde{\mathcal C}_{\leq\omega}$ of countable structures whose ages are included in $\mathcal C$ is bqo and  in fact hereditary bqo.
\end{proposition}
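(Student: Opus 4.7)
The plan is to mirror the proof of Proposition \ref{multichainablewqo}, replacing the use of Higman's theorem on words over a finite alphabet by Laver's Theorem \ref{thm:laver} on countable chains labelled by a bqo. Let $R$ be an almost multichainable structure with domain $V(R)=F\cup(L\times K)$, with $F$ and $K$ finite and $(L,\leq)$ totally ordered. The first step is to attach to every $R'\in \tilde{\mathcal C}_{\leq \omega}$ a code of the form $(F', L', \chi')$, where $F'\subseteq F$, $L'$ is a countable chain, and $\chi'\colon L'\to \mathcal P(K)\setminus\{\emptyset\}$ records which copies in $K$ appear above each $\ell\in L'$. Such a code exists because $R'$ embeds into an almost multichainable extension $R^{*}$ of $R$ in which the chain $L$ has been enlarged to a sufficiently rich countable chain (for instance $L\cdot\QQ$, or $\QQ$ itself when the age of $(L,\leq)$ already contains all finite chains); the enlarged $R^{*}$ still has age $\mathcal C$, and its weak saturation is enough to embed any countable structure whose age is contained in $\mathcal C$, via a back-and-forth argument on the countably many vertices of $R'$.

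The almost multichainability of $R^{*}$ turns comparison of codes into embedding of structures: given $(F_{1}',L_{1}',\chi_{1}')$ and $(F_{2}',L_{2}',\chi_{2}')$, any order embedding $h\colon L_{1}'\to L_{2}'$ with $F_{1}'\subseteq F_{2}'$ and $\chi_{1}'(\ell)\subseteq \chi_{2}'(h(\ell))$ for every $\ell\in L_{1}'$ lifts, via $(h,1_{K})$ extended by the identity on $F$, to an embedding of $R_{1}'$ into $R_{2}'$. Thus comparability of codes implies embeddability of the corresponding structures. A code is exactly an element of the finite set $\mathcal{P}(F)$ paired with a countable chain $L'$ labelled by the finite alphabet $\mathcal{P}(K)\setminus\{\emptyset\}$. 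By Theorem \ref{thm:laver}, such labelled countable chains form a bqo, and a finite product with the finite (hence bqo) set $\mathcal P(F)$ remains bqo. Hence $\tilde{\mathcal C}_{\leq \omega}$ is bqo.

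For the hereditary bqo statement, let $Q$ be an arbitrary bqo and consider a labelling $f\colon V(R')\to Q$. Enrich the alphabet by tagging each $\ell\in L'$ with the element of $(Q\sqcup\{*\})^{K}$ whose $k$-th coordinate is $f(\ell,k)$ if $k\in\chi'(\ell)$ and $*$ otherwise, with $*$ declared to be a bottom element; enrich the finite part $F'$ correspondingly with its values in $Q^{F}$. The new alphabet is a finite product of bqos, hence bqo, so Laver's theorem again gives that $Q$-labelled codes are bqo, and comparisons of labelled codes still lift to label-respecting embeddings by the same argument. The main obstacle I foresee lies in the first step, namely the embedding of an arbitrary $R'\in \tilde{\mathcal C}_{\leq \omega}$ into a countable almost multichainable $R^{*}$ with age $\mathcal C$; this requires checking a suitable weak homogeneity property of $R^{*}$, which should ultimately follow from the fact that the isomorphism type of every finite substructure of $R^{*}$ is entirely determined by its associated triple $(F', L',\chi')$ restricted to a finite $L'$.
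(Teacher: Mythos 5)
Your overall architecture is the same as the paper's: encode each member of $\tilde{\mathcal C}_{\leq\omega}$ by a pair consisting of a subset of $F$ and a countable chain labelled by the nonempty subsets of $K$, check that comparability of codes lifts to embeddability of the decoded structures via maps of the form $(h,1_K)$ extended by the identity on $F$, and conclude by Laver's Theorem \ref{thm:laver}, applied a second time to absorb labels from a bqo $Q$. The lifting direction and the treatment of labels are correct. The genuine gap is exactly where you flag it: the surjectivity of the coding, that is, the claim that every countable $R'$ with $\age(R')\subseteq\mathcal C$ embeds into an enlarged almost multichainable structure $R^{*}$. Your proposed justification by ``weak saturation'' and a back-and-forth does not go through, and it does \emph{not} follow from the fact that finite substructures of $R^{*}$ are determined by their triples. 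A forth-construction needs an extension property: every one-point extension type over a finite substructure $W$ of $R^{*}$ that is consistent with $\age(R)$ must be realized over $W$ itself, not merely over some isomorphic copy $W'$ of $W$ elsewhere in $R^{*}$. This fails in general, because an isomorphism $W\cong W'$ need not be induced by an isomorphism of the underlying triples; for instance, for a suitable non-symmetric bipartite digraph built over $L\times\{a,b\}$, a single vertex placed in column $a$ and one placed in column $b$ are isomorphic substructures, yet an extension type realized over the latter is not realized over the former. So the naive enumeration of the vertices of $R'$ can get stuck, and choosing the images with the required foresight is precisely the missing argument.

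The paper sidesteps any universality or homogeneity claim about $R^{*}$. It codes $R$ by an auxiliary ``almost multichain'' $S:=(V,\leq,\rho_L,\rho_K,a_1,\dots,a_\ell)$ (an order, two equivalence relations, and constants naming $F$) and invokes the Compactness Theorem of First Order Logic to attach to each $R'$ with $\age(R')\subseteq\mathcal C$ a companion structure $S'$ \emph{on the same domain} with $\age(S')\subseteq\age(S)$, such that local isomorphisms from $S'$ to $S$ yield local isomorphisms from $R'$ to $R$; the code is then read off from $S'$ (the chain of $\rho_L$-blocks, each labelled by the set of $\rho_K$-classes it meets, together with the trace on $F$). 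Note the paper's explicit warning that $S'$ need not itself be almost multichainable, which is another sign that the picture ``$R'$ sits inside an enlarged $R^{*}$'' is not the mechanism. To complete your proof you would need either to prove the extension property for your chosen enlargement (which, as above, is false in general), or to construct the triple $(F',L',\chi')$ directly from $R'$ by a compactness or explicit combinatorial argument; as written, the central step is missing.
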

\begin{proof} The proof of Proposition \ref{multichainablewqo} given in \cite {pouzet2006}
 consists to interpret members of the age by words over a finite alphabet and apply Higman's Theorem on words. Here, using the same alphabet, we interpret the countable structures whose ages are included in $\mathcal C$ by countable chains labelled by this alphabet. Since the alphabet is bqo,  Laver's Theorem implies that this collection of labelled chains is bqo. Now, if we label these labelled chains by a bqo,  an other application of Laver's Theorem  yields that the resulting class is bqo.  Here are the details.
 We asserts  that $\tilde{\mathcal C}_{\leq\omega}$ is the image of the  product $\mathcal P$ of a finite poset and the class $\mathcal S_{\leq \omega}$ labelled by a finite set $A$ by a map  $\varphi$ preserving the order. Any finite poset being  bqo, $\mathcal S_{\leq \omega}.A$ is bqo via  Laver's Theorem,  the product $\mathcal P$ is bqo.  and its image too,  hence $\tilde{\mathcal C}_{\leq\omega}$ is bqo. If we label members of $\tilde{\mathcal C}_{\leq\omega}$ by a bqo, say $Q$,  this amounts to label the members of $\mathcal P$ by $Q$. An other application of Laver's theorem  yields that $\mathcal P.Q$ is bqo, hence $\tilde{\mathcal C}_{\leq\omega}. Q$ is bqo as required.  To prove our assertion, let $R:=: (V, (\rho_i)_{i\in I})$ be an almost multichainable structure as defined above such that  $\age (R)= \mathcal C$. The set   $V$ is the disjoint union of a finite  set $F$ and a set  of the form $L\times K$ where  $K$ is a finite set and  there is a linear  order on $L$  satisfying condition $\bullet$ above.  We code $R$ by an \emph{almost multichain} $S:= (V, \leq, \rho_L, \rho_K, a_1, \dots a_{\ell})$ where  the  elements $a_1, \dots, a_k$ are constants defining $F$ that is $\{a_1, \dots, a_k\}=F$,  the relations $\rho_L$ and $\rho_K$ are  equivalence relations on $V$ such that all the elements of $F$ are inequivalent and inequivalent to those of $L\times K$, furthermore two elements   $(x,y)$ and $(x',y')$ of $L\times K$ are equivalent with respect to   $\rho_K$   if $y=y'$, respectively $\rho_L$ if $x=x'$;  the relation $\leq $ is an order on $V$ for which all the elements of $F$ are minimal and below all elements of $L\times K$, all blocks of $\rho_L$ are antichains for the ordering and the family of these blocks is totally ordered by $\leq$. Let $\mathcal D:= \age( S)$ and $\tilde {\mathcal D}_{\leq \omega} $ be the collection of  countable structures whose age is included in $\mathcal D$. We may apply Compactness Theorem of First Order Logic to this elaborate construction. It yields that for every member $R'$ of $\mathcal C= \age (R)$ there is some $S'$ on the same domain such that $\age (S') \subseteq \age (S)= \mathcal D$ and every local isomorphism from $S'$ to $S$ yields a local isomorphism  from  $R'$ to $R$ (beware, $S'$ is not necessarily almost multichainable).
Thus  $\tilde{\mathcal D}_{\leq\omega}$ is transformed in  $\tilde{\mathcal C}_{\leq\omega}$ in an order preserving way. Hence, if $Q$ is a bqo, in order to prove that $\tilde{\mathcal C}_{\leq\omega}. Q$ is  bqo,  it suffices to prove that $\tilde{\mathcal D}_{\leq\omega}. Q$. is bqo. For this new step, let $A$ be the alphabet  whose letters are the non-empty subsets of $K$ (that is
$A := \powerset (K ) \setminus \{\emptyset \}$. We order $A$ by inclusion and
extends this order to  the collection  $\mathcal S_{\leq \omega}. A$ of countable chains  labelled by the elements of $A$.  Order $\powerset (F)$  by inclusion. Then $\tilde{\mathcal D}_{\leq\omega}$ is the image of  the direct product $\mathcal P:= \powerset (F) \times \mathcal S_{\leq \omega}. A$ (thus, via Laver's Theorem, $\tilde{\mathcal D}_{\leq\omega}$ is bqo).  Then, replace  $A$ by the set $A'$ of $f: X\rightarrow  Q$ such that $X\in A$ in this direct product. An other application of Laver's theorem  yields that $\tilde{\mathcal D}_{\leq\omega}. Q$ is  bqo. \end{proof}

Since $D(m)$ is multichainable, we obtain:

\begin{corollary} \label{multichainablebqo2} The collection of countable posets  whose age  in  included in $\age (D(m))$ is hereditary bqo.
\end{corollary}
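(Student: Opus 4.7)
The plan is to derive Corollary \ref{multichainablebqo2} as an immediate instance of Proposition \ref{multichainablebqo} applied to the structure $D(m)$. The only thing to verify is that $D(m)$ fits the hypothesis of that proposition, namely that it is (almost) multichainable. This is exactly the content of Lemma \ref{dm-multichainable}: taking $L:=[0,2\pi[$ with its natural order, $K:=[-m,m]\subset \ZZ$, and $F:=\emptyset$, the decomposition of the vertex set of $D(m)$ as $L\times K$ together with Item (4) of Lemma \ref{lem:bichain-coding} shows that every local isomorphism $h$ of the chain $L$ yields, via $(h,1_K)$, a local isomorphism of $D(m)$. Thus $D(m)$ is multichainable, hence almost multichainable with empty finite part.

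With this verified, set $\mathcal C := \age(D(m))$. By Proposition \ref{multichainablebqo}, the collection $\tilde{\mathcal C}_{\leq\omega}$ of countable structures (here, countable posets, since $D(m)$ is a poset and the property of being a poset is inherited by substructures) whose age is included in $\mathcal C$ is hereditary bqo. This is precisely the assertion of the corollary.

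I do not anticipate any genuine obstacle: the substantive work has already been done, namely the coding-by-words argument of \cite{pouzet2006} used in Proposition \ref{multichainablewqo} together with the two applications of Laver's Theorem \ref{thm:laver} (once to lift from finite structures to countable structures labelled over a finite alphabet, and once more to accommodate an arbitrary bqo of labels) carried out in the proof of Proposition \ref{multichainablebqo}. The only small point worth mentioning explicitly is that, because $D(m)$ itself is a poset of width two and the class of posets is closed under induced substructures, the members of $\tilde{\mathcal C}_{\leq\omega}$ are genuinely posets, so the statement is indeed about countable posets rather than about arbitrary relational structures.
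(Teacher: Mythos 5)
Your proposal is correct and follows exactly the paper's own route: the corollary is obtained by applying Proposition \ref{multichainablebqo} to $D(m)$, whose multichainability (hence almost multichainability with empty finite part) is supplied by Lemma \ref{dm-multichainable} via Item (4) of Lemma \ref{lem:bichain-coding}. Your additional remarks on the identification of $L$, $K$, $F$ and on substructures of posets being posets are accurate but not needed beyond what the paper records.
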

\subsection{A proof of Theorem \ref{thm:1}}\label{section:proofthm:1}

We prove that the following implications hold: $(i)\Rightarrow (ii)\Rightarrow (iii)\Rightarrow  (iv)\Rightarrow (i)$.

We prove first  implications  $(iv)\Rightarrow (i)$ and $(iii)\Rightarrow (iv)$. They are almost immediate.
We prove implication $\neg (i)\Rightarrow \neg (iv)$. Suppose that there are infinitely many integers $k$ such that $\mathrm P_k \in \ainc \langle \mathcal C \rangle$. For each $k$, let $\widehat {\mathrm P}_k:=(\mathrm P_k, a_k, b_k)$ where $a_k$ and $b_k$ are two constants denoting the end vertices of $\mathrm P_k$. The $\widehat {\mathrm P}_k$'s  form an antichain with respect to embeddability. Indeed, suppose otherwise that some $\widehat {\mathrm P}_k$ embeds into some $\widehat {\mathrm P}_k'$. Then first, $k<k'$; next, since the  distance between $a_k$ and $b_k$ in $\mathrm P_k$ is $k-1$,  the distance between their images, namely $a_{k'}$ and $b_{k'}$,  is at most $k-1$ but in $\mathrm P_{k'}$ it is $k'-1$; This yields a contradiction. Since the  class of  members of $\ainc \langle \mathcal C \rangle$ labelled with two constants contain an infinite antichain, it  cannot be wqo,   a fortiori the  class of  members of $\mathcal C$ labelled with two constants cannot be wqo. Now, we prove implication $(iii)\Rightarrow (iv)$. The class $\mathcal C_{< \omega}$ is a subclass of $\mathcal C$, hence it inherits of the properties of $\mathcal C$.  Since the two element chain $2:= \{0, 1\}$, with $0<1$, is bqo, the class  $\mathcal  C_{<\omega}.2$ of members of $\mathcal C_{<\omega}$ labelled by the $2$-element chain $2$ is bqo hence wqo. We prove implication $(ii)\Rightarrow (iii)$. The class $\mathcal C$ is a subclass of the collection of countable posets whose age is included in $\age (D(m))$. According to  Corollary  \ref{multichainablebqo2} this latter class is hereditarily bqo. Hence, $\mathcal C$ is hereditarily bqo. We prove that  $(i)\Rightarrow (ii)$.  Suppose that for some nonnegative integer $k$, $\mathrm P_{k}$ does not belong to $\ainc \langle \mathcal C \rangle$. Then, no member of $\ainc \langle \mathcal C \rangle$ contains an induced  path $\mathrm P_k$. In particular, the diameter of every connected member of $\ainc \langle \mathcal C \rangle$ is at most $k-1$. According to Proposition \ref{bounded-diameter}, each countable poset $P$ of width at most two,  whose incomparability graph is connected and has  diameter at most $k$,  is embeddable in $D(m)$ for some nonnegative integer $m$.  Finite sums of finite members of $\mathcal C$ are embeddable in $D(m)$ hence $(ii)$ holds.  \hfill $\Box$
%

\subsection {A proof of Theorem \ref{thm:2}}

\noindent $(i) \Rightarrow (v)$. Since $\mathcal C_{<\omega}\subseteq \mathcal C$ and $\mathcal C$ is bqo, $\mathcal C_{<\omega}$ is bqo hence wqo.  \\
$(v)\Rightarrow (ii)$. Since $(\mathrm {DF}_{n})_{n\geq 1}$ is an antichain, the fact that $\mathcal C_{<\omega}$ contains no infinite antichain implies that it  contains at most finitely many members of this sequence and thus $(ii)$ holds.  \\
$(ii) \Rightarrow (iii)$. According to Lemma \ref{lem3:doublefork} of  subsection \ref{subsection:diameter}, if   there are vertices of degree at least three at arbitrarily large distance, there is an infinite antichain of double-ended forks.\\
$(iii) \Rightarrow (iv)$. According to Lemma \ref{lem2:degreethree} of  subsection \ref {subsection:diameter},  given $G$ in $\mathcal C$ such that the diameter of the set of vertices of degree at least $3$ is at most $r$ there exists a connected induced subgraph $K$ of $G$ with diameter at most $r+4$ such that $G$ is the union of $K$ and two paths attached to two vertices of $H$. The proof of Lemma \ref{lem2:degreethree} tells us that one of the paths is below the other. Hence if $G= \ainc(P)$, $P$ is as claimed.   \\
$(iv) \Rightarrow (i)$.  Let $\mathcal D$ be the class of $P\in \mathcal C$ such that $\ainc(P)$ is connected. Every $P\in \mathcal C$ is a lexicographic sum over a countable chain of members of $\mathcal D$. If $\mathcal D$ is bqo then by Laver's theorem the class of countable chains labelled by  $\mathcal D$ is b.q.o. Let $\mathcal D ^{(k)}$ be the subclass of $\mathcal D$ made  of posets $K$ such that $\delta (\ainc(K))\leq k$. Since every member of this class embeds in $D(m(k))$, the class $\mathcal D ^{(k)}$ is bqo and in fact hereditary bqo. Let $L(\mathcal D ^{(k)})$ be the class of members $K\in  \mathcal D ^{(k)}$ with two distinguished  vertices $a, b$,  labeled by elements $n_a$ and $n_b$ of the chain $\NN\cup \{\infty\}$. This class is a subclass of $\mathcal D ^{(k)}$ labelled by a bqo hence it is bqo. To each member  $(K, a, n_a, b, n_b)$ of $L(\mathcal D ^{(k)})$ associate the poset  $P$ made of $K$ and  the union two orientations of the complement of  a path $C_a$ of length $n_a$ and a path $C_b$ of length $n_b$ so that every vertex of
$P$ every vertex  of $C_{a} \setminus \{a\}$ is below every vertex of $K\setminus \{a\}$, and every vertex  of $C_{b} \setminus \{b\}$ is above  every vertex  of $K\setminus \{b\}$. In this association, the embeddability is preserved, hence the image is bqo.
According to $(iv)$,  each member of $\mathcal D$ is attained.  Hence $\mathcal D$ is bqo.
\hfill $\Box$

\begin{comments}\label{comment:lozin}
Theorem 7 page 520 of \cite{lozin-mayhill} states that for any fixed $k$, the set of finite bipartite permutation
graphs that do not embed a $DF_n$ for all $n\geq k$ is well-quasi-ordered.  In their proof, the authors stated that: "The proof of this theorem is similar to that of Theorem 5, so we reduce it to a sketch". By checking the proof of Theorem 5 \cite{lozin-mayhill} carefully we have found that the statement "We will show that in this case $G_i$ is an induced subgraph of $G_j$" is inaccurate. Indeed, if $G_i$ is $\Inc(P)$ and $G_j$ is $\Inc(Q)$ from Figure \ref{fig:counterexample-Lozin} one can easily see that $G_i$ and $G_j$ are unit interval graphs and do not embed in each other. Yet, by setting $L_i=\{1,2\}$, $M_i=\{2,3,4\}$, $R_i=\{3,5\}$, $L_j=\{a,b\}$, $M_j=\{b,c,d,e\}$, $R_j=\{e,f\}$ one can easily see that $L_i$ is an induced subgraph of $L_j$ , $M_i$ is an induced subgraph of $M_j$ and $R_i$ is an induced subgraph of $R_j$. Higman's Theorem is not enough to yield the desired conclusion here. The desired  conclusion follows because the set of "middle graphs" (i.e the $M_i$'s) is hereditary wqo.   This  is our Theorem \ref{thm:1}. In fact, in \cite{atmitas-lozin} the authors,   considering the notion of hereditary wqo (called labelled  wqo),  prove an analog to our Proposition \ref {multichainablewqo} (see their Theorem 4) leading to Theorem \ref{thm:1}.
\end{comments}
\begin{figure}[!ht]
\begin{center}
\includegraphics[width=180pt]{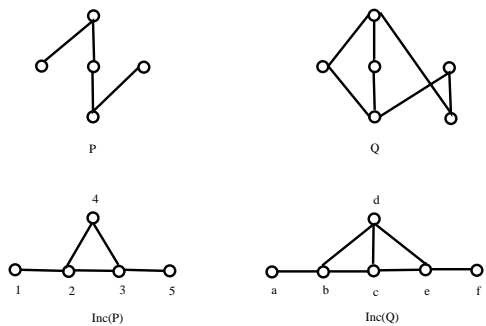}
\end{center}
\caption{Posets $P$ and $Q$ and their corresponding incomparability graphs.}
\label{fig:counterexample-Lozin}
\end{figure}

\section{Posets of width two:   a test class for equivalent versions of wqo.}\label{section:testclass}
 We give equivalent conditions ensuring that a hereditary class of finite structures is wqo (Proposition \ref{prop:well-founded}). We indicate that for ages, these equivalence remain conjectures. But we shown that they hold for  ages of posets of width two and the associated classes of  graphs and bichains. We conclude by a glimpse at hereditary classes of countable structures.

 \subsection{Comparison of finite  structures}
 \begin{proposition}\label{prop:well-founded}
Let $\mathcal{C}$ be a hereditary class of finite structures. Then the following properties are equivalent:
\begin{enumerate}[(a)]
\item $\mathcal{C}$ contains an  infinite antichain;
  \item $\mathcal{C}$ contains  uncountably many hereditary subclasses.
  \item The collection $\mathbf I(\mathcal C)$ of hereditary subclasses of $\mathcal{C}$ ordered by set inclusion  is not well-founded.
  \item There exists a collection $\mathcal D$ of hereditary subclasses of $\mathcal{C}$ that is  isomorphic to $\powerset(\NN)$,
      the power set of $\NN$, the sets $\mathcal D$ and $\powerset(\NN)$ being ordered by set inclusion.
\end{enumerate}
\end{proposition}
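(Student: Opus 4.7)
The plan is to prove the chain of implications $(a) \Rightarrow (d) \Rightarrow (b) \Rightarrow (a)$ together with $(a) \Leftrightarrow (c)$, the crux being the construction in $(a) \Rightarrow (d)$ and the classical characterisation of wqo classes for $(b) \Rightarrow (a)$ and $(c) \Rightarrow (a)$. Throughout, I identify structures with their isomorphism types, so that an infinite antichain means a countable family $(A_n)_{n\in\NN}$ of pairwise non-embeddable and pairwise non-isomorphic members of $\mathcal{C}$.

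For $(a) \Rightarrow (d)$, given an infinite antichain $(A_n)_{n\in\NN}$ in $\mathcal{C}$, I would associate to each $X \subseteq \NN$ the hereditary subclass $\mathcal{D}_X := \downarrow \{A_n : n \in X\}$, the downward closure in $\mathcal{C}$ under embeddability. The key point, which uses the antichain property crucially, is that $A_n \in \mathcal{D}_X$ if and only if $n \in X$: indeed $A_n$ embeds into $A_m$ with $m\in X$ only if $n=m$, since the $A_i$'s are pairwise incomparable. This immediately yields both that $X \mapsto \mathcal{D}_X$ is order-preserving and that it is an order-isomorphism onto its image, so we obtain a copy of $\powerset(\NN)$ inside $\mathbf{I}(\mathcal{C})$. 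The implication $(d) \Rightarrow (b)$ is then trivial from $|\powerset(\NN)| = 2^{\aleph_0}$, and the same construction also gives $(a) \Rightarrow (c)$ by looking at the strictly descending chain $\mathcal{D}_{\NN} \supsetneq \mathcal{D}_{\NN \setminus \{0\}} \supsetneq \mathcal{D}_{\NN \setminus \{0,1\}} \supsetneq \cdots$.

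The converse directions $(b) \Rightarrow (a)$ and $(c) \Rightarrow (a)$ both proceed by contraposition, invoking the fact that a hereditary class of finite structures with no infinite antichain is wqo (there are no infinite strictly descending chains under embeddability, because a proper substructure has strictly smaller cardinality). For $(b) \Rightarrow (a)$, if $\mathcal{C}$ is wqo, each hereditary subclass $\mathcal{D} \subseteq \mathcal{C}$ is uniquely determined by the antichain of minimal elements of $\mathcal{C} \setminus \mathcal{D}$; by wqo this antichain is finite, and since $\mathcal{C}$ itself is countable (finite structures on a fixed countable signature), there are only countably many finite antichains, hence only countably many hereditary subclasses. For $(c) \Rightarrow (a)$, assume $\mathbf{I}(\mathcal{C})$ contains an infinite strictly descending chain $\mathcal{C}_0 \supsetneq \mathcal{C}_1 \supsetneq \cdots$ and pick $x_n \in \mathcal{C}_n \setminus \mathcal{C}_{n+1}$; if $\mathcal{C}$ were wqo, we could extract indices $n_0 < n_1$ with $x_{n_0} \leq x_{n_1}$, and then $x_{n_0}$ would lie in $\mathcal{C}_{n_1} \subseteq \mathcal{C}_{n_0+1}$ by heredity, contradicting $x_{n_0} \notin \mathcal{C}_{n_0+1}$.

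There is no real obstacle: once one observes that an antichain of $\mathcal{C}$ gives rise, via downward closures indexed by subsets of $\NN$, to an embedding of $\powerset(\NN)$ into $\mathbf{I}(\mathcal{C})$, all the other implications follow either trivially or from the standard fact that wqo classes have a finite basis for each hereditary subclass. The only point requiring a little care is the bookkeeping in $(a) \Rightarrow (d)$ to ensure that distinct subsets $X \neq Y$ of $\NN$ really produce distinct subclasses $\mathcal{D}_X \neq \mathcal{D}_Y$, which is where the antichain hypothesis is used.
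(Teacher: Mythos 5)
Your proposal is correct and follows essentially the same route as the paper: the map $X\mapsto\,\downarrow\!\{A_n:n\in X\}$ for an infinite antichain $(A_n)$ is exactly the paper's embedding of $\powerset(\NN)$ into the hereditary subclasses, and your contrapositive arguments for $(b)\Rightarrow(a)$ (finitely many bounds determine each subclass, hence countably many subclasses) and $(c)\Rightarrow(a)$ (a good pair in a strictly descending chain yields a contradiction) match the paper's reasoning in substance. The only cosmetic difference is that you derive $(c)$ directly from the antichain rather than from the copy of $\powerset(\NN)$, which changes nothing.
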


The proof is straightforward: Suppose that Item $(a)$ holds. Let  $\mathcal A$ be an infinite antichain in $\mathcal C$;  let $(G_n)_{n \in \NN}$ be an enumeration of $\mathcal A$; the map $\varphi: \powerset (\NN) \rightarrow \mathbf I(\mathcal A)$ defined by setting $\varphi(I):= \{G:G \leq G_n \; \text{for some}\; n\in I\}$ for each subset $I$ of $\NN$ is an embedding and Item $(d)$ holds. If Item $(d)$ holds then since $\powerset(\NN)$ is both uncountable and not  well-founded, Item $(b)$  and Item $(c)$ hold. If Item $(b)$ or Item $(c)$ holds  then there is an infinite antichain in  $\mathcal C$.  Otherwise, in the first case,  since $\mathcal C$ is well-founded then each hereditary subclass has finitely many bounds belonging to $\mathcal C$; since each hereditary subclass is determined by its bounds, and since $\mathcal C$ is finite or countable, the number of these classes is at most countable. In the second case, since there is a strictly descending sequence $(\mathcal C_n)_{n\in \NN}$ of hereditary subclasses of $\mathcal C$,  pick  $G_n\in \mathcal C_{n}\setminus \mathcal C_{n+1}$. Since in $\mathcal C$ there is no infinite descending sequence (in fact, for each $G\in \mathcal C$ there are only finitely many $G'$ that embed into $G$),  we can extract from this sequence an infinite  subsequence forming an infinite antichain (this is essentially Ramsey's theorem on pairs).

It is a long standing unsolved question to know whether the previous equivalences are still true if we replace $\mathcal{C}$ by an age and the hereditary subclasses of $\mathcal C$ by ages. The only fact known up to now  is that if $\Id(\mathcal C)$,  the collection of ages included in $\mathcal C$, ordered by inclusion,  is countable, or the chains included in that set are countable,  then $\Id(\mathcal C)$ is well-founded (see \cite{pouzet78bis} and \cite{pouzet79}). It must be noticed that the fact that  $\powerset(\NN)$  embeds as a poset in $\Id(\mathcal C)$, is equivalent to the fact that $[\omega]^{<\omega}$, the collection of finite subsets of $\omega$, ordered by inclusion, embeds into $\mathcal {C}$ (this is a general fact about posets, see Theorem 1.1 p. 211 and Proposition 2.2 p. 215 of \cite{chakir-pouzet} for a proof). Since hereditary classes are countable, they embed in $[\omega]^{<\omega}$ (to each $\tau$ belonging to a hereditary class $\mathcal {C}$ associate the finite set $\downarrow \tau=\{\tau'\in \mathcal C: \tau'\leq \tau\}$). Hence we have the following dichotomy result:

\begin{lemma}
 For every age $\mathcal{C}$, either  $\mathcal{C}$ does not embed $[\omega]^{<\omega}$ or $\mathcal {C}$ is \emph{equimorphic} to $[\omega]^{<\omega}$, in the sense that each  embeds in the other.
\end{lemma}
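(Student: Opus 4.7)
The plan is to exhibit the ``easy'' direction of the equimorphism once and for all: every age $\mathcal{C}$ admits a poset embedding into $[\omega]^{<\omega}$, so the dichotomy reduces entirely to whether or not $[\omega]^{<\omega}$ also embeds back into $\mathcal{C}$.

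First, I would observe that an age $\mathcal{C}$ is countable (the signature being finite, there are only countably many finite structures up to isomorphism) and that, for every $\tau \in \mathcal{C}$, the principal downset $\downarrow \tau = \{\tau' \in \mathcal{C} : \tau' \leq \tau\}$ is finite, since only finitely many finite structures embed into a given finite structure $\tau$. After fixing an enumeration that identifies $\mathcal{C}$ with a subset of $\omega$, the map $\varphi : \tau \mapsto \downarrow \tau$ therefore sends $\mathcal{C}$ into $[\omega]^{<\omega}$.

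Next I would verify that $\varphi$ is a poset embedding. If $\tau \leq \tau'$, transitivity gives $\downarrow \tau \subseteq \downarrow \tau'$, hence $\varphi(\tau) \leq \varphi(\tau')$. Conversely, if $\downarrow \tau \subseteq \downarrow \tau'$, then by reflexivity $\tau \in \downarrow \tau \subseteq \downarrow \tau'$, so $\tau \leq \tau'$. This routine step is exactly the embedding sketched in the paragraph preceding the lemma, and establishes the inclusion $\mathcal{C} \hookrightarrow [\omega]^{<\omega}$ unconditionally.

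The dichotomy then follows immediately. If $[\omega]^{<\omega}$ does not embed into $\mathcal{C}$, we are in the first alternative. Otherwise, we have embeddings in both directions, so $\mathcal{C}$ is equimorphic to $[\omega]^{<\omega}$. There is no serious obstacle here; the real content is simply the recognition that $[\omega]^{<\omega}$ serves as a universal target among countable posets whose principal downsets are finite, and that ages belong to this class. The substantive question left open by the lemma—whether the equivalences of Proposition~\ref{prop:well-founded} lift from hereditary classes to ages—concerns precisely whether the first horn of the dichotomy is in turn equivalent to the ideal-level analogues of conditions (a)–(d), and is not addressed by this elementary observation.
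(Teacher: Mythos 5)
Your proposal is correct and follows exactly the paper's argument: the paper also obtains the unconditional embedding $\mathcal{C}\hookrightarrow[\omega]^{<\omega}$ via $\tau\mapsto\downarrow\tau$ (using that a hereditary class is countable and each $\downarrow\tau$ is finite), from which the dichotomy is immediate. Your explicit verification that $\tau\mapsto\downarrow\tau$ is an order embedding is a minor elaboration of a step the paper leaves implicit.
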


It is a tempting (and old) conjecture that the fact that $\mathcal {C}$ does not embed $[\omega]^{<\omega}$ is equivalent to the fact that $\mathcal C$ contains no infinite antichain, that is $\mathcal C$ is wqo.

In the case of ages consisting of finite posets of width two the answer is positive. More is true, the full result,  which is a corollary of Theorem \ref{thm:3}, reads as follows.

\begin{corollary}\label{cor:subclasses} Let $B$ be a bichain, $P:= o(B)$ and $G:= \ainc (P)$.  If $\mathcal C$ is  the age of one of these structures and if   $\Id(\mathcal C)$ is the collection of  ages
included into $\mathcal C$,  then  the following propositions are equivalent.
\begin{enumerate}[(i)]
  \item $\mathcal{C}$ is wqo;
\item $\Id(\mathcal{C})$ is countable;
  \item $\Id(\mathcal{C})$ is well-founded;
  \item $\powerset(\NN)$ does not embed in $\Id(\mathcal{C})$;
  \item $[\omega]^{<\omega}$ does not embed in $\mathcal{C}$.
\end{enumerate}
\end{corollary}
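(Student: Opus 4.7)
My plan is to establish the corollary by combining Proposition~\ref{prop:well-founded}, the structural description of non-wqo ages supplied by Theorem~\ref{thm:3}, and the transfer of wqo properties between the three levels (graph, poset, bichain) developed earlier in the paper. Several implications come for free. Since $\Id(\mathcal{C})\subseteq \mathbf{I}(\mathcal{C})$, Proposition~\ref{prop:well-founded} gives $(i)\Rightarrow (ii)\Rightarrow (iii)$ and $(iv)\Rightarrow (iii)$ directly: if $\mathcal{C}$ has no infinite antichain, then $\mathbf{I}(\mathcal{C})$ (hence $\Id(\mathcal{C})$) is countable, well-founded, and avoids $\powerset(\NN)$; and a non-well-founded poset embeds $\powerset(\NN)$ via any strictly descending chain. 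The implication $(v)\Rightarrow (i)$ is also free, because the singletons form an infinite antichain in $[\omega]^{<\omega}$, so any embedding $[\omega]^{<\omega}\hookrightarrow\mathcal{C}$ would produce an infinite antichain in $\mathcal{C}$.

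The real content of the proof is the converse direction: starting from $\neg(i)$, I would construct explicit embeddings $\powerset(\NN)\hookrightarrow\Id(\mathcal{C})$ and $[\omega]^{<\omega}\hookrightarrow\mathcal{C}$ out of an infinite antichain of connected building blocks. I would first reduce to the graph case $\mathcal{C}=\age(G)$: failure of wqo is equivalent at the three levels, and the embeddings constructed at the graph level lift to the poset and bichain levels by replacing each graph building block with a fixed orientation $\underline{\mathrm{DF}}_n$ of its complement, or a fixed bichain coding that orientation. Assuming $\mathcal{C}=\age(G)$, Theorem~\ref{thm:3} supplies an infinite $I\subseteq\NN$ with $\age\bigl(\bigoplus_{i\in I}\mathrm{DF}_i\bigr)\subseteq \mathcal{C}$. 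Fix a countable enumeration $(i_n)_{n\in\NN}$ of elements of $I$.

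For each $S\subseteq\NN$ set
\[
\mathcal{A}_S := \age\Bigl(\,\bigoplus_{n\in S}\mathrm{DF}_{i_n}\,\Bigr)\in \Id(\mathcal{C}).
\]
Because $\{\mathrm{DF}_n:n\geq 1\}$ is an infinite antichain of connected graphs, any embedding of $\mathrm{DF}_{i_j}$ into $\bigoplus_{n\in S}\mathrm{DF}_{i_n}$ lands inside a single connected component $\mathrm{DF}_{i_n}$, which by the antichain property forces $n=j$. Hence $\mathrm{DF}_{i_j}\in\mathcal{A}_S$ iff $j\in S$, so $S\mapsto \mathcal{A}_S$ is an order embedding of $\powerset(\NN)$ into $\Id(\mathcal{C})$. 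This single embedding simultaneously refutes $(ii)$, $(iii)$, and $(iv)$. Restricting to finite subsets, the map $F\mapsto \bigoplus_{n\in F}\mathrm{DF}_{i_n}$ is an order embedding of $[\omega]^{<\omega}$ into $\mathcal{C}$ by the same antichain argument, refuting $(v)$.

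The main obstacle is really packaged inside Theorem~\ref{thm:3}: once that theorem is available, the passage from "not wqo" to an antichain of double-ended forks which are connected summands of a single member of $\mathcal{C}$ is the delicate point, and it is precisely the age hypothesis that makes it work (it would fail for a general hereditary class, as the remark following Theorem~\ref{thm:3} points out). A secondary difficulty is verifying the level-transfer: namely, that a fixed poset orientation $\underline{\mathrm{DF}}_n$ of each $\mathrm{DF}_n$, and a fixed bichain realization thereof, still form antichains in their respective categories, so that the $\powerset(\NN)$ and $[\omega]^{<\omega}$ embeddings survive at the poset and bichain levels. This uses Theorem~\ref{thm:bqo-poset-bichain} together with the equivalence $(i)\Leftrightarrow (v)$ of Theorem~\ref{thm:2}.
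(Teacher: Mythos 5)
Your proof is correct, and its core is the same as the paper's: the hard direction is $\neg(i)\Rightarrow\neg(v)$, obtained from Theorem~\ref{thm:3} by sending finite subsets $F$ of $I$ to $\bigoplus_{n\in F}\mathrm{DF}_{i_n}$, and the transfer to $\age(P)$ and $\age(B)$ is handled, at the same level of detail as the paper, by choosing poset orientations and bichain codings of the forks and invoking Theorem~\ref{thm:bqo-poset-bichain}. Where you genuinely diverge is in the logical organization of the remaining implications. The paper proves the cycle $(i)\Rightarrow(ii)\Rightarrow(iii)\Rightarrow(iv)\Rightarrow(v)\Rightarrow(i)$ and for this imports two external results: Th\'eor\`eme V-4 of \cite{pouzet78bis} for $(ii)\Rightarrow(iii)$ (which, as the discussion preceding the corollary stresses, is \emph{not} covered by Proposition~\ref{prop:well-founded}, since that proposition concerns $\mathbf{I}(\mathcal C)$ rather than $\Id(\mathcal C)$), and the general equivalence from \cite{chakir-pouzet} for $(iv)\Rightarrow(v)$. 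You instead hub everything through $(i)$: Proposition~\ref{prop:well-founded} applied to $\mathbf{I}(\mathcal C)\supseteq\Id(\mathcal C)$ gives $(i)\Rightarrow(ii),(iii),(iv)$, the singleton-antichain argument gives $(i)\Rightarrow(v)$, and your extension of the fork construction to arbitrary $S\subseteq\NN$ (the map $S\mapsto\age\bigl(\bigoplus_{n\in S}\mathrm{DF}_{i_n}\bigr)$, an order-embedding of $\powerset(\NN)$ into $\Id(\mathcal C)$ because each fork is connected and the forks form an antichain) gives $\neg(i)\Rightarrow\neg(ii)\wedge\neg(iii)\wedge\neg(iv)$ directly. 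This buys a self-contained proof that bypasses both citations. Two slips should be fixed: your justification of ``$(iv)\Rightarrow(iii)$'' --- that a non-well-founded poset embeds $\powerset(\NN)$ via a strictly descending chain --- is false in general (the chain $\omega^{*}$ is non-well-founded yet embeds no two-element antichain); fortunately this implication is redundant in your scheme, since it follows from $(iv)\Rightarrow(i)\Rightarrow(iii)$. Likewise, the implication you label ``$(v)\Rightarrow(i)$'' is really $(i)\Rightarrow(v)$ (you prove its contrapositive); the direction $(v)\Rightarrow(i)$ that you actually need is the one supplied by your construction.
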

\begin{proof} If $\mathcal C$ is the age of an arbitrary structure,  the implications $(i)\Rightarrow (ii)  \Rightarrow (iii) \Rightarrow (iv) \Rightarrow (v)$ hold. The only nontrivial implication $(ii\Rightarrow (iii)$ is Th\'eor\`eme V-4 page 74 of \cite{pouzet78bis}.  To conclude, it suffices to prove that $\neg (i) \Rightarrow \neg (v)$. Suppose that $\mathcal C=\age (G)$ and $(i)$ does not hold. Apply Theorem \ref{thm:3}. Since $\mathcal C$ contains an infinite antichain,  there is a infinite subset $I$ of $\NN$ such that $\age
(\bigoplus_{n\in I}DF_n)\subseteq \mathcal{C}$. For every finite subset $F$ of
$I$, set $\varphi(F):=  \bigoplus_{n\in F}DF_n$. This
leads to an embedding of $[I]^{<\omega}$, the set of finite subsets of $I$, ordered by inclusion, in  $\Id(\mathcal{C})$, hence
an embedding of $[\omega]^{<\omega}$ in  $\Id(\mathcal{C})$, since, as posets, $[I]^{<\omega}$ and
$[\omega]^{<\omega}$ are isomorphic. Hence $(v)$ does not hold. According to Theorem \ref{thm:bqo-poset-bichain}, if $\age (B)$  or $\age (P)$ are not wqo then $\age (G)$ is not wqo. To the infinite  antichain of $DF_n$ with $n\in I$ as above corresponds an antichain in $\age (B)$ and in $\age (P)$ and it yields and embedding of $[\omega]^{<\omega}$ in $\age (B)$ and in $\age ( P)$.  Hence $\neg (i) \Rightarrow \neg (v)$ holds for $\mathcal C= \age ( B)$ as well $\age (P)$.  \end{proof}

\subsection{Comparison of countable structures}
Given a hereditary class $\mathcal C$ of finite structures, it is natural to look at the class $\tilde{\mathcal C}$ of structures, e.g.  graphs $G$,   whose age $\age (G)$ is a subset of $\mathcal C$  and to ask whether  the properties of the embeddability on $\mathcal  C$ have an influence on embedabbility on $\tilde{\mathcal C}$. For an example, if $\mathcal C$ is wqo, does  $\tilde{\mathcal C}$ wqo? This very innocent question goes quite far. If we look at chains (linearly ordered sets), the finite chains are ordered according to their size and thus form a chain of order type $\omega$. There are two countable  chains, namely $\omega$ and its dual $\omega^*$ and each infinite chain embeds at least of these two. The chain $\eta$ of rational numbers embeds all countable chains. There are descending sequence of uncountable chains. Fra\"{\i}ss\'e asked in 1954 if there were descending sequences of countable chains, and if fact if the collection of countable chains, quasi ordered by embeddability,  is wqo. These famous conjectures were solved by Laver (1971) \cite{laver}, using the notion of better-quasi-ordering (bqo). Thomass\'e \cite{thomasse} proved that  the class of countable cographs is wqo (and in fact bqo) after Damaschke \cite{damaschke} shown  that the class of finite cographs  is wqo. An extension to binary structures was obtained independently by Delhomm\'e \cite{delhomme1} and McKay \cite{mckay, mckay1}:

\begin{theorem}\label{bqo-hereditary2} If a hereditary class $\mathcal C$ of finite binary structures of finite arity contains only finitely many prime members then the class of countable  structures whose age is included into $\mathcal C$ is bqo.
\end{theorem}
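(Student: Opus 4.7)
The plan is to reduce the problem to a bqo question about labelled countable trees via the modular decomposition, and then to iterate Laver's Theorem \ref{thm:laver}. Recall that a \emph{module} of a binary structure $R=(V,(\rho_i)_{i\in I})$ is a subset $M$ of $V$ such that every element $v\notin M$ bears identical relations with every element of $M$, and that a classical theorem of Gallai (extended by Ehrenfeucht--Rozenberg and others to arbitrary binary structures of finite signature) asserts that any $R$ with $|V|\geq 2$ is either prime, a linear sum over a chain of its maximal strong modules, or a constant sum over an antichain of its maximal strong modules; the decomposition into strong modules is canonical. Iterating this decomposition yields, for every countable $R$ with $\age(R)\subseteq \mathcal C$, a well-defined countable rooted tree $T(R)$ whose leaves are the elements of $V$ and whose internal nodes carry one of finitely many types: \emph{linear}, \emph{constant}, or $P_j$ for $j\in \{1,\ldots,k\}$, where $P_1,\ldots,P_k$ enumerate the prime members of $\mathcal C$. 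Since the quotient at any node embeds into $R$, and $\age(R)\subseteq \mathcal C$, the prime quotients can only come from this finite list.

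The second step is to prove that the encoding $R\mapsto T(R)$ is \emph{functorial for embeddability}, in the sense that $R\leq R'$ if and only if there is a label-preserving embedding $T(R)\hookrightarrow T(R')$ which, at each linear node, restricts to a chain embedding of the indexing chains, and which, at each constant node, restricts to an injection of the children. Granted this, one obtains the bqo of $\tilde{\mathcal C}$ by transfinite induction on the height of $T(R)$. The inductive step at a linear node is Laver's Theorem \ref{thm:laver} applied to countable chains labelled by a bqo; at a constant node it reduces to Laver's theorem applied to a trivial chain; at a prime node labelled $P_j$, the finite fan-out reduces the step to finite products of bqos. For trees of unbounded countable height, the standard Nash-Williams--Laver bqo theorem for trees labelled by a bqo closes the induction and delivers the conclusion.

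The \textbf{principal obstacle} is the functoriality of the encoding: an embedding $R\hookrightarrow R'$ need not send maximal strong modules of $R$ to maximal strong modules of $R'$, since a single strong module of $R$ could in principle be distributed across several sibling modules of $R'$. The resolution exploits the rigidity of primes: if a node of $T(R')$ has a prime quotient, then the image of any strong module of $R$ that maps into that node must lie in a single child module, because otherwise the image would induce a nontrivial module inside the prime quotient. At linear and constant nodes the possible spread of an image is controlled and corresponds directly to a chain or antichain embedding. The finiteness of $\{P_1,\ldots,P_k\}$ together with the finiteness of the signature bounds the number of local possibilities at each node, which is precisely what feeds into the Laver apparatus to yield the bqo of $\tilde{\mathcal C}$ and, by the same argument applied to $\tilde{\mathcal C}\cdot Q$ for any bqo $Q$, its hereditary bqo.
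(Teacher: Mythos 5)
First, a point of reference: the paper does not prove Theorem~\ref{bqo-hereditary2}; it quotes it from Delhomm\'e \cite{delhomme1} and McKay \cite{mckay, mckay1}, so there is no internal proof to measure yours against. Your strategy --- encode each countable $R$ with $\age(R)\subseteq\mathcal C$ by a decorated modular decomposition tree and feed the class of such trees to bqo theory for labelled trees --- is indeed the route taken in that literature, but as written it has a genuine gap at its very first step. For \emph{infinite} binary structures the Gallai trichotomy you invoke fails: a countable structure need not possess any maximal proper strong module, so the rooted tree $T(R)$ ``whose leaves are the elements of $V$'' is simply not defined. (The paper itself is careful to state Gallai's decomposition only for finite $R$.) A concrete example inside the scope of the theorem: take $V=\NN$ and join $k<m$ by an edge iff $m$ is odd. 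Every finite restriction is a threshold graph, and the class of finite threshold graphs has no prime member on more than two vertices, so the hypothesis on $\mathcal C$ is met; yet the nontrivial proper modules of this graph are exactly the initial segments $\{0,\dots,n\}$, which form a strictly increasing chain of strong modules with union $V$ and no maximal proper element, so the root of your tree has no set of children. Handling this ``limit'' case --- nodes whose strong submodules form a chain rather than an antichain indexed by a prime, linear or constant quotient --- is a substantial part of the Delhomm\'e--McKay argument and is absent from yours.

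Even where $T(R)$ is defined, ``transfinite induction on the height'' does not close: nested substitutions into a fixed prime (say $P_4$ into $P_4$ into $P_4\dots$) produce trees with infinite branches in which every node has finite depth, so there is no reduction to trees of strictly smaller height. What is actually needed is a bqo theorem for the entire class of countable trees decorated by a bqo and carrying chain-indexed linear nodes --- essentially Laver's labelled-tree theorem or Delhomm\'e's ``nicely bqo grounded categories,'' not Nash-Williams' minor theorem for trees --- together with a verification that the structured tree embeddings it provides decode to embeddings of structures. On that last point you also work in the wrong direction: the ``only if'' half of your functoriality claim (an embedding $R\hookrightarrow R'$ induces a tree embedding) is both the doubtful half and the unnecessary one; it suffices that the decoding map from decorated trees to structures is monotone and surjective onto the class in question, since a monotone image of a bqo is bqo. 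So the architecture is right, but the two load-bearing ingredients --- the decomposition of arbitrary countable structures including the limit case, and the bqo of the resulting class of decorated trees --- are asserted rather than proved.
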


At present, it is not known if a hereditary class of finite graphs which is wqo is bqo. There are examples of hereditary classes $\mathcal C$  of finite graphs which are  wqo and in fact bqo,  but such that the class $\tilde {\mathcal C}_{\leq \omega}$ of countable members of $\tilde {\mathcal C}$ is not wqo. Examples can be made with uniformly recurrent sequences (see Comments \ref{comments-uniformly} in Section \ref{section:minimality}); they  are not hereditary wqo. This leading to the conjecture that $\tilde {\mathcal C}_{\leq \omega}$ is wqo provided that ${\mathcal C}$  is hereditary wqo.

With Theorem \ref{thm:bqo-poset-bichain} proved in Section \ref{section:bichains}, we know that if $\mathcal C$ is a  hereditary class  of finite  bipartite permutation graphs,  then   $\mathcal C$  is wqo if and only if it is bqo. Moreover,
$\tilde {\mathcal C}_{<\omega}$ is bqo if $\mathcal C$  is wqo.

\begin{problem}\label{problem6}  Does this hold  for hereditary classes of finite permutation graphs?
\end{problem}

Finite cographs are permutation graphs. Due to Thomass\'e's result, our problem has a positive answer  for hereditary subclasses of the class of finite cographs.

\section{Embeddability between posets, bichains and graphs. A  proof of Theorem~\ref{thm:bqo-poset-bichain}}\label{section:bichains}
In this section,  we prove first that embeddability between certain posets of dimension two is equivalent to embeddability between the corresponding incomparability graphs and bichains (Theorems \ref{thm:embed-poset-graph} and \ref{thm:embed-bichain-poset}). Then we prove Theorem~\ref{thm:bqo-poset-bichain}.

To that end, we recall the notion of primality, two basic results about posets and bichains (Theorems  \ref{kelly} and \ref{zaguia}) and we extend to infinite posets a previous result of El-Zahar and Sauer \cite{el-zahar-sauer} characterizing the finite posets of dimension two which are uniquely realizable.

We start with incomparability graphs and   then deal with bichains.

\subsection {Primality and embeddability between posets and graphs}
We refer to the book \cite{ehrenfeucht1} and the founding work of Fra\"{\i}ss\'e and Gallai \cite{fraisse84, gallai}.  We recall the notions of modules and strong modules. This later notion was introduced by  Gallai (1967) who  showed that a finite poset and its comparability graph have the same strong modules, from which it follows that one is prime if and only if the other one is prime. He also proved that if a finite graph is prime then the only other poset with the same comparability graph is the dual poset. These results were extended to infinite comparability graphs by Kelly \cite{kelly85}.

Let $\rho$ be a binary relation on a set $V$, i.e. a subset of $V\times
V$. A {\it module} (or {\it an interval}) for $\rho$ is a subset $A$ of $V$ such that
\[ ((v,a)\in \rho \Rightarrow (v,a') \in \rho) \; {\rm and } \;((a,v) \in \rho \Rightarrow (a',v)\in \rho) \]
hold for all $a,a'$ in $A$ and $v$ in $V\setminus A$. \\
A \emph{module} of a binary structure $R:=(V,(\rho_{i})_{i\in I})$, where each $\rho_i$ is a binary relation on $V$, is a module of $\rho_i$
for all $i\in I$. The empty set, the singletons in $V$ and the whole set $V$ are modules of $R$ and are said to be {\it trivial}. A module is \emph{proper} if it is distinct of $V$. A binary relational structure is {\it prime} (or {\it indecomposable}) if it has no nontrivial modules. Incidentally, a relational structure on at most two elements is prime.  A module is \emph{strong} if it is nonempty and it overlaps no module \cite{gallai}. The singletons and the whole set are strong and are called \emph{trivial}.  A   \emph{component} of $R$ is a  maximal (with respect to set inclusion) proper strong module of $R$.

We recall Gallai's decomposition theorem: \emph{if $R$ is finite,  then $R$ is the lexicographic sum of the restrictions of $R$ to its components (hence, the strong modules of the index set of this sum are trivial)}. More importantly, \emph{if $R$ is a poset or a graph, all  strong modules  of $R$ are trivial if and only if $R$ is either prime or is a chain, a complete graph or has no edges} (\cite{gallai} for the finite case, \cite{ehrenfeucht} for the infinite case).

The following  result (\cite{gallai} for finite graphs, \cite{kelly85} for infinite graphs) plays a crucial role in our investigation.

\begin{lemma}\label{lem:kelly} A poset $P$ and its comparability graph $\comp(P)$ have the same strong modules. \end{lemma}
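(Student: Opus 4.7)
The plan is to establish the lemma by showing that a subset $A\subseteq V$ is a strong module of $P$ if and only if it is a strong module of $\comp(P)$. I would begin with the easy containment of module families: every $P$-module is a $\comp(P)$-module. Indeed, if $A$ is a $P$-module and $v\notin A$, then $v$ relates identically to every element of $A$ (all above, all below, or all incomparable), so in $\comp(P)$ the vertex $v$ is either adjacent to every element of $A$ or to none. Thus $\mathrm{Mod}(P)\subseteq \mathrm{Mod}(\comp(P))$, and this single fact already handles one half of the strongness equivalence: any strong $\comp(P)$-module that happens to also be a $P$-module is automatically strong in $P$, since a smaller family of modules means fewer potential overlaps.

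The core of the argument is a structural analysis of $\comp(P)$-modules that fail to be $P$-modules. Suppose $A$ is a $\comp(P)$-module and not a $P$-module. Then there exists $v\notin A$ and $a,b\in A$ with $v$ relating differently to $a$ and to $b$ in $P$; since $A$ is a $\comp(P)$-module, $v$ must be comparable to every element of $A$ (otherwise $v$ would be adjacent in $\comp(P)$ to some elements of $A$ and not to others). Partition $A=A_v^{-}\sqcup A_v^{+}$ with $A_v^{-}=\{a\in A: a<v\}$ and $A_v^{+}=\{a\in A: v<a\}$; both are nonempty, and transitivity through $v$ forces $A_v^{-}<A_v^{+}$ in $P$. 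One checks directly that $A_v^{-}$ and $A_v^{+}$ are themselves $\comp(P)$-modules. Iterating this bisection over all admissible witnesses $v$ refines $A$ into a family $(A_i)_{i\in L}$ indexed by a chain $L$, such that $P_{\restriction A}$ is the lexicographic sum of the $P_{\restriction A_i}$ over $L$ and every interval of $L$ yields a $\comp(P)$-submodule of $A$.

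Now for the direction \emph{strong in $\comp(P)$ $\Rightarrow$ strong in $P$}: let $A$ be strong in $\comp(P)$. If $A$ were not a $P$-module, the analysis of the preceding paragraph yields a witness $v\notin A$ and a decomposition $A=A_v^{-}\sqcup A_v^{+}$. One then exhibits a $\comp(P)$-module $B$ overlapping $A$ by taking $B=A_v^{-}\cup W$, where $W$ is the set of all elements of $V\setminus A$ that lie (in $P$) above every element of $A_v^{-}$ and below every element of $A_v^{+}$; the verification that $B$ is a $\comp(P)$-module uses the case analysis on $u\notin B$ (either $u\in A_v^{+}$, or $u\notin A$ in which case $A$ being a $\comp(P)$-module dictates uniform adjacency). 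Since $v\in B\setminus A$ and $A_v^{+}\subseteq A\setminus B$, we get an overlap, contradicting strongness of $A$ in $\comp(P)$. Hence $A$ is a $P$-module, and combined with $\mathrm{Mod}(P)\subseteq\mathrm{Mod}(\comp(P))$, $A$ is strong in $P$.

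For the converse \emph{strong in $P$ $\Rightarrow$ strong in $\comp(P)$}, suppose $A$ is a strong $P$-module overlapping some $\comp(P)$-module $B$. If $B$ is also a $P$-module, this contradicts strongness in $P$ immediately. Otherwise, apply the structure lemma to $B$: $B=B_v^{-}\sqcup B_v^{+}$ with $B_v^{-}<B_v^{+}$ in $P$. Using that $A$ is a $P$-module, one analyses how $A$ meets $B_v^{-}$, $B_v^{+}$, and $V\setminus B$: the uniform-relation condition of $A$ as a $P$-module combined with the linear-sum structure of $B$ forces either $A\cap B_v^{-}$ or $A\cap B_v^{+}$ (or an analogous refinement using several witnesses) to be a $P$-module that overlaps $A$, again a contradiction. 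The main obstacle is precisely this last step: the infinite case requires a careful choice of witness $v$ (or rather, coordination of multiple witnesses) so that the derived $P$-module genuinely overlaps $A$; this is where Kelly's extension of Gallai's theorem to the infinite setting does the heavy lifting, ensuring that the lexicographic-sum refinement of $B$ is well-defined even without finiteness.
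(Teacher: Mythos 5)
The paper itself does not prove this lemma: it is quoted as a known result, with \cite{gallai} credited for finite posets and \cite{kelly85} for infinite ones, so your attempt must stand on its own. Half of it does. The inclusion ``every module of $P$ is a module of $\comp(P)$'' is right, and your argument that a strong module $A$ of $\comp(P)$ must be a module of $P$ is sound: if a witness $v\notin A$ is comparable to all of $A$ with $A_v^{-},A_v^{+}$ both nonempty, then $B=A_v^{-}\cup W$ (with $W$ the elements of $V\setminus A$ lying above all of $A_v^{-}$ and below all of $A_v^{+}$) is indeed a $\comp(P)$-module --- the case analysis closes, including the delicate subcase where $u\notin A$ is comparable to all of $A$ but sandwiched neither way --- and $B$ overlaps $A$ (it contains $A_v^{-}$ and $v$, and misses $A_v^{+}$), contradicting strongness in $\comp(P)$. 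Together with your first observation this gives: strong in $\comp(P)$ implies strong in $P$.

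The converse direction --- strong in $P$ implies strong in $\comp(P)$ --- is where the proposal breaks down, and this is the hard half of the Gallai--Kelly theorem. Concretely: (a) the contradiction you announce cannot occur, because $A\cap B_v^{-}$ and $A\cap B_v^{+}$ are subsets of $A$, and a subset of $A$ can never \emph{overlap} $A$ (overlap requires a nonempty part outside $A$); so no ``$P$-module that overlaps $A$'' can be produced from these sets, with or without coordinating several witnesses. (b) What is actually needed is to manufacture, from a $\comp(P)$-module $B$ overlapping the strong $P$-module $A$, a genuine $P$-module crossing $A$; the natural candidates all fail. Indeed, by strongness every block of the linear decomposition of $B$ lies inside $A$ or misses $A$, so the blocks arrange as (blocks below $A$) $<$ (blocks inside $A$) $<$ (blocks above $A$); but unions such as $(B\cap A)\cup W$, or $(B\cap A)$ together with the blocks above $A$, need not be $P$-modules, precisely because elements of $A\setminus B$ comparable to all of $B$ can cut \emph{inside} $B\cap A$, and in the infinite case the chain of cuts may be dense with no extremal blocks. (c) Your closing appeal to ``Kelly's extension of Gallai's theorem'' to ``do the heavy lifting'' is circular: that extension \emph{is} the lemma being proved. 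As it stands, you have established only one of the two inclusions between the families of strong modules.
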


A consequence is the result of Gallai-Kelly:
\begin{theorem} \label{kelly} A poset $P$ is prime if and only if $\ainc(P)$ is prime if and only if
$\comp(P)$ is prime. Moreover, if $\comp(P)$ is prime,  then it has exactly two transitive orientations, namely $P$ and $P^*$.
\end{theorem}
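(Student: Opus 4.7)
The plan is to reduce everything to Lemma~\ref{lem:kelly} and the Gallai--Ehrenfeucht structural dichotomy recalled just above it. First, I would observe that a subset $A\subseteq V$ is a module of a graph $G$ if and only if it is a module of the complement $G^{c}$: the defining condition ``each vertex of $V\setminus A$ is adjacent to all of $A$ or to none of $A$'' is self-complementary. Applied to $G=\comp(P)$ and $G^{c}=\ainc(P)$, this shows that the two graphs share the same modules and, in particular, the same strong modules. Hence $\comp(P)$ is prime if and only if $\ainc(P)$ is prime.

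Next, combining Lemma~\ref{lem:kelly} with the Gallai--Ehrenfeucht theorem, both $P$ and $\comp(P)$ have only trivial strong modules precisely in the ``degenerate or prime'' cases listed in the excerpt. Note that $P$ is a chain iff $\comp(P)$ is a complete graph, and $P$ is an antichain iff $\comp(P)$ is edgeless; so the degenerate alternatives match up under the correspondence $P\leftrightarrow \comp(P)$. For $|V|\le 2$ every structure is trivially prime; for $|V|\ge 3$ I would argue as follows: $P$ is prime iff $P$ has only trivial strong modules and $P$ is neither a chain nor an antichain (for if the strong modules are all trivial but $P$ has a nontrivial \emph{non-strong} module, then $P$ is forced by the dichotomy to be a chain or an antichain, both of which fail to be prime on $\ge 3$ points), iff $\comp(P)$ has only trivial strong modules and is neither complete nor edgeless, iff $\comp(P)$ is prime. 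This gives the three-way equivalence.

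For the ``moreover'' part, assume $G:=\comp(P)$ is prime. Any transitive orientation $P'$ of $G$ is a poset with $\comp(P')=G$, so by the first part $P'$ is itself prime with the same strong modules as $P$. The heart of the argument is Gallai's analysis of the forcing (or $\Gamma$-)relation on edges: edges $\{x,y\}$ and $\{y,z\}$ sharing the vertex $y$ are $\Gamma$-related whenever $\{x,z\}$ is not an edge of $G$. Transitivity of any orientation forces $\Gamma$-related edges to be oriented consistently, for otherwise $x<_{P'}y<_{P'}z$ (or the reverse) would make $\{x,z\}$ comparable in $P'$ and hence an edge of $G$, a contradiction.

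The key non-trivial step, and the main obstacle, is to show that on a prime comparability graph the transitive closure of $\Gamma$ admits a single equivalence class. This is exactly Gallai's result (extended to infinite graphs by Kelly), whose proof I would indicate by the standard argument: if several $\Gamma$-classes existed, the set of vertices spanned by a fixed union of classes would be a nontrivial module of $G$, contradicting primality. Once this is granted, every transitive orientation of $G$ is determined by orienting a single edge, so $G$ has at most two transitive orientations. Since $P$ and $P^{\ast}$ are transitive orientations differing on every edge, they exhaust the two possibilities, and hence $P'\in\{P,P^{\ast}\}$, completing the proof.
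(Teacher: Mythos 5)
Your argument is correct and follows essentially the same route as the paper, which presents this theorem with no proof of its own: it is stated as an immediate consequence of Lemma~\ref{lem:kelly} together with the recalled Gallai--Ehrenfeucht dichotomy, with the result otherwise attributed to Gallai and Kelly. Your derivation of the three-way equivalence from the coincidence of strong modules (plus the matching of the degenerate cases chain/complete/edgeless) is exactly the intended ``consequence,'' and for the uniqueness of the two transitive orientations you, like the paper, ultimately defer to Gallai's single-$\Gamma$-class theorem for prime comparability graphs -- your one-line justification of that step is only heuristic, but since the paper treats it as a cited black box this is not a gap relative to the paper's treatment.
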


\begin{corollary}\label{proposition:embed-poset-graph-prime}
Let $P$ and $Q$ be two  posets. If $P$ is prime,  then the following propositions are equivalent.
\begin{enumerate}[$(i)$]
  \item $P$ embeds into $Q$ or $Q^*$.
  \item $Comp(P)$ embeds into $Comp(Q)$.
  \item $\Inc(P)$ embeds into $\Inc(Q)$.
\end{enumerate}
\end{corollary}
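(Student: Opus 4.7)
The plan is to deduce the three equivalences directly from Theorem \ref{kelly} (the Gallai--Kelly result on primality), by transporting orders along the embedding and using uniqueness of the transitive orientation of a prime comparability graph.

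First I would dispose of the easy implications. For $(i)\Rightarrow (ii)$, note that any embedding $P \hookrightarrow Q$ is also an embedding $\comp(P)\hookrightarrow \comp(Q)$, and since $\comp(Q)=\comp(Q^{*})$, an embedding $P\hookrightarrow Q^{*}$ likewise yields $\comp(P)\hookrightarrow \comp(Q)$. For the equivalence $(ii)\Leftrightarrow (iii)$, observe that an injection $f:V(P)\to V(Q)$ preserves and reflects the comparability relation iff it preserves and reflects its complement on $[V]^{2}$; since $\ainc(R)$ is exactly the complement of $\comp(R)$ (on distinct pairs) for any poset $R$, an embedding of $\comp(P)$ into $\comp(Q)$ is the same map as an embedding of $\ainc(P)$ into $\ainc(Q)$.

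The substantive step is $(ii)\Rightarrow (i)$. Let $f:V(P)\to V(Q)$ be an embedding of $\comp(P)$ into $\comp(Q)$. Set $V':=f(V(P))$ and let $Q':=Q_{\restriction V'}$. Then $f$ is a graph isomorphism from $\comp(P)$ onto $\comp(Q_{\restriction V'})=\comp(Q')$. Transport the order of $P$ along $f$: define an order $P'$ on $V'$ by $f(x)\leq_{P'} f(y)$ iff $x\leq_{P} y$. By construction $P'\cong P$ as posets and $\comp(P')=\comp(Q')$. Now $P$ is prime by hypothesis, so by Theorem \ref{kelly} its comparability graph $\comp(P)$ is prime; since $\comp(Q')\cong \comp(P)$, the graph $\comp(Q')$ is prime as well. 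Applying the second part of Theorem \ref{kelly} to $\comp(Q')$, this graph admits exactly two transitive orientations, namely $Q'$ and $Q'^{*}$. But $P'$ is a transitive orientation of $\comp(Q')$, so either $P'=Q'$ or $P'=Q'^{*}$. In the former case $f$ is an embedding of $P$ into $Q$, and in the latter case $f$ is an embedding of $P$ into $Q^{*}$. This yields $(i)$.

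There is no real obstacle here beyond correctly invoking Theorem \ref{kelly}; the only point meriting care is that primality of $\comp(P)$ transfers to $\comp(Q')$ (not to $\comp(Q)$ itself, which need not be prime), which is why one must restrict $Q$ to the image $V'$ before applying the uniqueness of the transitive orientation.
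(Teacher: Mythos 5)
Your proof is correct and follows exactly the route the paper intends: the corollary is stated as an immediate consequence of Theorem \ref{kelly}, and the paper's proof of the more general Theorem \ref{thm:embed-poset-graph} uses the same restrict-to-the-image strategy before invoking the uniqueness (up to duality) of the transitive orientation. Your remark that primality must be transferred to $\comp(Q')$ rather than $\comp(Q)$ is exactly the right point of care.
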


We do not know how to extend this result to non prime posets in general.  We do have an extension when  the only modules distinct  from $P$ are finite and totally ordered. This gives our first embeddability result.

\begin{theorem}\label{thm:embed-poset-graph}Let $P=(V,\leq_P)$ and $Q=(U,\leq_Q)$ be two posets such that  the only modules of $P$ distinct from $P$ are finite and totally ordered. The following propositions are equivalent.
\begin{enumerate}[$(i)$]
  \item $P$ embeds into $Q$ or $Q^*$.
  \item $Comp(P)$ embeds into $Comp(Q)$.
  \item $\Inc(P)$ embeds into $\Inc(Q)$.
\end{enumerate}
\end{theorem}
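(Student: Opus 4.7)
The plan is to establish $(i) \Rightarrow (ii) \Leftrightarrow (iii) \Rightarrow (i)$. The implications $(i) \Rightarrow (ii)$ and $(ii) \Leftrightarrow (iii)$ are immediate: since $Comp(Q^\ast) = Comp(Q)$ and $\Inc(Q^\ast) = \Inc(Q)$ (reversing the order changes neither the set of comparable pairs nor its complement), any order-embedding of $P$ into $Q$ or into $Q^\ast$ induces an induced-subgraph embedding of $Comp(P)$ into $Comp(Q)$; and an induced-subgraph embedding $g$ of $Comp(P)$ into $Comp(Q)$ preserves and reflects both edges and non-edges, hence is simultaneously an induced-subgraph embedding of $\Inc(P)$ into $\Inc(Q)$, these two graphs being complements on the same vertex set.

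The substantive direction is $(ii) \Rightarrow (i)$. Given an embedding $g \colon Comp(P) \hookrightarrow Comp(Q)$, set $Q' := Q_{\restriction g(V)}$. Then $Comp(Q') = g(Comp(P))$, so after identifying vertices via $g$, the posets $P$ and $Q'$ carry the same comparability graph on a common vertex set. The aim is to show $Q' \cong P$ or $Q' \cong P^\ast$ as posets, which immediately delivers an embedding of $P$ into $Q$ or into $Q^\ast$.

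To compare $Q'$ and $P$, apply Lemma \ref{lem:kelly}: a poset and its comparability graph share their strong modules, so $P$ and $Q'$ share their strong modules. By hypothesis every proper nontrivial strong module $M$ of $P$ is a finite chain; its comparability graph is a clique, and so the induced order on $M$ in $Q'$ is also a finite chain of the same cardinality, hence isomorphic to $M$. Let $\mathcal{C}$ be the family of maximal proper strong modules, so that $P$ and $Q'$ both decompose as lexicographic sums indexed by a common quotient $I$ with matching finite-chain components $C_i \cong C_i'$. The quotient orderings induced on $I$ by $P$ and by $Q'$ share the same comparability graph, and the quotient has only trivial strong modules; by the structure theorem stated in the excerpt (valid in the infinite case), the quotient is prime, a chain, or an antichain.

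The proof concludes with a case analysis on the quotient. If the quotient is prime, Theorem \ref{kelly} forces the two orderings on $I$ to coincide or to be mutually dual, so $Q' \cong P$ or $Q' \cong P^\ast$. If the quotient is a chain, then $P$ is itself a chain; an infinite chain has infinite proper initial or final segments as modules, contradicting the hypothesis, so $P$ is finite and $Q'$ is then a finite chain of the same length, giving $Q' \cong P$. If the quotient is an antichain, then $|I|\leq 2$: for $|I|\geq 3$ the union of any two components would be a proper module that is a direct sum of two chains, not totally ordered, contradicting the hypothesis. For $|I|=2$, both $P$ and $Q'$ are direct sums of two finite chains of matching sizes, so $Q'\cong P$, and $P\cong P^\ast$ since finite chains are self-dual. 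In every case $Q'\cong P$ or $Q'\cong P^\ast$, completing the proof. The main technical point is invoking the infinite version of the Gallai trichotomy and of Gallai-Kelly's uniqueness of prime orientations; the hypothesis that all proper modules of $P$ are finite chains is precisely what eliminates the pathologies (infinite chain components, antichain quotients of size at least three) which would otherwise permit multiple non-equivalent realizations of the comparability graph.
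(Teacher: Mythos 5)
Your proof is correct and follows essentially the same route as the paper's: reduce modulo the maximal totally ordered modules, note that the finite chain components are determined up to isomorphism by their comparability cliques, and apply the Gallai--Kelly uniqueness of transitive orientations (Theorem \ref{kelly}) to the prime quotient. Your explicit case analysis for a chain or antichain quotient is subsumed in the paper by Lemma \ref{lem:gallaiquotient}, which shows directly (under the convention that structures on at most two elements are prime) that the quotient is prime.
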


We could deduce this result from Gallai's decomposition theorem. We prefer to deduce it from properties of  totally ordered modules, that we will use for the study of bichains.

\begin{lemma}\label{lem:module-chain} Let $P$ be a poset. Then:
\begin{enumerate}[$(1)$]
\item The union $A\cup B$ of two totally ordered modules $A$ and $B$ of $P$ such that  $A\cap B\neq \emptyset$   is a totally ordered module of $P$;
\item Every totally ordered module of $P$ is contained in a totally ordered module of $P$, maximal with respect to inclusion.
\end{enumerate}
\end{lemma}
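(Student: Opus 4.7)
The plan for part (1) is to verify separately that $A \cup B$ is a module of $P$ and that it is totally ordered. For the module property, fix any $v \in V \setminus (A \cup B)$ and any reference element $c \in A \cap B$, which exists by hypothesis. For every $a \in A$, the module property of $A$ applied to the pair $\{a, c\} \subseteq A$ and the outside point $v$ forces $v$ to be comparable with $a$ in precisely the same way (above, below, or incomparable) as with $c$. The symmetric argument using $B$ shows the same for every $b \in B$. Chaining through $c$, the comparison of $v$ is therefore uniform across all of $A \cup B$, which is exactly the module condition. To see $A \cup B$ is totally ordered, take $a \in A$ and $b \in B$; the only nontrivial case is $a \notin B$ and $b \notin A$. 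Since $B$ is a module and $a \notin B$, $a$ relates to $b$ the same way it relates to $c$, but $a$ and $c$ both lie in the totally ordered set $A$ and so are comparable, giving comparability of $a$ and $b$.

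For part (2), I plan a routine Zorn's lemma argument. Let $M_0$ be a totally ordered module of $P$ and let $\mathcal F$ denote the family of totally ordered modules of $P$ containing $M_0$, ordered by inclusion. Given a nonempty chain $\mathcal C \subseteq \mathcal F$, set $N := \bigcup \mathcal C$. For any two $x, y \in N$ there is a single $M' \in \mathcal C$ with $x, y \in M'$ by directedness; then $x$ and $y$ are comparable because $M'$ is totally ordered, and for $v \notin N$ (hence $v \notin M'$) the module property of $M'$ shows $v$ has a uniform comparison with $x$ and $y$. Hence $N$ is a totally ordered module containing $M_0$, i.e.\ $N \in \mathcal F$ is an upper bound of $\mathcal C$, so Zorn's lemma yields a maximal element of $\mathcal F$.

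There is no real obstacle: the heart of the argument is the observation in (1) that a single common element of $A$ and $B$ is enough to propagate the uniform-outside-comparison condition from $A$ and $B$ to their union, and part (2) simply iterates this via Zorn. (Alternatively, one could bypass Zorn by applying (1) transfinitely along a well-ordering of the totally ordered modules containing $M_0$, but the Zorn formulation is cleaner.)
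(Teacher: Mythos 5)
Your proof is correct and follows essentially the same route as the paper: for (1) the union is shown to be a module and then total comparability is propagated through a common element $c\in A\cap B$ using the module property of $B$, and for (2) a standard Zorn's lemma argument is used. The only difference is that you verify directly that the union of two overlapping modules is a module, whereas the paper simply cites this well-known fact.
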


\begin{proof} $(1)$ As it is well known \cite{ille} the union of two modules with a nonempty intersection is a module.
We check that $A\cup B$ is totally ordered.  Let $a\in A$ and $b\in B$. Since $A\cap B$ is a module and is a chain, $a$ is comparable to all elements of $A\cap B$. From the fact that $B$ is a module we infer that $a$ is comparable to all elements of $B$ and hence $a$ is comparable to $b$. This shows that the order induced by $P$ on $A\cup B$ is total and we are done.\\
$(2)$ This second assumption of the lemma follows readily from the previous one and Zorn's Lemma.
\end{proof}

Let $P$ be a poset.  Let $x, y \in P$. We set $x \equiv_P y$ if there exists a totally ordered module $A$ of $P$ containing $x$ and $y$. Clearly, $\equiv_P$ is reflexive and symmetric. It follows from Lemma \ref{lem:module-chain} that $\equiv_P$ is also transitive. Hence, $\equiv_P$ is an equivalence relation. The equivalence class of each element is a totally ordered module. We denote by $P/\equiv_P$ the quotient poset and let $\varphi:P \rightarrow P/\equiv_P$ be the quotient map.

\begin{lemma}\label{lem:gallaiquotient}Let $P$ be a poset. Then:
\begin{enumerate}[$(1)$]
\item $P$ is the lexicographical sum $\sum_{i\in D} C_i$ of chains $C_i$ indexed by a poset $D$ in which no nontrivial module is totally ordered.
\item If every module of $P$ distinct of $P$ is totally ordered then $D$ is prime.
\end{enumerate}
\end{lemma}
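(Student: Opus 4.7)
The plan is to exploit the equivalence relation $\equiv_P$ introduced just before the lemma, together with Lemma \ref{lem:module-chain}, to identify the components $C_i$ with the maximal totally ordered modules of $P$, and then to transfer totally ordered modules from the quotient back up to $P$.

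First I would establish that each $\equiv_P$-class is a \emph{maximal} totally ordered module. By Lemma \ref{lem:module-chain}(2), every totally ordered module of $P$ is contained in one that is maximal with respect to inclusion. If two maximal totally ordered modules $A,B$ share an element, Lemma \ref{lem:module-chain}(1) gives that $A \cup B$ is also a totally ordered module, so $A = B$ by maximality. Consequently, through every $x \in P$ there passes a unique maximal totally ordered module $M(x)$, and $x \equiv_P y$ iff $M(x) = M(y)$. Thus the equivalence classes partition $P$ into maximal totally ordered modules; call them $C_i$, indexed by $i \in D := P/\!\equiv_P$. Since the $C_i$ are pairwise disjoint modules partitioning $P$, standard facts about modules give $P = \sum_{i \in D} C_i$ as a lexicographical sum, where $D$ carries the quotient order.

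For part (1), it remains to check that $D$ itself has no nontrivial totally ordered module. Suppose for contradiction that $M \subseteq D$ is a totally ordered module of $D$ with $2 \le |M|$ and $M \neq D$. Set $N := \varphi^{-1}(M) = \bigsqcup_{i \in M} C_i$. Standard module theory ensures that $N$ is a module of $P$. Moreover, in the lex sum $\sum_{i \in D} C_i$, two elements lying in different components $C_i$, $C_j$ are comparable in $P$ iff $i, j$ are comparable in $D$; combined with the fact that each $C_i$ is a chain and $M$ is totally ordered in $D$, this shows $N$ is a chain. But then $N$ is a totally ordered module of $P$ that strictly contains any single $C_i$ with $i \in M$ (since $|M|\ge 2$), contradicting the maximality of $C_i$ as a totally ordered module.

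For part (2), assume that every module of $P$ distinct from $P$ is totally ordered, and suppose $M$ is a nontrivial module of $D$. Again $N := \varphi^{-1}(M)$ is a module of $P$, and $N \neq P$ because $M \neq D$; by hypothesis $N$ is then totally ordered. Picking any $i \in M$, we have $C_i \subsetneq N$, which again contradicts the maximality of $C_i$. Hence $D$ has no nontrivial module at all, i.e.\ $D$ is prime. The main (very mild) obstacle is merely bookkeeping: verifying that the preimage $\varphi^{-1}(M)$ of a totally ordered module in the quotient is itself totally ordered in $P$, which follows directly from the definition of a lex sum over a poset index.
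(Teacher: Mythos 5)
Your proposal is correct and follows essentially the same route as the paper: both identify the $\equiv_P$-classes as the maximal totally ordered modules, write $P$ as the lexicographical sum of these classes over $D:=P/\!\equiv_P$, and observe that the preimage under $\varphi$ of a (totally ordered) module of $D$ is a (totally ordered) module of $P$, which contradicts maximality. The only cosmetic difference is that in part (2) you contradict the maximality of $C_i$ directly, while the paper contradicts the conclusion of part (1); these are interchangeable.
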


\begin{proof}
$(1)$  $P$ is the lexicographical sum of its equivalence classes indexed by $P/\equiv_P$. If $A$ is a totally ordered  module of $P/\equiv_P$ then  its inverse image by $\varphi$ is a  totally ordered module of $P$. Due to the maximality of equivalence classes, $\vert A\vert \leq 1$. So we may set   $D:=P/\equiv_P$.

$(2)$ Suppose that $D$ is not prime. Let $A$ be a proper module of $D$. Then $\cup\{C_i : i\in A\}$ is a proper module of $P$, hence is totally ordered. But then $A$ was totally ordered, contradicting the fact that no proper module of $D$ is totally ordered.
\end{proof}

\noindent{\bf Proof of Theorem \ref {thm:embed-poset-graph}.}
The equivalence $(ii) \Leftrightarrow (iii)$ and implication $(i) \Rightarrow (ii)$ are trivial and  true with no conditions on $P$ and $Q$.\\
$(ii) \Rightarrow (i)$ Let $f$ be an embedding of $\comp(P)$ into $\comp(Q)$, $V':= f(V)$, $P':= (V', \leq')$ where $\leq'$  is the image on $V'$ of the order $\leq$ on $P$ and $Q':= Q_{\restriction V'}$.  By Lemma \ref{lem:gallaiquotient}, the poset $P'$ is a lexicographic sum of chains $C_i$ with domains $A_i$ indexed by a prime poset $D'$. Hence $\comp(P)$ is a lexicographical sum of the cliques $\comp(C_i)$  indexed by $\comp (D')$. By Theorem \ref{kelly},  $\comp (D')$ is prime since $D'$ is prime. Since  $\comp(P')=\comp(Q')$, $Q'$ is a lexicographical sum of chains $C'_i$ with domain $A_i$ indexed by an orientation $D''$ of $\comp(D')$. By Theorem \ref{kelly}, $D''=D'$ or $D''= D{'*}$. Since $P$ is finite each chain $C_i$ is isomorphic to $C'_i$ as well as its dual. Hence $P'$ is isomorphic to $Q'$ or to  $Q^{'*}$. Hence $P$ embeds into $Q$  or into $Q^*$ and we are done. \hfill $\Box$

It should be mentioned that Theorem \ref{thm:embed-poset-graph} as stated becomes false if the condition that the modules of $P$ are finite chains is dropped. Indeed,  two infinite chains have the same comparability graphs but, in general, do not embed in each other.

We now specialize the above results to our situation: aka posets of width two. In particular, we will show that if  a poset has width at most two and has a connected incomparability graph, then the only possible nontrivial modules are totally ordered.
Note that if  $P$ is a poset such that every module distinct from $P$ is a chain, then the nontrivial strong modules of $P$ are the maximal nontrivial modules of $P$. This leads to an other proof of Theorem \ref{thm:embed-poset-graph}.

\begin{lemma}\label{lem:module1}Let $P:=(V,\leq)$ be a poset and $I$ be a module in $P$. If $I$ contains a maximal antichain of $P$, then $\Inc(P)$ is not connected.
\end{lemma}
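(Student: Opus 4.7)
The plan is to exploit the module property of $I$ together with the maximality of an antichain $A \subseteq I$ to force every element of $V \setminus I$ into one of two classes that are entirely comparable to $I$, thereby killing every edge of $\ainc(P)$ that could cross between $I$ and $V \setminus I$.

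Fix a maximal antichain $A$ of $P$ with $A \subseteq I$. For any $v \in V \setminus I$, the maximality of $A$ prevents $A \cup \{v\}$ from being an antichain, so $v$ must be comparable with some $a \in A$. If $v < a$, then the module condition applied to $a$ and any other $a' \in I$ yields $v \leq a'$, and since $v \notin I$ this gives $v < a'$; symmetrically, $a < v$ propagates to $a' < v$ for every $a' \in I$. I would next verify that these two alternatives cannot both occur for the same $v$: if $v < a$ and $b < v$ for some $a,b \in A$, then $b < v < a$ gives $b < a$, which contradicts the antichain property of $A$ unless $a=b$, and in that degenerate case $v = a \in I$, again a contradiction. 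Hence $V \setminus I$ splits into the set $L$ of elements strictly below every element of $I$ and the set $U$ of elements strictly above every element of $I$.

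The decisive consequence is that every vertex in $V \setminus I$ is comparable to every vertex of $I$, so $\ainc(P)$ has no edge joining $I$ to $V \setminus I$. Assuming the module $I$ is proper (which is the only meaningful case of the statement, since $I = V$ makes the conclusion clearly false, for instance on a chain extended by one incomparable element), both $I$ and $V \setminus I$ are nonempty, and the vertex set of $\ainc(P)$ is partitioned into two nonempty parts with no edge between them; $\ainc(P)$ is therefore disconnected.

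The argument is short and the only delicate point is ruling out that a single $v \notin I$ could be related to $A$ on both sides; this is dispatched immediately by the antichain property of $A$, so I do not anticipate any serious obstacle.
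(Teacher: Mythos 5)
Your proof is correct and follows essentially the same route as the paper: maximality of $A$ forces every $v\notin I$ to be comparable to some element of $A$, the module property propagates this comparability to all of $I$ in a uniform direction, and hence no edge of $\ainc(P)$ crosses between $I$ and $V\setminus I$. The only cosmetic difference is that the paper invokes convexity of modules to get the ``entirely above or entirely below'' dichotomy, whereas you derive it directly from the directional propagation in the module definition; your explicit remark that $I$ must be a proper module for the conclusion to hold matches the paper's implicit assumption.
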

\begin{proof} Let $I$ be a module in $P$ and let $A$ be a maximal antichain of $P$ such that $A\subseteq I$. Since $A$ is maximal with respect to  inclusion,  every element of $P$ is comparable to at least one element of $A$. Hence, every element not in $I$ is comparable to at least one element of $A$ and therefore is comparable to all elements of $I$ since $I$ is a module. Since a module is convex we infer that every element not in $I$ is either larger than all elements of $I$ or is smaller than all elements of $I$. This proves that $I$ is a nontrivial component of $\ainc(P)$. The conclusion of the lemma follows.
\end{proof}

If $P$ has width at most two, maximal antichains have size at most two, hence:

\begin{corollary}\label{cor:module1}Let $P:=(V,\leq)$ be a poset of width two. If  $\ainc(P)$ is connected then all proper  modules of $P$ are chains.
\end{corollary}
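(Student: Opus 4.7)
The plan is to deduce this immediately from Lemma \ref{lem:module1} together with the elementary observation that in a poset of width at most two every two-element antichain is automatically a maximal antichain.

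First I would argue by contrapositive: assume $M$ is a proper module of $P$ that is not a chain, and I aim to show that $\ainc(P)$ is disconnected. Since $M$ is not a chain it contains two incomparable elements $a,b$. Thus $\{a,b\}$ is an antichain of $P$. Because $P$ has width at most two, no element $c\in V$ can be incomparable to both $a$ and $b$ (otherwise $\{a,b,c\}$ would be an antichain of size three), so $\{a,b\}$ is a maximal antichain of $P$ contained in $M$.

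At this point Lemma \ref{lem:module1} applies directly, yielding that $\ainc(P)$ is not connected, which contradicts the hypothesis. Hence every proper module of $P$ must be a chain, as claimed. The only step requiring any care is the maximality of the antichain $\{a,b\}$, and this is forced by the width hypothesis with no further work, so there is essentially no obstacle.
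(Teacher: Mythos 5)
Your argument is correct and is exactly the paper's: the corollary is stated there as an immediate consequence of Lemma \ref{lem:module1} together with the observation that in a poset of width at most two every two-element antichain is maximal. Nothing further is needed.
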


If a poset  has width at most two, its incomparability graph is bipartite. A characterisation of bipartite prime graph is easy:

\begin{proposition}\label{primebipartite} A bipartite graph is prime if and only if it is connected and distinct vertices have distinct
neighborhoods.
\end{proposition}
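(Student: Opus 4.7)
My plan is to prove the two directions separately, in each case analysing what a nontrivial module $M$ of a bipartite graph $G = (V,E)$ with bipartition $V = X \cup Y$ must look like. The substance of the proof lies in the converse (sufficiency), since the necessity of each of the two conditions admits short direct arguments.

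For the necessity, I would first observe that any union of connected components of $G$ is a module of $G$: no vertex outside such a set has any neighbour inside, so the module condition is vacuously satisfied. Consequently, for $|V|\geq 3$, a disconnected $G$ always admits a nontrivial module, and primality forces connectedness. Next, if two distinct vertices $u,v$ satisfy $N(u)=N(v)$, then $\{u,v\}$ is a module, since for every $w\notin\{u,v\}$ one has $w\sim u$ iff $w\in N(u)=N(v)$ iff $w\sim v$; when $|V|\geq 3$ this module is nontrivial, contradicting primality.

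For the sufficiency, I assume $G$ is connected, neighbourhoods are pairwise distinct, and $M$ is a module with $2\leq |M|<|V|$, and I derive a contradiction by splitting on the position of $M$ relative to the bipartition. If $M$ is contained in one part, say $M\subseteq X$, then pick $u,v\in M$; for every $w\in Y$ the module condition gives $w\sim u\Leftrightarrow w\sim v$, while $u\sim v$ fails automatically (same part). Hence $N(u)=N(v)$, contradicting the hypothesis. If instead $M$ meets both parts, pick $u\in M\cap X$ and $v\in M\cap Y$. For $w\notin M$ with $w\in X$, one has $w\not\sim u$ (same part), so by the module property $w\not\sim v$; symmetrically, for $w\in Y\setminus M$ one gets $w\not\sim u$. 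Together this shows every neighbour of a vertex of $M$ already lies in $M$, so $M$ is a nonempty proper union of connected components of $G$, contradicting connectedness.

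The only real obstacle is a pedantic small-cardinality issue: the paper adopts the convention that any relational structure on at most two elements is prime, whereas the stated criterion fails for the two-vertex edgeless graph (both vertices share the empty neighbourhood yet the structure is vacuously prime). I would simply note that the equivalence is understood for $|V|\geq 3$, which is the only case in which the proposition is applied in the sequel. Beyond that, the argument is entirely self-contained and uses only the definition of module together with the bipartite hypothesis.
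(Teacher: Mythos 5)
Your proof is correct and supplies precisely the routine module analysis that the paper omits (its proof reads only ``easy and left to the reader''): both directions are handled properly, and the case split on whether the module lies in one side of the bipartition or meets both is the natural argument. Your caveat about the two-vertex edgeless graph is also legitimate, since under the paper's convention that structures on at most two elements are prime the stated equivalence does fail in that degenerate case, so restricting to $|V|\geq 3$ (the only case used later) is the right fix.
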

\begin{proof}
 The proof is easy and left to the reader.
\end{proof}

\subsection{Embeddability between bichains}

A \emph{bichain} is a relational structure $B := (V, (\leq_1,\leq_2))$ where $\leq_1$ and $\leq_2$ are two total orders on the same set $V$. The bichain \emph{induced} by $B$ on a subset $A$ of $V$, also called the \emph{restriction} of $B$ to $A$, is the bichain
$B_{\restriction A} := (A, ({\leq_1}_{\restriction A}, {\leq_2}_{\restriction A}))$ where $\leq_{1\restriction A}$ and ${\leq_2}_{\restriction A}$ are the orders induced by $\leq_1$ and $\leq_2$ on $A$. If $B := (V, (\leq_1, \leq_2))$ is a bichain, its  \emph{transpose} is $B^{t}:=(V, (\leq_2, \leq_1))$, its dual is  $B^*:= (V, (\leq_1^*,\leq_2^*))$. Clearly, ${(B^{t})}^t=B$ and ${(B^{*})}^*=B$. Furthermore, ${(B^{t})}^*={(B^{*})}^t$. It follows that starting from the bichain $B$ and applying the operations transpose and dual we may possibly generate at most three new bichains.

Let $P:= (V, \leq)$ be a poset; a \emph{realizer} of $P$ is a set $\mathcal E$ of linear orders on $V$ whose intersection is the order on $P$. We also say that the set $\{(V, \preceq):\;  \preceq \in \mathcal E\}$ is a realizer of $P$.  The order on  $P$, or $P$ as well,  is \emph{uniquely realizable} if it has only one realizer with at most two elements.  In  particular $P$ has dimension at most two.

The relevance of this notion is in the following obvious lemma:

\begin{lemma}A poset $P$ of dimension two has a unique realizer if and only if  there are exactly two bichains $B$ and $B'$ such that $o(B)=o(B')=P$. In which case $B'=B^t$.
\end{lemma}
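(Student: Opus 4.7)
The plan is to set up an explicit two-to-one correspondence between bichains $B$ with $o(B)=P$ and realizers of $P$ of size at most two, from which both directions of the equivalence will fall out mechanically. The correspondence is the obvious forgetful map $B=(V,(\leq_1,\leq_2))\mapsto\{\leq_1,\leq_2\}$, and the two preliminary facts that drive everything are: $o(B)=o(B^t)$ (intersection is symmetric), and $B=B^t$ iff $\leq_1=\leq_2$ iff $o(B)$ is a chain. The hypothesis $\dim(P)=2$ ensures that $P$ is not a chain, so $B\neq B^t$ for every bichain with $o(B)=P$, and every realizer of $P$ of size at most two actually has size exactly two.

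Consequently, the forgetful map above is exactly two-to-one: the preimage of a realizer $\{\preceq_1,\preceq_2\}$ is $\{(V,(\preceq_1,\preceq_2)),\ (V,(\preceq_2,\preceq_1))\}$, a pair of transposed bichains. For the forward direction, if the realizer $\{\leq_1,\leq_2\}$ of $P$ is unique, then the set of bichains realizing $P$ is its preimage, of size exactly two and consisting of two mutual transposes. For the backward direction, if there are exactly two bichains $B$ and $B'$ with $o(B)=o(B')=P$, then $B^t$ also satisfies $o(B^t)=P$, is distinct from $B$ (since $P$ is not a chain), and must therefore coincide with $B'$; moreover, any realizer $\{\preceq_1,\preceq_2\}$ of $P$ of size at most two gives rise to the bichain $(V,(\preceq_1,\preceq_2))$, which by hypothesis equals $B$ or $B^t$, forcing $\{\preceq_1,\preceq_2\}=\{\leq_1,\leq_2\}$ and yielding uniqueness. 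The identification $B'=B^t$ is a by-product of either direction.

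There is no genuine obstacle; the only point that deserves a moment's care is separating out the degenerate chain case, where $B=B^t$ and the forgetful map would be one-to-one rather than two-to-one. This case is excluded throughout by the standing hypothesis $\dim(P)=2$, and in fact any proof must use this hypothesis, since for a chain $P$ of at least two elements there is a unique realizer but only one bichain $B$ with $o(B)=P$, so the stated equivalence genuinely fails outside the dimension-two setting.
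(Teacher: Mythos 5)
Your proof is correct: the two-to-one forgetful correspondence $B\mapsto\{\leq_1,\leq_2\}$, together with the observations that $o(B)=o(B^t)$ and that $B=B^t$ exactly when $o(B)$ is a chain (which the hypothesis $\dim(P)=2$ excludes), is precisely the argument the paper has in mind when it labels this lemma ``obvious'' and omits the proof. Your remark that the equivalence genuinely fails for chains, so the dimension-two hypothesis is essential, is a worthwhile addition.
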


 In this case, we denote by $B_P$ one of these two bichains.

 Our first embeddability result is this.

\begin{theorem}\label{thm:embed-bichain-poset}Let $P$,  $P'$ be two posets. If   $P$ is uniquely realizable,  then $P$ embeds into $P'$ if and only if $B_P$ embeds into $B_{P'}$ or into  $B^t_{P'}$.
\end{theorem}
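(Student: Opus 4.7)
The plan is to exploit the definition of unique realizability directly; no structural decomposition of $P$ or $P'$ is needed.

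First I would handle the ``if'' direction. Writing $B_P = (V, (\leq_1, \leq_2))$ and $B_{P'} = (V', (\leq_1', \leq_2'))$, I note that $o(B^t) = o(B)$ for every bichain $B$, so $o(B_{P'}^t) = P'$. Since an embedding of bichains preserves both linear orders, it preserves their intersection; hence an embedding $B_P \hookrightarrow B_{P'}$ or $B_P \hookrightarrow B_{P'}^t$ immediately yields an embedding $P \hookrightarrow P'$ by forgetting the bichain structure.

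For the converse, I would start with a poset embedding $f : P \hookrightarrow P'$, set $V'' := f(V)$, and pull back the pair $({\leq_1'}_{\restriction V''}, {\leq_2'}_{\restriction V''})$ to $V$ along $f$. Explicitly, for $i = 1, 2$, I define $\preceq_i$ on $V$ by $x \preceq_i y \Leftrightarrow f(x) \leq_i' f(y)$. These are two linear orders on $V$, and their intersection equals $\leq_P$ because $\leq_1' \cap \leq_2'$ is the order of $P'$ and $f$ is a poset embedding. Thus $\{\preceq_1, \preceq_2\}$ is a realizer of $P$ with at most two elements.

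Applying the hypothesis that $P$ is uniquely realizable, we obtain $\{\preceq_1, \preceq_2\} = \{\leq_1, \leq_2\}$. Two cases arise: either $(\preceq_1, \preceq_2) = (\leq_1, \leq_2)$, in which case $f$ is an embedding $B_P \hookrightarrow B_{P'}$; or $(\preceq_1, \preceq_2) = (\leq_2, \leq_1)$, in which case $f$ is an embedding $B_P \hookrightarrow B_{P'}^t$. The argument is essentially forced by the definitions, so no serious obstacle arises. The only subtlety worth flagging is why both $B_{P'}$ and $B_{P'}^t$ must appear in the statement: the poset $P'$ does not canonically distinguish its two realizing linear orders, so which of them ends up as ``the first coordinate'' of the embedded copy of $B_P$ depends on the particular embedding $f$ and cannot be pinned down a priori.
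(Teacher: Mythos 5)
Your proof is correct and follows essentially the same route as the paper's: the forward direction by forgetting the bichain structure, and the converse by pulling back the realizer of $P'$ along the embedding to obtain a two-element realizer of $P$, which unique realizability forces to coincide (as a set) with $\{\leq_1,\leq_2\}$. The paper's version is merely terser; your explicit case split on $(\preceq_1,\preceq_2)=(\leq_1,\leq_2)$ versus $(\leq_2,\leq_1)$ is exactly what is implicit there.
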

\begin{proof}
 Suppose that $B_P$ embeds into $B_{P'}$ or in $B^t_{P'}$.  Then $o(B_P)$ embeds in $o(B_{P'})$ or in $o(B^{t}_{P'})$. Since $P=o(B_P)$ and $P'=o(B_{P'})= o(B^{t}_{P'})$, $P$ embeds in $P'$.
Conversely, suppose that $P$ embeds into $P'$.  Let $\varphi$ be such an embedding and $X$ be the image of $\varphi$. Then $B(P')$ induces a realizer of $P'_{\restriction X}$, hence a realizer of $P$. Since $P$ is uniquely realizable, $B$ and $B^t$ are the only bichains such that $o(B)=o(B^t)=P$ hence $\varphi$ is an embedding of $B$  into $B'$ or into $B'^t$ as required.
\end{proof}

We characterize the uniquely realizable posets. We extend the result  of El-Zahar and Sauer to arbitrary  two-dimensional posets.

\begin{theorem}\label{thm:elzahar-sauer} A   poset $P$ of dimension two is uniquely realizable if and only if at most one connected component of $\ainc(P)$ is nontrivial and the only proper modules of the poset induced on this component are totally ordered.
\end{theorem}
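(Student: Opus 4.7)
The proof is in two directions, after a common reduction. By Lemma \ref{lem:folklore}, in any linear extension of $P$ the trivial components of $\ainc(P)$ are forced into their $P$-order and each nontrivial component forms a convex block, so a realizer of $P$ factors through realizers of the nontrivial components $P_{\restriction C_i}$ together with, for each $i$, a choice of which of the two linear extensions of $P_{\restriction C_i}$ lies in $L_1$; modulo the global swap of $\{L_1,L_2\}$ this yields $2^{k-1}$ realizers of $P$ for each choice of realizers on the individual nontrivial components, where $k$ is the number of such components. In particular, $k\ge 2$ immediately produces at least two realizers, which settles condition (a) in the forward direction. So we may reduce throughout to the case $k\le 1$, i.e., $\ainc(P)$ has a unique nontrivial component $C$, and it suffices to characterize when $P_{\restriction C}$ is uniquely realizable.

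For the forward direction in the case $k=1$ (contrapositive of (b)), suppose $P=P_{\restriction C}$ has a proper non-chain module $M$. Three facts about the quotient $P/M$ (collapse $M$ to a point $*$) are needed: $\dim(P/M)\le 2$ (a standard property of module quotients); $P/M$ is not a chain, for $\ainc(P)$ connected together with $M$ proper forces some $x\in P\setminus M$ to be a neighbour in $\ainc(P)$ of some element of $M$, and then the module property makes $x$ incomparable to all of $M$, so $[x]\parallel *$ in $P/M$; and $M$ itself is non-chain by assumption. Picking realizers $\{K_1,K_2\}$ of $P/M$ and $\{N_1,N_2\}$ of $M$, one has $K_1\ne K_2$ and $N_1\ne N_2$. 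For any linear extension $K$ of $P/M$ and $N$ of $M$, define the \emph{splice} $K\oplus N$ as the linear extension of $P$ obtained by replacing $*$ in $K$ by the sorted block $N$; a direct check using that $M$ is a module shows $K\oplus N$ extends $P$ and $(K_1\oplus N_1)\cap(K_2\oplus N_2)=P$. Thus the two unordered pairs $\{K_1\oplus N_1,\,K_2\oplus N_2\}$ and $\{K_1\oplus N_2,\,K_2\oplus N_1\}$ are both realizers of $P$, and they are distinct precisely because $K_1\ne K_2$ and $N_1\ne N_2$; hence $P$ has at least two realizers.

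For the reverse direction, assume conditions (a) and (b), so after the reduction $\ainc(P)$ is connected and every proper module of $P$ is totally ordered. Lemma \ref{lem:gallaiquotient} writes $P=\sum_{i\in D}C_i$ with each $C_i$ a chain and no nontrivial module of $D$ totally ordered; the hypothesis forces $D$ to be prime, since a nontrivial proper module $A\subsetneq D$ would lift to the non-chain module $\bigcup_{i\in A}C_i\subsetneq P$. By Theorem \ref{kelly} (Gallai--Kelly), the prime poset $D$ of dimension at most $2$ is uniquely realizable. The crucial step is then to prove that in any realizer $\{L_1,L_2\}$ of $P$ each $C_i$ is convex in both $L_1$ and $L_2$: if $a<_{L_k}c<_{L_k}b$ with $a,b\in C_i$ and $c\in C_j$, then $j$ comparable to $i$ in $D$ contradicts the lex-sum order (as $c$ would be either above or below all of $C_i$ in $P$), while $i\parallel_D j$ makes $c$ incomparable to both $a,b$, and combined with the comparability of $a,b$ in the chain $C_i$ (say $a<_{C_i}b$), this forces in $L_{3-k}$ the inequalities $a<_{3-k}b$, $c<_{3-k}a$, and $b<_{3-k}c$, producing the cycle $a<b<c<a$ in $L_{3-k}$. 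With convexity established, collapsing each $C_i$ in $L_k$ yields a linear extension $L_k^D$ of $D$, and $\{L_1^D,L_2^D\}$ is a realizer of $D$; uniqueness of that realizer and the fact that each $C_i$ has a unique (chain) internal order force $\{L_1,L_2\}$ to be completely determined.

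The central obstacle is the convexity argument above — without it the reduction of a realizer of $P$ to a realizer of the prime skeleton $D$ cannot be carried out, and uniqueness on $D$ cannot be transferred to $P$. A secondary point is that the splicing in the forward direction and all of the module-theoretic reductions are purely order-theoretic and carry over verbatim from the finite case of El-Zahar and Sauer to arbitrary (possibly infinite) posets, Gallai--Kelly (Theorem \ref{kelly}) being valid in this generality.
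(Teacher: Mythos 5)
Your proof is correct, and its overall architecture coincides with the paper's: both directions rest on the reduction to a single nontrivial component (Lemma \ref{lem:folklore}), the decomposition $P=\sum_{i\in D}C_i$ with $D$ prime (Lemma \ref{lem:gallaiquotient}), the fact that in any realizer the chains $C_i$ are intervals of both linear orders (your ``convexity'' step is exactly Lemma \ref{lem:zaguia98}, and your collapse of a realizer of $P$ to a realizer of $D$ is Proposition \ref{prop:realizer}), the two-component swap, and the splicing of realizers of $M$ and $P/M$ (which the paper states in one line and you rightly verify in detail). The one genuine divergence is the source of unique realizability for the prime skeleton $D$: the paper proves this (Corollary \ref{cor:elzahar-sauer}) by combining Ille's Theorem \ref{thm:ille} --- primality is witnessed on finite restrictions --- with the finite-case theorem of El-Zahar and Sauer, whereas you invoke the Gallai--Kelly Theorem \ref{kelly} directly, which avoids both the compactness-type argument and any appeal to the finite case. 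This route is legitimate, but note that Theorem \ref{kelly} as stated concerns transitive orientations of prime comparability graphs, not realizers; to use it you need the classical Dushnik--Miller correspondence: given a realizer $\{L_1,L_2\}$ of $D$, the relation $L_1\cap L_2^{*}$ is a transitive orientation of $\ainc(D)$, and conversely any transitive orientation $W$ of $\ainc(D)$ yields the realizer $\{D\cup W,\, D\cup W^{*}\}$; since $\ainc(D)$ is prime (first part of Theorem \ref{kelly}) and is a comparability graph (because $\dim D\leq 2$), its only transitive orientations are $W$ and $W^{*}$, and both produce the same unordered pair. Spelling out this bridge closes the only compressed step in your argument; everything else is complete and valid for infinite posets, since Kelly's extension of Gallai's theorem holds in that generality.
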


With  Lemma \ref{lem:gallaiquotient} we obtain that   \emph{a poset $P$  is uniquely realizable if and only if $P$  is the lexicographical sum $Q^{-} + Q+ Q^{+}$ where $Q^{-}$ and $Q^{+}$ are chains and $Q$ is a lexicographical sum of chains indexed by a two-dimensional prime poset}.

As a consequence:
\begin{corollary}\label{cor:elzahar-sauer}  Prime  posets of dimension two are uniquely realizable.
\end{corollary}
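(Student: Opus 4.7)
The plan is to derive the corollary directly from Theorem~\ref{thm:elzahar-sauer}. That theorem characterises uniquely realizable two-dimensional posets by two conditions: (a) at most one connected component of $\ainc(P)$ is nontrivial, and (b) the proper modules of the restriction of $P$ to that component are all totally ordered. So the whole job reduces to checking that both conditions are automatic when $P$ is prime.

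After disposing of the small cases $|P|\leq 2$ by inspection, I would assume $|P|\geq 3$ and first establish (a). The step I regard as the only non-cosmetic one is translating ``connected component of $\ainc(P)$'' into ``module of $P$''. For this I would invoke Lemma~\ref{lem:folklore}, which asserts that $P$ is the lexicographic sum of its incomparability components along the chain $\Connect(P)$, so each such component is automatically a module of $P$. Because $P$ is prime, every module is trivial, so each component is either a singleton or equals $V$. If every component were a singleton then $\ainc(P)$ would be edgeless and $P$ would be a chain; but a chain on three or more elements is not prime, since any $2$-element initial interval is a proper nontrivial module. This contradiction forces $\ainc(P)$ to be connected, which gives (a) with the unique nontrivial component equal to $V$ itself.

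Condition (b) is then essentially free: the restriction of $P$ to its unique nontrivial component is $P$ itself, and since $P$ is prime its only proper modules are singletons, which are trivially totally ordered. Applying Theorem~\ref{thm:elzahar-sauer} yields that $P$ is uniquely realizable.

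I do not anticipate a genuine obstacle here: the corollary is really a weakening of the hypotheses of Theorem~\ref{thm:elzahar-sauer} to primeness, and the main content is the simple observation that primeness strictly implies both of its conditions. The only care needed is to combine Lemma~\ref{lem:folklore}, used to upgrade connected components of $\ainc(P)$ to modules of $P$, with the elementary remark that a chain on $\geq 3$ elements is not prime, in order to rule out the edgeless case for $\ainc(P)$.
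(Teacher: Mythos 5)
Your proposal is circular within the logical structure of the paper. You derive the corollary from Theorem~\ref{thm:elzahar-sauer}, but the paper proves that theorem \emph{using} the corollary: in the forward direction of its proof (the very direction you need, namely that conditions (a) and (b) imply unique realizability), the quotient $Q'$ is shown to be prime and then Corollary~\ref{cor:elzahar-sauer} is invoked to conclude that the induced realizer on $Q'$ is unique. The paper even states this dependency explicitly just after the corollary: ``In fact, the proof of Theorem~\ref{thm:elzahar-sauer} is based on the corollary.'' So your reduction, while internally coherent as a deduction of a special case from a general statement, cannot serve as a proof here without an independent proof of the theorem, which the paper does not supply.

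The idea you are missing is the reduction to the finite case. The paper's actual argument is short and direct: if a prime two-dimensional poset $P$ had two distinct realizers (each with at most two linear orders), their restrictions to some set $F$ of at most four elements would already differ; by Ille's theorem (Theorem~\ref{thm:ille}), primality of $P$ yields a finite prime restriction $P_{\restriction F'}$ with $F\subseteq F'$; and by the El-Zahar--Sauer theorem, finite prime posets of dimension two are uniquely realizable, contradicting the fact that the two realizers restrict to distinct realizers of $P_{\restriction F'}$. Your observations about connectedness of $\ainc(P)$ and triviality of modules for prime $P$ are correct but do not substitute for this compactness step; to repair your proof you would need to either adopt the paper's argument or give a proof of the relevant direction of Theorem~\ref{thm:elzahar-sauer} that does not pass through the corollary.
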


In fact, the proof of Theorem \ref{thm:elzahar-sauer} is based on the corollary. The proof of the corollary uses the finite case and 	a  result of Ille (\cite{ille} for binary relations, \cite{delhomme} for binary structures).

\begin{theorem}\label{thm:ille}A binary structure  $R:=(V,(\rho_{i})_{i\in I})$ is prime if and only if for every finite set $F\subset V$ there exists a  finite subset  $F'$ of $V$ such that $R_{\restriction F'}$ is prime. \end{theorem}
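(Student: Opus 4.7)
The plan is to handle the two implications separately; only the forward one carries substantive content.

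For $(\Leftarrow)$ I would argue by contrapositive. Suppose $R$ is not prime and fix a nontrivial module $M\subseteq V$ (so $|M|\geq 2$ and $V\setminus M\neq\emptyset$). Pick distinct $x,y\in M$ and $z\in V\setminus M$ and set $F:=\{x,y,z\}$. The defining clause of a module is clearly preserved by restriction, so for every finite $F'$ with $F\subseteq F'\subseteq V$ the subset $M\cap F'$ is a module of $R_{\restriction F'}$; it contains $\{x,y\}$ and omits $z$, hence is nontrivial. Thus no finite $F'\supseteq F$ makes $R_{\restriction F'}$ prime, contradicting the hypothesis.

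For $(\Rightarrow)$ I would build a finite prime extension by iterated witness adjunction. Set $F_0:=F$. At stage $n$, if $R_{\restriction F_n}$ is already prime, I stop. Otherwise, by Gallai's decomposition theorem applied to the finite structure $R_{\restriction F_n}$, there is a nontrivial strong module $A_n$ of $R_{\restriction F_n}$. Because $R$ itself is prime, $A_n$ is \emph{not} a module of $R$, hence there exist $a,a'\in A_n$, an index $i\in I$, and an element $v_n\in V\setminus F_n$ for which exactly one of $(v_n,a)$, $(v_n,a')$ lies in $\rho_i$ (or symmetrically one of $(a,v_n)$, $(a',v_n)$). I set $F_{n+1}:=F_n\cup\{v_n\}$, so that $A_n$ is no longer a module of the new restriction.

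The hard part is showing that this procedure terminates: destroying one strong module can create others, so naive iteration need not halt. The remedy, which is the technical heart of the proofs of Ille \cite{ille} and Delhomm\'e \cite{delhomme}, is to choose the witness $v_n$ carefully---typically one that splits a \emph{maximal} nontrivial strong module into at least two classes while respecting the surrounding Gallai tree---and to exhibit a well-founded complexity measure on that tree, for instance the lexicographic invariant given by the multiset of sizes (or ranks) of nontrivial strong modules, that strictly decreases after each adjunction. Once termination is secured, the resulting $F'$ is a finite superset of $F$ with $R_{\restriction F'}$ prime, which completes the proof.
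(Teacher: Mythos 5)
The paper does not actually prove this statement: it is quoted from Ille \cite{ille} and Delhomm\'e \cite{delhomme}, so there is no in-paper argument to compare yours against, and your proposal has to stand on its own. Your backward direction is fine (and you correctly read the statement as requiring $F\subseteq F'$, which is how the paper uses it and without which the equivalence would be false, since every restriction to at most two elements is prime by convention).

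The forward direction, however, contains a genuine gap, and it is exactly where the whole content of the theorem lies. Your iteration ``find a nontrivial module of $R_{\restriction F_n}$, adjoin a splitter'' is the obvious first attempt, and you candidly admit that its termination is ``the technical heart'' of the cited proofs --- but you then do not supply it. The complexity measure you gesture at (a lexicographic invariant on the multiset of sizes or ranks of nontrivial strong modules of $R_{\restriction F_n}$) is not shown to decrease, and there is no evident reason it should: adjoining $v_n$ can create brand-new modules and reorganize the modular (Gallai) decomposition of the restriction arbitrarily, so the strong modules of $R_{\restriction F_{n+1}}$ need not be comparable to those of $R_{\restriction F_n}$ in any way that your invariant controls. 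Deferring this step to ``the technical heart of the proofs of Ille and Delhomm\'e'' is a citation, not a proof. It is also worth noting that the known arguments do not proceed by destroying modules of a non-prime restriction at all; they go in the opposite direction, via the two-element extension lemma of Ehrenfeucht--Rozenberg and Schmerl--Trotter (extended to the infinite case by Ille): if $R$ is prime and $R_{\restriction X}$ is prime with $X$ finite of size at least $3$, then for any $v\notin X$ one can find $Y\supseteq X\cup\{v\}$ with $|Y|\leq |X|+2$ and $R_{\restriction Y}$ prime. Growing a prime restriction until it absorbs $F$ makes termination automatic, which is precisely the difficulty your scheme leaves open. As it stands, the proposal is an accurate description of a strategy plus an acknowledgement that its crucial step is missing.
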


Indeed, let $P:= (V, \leq)$ be a prime poset of dimension two.  In order to prove that  there is just one   realizer, suppose that there are two. Observe that  their restrictions to some subset with at most four elements $F$ of $V$must be  distinct. Since $P$ is prime, Theorem \ref{thm:ille} asserts that  there is some finite subset $F'$ of $V$ containing $F$ such that $P_{\restriction F'}$ is prime. According to Theorem 1 page 240  of \cite{el-zahar-sauer}, $P_{\restriction F'}$ is uniquely realizable. This contradicts our hypothesis.

We recall the following result  stated in \cite{zaguia98} for finite sets; the proof given holds without that restriction.

\begin{theorem} \label{zaguia}  Let $B:= (V, (\leq_1, \leq_2))$ be a bichain.  Then the poset $o(B):= (V, \leq_1\cap \leq_2)$ is prime if and only if the bichain $B$ is prime.
\end{theorem}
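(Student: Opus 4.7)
The theorem is an equivalence, and I would prove the two implications separately, reducing the harder one to the finite case (which is the content of \cite{zaguia98}) by invoking Ille's Theorem (Theorem~\ref{thm:ille}).

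For the direction ``$o(B)$ prime $\Rightarrow$ $B$ prime'', I would argue by contraposition and verify directly that any nontrivial module $A$ of $B$ is also a module of $o(B)$. Indeed, a module of $B$ is by definition a common module of the two linear orders $\leq_1$ and $\leq_2$, so for every $v\in V\setminus A$ and every $a,a'\in A$ the element $v$ compares with $a$ and with $a'$ in the same way in $\leq_1$, and likewise in $\leq_2$. Consequently $v$ compares the same way with $a$ and $a'$ in $o(B)=\leq_1\cap \leq_2$, so $A$ is a nontrivial module of $o(B)$.

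For the converse ``$B$ prime $\Rightarrow$ $o(B)$ prime'' I would again argue by contraposition, but now using compactness through Ille's Theorem. If $o(B)$ is not prime, the contrapositive of Theorem~\ref{thm:ille} applied to $o(B)$ produces a finite $F\subseteq V$ such that for every finite $F'$ with $F\subseteq F'\subseteq V$ the restriction $o(B)_{\restriction F'}$ is not prime. Since taking restrictions commutes with intersecting the two linear orders, $o(B)_{\restriction F'}=o(B_{\restriction F'})$. I would then invoke the finite version of the theorem (Theorem~1 of \cite{zaguia98}) to conclude that $B_{\restriction F'}$ is not prime for every such $F'$. Applying Theorem~\ref{thm:ille} once more, this time to the bichain $B$, yields that $B$ itself is not prime, which is what we needed.

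The only substantive obstacle is the finite case, which is why we delegate it to \cite{zaguia98}. The difficulty there is that a nontrivial module $A$ of $o(B)$ need not be a module of $B$: if $A$ fails to be a $\leq_1$-interval, say $a<_1 v<_1 a'$ for some $a,a'\in A$ and $v\in V\setminus A$, then the module property in $o(B)$ forces $v$ to be incomparable to every element of $A$, and hence $a'<_2 v<_2 a$. Proving the finite case directly therefore requires a Gallai-type modular-decomposition argument that produces, from such an $A$, a different subset which is convex in both $\leq_1$ and $\leq_2$ and is thus a genuine module of $B$.
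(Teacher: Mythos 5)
Your proposal is correct, but it takes a different route from the paper: the paper gives no argument at all for this statement beyond the remark that the proof of the finite case in \cite{zaguia98} ``holds without that restriction'', i.e.\ it asserts that the original finite proof is cardinality-independent. You instead \emph{derive} the infinite case from the finite one. Your easy direction is fine: a module of the bichain $B$ is a common module of $\leq_1$ and $\leq_2$, hence a module of the intersection $o(B)$, and it is nontrivial as the same subset of $V$; so $o(B)$ prime implies $B$ prime. Your hard direction is a clean compactness reduction: the contrapositive of Theorem~\ref{thm:ille} applied to $o(B)$ yields a finite $F$ such that $o(B)_{\restriction F'}=o(B_{\restriction F'})$ is not prime for every finite $F'\supseteq F$ (note that the statement of Theorem~\ref{thm:ille} in the text tacitly requires $F\subseteq F'$, as the paper's own later use of it confirms, and you use it in that form); the finite theorem of \cite{zaguia98} then makes every $B_{\restriction F'}$ non-prime, and Theorem~\ref{thm:ille} applied to $B$ concludes. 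This is precisely the technique the paper itself uses to lift the El-Zahar--Sauer result to infinite posets (Corollary~\ref{cor:elzahar-sauer}), so it fits the paper's toolkit; what it buys is that one only needs the \emph{statement} of the finite theorem rather than an inspection of its proof, at the cost of invoking Ille's theorem twice. Both approaches ultimately rest on the finite case, which you correctly identify as the real content, and your closing remark about why that case is nontrivial (a module of $o(B)$ need not be an interval of $\leq_1$, in which case it is forced to be reversed in $\leq_2$) is accurate.
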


Putting together Corollary \ref{cor:elzahar-sauer}  and Theorem \ref{zaguia} we obtain:
\begin{corollary}\label{cor:elzahar-sauer-zaguia} If $B$ is a prime bichain, then $B$ and $B^t$ are the only bichains such that $o(B)=o(B^t)$.
\end{corollary}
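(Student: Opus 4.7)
The plan is to combine the two theorems cited immediately before the corollary. Let $B=(V,(\leq_1,\leq_2))$ be a prime bichain and set $P:=o(B)=(V,\leq_1\cap\leq_2)$. The argument will proceed in three short steps.

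First, I would invoke Theorem \ref{zaguia} to conclude that the intersection order $P$ is a prime poset. By construction, $\{\leq_1,\leq_2\}$ is a realizer of $P$ of cardinality at most two, so $P$ has dimension at most two. Next, Corollary \ref{cor:elzahar-sauer} asserts that prime posets of dimension two are uniquely realizable; thus $\{\leq_1,\leq_2\}$ is the \emph{only} realizer of $P$ with at most two elements.

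Now suppose $B'=(V,(\leq'_1,\leq'_2))$ is any bichain with $o(B')=P$. Then $\{\leq'_1,\leq'_2\}$ is again a realizer of $P$ with at most two elements, so by unique realizability $\{\leq'_1,\leq'_2\}=\{\leq_1,\leq_2\}$ as unordered sets. There are only two ways to order this pair into a bichain, namely $(\leq_1,\leq_2)$ and $(\leq_2,\leq_1)$, giving $B'=B$ or $B'=B^{t}$. This is exactly the conclusion of the corollary.

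I do not anticipate any real obstacle: once Theorem \ref{zaguia} transfers primality from $B$ to $P$, the work has been done in Corollary \ref{cor:elzahar-sauer}, and the remainder is just observing that a bichain is determined by its pair of linear orders together with the choice of which one is listed first. The only edge case worth a line is when $\leq_1=\leq_2$ (so $|V|\leq 2$ since $P$ is then a chain and prime forces at most two elements), in which case $B=B^{t}$ and the statement is trivially true.
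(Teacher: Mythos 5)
Your proof is correct and follows exactly the route the paper takes: the paper derives this corollary by "putting together" Theorem \ref{zaguia} (primality transfers from $B$ to $o(B)$) and Corollary \ref{cor:elzahar-sauer} (prime two-dimensional posets are uniquely realizable), which is precisely your three-step argument with the details filled in. The closing observation that a two-element realizer determines the bichain up to transposition is the content of the paper's preceding lemma on unique realizers, so nothing is missing.
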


\begin{lemma}\label{lem:zaguia98} Let $P$ be a poset of dimension two and $\{\leq_1,\leq_2\}$ be any realizer of $P$. The following propositions are true.
\begin{enumerate}[$(1)$]
  \item If $A$ is an interval of $P_1:= (V, \leq_1)$ and $P_2:= (V, \leq_2)$, then $A$ is a module of $P$.
  \item If $A$ is a totally ordered module of $P$, then $A$ is an interval of $P_1$ and $P_2$.
\end{enumerate}
\end{lemma}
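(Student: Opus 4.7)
My plan is to unfold both parts directly from the realizer identity $\leq_P = \leq_1 \cap \leq_2$, using the fact that in a chain, an interval is exactly a convex subset, which in turn is exactly a module. The key trichotomy I will exploit is that for $v\notin A$, being a module in the chain $P_i$ means that $v$ sits either entirely below $A$ or entirely above $A$ in $\leq_i$.

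For part $(1)$, I would fix $v\in V\setminus A$ and consider the four Boolean cases given by the position of $v$ with respect to $A$ in $\leq_1$ and in $\leq_2$ (below/below, above/above, below/above, above/below). In the first case $v<_P a$ for every $a\in A$; in the second, $v>_P a$ for every $a\in A$; in the two mixed cases $v$ is incomparable to every $a\in A$. In all four cases the relation of $v$ to $A$ is uniform, which is exactly the definition of $A$ being a module of $P$. This part is routine.

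For part $(2)$, assume $A$ is a totally ordered module of $P$ but, for contradiction, not an interval in $\leq_1$; then there exist $a,a'\in A$ and $v\notin A$ with $a<_1 v<_1 a'$. Because $A$ is a chain in $P$ and $a<_1 a'$ forces $a\neq a'$, the module/realizer relations give $a<_P a'$ and hence $a<_2 a'$. Modularity of $A$ then forces $v$ to have a uniform relation to the pair $\{a,a'\}$: either $v<_P$ both, $v>_P$ both, or $v$ incomparable with both. The first case contradicts $a<_1 v$, the second contradicts $v<_1 a'$. In the incomparability case, $a<_1 v$ forces $v<_2 a$ and $v<_1 a'$ forces $a'<_2 v$; combined with $a<_2 a'$ this gives the $\leq_2$-cycle $v<_2 a<_2 a'<_2 v$, a contradiction. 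By symmetry the same argument handles $\leq_2$.

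The main step to be careful about is the incomparability subcase in part $(2)$: one must remember that ``$v$ is incomparable to $a$'' has two possible ``directions'' in the realizer (either $a<_1 v$ and $v<_2 a$, or $v<_1 a$ and $a<_2 v$) and use modularity plus the assumed chain order on $A$ to force these directions to be consistent, which is exactly what produces the cyclic contradiction in $\leq_2$. No deeper machinery is needed — the whole lemma is a direct bookkeeping argument on the two linear orders in the realizer.
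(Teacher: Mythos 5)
Your proof is correct and follows essentially the same route as the paper: part (1) by the four-case position analysis, and part (2) by a direct bookkeeping argument in which modularity propagates incomparability of an outside vertex $v$ across the chain $A$ and the two linear orders then clash. The paper packages the argument slightly differently (it first proves $[a,a']_{P_1}=[a,a']_{P_2}$ and then invokes convexity of modules, whereas you derive a $\leq_2$-cycle directly), but the underlying mechanism is identical.
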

\begin{proof}
\begin{enumerate}[$(1)$]
  \item The proof is easy and is left to the reader.
  \item Let $a<a'$ be two elements of $A$. Since $\leq_1$ and $\leq_2$ are linear extensions of $P$ we must have $a<_1 a'$ and $a<_2 a'$. We claim that the interval $[a,a']_{P_1}$ of $P_1$ is a  subset of the interval  $[a,a']_{P_2}$ of $P_2$. Indeed, let $x\in [a,a']_{P_1}$ and suppose for a contradiction that  $x\not \in [a,a']_{P_2}$. Suppose that  $x<_2 a$. Since $a<_1x$ then $x$ and $a$ are incomparable in $P$.  Since $A$ is a module of $P$, if $x\not \in A$  we must have $x$ incomparable to $a'$. But this is not the case: since  $a<_2 a'$ we have $x<_2a'$ and since $x<_1a'$ we have $x<a'$. Thus $x\in A$. But $A$ is a chain containing $a$ and therefore $x$ is comparable to $a$.
 We get a similar contradiction if $a' <_2 x$. Our claim is then proved. By symmetry we have $[a,a']_{P_2}\subseteq [a,a']_{P_1}$. Thus $[a,a']_{P_1}= [a,a']_{P_2}$. Now,  let $x\in [a,a']_{P_1}$. Since $[a,a']_{P_1}= [a,a']_{P_2}$, $x$ verifies $a<x<a'$. Since a module  is convex,  we must have $x\in A$. This shows that $[a,a']_{P_1}\subseteq A$. Hence, $A$ is an interval of $P_1$ and $P_2$ and we are done.
      \end{enumerate}
\end{proof}

We recall that if $P$ is a lexicographical sum $\sum_{i\in Q} Q_i$ and if $\{Q_{i,1}, Q_{i,2}\}$ is a realizer of $Q_i$ for each $i\in Q$ and $\{Q_1, Q_2\}$ is a realizer of $Q$ then $\{\sum_{i\in Q_1} Q_{i, 1}, \sum_{i\in Q_2} Q_{i, 2} \}$ is a realizer of $P$.

In general,  not every  realizer of $P$ is obtainable by this process.  A realizer of  $P$ induces  a realizer of each $Q_i$, but it does not necessarily induce a realizer on the quotient. A noticeable exception is when the $Q_i'$   are chains as we shall see below.

\begin{proposition}\label{prop:realizer}Let $P:= (V, \leq)$ be a poset   lexicographical sum $\sum_{i\in P'} C_i$ of chains $C_i:= (V_i, \leq_{\restriction V_i} )$ indexed by a poset $P':= (V', \leq')$ and let $\leq_1, \leq_2$ be two linear orders on $V$.
Then $\{\leq_1, \leq_2\}$ is a realizer of $P$ if and only if:
\begin{enumerate}[$(1)$]
 \item   $\leq_{1_{\restriction C_i}}$ and $\leq_{2_{\restriction C_i}}$ coincide with $\leq_{\restriction V_i}$  on  $C_i$;
 \item  $P_1:= (V, \leq_1)$ and $P_2:= (V, \leq_2)$ are respectively the lexicographical sums $\sum_{i \in P'_1} C_i$ and  $\sum_{i \in P'_2} C_i$,
 where $P'_1:= (V', \leq'_1)$,  $P'_2:= (V', \leq'_1)$ and  $\{\leq'_1,  \leq'_2\}$ is a realizer of $P'$.
 \end{enumerate}
 \end{proposition}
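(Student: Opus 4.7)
The plan is to prove the two directions separately, with the forward direction relying on Lemma \ref{lem:zaguia98}(2) applied to the blocks $V_i$, which are totally ordered modules of $P$.

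For the backward direction, I would assume $(1)$ and $(2)$ and verify that $\leq_1 \cap \leq_2 = \leq$. Given $x \in V_i$ and $y \in V_j$, I split into two cases. If $i = j$, then by $(1)$ both $\leq_1$ and $\leq_2$ restricted to $V_i$ are equal to $\leq_{\restriction V_i}$, so $x \leq_1 y$ and $x \leq_2 y$ together hold exactly when $x \leq y$. If $i \neq j$, then by $(2)$ we have $x \leq_1 y \Leftrightarrow i \leq'_1 j$ and $x \leq_2 y \Leftrightarrow i \leq'_2 j$; since $\{\leq'_1, \leq'_2\}$ is a realizer of $P'$, the conjunction holds iff $i \leq' j$, which by the definition of the lexicographical sum holds iff $x \leq y$.

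For the forward direction, I would first prove $(1)$: for $x, y \in V_i$ the restriction $\leq_{\restriction V_i}$ is a total order, hence $x \leq y$ or $y \leq x$; since $\leq_1$ and $\leq_2$ are linear extensions of $\leq$, they must agree with $\leq$ on every comparable pair, so $\leq_{1\restriction V_i} = \leq_{2\restriction V_i} = \leq_{\restriction V_i}$. Next, each $V_i$ is a totally ordered module of $P$, so Lemma \ref{lem:zaguia98}(2) ensures that $V_i$ is an interval both in $P_1 := (V, \leq_1)$ and in $P_2 := (V, \leq_2)$. In particular, for distinct $i, j \in V'$, either every element of $V_i$ precedes every element of $V_j$ in $\leq_1$, or the converse; this lets me define binary relations $\leq'_1$ and $\leq'_2$ on $V'$ by $i \leq'_k j$ iff $x \leq_k y$ for some (equivalently, every) choice $x \in V_i$, $y \in V_j$. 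These are total orders on $V'$, and by construction $P_k = \sum_{i \in P'_k} C_i$ with $P'_k = (V', \leq'_k)$.

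It remains to show that $\{\leq'_1, \leq'_2\}$ is a realizer of $P'$. For distinct $i, j \in V'$, pick $x \in V_i$ and $y \in V_j$. Then $i \leq'_1 j$ and $i \leq'_2 j$ iff $x \leq_1 y$ and $x \leq_2 y$ iff $x \leq y$ (as $\{\leq_1, \leq_2\}$ realizes $P$) iff $i \leq' j$ (by the definition of the lexicographical sum, using $i \neq j$). This gives $\leq'_1 \cap \leq'_2 = \leq'$, establishing $(2)$.

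I do not expect a genuine obstacle: the main ingredient is the fact that each block $V_i$ is a totally ordered module and hence, by Lemma \ref{lem:zaguia98}(2), an interval in both linear extensions of the realizer; from there the two total orders $\leq'_k$ on the index set $V'$ are forced, and both implications reduce to unwinding the definition of lexicographical sum. The only care needed is to observe that the lemma is stated for posets of dimension two, which is applicable here since the hypothesis that $\{\leq_1, \leq_2\}$ is a realizer already witnesses $\adim(P) \leq 2$.
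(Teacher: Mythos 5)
Your proposal is correct and follows essentially the same route as the paper: the forward direction rests on Lemma \ref{lem:zaguia98}(2) applied to the blocks $V_i$ (totally ordered modules, hence intervals of both $P_1$ and $P_2$), and the remaining verifications are the routine unwindings that the paper dismisses as "immediate." You have merely supplied the details the paper omits.
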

\begin{proof}
Suppose that $\{\leq_1, \leq_2\}$ is a realizer  of $\leq$.  Item 1 is immediate. From  Lemma \ref{lem:zaguia98}, each $V_i$ is an interval of $P_1$ and of $P_2$. Hence, $P_1$ and $P_2$ are the lexicographical sums stated in Item 2 and  $\{\leq'_1,  \leq'_2\}$ is a realizer of $P'$. The converse is immediate.
\end{proof}
\noindent {\bf Proof of Theorem \ref{thm:elzahar-sauer}}
Suppose that  at most one connected component of $\ainc(P)$, say $Q$,   is nontrivial and the only proper modules of the poset induced on this component are totally ordered. We prove that $P$ has a unique realizer. Note that  $P$ is the lexicographical sum $Q^{-}+Q+ Q^{+}$ where $Q^{-}$ and $Q^+$ are the set of lower bounds and  the set of upper bounds of $Q$ and furthermore $Q^{-}$ an $Q^{+}$ are chains.
 If $\{\leq_1, \leq_2\}$ is a realizer  of $\leq$,  then trivially  it induces a  realizer of $Q$.  Since the only proper modules of $Q$ are totally ordered, $Q$ is a lexicographical sum $\sum_{i\in Q'} C_i$ of chains $C_i:= (V_i, \leq_{\restriction V_i} )$ indexed by a poset $Q':= (V', \leq')$. According to Proposition \ref{prop:realizer}, the realizer on $Q$  induces a realizer on $Q'$. Since $Q'$ is prime, it follows from Corollary \ref{cor:elzahar-sauer} that this  realizer is unique. Hence,  $\{\leq_1, \leq_2\}$ is the unique realizer of  $P$.

Conversely, suppose that $P$ has a unique realizer.
First, $\ainc(P)$ cannot have two nontrivial components. Otherwise, let $Q$ and $Q'$ be such components;  let $Q_1,Q_2$ be a realizer of $Q$ and $Q'_1, Q'_2$ be a realizer of $Q'$.  With the realizers of the other components, we may built  a realizer of $P$ such that   $Q_1$ and $Q'1$ appear in one order and $Q_2$ and $Q'_2$ appear in the other order; switching $Q'_1$ and $Q'_2$ we get an other realizer. So $\ainc(P)$ has just one nontrivial component. We may suppose that $P$ reduces to that component, i.e., $\ainc(P)$ is connected. Now, suppose  for a contradiction that $P$ has a nontrivial module $A$ which is not a chain. Since $P$ has dimension two,  every induced subposet has dimension at most two, in particular $P_{\restriction A}$  and the quotient $Q:= P/A$ (obtained by identifying all elements of $A$ to a single element) have dimension at most two and in fact dimension two (since $A$ is not a chain and $\ainc(P)$ is connected). Let  $\{\leq_1,\leq_2\}$ be a realizer of $P_{\restriction A}$  and $\{L_{1}, L_{2}\}$ be a realizer of $P/A$. These realizers yield  two distinct  realizers of $P$.   This proves that $P$ is not uniquely realizable. A contradiction. The proof of the theorem is now complete.
\hfill $\Box$

\subsection{Proof of Theorem \ref {thm:bqo-poset-bichain}}\label{subsection:proofthm1}   Let $\mathcal{D}$ be a class of posets $P$ at  most countable whose order is the intersection of two linear orders,  $\ainc(P)$, the incomparability graph of $P$, is connected and the only nontrivial modules of $P$ are chains. Let $\mathcal C:= \sum \mathcal D$ be the collection of lexicographical sums over countable chains of members of $\mathcal{D}$.  Let  $\mathcal {C_{<\omega}}$ be the subclass of $\mathcal C$ made of finite posets.

\begin{proof}
$(1)$  The map associating to every bichain $B:= (V, \leq_1, \leq_2)\in o^{-1}(\mathcal C)$ the poset $(V, \leq_1\cap \leq _2)$ is a surjection of  $o^{-1}(\mathcal C)$ onto $\mathcal C$. It preserves the embeddability relation. Thus if $o^{-1}(\mathcal C)$ is bqo resp., wqo, its image $\mathcal C$ is bqo resp., wqo. Conversely,  suppose that $\mathcal{C}$ is bqo. Then $\mathcal D$  is bqo. It follows from Theorem \ref{thm:embed-bichain-poset}  that embeddings between elements of $\mathcal D$ yield embeddings (up to a transposition) between the corresponding elements of $o^{-1}(\mathcal{D})$, and hence $o^{-1} (\mathcal D)$ is bqo. It follows from Lemma \ref{lem:folklore} that every element of $o^{-1}(\mathcal{C})$ is a linear sum of elements of $o^{-1} (\mathcal D)$. From Laver's Theorem \ref{thm:laver} we deduce that $o^{-1}(\mathcal{C})$  is bqo.

$(2)$ Let  $\mathcal {C_{<\omega}}$ be the subclass of $\mathcal C$ made of finite posets.
The map from  $\mathcal {C_{<\omega}}$ to $\ainc\langle \mathcal C_{<\omega} \rangle$, associating to each poset $P:= (V, \leq )$ its incomparability graph $\ainc(P)$, preserves the embeddability, thus if ${\mathcal C_{<\omega}}$ is wqo,  resp.,  bqo, its image $\ainc \langle \mathcal C_{<\omega} \rangle$ is  wqo, resp.,  bqo. For the converse,  suppose that $\ainc \langle \mathcal{C_{<\omega}} \rangle$ is wqo,  resp., bqo. Since the members of  $\ainc \langle \mathcal D_{<\omega} \rangle$ are finite, it follows from Theorem \ref{thm:embed-poset-graph}  that embeddings between members of   $\ainc \langle \mathcal D_{<\omega} \rangle$  yield embeddings (up to duality) between the members  of $\mathcal{D_{<\omega}}$, and hence $\mathcal D_{<\omega}$ is wqo, resp., bqo.  Since  every element of $\mathcal{C}$ is a linear sum of elements of $\mathcal D$, from the fact that $\mathcal D_{<\omega}$ is wqo we deduce from Higman's Theorem that   $\mathcal {C_{<\omega}}$  is wqo  and from  the fact that $\mathcal D_{<\omega}$ is bqo we deduce  that $\mathcal C$ is bqo from the extension of Higman's theorem to bqo.
\end{proof}

\section{Minimal prime classes,   minimal prime structures and a proof of Theorems \ref{thm:minimalprimegraph} and \ref{thm:5}}\label{section:minimality}

Let $I$ be a set and $\Omega({I})$, resp., $\Omega(I)_{<\omega}$ be the class of binary structures, resp., finite binary structures  $R:= (V, (\rho_i)_{i\in I})$.  Let $\mathcal C\subseteq \Omega({I})$; we denote by $\prim (\mathcal C)$ the collection of prime structures belonging to $\mathcal C$.

There are interesting  hereditary classes $\mathcal C$ in  which $\prim (\mathcal C)$ is cofinal in $\mathcal C$, that is to say $\downarrow\prim  (\mathcal C)= \mathcal C$.

For an example, a consequence of Ille's result (Theorem \ref{thm:ille}) yields:
\begin{corollary}\label{cor:ille} If $\mathcal C$ is the age of a binary structure then there is some prime binary structure $R$ such that $\age (R)= \mathcal C$ if and only if $\downarrow \prim  (\mathcal C)= \mathcal C$.
\end{corollary}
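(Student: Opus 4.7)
The plan is to derive both implications directly from Ille's theorem (Theorem~\ref{thm:ille}).

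For the forward direction, I would assume that $R$ is a prime binary structure with $\age(R)=\mathcal{C}$ and pick an arbitrary $S\in\mathcal{C}$. Choose a finite $F\subseteq V(R)$ with $S\cong R_{\restriction F}$; then Ille's theorem, applied to the prime structure $R$, produces a finite $F'\supseteq F$ such that $R_{\restriction F'}$ is prime. The structure $R_{\restriction F'}$ is then a member of $\prim(\mathcal{C})$ above $S$, showing $S\in\downarrow\prim(\mathcal{C})$. The reverse inclusion $\downarrow\prim(\mathcal{C})\subseteq\mathcal{C}$ is automatic since $\prim(\mathcal{C})\subseteq\mathcal{C}$ and $\mathcal{C}$ is hereditary.

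The reverse direction requires a construction. Assuming $\mathcal{C}$ is an age with $\downarrow\prim(\mathcal{C})=\mathcal{C}$, the first step is to observe that $\prim(\mathcal{C})$ is itself up-directed: given $P_1,P_2\in\prim(\mathcal{C})$, the joint embedding property of the age $\mathcal{C}$ furnishes $S\in\mathcal{C}$ with $P_1,P_2\leq S$, and then the hypothesis yields $P\in\prim(\mathcal{C})$ with $S\leq P$. Since $\prim(\mathcal{C})$ is cofinal in the (countable) age $\mathcal{C}$, I would enumerate $\mathcal{C}$ as $(S_n)_{n\in\NN}$ and build an increasing chain of induced substructures $Q_0\subseteq Q_1\subseteq\cdots$ with each $Q_n\in\prim(\mathcal{C})$, chosen inductively so that $Q_n$ dominates both $Q_{n-1}$ and some prime upper bound of $S_n$ in $\prim(\mathcal{C})$ (using the up-directedness just established). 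Taking $R:=\bigcup_n Q_n$ yields a binary structure whose finite induced substructures are exactly those lying inside some $Q_n\in\mathcal{C}$, while every $S_n$ embeds into $Q_n\subseteq R$; hence $\age(R)=\mathcal{C}$.

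It remains to argue that $R$ is prime, and for this I would invoke the converse direction of Ille's theorem: any finite $F\subseteq V(R)$ is contained in $V(Q_n)$ for some $n$, and $R_{\restriction V(Q_n)}=Q_n$ is prime by construction. The main conceptual obstacle is promoting the mere cofinality of $\prim(\mathcal{C})$ in $\mathcal{C}$ to the up-directedness of $\prim(\mathcal{C})$ itself, since this is what makes the cofinal chain $(Q_n)$ available; once this is in hand, everything follows from Ille's criterion. In the event that $\mathcal{C}$ is uncountable (for instance when the signature $I$ is uncountable), the same plan goes through upon replacing the sequence $(Q_n)_{n\in\NN}$ with a well-ordered cofinal chain in $\prim(\mathcal{C})$ of the appropriate length.
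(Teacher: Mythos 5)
The paper offers no written proof of this corollary---it is simply asserted as a consequence of Ille's theorem---and your argument is the natural way to fill in that assertion: Ille's criterion (read, as intended, with $F\subseteq F'$) gives the forward direction, and the Fra\"{\i}ss\'e-style increasing chain through $\prim(\mathcal C)$, made available by the up-directedness you establish, gives the converse together with primality of the union via the ``if'' half of Ille's theorem; this is correct and matches the intended route. The only blemish is your closing remark about uncountable $\mathcal C$: an up-directed poset need not contain a well-ordered cofinal chain, so that extension would need a different argument, but it is not required for the statement in the paper's setting.
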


A hereditary subclass
$\mathcal{C}$ of $\Omega(I)_{<\omega}$ is \emph{minimal prime} if it contains infinitely many prime structures but every proper hereditary subclass  of $\mathcal{C}$ contains only finitely many.  This later condition means that  $\mathcal C= \downarrow \mathcal I$ for every infinite subset $\mathcal I $ of prime members of $\mathcal C$.

A related notion is this: A  binary relational structure  $R$ is \emph{minimal prime} if $R$ is prime and every prime induced structure  with the same cardinality as $R$ embeds a copy of $R$.

We described the countable minimal prime graphs with no infinite clique (Theorem 2 \cite{pouzet-zaguia2009}, p.358). From  our description there exists four countable  prime graphs such that every infinite prime graph with no infinite clique embeds one of these graphs. All these four graphs are bipartite and only two are incomparability graphs. These are $P_\infty$ and $H$. In fact, $P_\infty$ is the incomparability graph of $P_2$ and $H$ is the incomparability graph of $P_1$. Since a poset $P$ is minimal prime if and only if $comp(P)$ is minimal prime (Proposition 4 of \cite{pouzet-zaguia2009}) it follows that every infinite prime poset with no infinite antichain embeds one of the posets $P_1,P_1^*,P_2,P_2^*$.
 Theorem \ref{thm:minimalprime} asserts that the  ages of $P_1$, $P_2$, $\comp (P_1)$  and $\comp(P_2)$ are  minimal prime. The question to know wether or not the age of a relational structure is minimal prime whenever $R$ is minimal prime was considered by Oudrar (\cite{oudrar} Probl\`eme 1, page 99). Her and our  results suggest that the answer is positive. But, note that a description of minimal prime bichains or posets  is not known; the same question for minimal prime ages is unsolved too.

We recall two results obtained by Oudrar and the first author (see Theorem 5.12, p. 92, and Theorem 5.15, p. 94 of  \cite{oudrar}).
\begin{theorem}\label{minimal}
Every   hereditary subclass of   $\Omega (I)_{<\omega}$  containing only finitely many members of size $1$ or $2$ and infinitely many prime structures   contains a minimal prime hereditary subclass.
\end{theorem}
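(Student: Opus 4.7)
The strategy is to apply Zorn's lemma to the family $\mathcal{F}$ of hereditary subclasses of $\mathcal{C}$ containing infinitely many prime structures, ordered by inclusion. The hypothesis that $\mathcal{C}$ has only finitely many members of size $1$ or $2$ implies that only finitely many ``element types'' and ``pair types'' can occur in structures of $\mathcal{C}$, so the number of size-$n$ isomorphism types in $\mathcal{C}$ is finite for every $n$ and hence $\mathcal{C}$, as well as $\prim(\mathcal{C})$, is countable. This countability drives the compactness arguments below.

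To verify Zorn's hypothesis, let $(\mathcal{D}_\alpha)$ be a descending chain in $\mathcal{F}$. A pigeonhole argument on the countable set $\prim(\mathcal{C})$ shows that the chain has countable cofinality, so I may replace it by an $\omega$-subchain $\mathcal{D}_0 \supsetneq \mathcal{D}_1 \supsetneq \cdots$. Let $\mathcal{P}^* := \bigcap_n \prim(\mathcal{D}_n)$. If $\mathcal{P}^*$ is infinite, then $\downarrow \mathcal{P}^* \subseteq \bigcap_n \mathcal{D}_n$ is a hereditary lower bound in $\mathcal{F}$. Otherwise, after further thinning so that $\prim(\mathcal{D}_n) \setminus \prim(\mathcal{D}_{n+1}) \neq \emptyset$ for every $n$, I choose distinct primes $R_n \in \prim(\mathcal{D}_n) \setminus \prim(\mathcal{D}_{n+1})$; then $R_n \in \mathcal{D}_m$ precisely when $m \leq n$.

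Heredity prevents any infinite $\leq$-chain among the $(R_n)$: if $R_n \leq R_m$ with $m > n$, then $R_m \in \mathcal{D}_{n+1}$ would force $R_n \in \mathcal{D}_{n+1}$, contradicting our choice. So by Ramsey's theorem on pairs I extract an infinite antichain of primes from $(R_n)$, whose sizes must tend to infinity. Enumerating the finite structures of $\mathcal{C}$ as $(S_m)$, a diagonal argument refines this antichain to a ``Cauchy'' subsequence $(R_n)$, meaning that for each $m$ the truth value of $S_m \leq R_n$ stabilises as $n \to \infty$. Define the limit class
\[
\mathcal{D}_* := \{S \in \mathcal{C} : S \leq R_n \text{ for cofinitely many } n\}.
\]
This is hereditary by definition, up-directed by pigeonhole over the finitely many possible common super-structures of any two elements, and satisfies $\mathcal{D}_* \subseteq \mathcal{D}_m$ for every $m$ (since $R_n \in \mathcal{D}_m$ whenever $n \geq m$).

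The crux, and the main obstacle, is to show that $\mathcal{D}_*$ contains infinitely many prime structures. Here I invoke the Schmerl--Trotter theorem (in its form for binary relational structures): every finite prime of size $n \geq 7$ has a prime induced substructure of size $n-1$ or $n-2$. Iterating, each $R_n$ contains prime substructures of every sufficiently large size up to $|R_n|$. Since only finitely many isomorphism types of each fixed size appear in $\mathcal{C}$, pigeonhole together with the Cauchy property supply, for every sufficiently large $s$, a single prime of size $s$ which embeds in cofinitely many $R_n$ and hence lies in $\mathcal{D}_*$. Consequently $\mathcal{D}_* \in \mathcal{F}$, providing the required lower bound for the chain. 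Zorn's lemma then yields a minimal element of $\mathcal{F}$, which is by construction a minimal prime hereditary subclass of $\mathcal{C}$.
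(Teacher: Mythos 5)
The paper does not give a proof of this theorem: it is imported from Oudrar's thesis (Theorems 5.12 and 5.15 there), so there is no in-text argument to compare yours against. On its own terms your proof is, as far as I can check, correct, and it is of the same compactness-plus-Schmerl--Trotter nature as the source. The reduction to an $\omega$-chain is legitimate (a family of subsets of the countable set $\mathcal{C}$ that is totally ordered by inclusion has countable coinitiality, and countability of $\mathcal{C}$ is exactly what the hypothesis on members of size $1$ or $2$ buys); the extraction of primes $R_n$ with $R_n\in\mathcal{D}_m$ iff $m\le n$ is sound; and the limit class $\mathcal{D}_*$ is hereditary, contained in every $\mathcal{D}_m$, and acquires infinitely many primes by pigeonhole on the finitely many isomorphism types of each size combined with the stabilisation property. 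Two points should be made explicit. First, the Schmerl--Trotter theorem only yields a prime substructure of size $n-1$ \emph{or} $n-2$, so iterating guarantees, for each $k$, a prime substructure of $R_n$ of size $k$ or $k+1$ rather than of ``every sufficiently large size''; this costs nothing, since the pigeonhole can be run on the finite set of primes of $\mathcal{C}$ of size $k$ or $k+1$, and taking $k=7,9,11,\dots$ still produces infinitely many pairwise distinct primes in $\mathcal{D}_*$. Second, Schmerl--Trotter is proved for binary structures with finitely many relations; if the index set $I$ is infinite, the hypothesis that $\mathcal{C}$ has finitely many $2$-element members is again what allows you to replace each structure by one over a finite signature with the same modules, so the descent applies. (Your up-directedness claim for $\mathcal{D}_*$ is also correct, via the stabilisation property, but it is not needed for the Zorn argument.) With these glosses the proof is complete.
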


\begin{theorem} \label{thm:main1} Every  minimal prime hereditary class is the age of some prime structure; furthermore this age is wqo.
\end{theorem}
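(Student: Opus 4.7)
The plan is to establish the age claim via an up-directedness argument and then prove wqo by confining any hypothetical infinite antichain to a proper hereditary subclass that, by minimality, has only finitely many primes and is thus forced to be wqo by the Delhomm\'e--McKay result.

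First I would show that $\mathcal{C}$ is up-directed and that $\downarrow \prim(\mathcal{C}) = \mathcal{C}$. For any $R \in \mathcal{C}$, the class $\mathcal{C}_R := \{S \in \mathcal{C} : R \not\leq S\}$ is hereditary (by transitivity of embeddability) and proper (it excludes $R$), so by minimality it contains only finitely many prime members. Hence cofinitely many primes of $\mathcal{C}$ contain $R$. Given $R_1, R_2 \in \mathcal{C}$, the two cofinite subsets of $\prim(\mathcal{C})$ containing $R_1$ and $R_2$ intersect, yielding a prime common upper bound in $\mathcal{C}$. Next, $\downarrow \prim(\mathcal{C})$ is a hereditary subclass containing all (infinitely many) primes of $\mathcal{C}$, so by minimality it must equal $\mathcal{C}$. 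Thus $\mathcal{C}$ is an ideal; Fra\"iss\'e's theorem yields a structure $S$ with $\age(S) = \mathcal{C}$, and Corollary \ref{cor:ille} then allows us to replace $S$ by a prime structure $R$ with $\age(R) = \mathcal{C}$.

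For the wqo claim I argue by contradiction. Suppose $(A_n)_{n \in \NN}$ is an infinite antichain in $\mathcal{C}$, and set $\mathcal{D} := \downarrow \{A_n : n \in \NN\}$. I first show $\mathcal{D} \subsetneq \mathcal{C}$: otherwise, by up-directedness, $A_1$ and $A_2$ admit a common upper bound $y \in \mathcal{C} = \mathcal{D}$, and then $y \leq A_n$ for some $n$ gives $A_1, A_2 \leq A_n$, contradicting the antichain property. Hence $\mathcal{D}$ is proper, and by minimality it contains only finitely many primes. Delhomm\'e--McKay (Theorem \ref{bqo-hereditary2}) then applies to $\mathcal{D}$, yielding that the class of countable structures with age included in $\mathcal{D}$ is bqo; in particular $\mathcal{D}$ itself is bqo, hence wqo. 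This contradicts the presence of the infinite antichain $(A_n)$ in $\mathcal{D}$.

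The subtlest point is realizing $\mathcal{C}$ as the age of a \emph{single} structure when the signature $I$ may be large; this is handled by extracting from $\prim(\mathcal{C})$ a countable cofinal chain (using the cofinality of primes established in the first step, together with the fact that cofinitely many primes strictly extend any given one) and passing to the direct limit. By contrast, the wqo argument is remarkably compact once Delhomm\'e--McKay is available as a black box: minimality forces the downset of any infinite antichain to be structurally simple (finitely many primes), which is then incompatible with containing an infinite antichain.
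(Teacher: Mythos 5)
The paper does not actually prove this statement: Theorem \ref{thm:main1} is recalled from Oudrar's thesis \cite{oudrar} (Theorem 5.15 there), so there is no in-paper argument to compare yours against. Judged on its own, your reconstruction is correct. The first half is exactly the natural argument: $\mathcal C_R:=\{S\in\mathcal C: R\not\leq S\}$ is a proper hereditary subclass, so it has only finitely many primes, whence every $R\in\mathcal C$ embeds into cofinitely many primes of $\mathcal C$; this gives up-directedness and $\downarrow\prim(\mathcal C)=\mathcal C$, the countable cofinal chain of primes and its direct limit realize $\mathcal C$ as an age, and Corollary \ref{cor:ille} upgrades the realizing structure to a prime one. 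The downset-of-an-antichain argument for properness of $\mathcal D$ is also sound (if $y\in\mathcal D$ were a common upper bound of $A_1,A_2$ then $A_1,A_2\leq A_n$ for some $n$, forcing $A_1=A_2$), and since embeddability on finite structures is well-founded, excluding infinite antichains does give wqo. Two caveats. First, your wqo step leans on Theorem \ref{bqo-hereditary2}, which the paper states only for finite binary structures \emph{of finite arity} (i.e., bounded signature), whereas the ambient setting $\Omega(I)_{<\omega}$ allows $I$ to be an arbitrary set; you should either add the finite-signature hypothesis or replace the black box by the elementary route --- a hereditary class with finitely many prime members is wqo by Gallai's modular decomposition together with Higman's/Kruskal's theorem --- which is presumably what the cited source does and which avoids invoking a bqo theorem about countable structures merely to conclude wqo of finite ones. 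Second, the same finite-signature issue touches your appeal to Fra\"{\i}ss\'e's ``ideals are ages,'' but your direct-limit construction along the cofinal chain of primes already circumvents it, so that part stands in full generality.
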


We prove  the following result.

\begin{theorem}\label{thm:minimalprime}
\begin{enumerate}[$(1)$]
\item Let $P:=(V,\leq)$ be a  poset.  Then  $\age(\ainc (P))$ is minimal prime if and only if
$\age(\comp(P))$ is minimal prime. Furthermore,  $\age(P)$ is minimal prime if and only if $\age (\ainc(P))$ is minimal prime and $\downarrow  \prim (\age(P))= \age (P)$.
\item  Let $B:=(V,(\leq_1, \leq_2))$ be a bichain and $o(B):= (V, \leq_1\cap \leq_2)$. Then  $\age (B)$ is minimal prime if and only if $\age (o(B))$ is minimal prime and $\downarrow  \prim (\age (B))= \age (B)$.
\end{enumerate}
\end{theorem}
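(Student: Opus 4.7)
My plan is to dispatch the equivalence $\age(\ainc(P))$ minimal prime $\Leftrightarrow$ $\age(\comp(P))$ minimal prime by observing that complementation $G\mapsto G^{c}$ is a bijection between these two hereditary classes which preserves embeddability and (since a graph and its complement share the same modules) primality; minimal primality transfers immediately. Both parts then reduce to the same template, applied once with the correspondence $Q\mapsto \ainc(Q)$, controlled by Gallai-Kelly (Theorem \ref{kelly}), and once with $B\mapsto o(B)$, controlled by Theorem \ref{zaguia} and Corollary \ref{cor:elzahar-sauer-zaguia}. In both cases the relevant map sends primes to primes and is at most $2$-to-$1$ on primes: $\ainc(Q)=\ainc(Q')$ prime forces $Q'\in\{Q,Q^{*}\}$, and $o(B)=o(B')$ prime forces $B'\in\{B,B^{t}\}$.

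For the forward direction of the main equivalence in either part, I would invoke Theorem \ref{thm:main1}: a minimal prime age is the age of a prime structure, hence by Corollary \ref{cor:ille} it satisfies $\downarrow\prim(\mathcal C)=\mathcal C$. To transfer minimality from the upper level to the lower level (from $\age(P)$ to $\age(\ainc(P))$ in Part $(1)$, and from $\age(B)$ to $\age(o(B))$ in Part $(2)$), I would take a putative proper hereditary subclass $\mathcal{D}$ of the lower class with infinitely many primes, pull it back along the $2$-to-$1$ map to a subclass $\mathcal{D}'$ of the upper class, check that $\mathcal{D}'$ is still proper and still contains infinitely many primes, and thereby contradict the minimality of the upper class. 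Properness of the pullback follows from choosing any element in the lower class missing from~$\mathcal{D}$.

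The backward direction is where the argument must work for its subtlety. Suppose $\age(\ainc(P))$ is minimal prime and $\downarrow\prim(\age(P))=\age(P)$. By Corollary \ref{cor:ille}, fix a prime $\tilde P$ with $\age(\tilde P)=\age(P)$; then $\ainc(\tilde P)$ is a prime graph with $\age(\ainc(\tilde P))=\age(\ainc(P))$. Given a proper hereditary $\mathcal{D}\subsetneq\age(P)$, I would project to $\mathcal{D}^{\ainc}:=\{\ainc(Q):Q\in\mathcal{D}\}$, which is hereditary in $\age(\ainc(P))$. If $\mathcal{D}^{\ainc}\subsetneq\age(\ainc(P))$, minimality downstairs yields finitely many prime graphs in $\mathcal{D}^{\ainc}$, whence the $2$-to-$1$ bound forces finitely many primes in $\mathcal{D}$, as required. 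The main obstacle is to exclude the pathological case $\mathcal{D}^{\ainc}=\age(\ainc(P))$: here for each prime $R\in\age(P)\setminus\mathcal{D}$ (such $R$ exists because $\mathcal{D}$ is proper hereditary and $\downarrow\prim(\age(P))=\age(P)$), the equality $\ainc(R)\in\mathcal{D}^{\ainc}$ combined with Gallai-Kelly forces $R^{*}\in\mathcal{D}$. I would resolve this by using $\downarrow\prim(\age(P))=\age(P)$ to extend an arbitrary $Q'\in\age(P)$ to a prime $R\geq Q'$ in $\age(P)$, and then argue, using the hereditary closure of $\mathcal{D}$ together with the generativity of $\tilde P$, that the dichotomy $\{R,R^{*}\}\cap\mathcal{D}\neq\emptyset$ for every prime $R$ propagates downward to give $Q'\in\mathcal{D}$, hence $\mathcal{D}=\age(P)$, contradicting properness.

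Part $(2)$ follows the same script, with transposition replacing duality, Theorem \ref{zaguia} replacing the Gallai-Kelly correspondence between primality of $Q$ and of $\ainc(Q)$, and Corollary \ref{cor:elzahar-sauer-zaguia} replacing the two-orientations half of Theorem \ref{kelly}. The same subtlety—that the lower-level projection of a proper hereditary subclass could a priori fill out the whole lower age—arises and is resolved by the same interplay between $\downarrow\prim=\age$ and the fact that the natural map is at most $2$-to-$1$ on prime structures.
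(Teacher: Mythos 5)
Your overall architecture coincides with the paper's: the paper packages both parts into a single abstract lemma (Lemma \ref{lem:iteration}) about a prime-preserving ``free operator'' $\varphi$ together with an involution ($P\mapsto P^*$ for posets, $B\mapsto B^t$ for bichains) under which $\varphi$ is at most $2$-to-$1$ on primes, and then applies it to $\varphi=\ainc$ and $\varphi=o$; your treatment of $\ainc$ versus $\comp$ by complementation, your forward direction, and your easy case $\mathcal{D}^{\ainc}\subsetneq\age(\ainc(P))$ all correspond to what is done there. The trouble is exactly at the point you flag as the crux, and your proposed resolution does not work as stated. From the dichotomy ``$R\in\mathcal{D}$ or $R^*\in\mathcal{D}$ for every prime $R$ of $\age(P)$'', propagating \emph{downward} along a prime extension $R\geq Q_0$ only yields ``$Q_0\in\mathcal{D}$ or $Q_0^*\in\mathcal{D}$'': if $R^*\in\mathcal{D}$, heredity gives $Q_0^*\in\mathcal{D}$, not $Q_0\in\mathcal{D}$, and the conclusion ``every member of $\age(P)$ lies in $\mathcal{D}$ up to duality'' is perfectly compatible with $\mathcal{D}$ being proper. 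Neither ``the hereditary closure of $\mathcal{D}$'' nor ``the generativity of $\tilde P$'' closes this.

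The missing idea --- the one genuinely nontrivial step of the whole proof, and the place where both hypotheses are really used --- is an \emph{upward} move. If $R$ is prime with $R\notin\mathcal{D}$, the dichotomy puts $R^*$ in $\mathcal{D}\subseteq\age(P)$; since an age is up-directed and $\downarrow\prim(\age(P))=\age(P)$, there is a \emph{prime} $R_1\in\age(P)$ embedding both $R$ and $R^*$. Apply the dichotomy to $R_1$: if $R_1\in\mathcal{D}$ then $R\leq R_1$ gives $R\in\mathcal{D}$; if $R_1^*\in\mathcal{D}$ then $R^*\leq R_1$, equivalently $R\leq R_1^*$, again gives $R\in\mathcal{D}$. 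Either way $R\in\mathcal{D}$, so $\mathcal{D}$ contains all primes of $\age(P)$ and $\downarrow\prim(\age(P))=\age(P)$ forces $\mathcal{D}=\age(P)$, the desired contradiction. This is precisely the argument in item $(2)$ of Lemma \ref{lem:iteration}, where $Q_1$ is chosen as a common prime extension of $Q_0$ and $\dual(Q_0)$. With this step supplied, the rest of your outline --- including the substitution of Theorem \ref{zaguia} and Corollary \ref{cor:elzahar-sauer-zaguia} for the Gallai--Kelly correspondence in part $(2)$ --- goes through and matches the paper.
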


This  result represents two instances of a more general situation.

Let $I$ and $J$ be two finite sets. Let $\mathcal C$ be a hereditary subclass of $\Omega({I})$. A map $\varphi$  from  $\mathcal C$ to $\Omega({J})$ is a \emph{free operator} if it associates  to every binary structure $R:= (V, (\rho_i)_{i\in I})$ a binary structure $\varphi (R):=(V, (\tau_j)_{j\in J})$ in such a way that  for every $R, R'$ in $\mathcal C$,  every local isomorphism from $R$ to $R'$ induces a local isomorphism from $\varphi (R)$ to $\varphi (R')$. We will denote by $\varphi \langle \mathcal I \rangle$ the range by $\varphi$ of any subset $\mathcal I$ of $\mathcal C$.
In the following lemma, we suppose that $\varphi$ is such that $\varphi (R)$ is prime for each $R$ prime belonging to $\mathcal C$. Furthermore, we suppose that there is a free and involutive operator  on $\mathcal C$, denoted by $\dual$,   associating to each  $R\in \mathcal C$  some element  $\dual (R)$ of $\mathcal C$ in such a way that first $R$ is prime if and only if $\dual(R)$ is prime and next that for every $R, R'\in \mathcal C)$, $R$ prime and $\varphi(R)=\varphi(R')$ imply $R'=R$ or $R'= \dual (R)$. Under these conditions we have:

\begin{lemma}\label{lem:iteration}Let $R\in  \mathcal C$. Then:
\begin{enumerate}[$(1)$]
\item $\varphi \langle \prim (\age (R)) \rangle=\prim (\age (\varphi (R))$.
\item If  $\downarrow  \prim (\age(R))= \age (R)$ then for every $\mathcal I\subseteq \prim (\age (R))$, the range of $\mathcal I$,
$\downarrow \mathcal I= \age (R)$ if and only if $\downarrow{\varphi} \langle \mathcal I \rangle=\age (\varphi(R))$. In particular,
$\downarrow{\prim (\age (\varphi (R))}=\age (\varphi(R))$.
\item $\age (R)$ is minimal prime if and only if $\age (\varphi (R))$ is minimal prime and $\downarrow  \prim (\age(R))= \age (R)$.
\end{enumerate}
\end{lemma}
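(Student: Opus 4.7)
All three parts rest on one technical fact: since $\varphi$ (and $\dual$) are free, they commute with the formation of induced substructures. Explicitly, applying the freeness of $\varphi$ to the local isomorphism $V'\hookrightarrow V$, where $V$ is the domain of $R$ and $V'\subseteq V$, yields $\varphi(R_{\restriction V'})=\varphi(R)_{\restriction V'}$; the analogous identity holds for $\dual$. I would first prove (1) from this fact together with the standing hypothesis on $\varphi$, then use (1) and the $\dual$-hypothesis to establish the biconditional in (2), and finally derive (3) by combining (1), (2), Corollary~\ref{cor:ille}, and Theorem~\ref{thm:main1}.

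\textbf{Proof of (1).} For $\subseteq$: if $R'\in\prim(\age(R))$ then $R'\le R$ gives $\varphi(R')\le\varphi(R)$ by freeness, and $\varphi(R')$ is prime by the hypothesis on $\varphi$. For $\supseteq$: given $S\in\prim(\age(\varphi(R)))$, fix a finite $V'\subseteq V$ with $\varphi(R)_{\restriction V'}\cong S$ and set $R':=R_{\restriction V'}$; the commutation above gives $\varphi(R')\cong S$. To see that $R'$ is prime, observe that for any nontrivial module $A$ of $R'$, any $a,a'\in A$, and any $v\in V'\setminus A$, the partial map $a\mapsto a'$, $v\mapsto v$ is an isomorphism between the induced substructures of $R'$ on $\{a,v\}$ and $\{a',v\}$ (because $A$ is a module), hence a local isomorphism of $R'$; by freeness of $\varphi$ it is a local isomorphism of $\varphi(R')\cong S$, so $A$ is a nontrivial module of $S$, contradicting primality.

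\textbf{Proof of (2) and (3).} Assume $\downarrow\prim(\age(R))=\age(R)$ and fix $\mathcal I\subseteq\prim(\age(R))$. The forward direction of the biconditional is immediate: any $S\in\age(\varphi(R))$ equals $\varphi(R_{\restriction V'})$ for some finite $V'\subseteq V$; the hypothesis $\downarrow\mathcal I=\age(R)$ gives $R'''\in\mathcal I$ with $R_{\restriction V'}\le R'''$, whence $S\le\varphi(R''')\in\varphi\langle\mathcal I\rangle$ by freeness. For the converse, given $R'\in\age(R)$, lift to $R''\in\prim(\age(R))$ with $R'\le R''$ (using the standing assumption), use $\age(\varphi(R))=\downarrow\varphi\langle\mathcal I\rangle$ together with (1) to find $R'''\in\mathcal I$ with $\varphi(R'')\le\varphi(R''')$, and extract $V''\subseteq\mathrm{dom}(R''')$ with $\varphi(R'''_{\restriction V''})\cong\varphi(R'')$. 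Primeness of $R''$ and the $\dual$-hypothesis then force $R'''_{\restriction V''}\cong R''$ or $\cong\dual(R'')$; in the first case $R'\le R''\le R'''$ and we are done, while in the second, freeness and involutivity of $\dual$ give $R''\le\dual(R''')$, which lies in $\mathcal I$ under $\dual$-invariance of $\mathcal I$ in $\age(R)$ (automatic for the "in particular" choice $\mathcal I=\prim(\age(R))$, since $\dual$ preserves both primeness and $\age(R)$). For (3): the forward implication uses Theorem~\ref{thm:main1} to see that a minimal prime $\age(R)$ is the age of a prime structure, hence $\downarrow\prim(\age(R))=\age(R)$ by Corollary~\ref{cor:ille}; the minimality of $\age(\varphi(R))$ then follows by pulling each infinite $\mathcal J\subseteq\prim(\age(\varphi(R)))$ back through the at-most-two-to-one map $\varphi$ on primes, applying minimality of $\age(R)$ to the resulting infinite $\mathcal I$, and using (2) forward. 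The reverse implication pushes any infinite $\mathcal I\subseteq\prim(\age(R))$ forward through $\varphi$ to an infinite subset of $\prim(\age(\varphi(R)))$, invokes minimality of $\age(\varphi(R))$, and applies (2) backward.

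\textbf{Main obstacle.} The delicate step is the converse in (2): the $\dual$-hypothesis recovers $R'''_{\restriction V''}$ only up to duality, so one must ensure that an embedding of $R''$ itself—not merely of $\dual(R'')$—is produced into some element of $\mathcal I$. This forces $\mathcal I$ to be $\dual$-stable inside $\age(R)$. For the targeted use $\mathcal I=\prim(\age(R))$ (and in particular for Theorems~\ref{thm:minimalprimegraph} and~\ref{thm:5}) this stability is automatic, since $\varphi\circ\dual$ agrees with $\varphi$ on $\mathcal C$ in both the graph and the bichain instances, and $\dual$ preserves both primeness and the age structure; outside this setting it would have to be imposed as an additional hypothesis.
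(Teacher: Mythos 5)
Your part (1) and the forward direction of part (2) are correct and follow the same route as the paper (the commutation $\varphi(R_{\restriction F})=\varphi(R)_{\restriction F}$ and the dichotomy ``$\varphi(Q)\leq\varphi(S)$ and $Q$ prime imply $Q\leq S$ or $Q\leq\dual(S)$'' are exactly the paper's observations). The genuine gap is the one you yourself flag as the ``main obstacle'': your proof of the converse in (2) only works when $\mathcal I$ is $\dual$-stable, and you propose to impose this as an extra hypothesis. That weakened statement does not suffice for part (3): minimal primality of $\age(R)$ must be tested against \emph{every} infinite $\mathcal I\subseteq\prim(\age(R))$, and an arbitrary such $\mathcal I$ has no reason to be closed under $\dual$ --- indeed $\dual(Q)$ need not even belong to $\age(R)$ when $Q$ does, so your claim that stability is ``automatic'' for $\mathcal I=\prim(\age(R))$ (resting on ``$\dual$ preserves the age structure'') is itself unjustified in the general setting of the lemma. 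As stated, your argument for (3) in the reverse direction therefore invokes an instance of (2) that you have not proved.

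The paper closes this gap without any stability assumption, using the fact that $\age(R)$ is up-directed. Suppose $Q_0\in\age(R)\setminus\downarrow\mathcal I$, with $Q_0$ prime (possible since $\downarrow\prim(\age(R))=\age(R)$). Pick $R_0\in\mathcal I$ with $\varphi(Q_0)\leq\varphi(R_0)$; since $Q_0\not\leq R_0$ the dichotomy forces $Q_0\leq\dual(R_0)$, i.e.\ $\dual(Q_0)\leq R_0$, so $\dual(Q_0)\in\downarrow\mathcal I\subseteq\age(R)$. By up-directedness there is $Q_1\in\age(R)$ embedding both $Q_0$ and $\dual(Q_0)$, which one may take prime; from $\dual(Q_0)\leq Q_1$ one gets $Q_0\leq\dual(Q_1)$. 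Now $Q_1\geq Q_0$ is also outside $\downarrow\mathcal I$, so applying the same dichotomy to $Q_1$ and some $R_1\in\mathcal I$ with $\varphi(Q_1)\leq\varphi(R_1)$ yields $\dual(Q_1)\leq R_1$, hence $Q_0\leq\dual(Q_1)\in\downarrow\mathcal I$, a contradiction. This two-step bootstrap is the missing idea; the rest of your outline (including the push--pull argument for (3), and noting that $\downarrow\prim(\age(R))=\age(R)$ is immediate from the definition of minimal prime, without appeal to Theorem~\ref{thm:main1} or Corollary~\ref{cor:ille}) then goes through.
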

\begin{proof}

We start with the following observations
\begin{enumerate}[{(a)}]
\item For every $R\in \mathcal C$ and every subset $F$ of the domain of $R$, $\varphi(R_{\restriction F})= \varphi (R)_{\restriction F}$.
\item  $R\in \prim  (\mathcal C)$ if and only if $\varphi(R)\in \prim (\varphi \langle \mathcal C \rangle)$.

\item If $\varphi(Q)\leq \varphi (R)$ and $Q$ is prime then   $Q\leq R$ or $Q\leq \dual(R)$.
\end{enumerate}

Property $(a)$ is valid for every free operator. Concerning property $(b)$ note that if $A$ is a module of $R$ this is also a module of $\varphi (R)$, hence if $\varphi(R)\in \prim (\varphi \langle \mathcal C \rangle)$ then $R\in \prim  (\mathcal C)$. The converse holds by our hypothesis on $\varphi$.
We prove $(c)$.
Suppose $\varphi(Q)\leq \varphi (R)$.   Let $f$ be an embedding  from $\varphi(Q)$ to $\varphi (R)$. Let $Q'$ be the inverse image of $R_{\restriction(range(f)}$ by $f$. Since $f$ is an embedding  of $Q'$ in $R$,  $Q'\leq R$, and since $\varphi$ is a free operator, $f$ is an embedding from $\varphi (Q')$ in $\varphi(R)$; with the fact that $f$ is an embedding from $\varphi(Q)$ to $\varphi (R)$ we have   $\varphi (Q')= \varphi (Q)$. Since $QS$ is prime,  either $Q'=Q$ or $Q'= \dual(Q)$. In the first case $Q\leq R$, while in the second case,  $\dual (Q) \leq R$ hence $Q= \dual( \dual (Q))\leq \dual (R)$.

Item $(1)$. First, $\varphi \langle \prim (\age (R)) \rangle \subseteq \prim (\age (\varphi (R))$. Let $Q\in \prim (\age (R))$. Due to the condition on $\varphi$, $\varphi (Q)$ is prime, hence $\varphi (Q)\in \prim(\age (\varphi (R))$. Conversely, $ \prim (\age (\varphi (R))\subseteq \varphi \langle \prim (\age (R)) \rangle$. Let $S\in \prim (\age (\varphi (R))$. Then $S$ is isomorphic to $\varphi (R)_{\restriction F}$ for some finite subset of the domain of $F$. According to $(a)$,  $\varphi (R)_{\restriction F}= \varphi (R_{\restriction F} $ and according to $(b)$, $R_{\restriction F} $ is prime, hence $S\in  \varphi \langle \prim (\age (R)) \rangle$.

Item $(2)$. Suppose first that  $\downarrow \mathcal I= \age (R)$. The fact that $\downarrow{\varphi}\langle \mathcal I \rangle=\age (\varphi(R))$ is immediate. Indeed, let $T\in \age  (\varphi(R))$. Then, up to isomorphy, $T= \varphi (R)_{\restriction F}$ for some finite subset of the domain of $R$, hence $T= \varphi (R_{\restriction F})$. Since $\downarrow \mathcal I= \age (R)$, there is some $S\in \mathcal I$ such that $R_{\restriction F}\leq S$. Thus, $T\leq \varphi(S)\in \varphi\langle \mathcal I \rangle$, hence $T\in  \downarrow{\varphi}\langle \mathcal I \rangle$ as required.
Suppose next that $\downarrow{\varphi}\langle \mathcal I \rangle=\age (\varphi(R))$. We prove that $\downarrow \mathcal I= \age (R)$. We suppose that this is not the case. Let $Q_{0}\in \age (R)\setminus  \downarrow \mathcal I$. Since $\downarrow  \prim (\age(R))= \age (R)$,  we may suppose that $Q_{0}$ is prime.  Since $\downarrow{\varphi}\langle \mathcal I \rangle=\age (\varphi(R))$ there exists  $R_{0}\in   I$ such that $\varphi(Q_{0})\leq \varphi (R_{0})$. From Property (c) above,  $Q_{0}$ embeds in  $R_{0}$ or in $\dual(R_{0})$. Since $Q_{0}\not \in \downarrow \mathcal I$, and $R_{0}\in \downarrow \mathcal I$, $Q_{0}$ does not embed in $R_{0}$. But then $Q_{0}$ embeds into $\dual (R_{0})$, that is $\dual (Q_{0}) \leq R_{0}$. Since $R_0\in \downarrow \mathcal I$, $\dual (Q_{0})\in \downarrow \mathcal I$, hence $\dual (Q_{0})\in \age (R)$. Since $\age (R)$ is up-directed, there is some $Q_1 \in \age (R)$ which embeds $Q_0$ and  $\dual (Q_0)$. Since $\dual (Q_{0})\leq Q_1$, $Q_{0}\leq \dual (Q_1)$. Thus, if we prove that $\dual (Q_1)\in \downarrow \mathcal I$ we will get a contradiction. We may  suppose $Q_1$ prime. As above, there is some $R_1\in \mathcal  I$  such that  $\varphi(Q_{1})\leq \varphi (R_{1})$. Thus with Property $(c)$, $Q_{1}$ embeds into $R_{1}$ or $\dual (R_{1})$. By the same argument as above, $\dual (Q_{1})\leq R_{1}$. But then $\dual (Q_{1})\in \downarrow \mathcal I$. This yields the desired contradiction.

\noindent $(3)$ Suppose that $\age(R)$ is minimal prime. Then trivially, $\downarrow \prim (\age (\varphi (R)))= \age (\varphi (R))$. We prove that $\age (\varphi (R))$ is  minimal prime. Since $\age(R)$ is minimal prime it contains infinitely many prime elements. Since the images by $\varphi$ of primes are primes,  $\age (\varphi (R))$ contains infinitely many prime elements. Let $\mathcal I$ be an infinite  set of prime elements of $\age (\varphi (R))$. Since the inverse image of a prime is prime, $\varphi^{-1}(\mathcal I)$ is made of prime elements of $\age(R)$.  Let  $\downarrow \varphi^{-1}(\mathcal I)$ be  the initial segment of $\age(R)$ generated by $\downarrow \varphi^{-1}(\mathcal I)$.  Since $\age(R)$  is minimal prime,  $\downarrow \varphi^{-1}(\mathcal I)$ equals $\age (R)$.  Applying $\varphi$, we get $\varphi \langle \downarrow \varphi^{-1}(\mathcal I) \rangle \subseteq \downarrow \mathcal I$, thus $\age (\varphi(R))\subseteq \downarrow \mathcal I$. It follows that  $\age (\varphi(R))$ is minimal prime. \\
Suppose $\age(\varphi(R))$ is minimal prime and $\downarrow  \prim (\age(R))= \age (R)$.
Let $\mathcal I\subseteq \prim (\age(R))$.  Our aim is to show that $\downarrow \mathcal I=\age (R)$ if $\mathcal I$ is infinite. Apply $(2)$ of this lemma. If $\mathcal I$ is infinite then $\downarrow \varphi \langle \mathcal I \rangle$ is infinite and included into $\prim (\age (\varphi(R)))$. Since that age is minimal prime,  $\downarrow \varphi \langle \mathcal I \rangle =\age \varphi (R)$.  Then  $(2)$
yields   $\downarrow \mathcal I=\age (R)$ as required. \end{proof}

\noindent{\bf Proof of Theorem \ref{thm:minimalprime}.}
\noindent $(1)$ The fact that $\age (\comp (P))$ is minimal prime if and only if $\age (\ainc(P))$ is minimal prime is obvious and requires no condition on $P$. In fact, since a graph $G$ and its complement  $G^c$ have the same local isomorphisms, it is a simple exercise to check that  $\age (G)$  is minimal prime if and only if  $\age (G^c)$ is minimal prime. For the other part of the theorem  take for $\mathcal C$ the collection of finite posets, for $\varphi$ the map associating to each finite poset  $P$ its incomparability graph $\ainc(P)$ and for the involution $\dual$ on $\mathcal C$ the ordinary duality   associating to each poset the dual poset.  Theorem \ref{kelly} expresses that the condition on the involution is satisfied. Item $3$ of Lemma \ref{lem:iteration} yields  $(1)$. \\
\noindent $(2)$ Take for $\mathcal C$ the collection of finite bichains, for $\varphi$ the map associating to each finite bichain $B:= (V, (\leq_1, \leq_2))$ the poset $o(B):= (V, \leq_1\cap \leq_2))$  and for the involution $\dual$ on $\mathcal C$ the transpose,  associating $B^t:= (V, (\leq_2, \leq_1))$ to $B:= (V, (\leq_1, \leq_2))$. Theorem \ref{zaguia} and Corollary \ref{cor:elzahar-sauer-zaguia} express that the condition on the involution is satisfied. Item $3$ of Lemma \ref{lem:iteration} is $(2)$.

\subsection {Proof of Theorem \ref{thm:minimalprimegraph}.}

\begin{claim} \label{claim:minimalprime}A hereditary class $\mathcal C$ of finite bipartite permutation graphs  is minimal prime if and only if $\mathcal C$ is $\age (H)$ or $\age(P_{\infty})$.
\end{claim}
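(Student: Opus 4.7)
The plan is to combine Theorem \ref{thm:main1}, which asserts that every minimal prime hereditary class is the age of a prime structure, with the classification from \cite{pouzet-zaguia2009} of infinite countable prime graphs without infinite clique. For the forward implication, suppose $\mathcal{C}$ is a minimal prime hereditary class of finite bipartite permutation graphs. First I would apply Theorem \ref{thm:main1} to write $\mathcal{C} = \age(R)$ for some prime relational structure $R$, necessarily infinite since $\mathcal{C}$ contains infinitely many primes. As every finite member of $\mathcal{C}$ is a bipartite permutation graph, hence by Lemma \ref{lem:w2} a bipartite incomparability graph, and since Lemma \ref{lem:w2} characterizes these graphs via a set of finite forbidden induced subgraphs (condition $(ii)$), the graph $R$ itself is a countable bipartite incomparability graph, and in particular has no infinite clique. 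Invoking Theorem 2 of \cite{pouzet-zaguia2009}, which describes the four countable graphs that every infinite prime graph without infinite clique must embed, together with the observation recalled just before Theorem \ref{thm:minimalprimegraph} that only two of these four are incomparability graphs, namely $\mathrm{P}_{\infty}$ and $H$, I conclude $\age(\mathrm{P}_{\infty}) \subseteq \mathcal{C}$ or $\age(H) \subseteq \mathcal{C}$.

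To close the forward direction, I would check that both $\age(\mathrm{P}_{\infty})$ and $\age(H)$ contain infinitely many prime graphs. The paths $\mathrm{P}_{n}$ are prime for $n \geq 4$, and the half-graphs $H_{n} := H_{\restriction \{a_{1},\dots,a_{n},b_{1},\dots,b_{n}\}}$ are prime for all $n \geq 2$, both by Proposition \ref{primebipartite} (connectedness together with pairwise distinct neighborhoods in each bipartition class). Minimality of $\mathcal{C}$ then forbids any proper hereditary subclass from containing infinitely many primes, so the inclusion obtained above must be an equality: $\mathcal{C} = \age(\mathrm{P}_{\infty})$ or $\mathcal{C} = \age(H)$.

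For the converse, I would prove directly that $\age(\mathrm{P}_{\infty})$ and $\age(H)$ are minimal prime, following Oudrar's Lemma~6.1 of \cite{oudrar}. Given a proper hereditary subclass $\mathcal{D} \subsetneq \age(H)$ and a witness $G_{0} \in \age(H) \setminus \mathcal{D}$, the decisive point is that $H$ is the union of the increasing chain $(H_{n})_{n \geq 1}$, hence the finite graph $G_{0}$ embeds into $H_{n}$ for every $n$ large enough; heredity of $\mathcal{D}$ then removes all but finitely many $H_{n}$ from $\mathcal{D}$. Since each prime graph in $\age(H)$ of sufficiently large size embeds some such $H_{n}$, only finitely many primes survive in $\mathcal{D}$. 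The same scheme handles $\age(\mathrm{P}_{\infty})$ via the path chain $(\mathrm{P}_{n})_{n \geq 4}$, which is visibly cofinal in the primes of $\age(\mathrm{P}_{\infty})$.

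The main obstacle is the first step: extracting from the abstract prime $R$ of Theorem \ref{thm:main1} the strong structural information that $R$ is in fact a bipartite incomparability graph, so that the classification of \cite{pouzet-zaguia2009} becomes applicable. This hinges crucially on the finite-forbidden-subgraph characterization encoded in Lemma \ref{lem:w2}; without such a compactness-style description of the target class, one could not transfer the class-level property ``$\mathcal{C}$ consists of bipartite incomparability graphs'' up to the infinite prime structure $R$.
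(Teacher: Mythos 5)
Your proof is correct and follows essentially the same route as the paper: Theorem \ref{thm:main1} plus Theorem 2 of \cite{pouzet-zaguia2009} for the forward direction (with your compactness argument via Lemma \ref{lem:w2} merely making explicit what the paper leaves implicit), and a direct verification that the two ages are minimal prime for the converse. The one fact you assert without proof --- that every sufficiently large prime in $\age(H)$ contains a large $H_n$ --- is exactly what the paper supplies by citing the Schmerl--Trotter classification of critically prime graphs, which shows that the finite primes of $\age(H)$ on at least four vertices are precisely the half-graphs $H_n$ and form a chain.
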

\noindent {\bf Proof of  Claim \ref{claim:minimalprime}.}
Suppose that $\mathcal C$ is minimal prime. Then $\mathcal C= \age (G)$  where $G$ is a prime graph which is  the incomparability graph of a poset of width two (Theorem  \ref{thm:main1}). From Theorem 2 of \cite{pouzet-zaguia2009} follows that $G$ embeds $P_\infty$ or $H$. Hence, $\mathcal{C}$ contains  $\age(P_\infty)$ or $\age(H)$. Since $P_\infty$ and $H$ are prime and $\mathcal{C}$ minimal prime,  $\mathcal{C}= \age(P_\infty)$ or $\age(H)$.\\
Conversely, let us  prove  that $\age(P_\infty)$ and $\age(H)$ are minimal prime. This is obvious for $\age(P_\infty)$, indeed, finite prime subgraphs  of $P_{\infty}$ on  at least three vertices are paths, thus form a chain for the embeddability order; if a  hereditary subclass of  $\age (P_{\infty})$ contains infinitely many  of these paths it contains all, thus is equal to $\age (P_{\infty})$. For $\age (H)$, the situation is about the same: finite prime induced subgraphs of $H$ on at least four vertices  form also a chain. Indeed, these subgraphs are the  \emph{critically prime} discovered by  Schmerl and Trotter \cite{S-T}, prime graphs such that the deletion of any vertex yields a nonprimitive graph. As shown by Schmerl and Trotter, these graphs have an even number of vertices;   up to complementation, there is just one critically prime graph on an even number of vertices.
\hfill $\Box$

With Claim \ref{claim:minimalprime} and  Theorem \ref{thm:minimalprime} we obtain:
\begin{claim} \label{claim:minimalprime2} If  $\mathcal C$ is a hereditary class of finite posets of width two then $\mathcal C$ is minimal prime if and only if $\mathcal C$ is $\age (P_1)$ or  $\age (P_2)$. \end{claim}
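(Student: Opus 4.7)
The plan is to apply Theorem \ref{thm:minimalprime}$(1)$ in order to reduce the statement to Claim \ref{claim:minimalprime}, using that $P_1$ and $P_2$ are themselves prime. The graphs $H$ and $\mathrm P_\infty$ are prime, being connected bipartite graphs in which distinct vertices have distinct neighborhoods, so Proposition \ref{primebipartite} applies. By Theorem \ref{kelly} this makes $P_1$ and $P_2$ prime, and Corollary \ref{cor:ille} then yields $\downarrow \prim(\age(P_i)) = \age(P_i)$ for $i \in \{1,2\}$.

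For the forward direction, Claim \ref{claim:minimalprime} ensures that $\age(\ainc(P_1)) = \age(H)$ and $\age(\ainc(P_2)) = \age(\mathrm P_\infty)$ are minimal prime; together with the identity $\downarrow \prim(\age(P_i)) = \age(P_i)$ above, the two hypotheses of Theorem \ref{thm:minimalprime}$(1)$ are met, so both $\age(P_1)$ and $\age(P_2)$ are minimal prime.

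For the converse, let $\mathcal C$ be a minimal prime hereditary class of finite posets of width two. By Theorem \ref{thm:main1}, $\mathcal C = \age(P)$ for some prime structure $P$, which is necessarily a poset whose width is at most two (inherited from its finite substructures); in fact the width is exactly two, since a chain admits only finitely many prime induced subposets and would violate minimal primality. Theorem \ref{thm:minimalprime}$(1)$ then forces $\age(\ainc(P))$, a hereditary class of finite bipartite permutation graphs, to be minimal prime; Claim \ref{claim:minimalprime} therefore gives $\age(\ainc(P)) = \age(\ainc(P_i))$ for some $i \in \{1,2\}$.

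The step I expect to be the main obstacle is passing from $\age(\ainc(P)) = \age(\ainc(P_i))$ to $\age(P) = \age(P_i)$. For this, I would invoke that a finite prime poset of dimension two is determined, up to duality, by its incomparability graph, via Theorem \ref{kelly} together with Corollary \ref{cor:elzahar-sauer}. Hence $\prim(\age(P))$ matches $\prim(\age(P_i))$ modulo duality, and since each age equals the downward closure of its prime members (Corollary \ref{cor:ille}), this forces $\age(P) \in \{\age(P_i), \age(P_i^*)\}$; the two coincide by direct inspection of the symmetries of $P_1$ and $P_2$, completing the proof.
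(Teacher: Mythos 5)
Your proof is correct and follows essentially the same route as the paper: reduce to Claim \ref{claim:minimalprime} via Theorem \ref{thm:minimalprime}(1), using Theorem \ref{thm:main1} to write $\mathcal C$ as the age of a prime poset. You are in fact more explicit than the paper on the final step (recovering $\age(P)=\age(P_i)$ from $\age(\ainc(P))=\age(\ainc(P_i))$ via Theorem \ref{kelly} and the self-duality of $\age(P_1)$ and $\age(P_2)$), a point the paper's proof leaves implicit.
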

 Indeed, if $\mathcal C$ is minimal prime then $\mathcal C= \age (P)$ for some prime poset $P$ of width at most two.  Conversely, if a hereditary class $\mathcal D$ of incomparability graphs of finite posets of width two is minimal prime, then $\mathcal D= \age (G)$ for some prime graph and  this graph is of the form $\ainc(P)$ for some prime poset $P$ of width at most two.   According to Theorem \ref{thm:minimalprime}, $\mathcal C$ is minimal prime if and only if $\mathcal D$ is minimal prime. Since  $\age (H)$ and $\age (P_{\infty})$ are the only minimal prime classes of incomparability graphs of posets of width two then $\age (P_1)$ and $\age(P_2)$ are the  only minimal prime classes of posets of width two.  \hfill $\Box$

\subsection{Proof of Theorem \ref{thm:5}.} For the proof, note first that  $\age (B_1)\not = \age(B_1^t)$ while  $\age (B_2)\not = \age(B_2^t)$.  Now, let $\mathcal C$ be a minimal prime class of finite bichains.  Then $\mathcal C$ is the age of some bichain $B$ that we may suppose prime. By $(2)$ of Theorem \ref{thm:minimalprime},  $\age( o(B))$  is minimal prime. If $o(B)$ has width at most two, then by
Theorem \ref{thm:minimalprimegraph},  $\age (o(B))$ is either  $\age (P_1)$ or $\age (P_2)$.  Since $B$ is prime, then by Corollary \ref{cor:elzahar-sauer-zaguia},  $B$ and  $B^t$ are the only bichain such that $o(B)= o(B^t)$. Hence, there are at most   two ages, namely $\age (B_1)$, and  $\age (B_1^t)$ such that $\age (o(B_1))=\age (o(B_1^t)=\age (P_1)$ and at most two ages, $\age (B_2)$, and  $\age (B_2^t)$ such that $\age (o(B_2))=\age (o(B_2^t)=\age (P_2)$. Since $\age (B_2)\not = \age(B_2^t)$, this provides only three ages. Now since,    $\mathcal C$ is minimal prime and $\age (B)\subseteq  \mathcal C$, $\age (B)= \mathcal C$.  Thus  $\mathcal C$  is  the age of one of the  bichains $B_1$, $B_1^t$ or $B_2$ as claimed.
This completes the proof of the theorem.
\hfill $\Box$

\begin{comments}\label{comments-uniformly}
a) The involutive and free operator   $\dual$ does not appear in the conclusions of Lemma \ref{lem:iteration}. This suggests that this lemma holds   under more general conditions. However, some conditions are necessary.
Take for $\mathcal C$ the collection of finite oriented graphs $G$   such that the symmetric hull of $G$ is a direct sum of  finite paths. Let $\varphi$ be the map associating to each graph its symmetric hull. The class $\mathcal C$ is a hereditary class and $\varphi $ is a  free operator. Members of $\mathcal C$ are prime if and only if their symmetric hull is  prime, indeed their  symmetric hull must have  at most two vertices or  must be  a path. Thus the first conditions of the frame  of our lemma are satisfied. The class $\mathcal C$ is an age and the age of a prime structure. It is not minimal prime (it contains the age of an infinite zig-zag) but the range  $\varphi \langle \mathcal C \rangle$ of $\mathcal C$ being  the age of the infinite path $P_{\infty}$, it is  minimal prime. Hence $(3)$ of Lemma \ref{lem:iteration} does not hold. In fact, $\mathcal C$ contains $2^{\aleph_0}$ minimal prime ages, each one transformed to $\age(P_{\infty})$ by $\varphi$. For a  proof of this fact, code orientations of $P_{\infty}$ by  words $u$ with domain $\NN$ on the two letters alphabet $\{0, 1\}$. To a $0$-$1$ word $u:= (u_n)_{n\in \NN}$ on $\NN$ associate an orientation of $P_{\infty}$, orienting the edge $\{n, n+1\}$ from  $n$ to $n+1$ if $u_n=1$ and from $n+1$ from $n$ if $u_n=0$. Then observe  that the age of the orientation is minimal prime if and only if the word $u$ is  uniformly recurrent (a word $u$ on $\NN$  is \emph{uniformly recurrent} if for every $n\in \NN$ there exists $m\in \NN$ such that each factor $u(p),...,u(p+n)$ of length $n$ occurs as a factor of every factor of length $m$). To conclude, use the fact that  there are $2^{\aleph_0}$ uniformly recurrent  words $u_\alpha$ on the two letters alphabet $\{0, 1\}$ such that for $\alpha \neq \beta$ the collections $Fac(u_\alpha)$ and $Fac (u_\beta)$ of their finite factors are distinct, and  in fact incomparable with respect to set inclusion (this is a well known fact of symbolic dynamic, e.g. Sturmian words with different slopes will do \cite{pytheas}). For more on the use of recurrent words,  see \cite{pouzet-zaguia} and \cite{oudrar-pouzet-zaguia}.

b) We do not know if in Theorem \ref{thm:minimalprime}  we can remove the condition $\downarrow  \prim (\age (P))= \age (P)$ in $(1)$ and the condition $\downarrow  \prim (\age (B))= \age (B)$ in $(2)$.
\end{comments}

\end{document}